\newcommand{\revdate}{October 2022}
\tikzstyle{map} = [->, font=\scriptsize]
\tikzstyle{linj} = [left hook->, font=\scriptsize]
\tikzstyle{rinj} = [right hook->, font=\scriptsize]
\tikzstyle{mono} = [>->, font=\scriptsize]
\tikzstyle{epi} = [->>, font=\scriptsize]
\tikzstyle{cell} = [double,double equal sign distance,-implies, shorten >= 3.75pt, shorten <= 3.75pt, font=\scriptsize]
\tikzstyle{eq} = [double,double equal sign distance]
\tikzstyle{ps} = [shorten >= 2pt]
\tikzstyle{iso} = [above, sloped, inner sep=1.5pt]
\tikzstyle{nat} = [above, sloped, inner sep=2pt]
\tikzstyle{desc} = [fill=white, inner sep=2pt]
\tikzstyle{dots} = [black, font=]
\tikzstyle{small} = [font=\scriptsize]
\tikzstyle{textbaseline} = [baseline=-3.2pt]
\tikzstyle{barred} = [decoration={markings, mark=at position 0.5 with {\draw[-] (0,-1.5pt) -- (0,1.5pt);}}, postaction ={decorate}]
\tikzstyle{math35} = [matrix of math nodes, row sep={3.25em,between origins}, column sep={3.5em,between origins}, text height=1.5ex, text depth=0.25ex, nodes in empty cells]
\tikzstyle{minimath} = [matrix of math nodes, row sep={3em,between origins}, column sep={3.25em,between origins}, font=\scriptsize, text height=1ex, text depth=0.25ex, nodes in empty cells]
\tikzstyle{scheme} = [textbaseline, x=1.6em, y=1.6em, yshift=-2.4em, font=\scriptsize, text depth=0ex, every node/.style={overlay}, execute at end picture = { \useasboundingbox ($(current bounding box.north west) + (0,0.4em)$) rectangle ($(current bounding box.south east) - (0,0.4em)$); }]
\def\slashedarrowfill@#1#2#3#4#5{%
  $\m@th\thickmuskip0mu\medmuskip\thickmuskip\thinmuskip\thickmuskip
   \relax#5#1\mkern-7mu%
   \cleaders\hbox{$#5\mkern-2mu#2\mkern-2mu$}\hfill
   \mathclap{#3}\mathclap{#2}%
   \cleaders\hbox{$#5\mkern-2mu#2\mkern-2mu$}\hfill
   \mkern-7mu#4$%
}
\def\rightslashedarrowfill@{%
  \slashedarrowfill@\relbar\relbar\mapstochar\rightarrow}
\newcommand\xslashedrightarrow[2][]{%
  \ext@arrow 0055{\rightslashedarrowfill@}{#1}{#2}}
\def\slashedrightarrow{\xslashedrightarrow{}}
\newcommand{\conc}{%
  \mathbin{
    \mathchoice
    {\raisebox{1ex}{\scalebox{.7}{$\frown$}}}
    {\raisebox{1ex}{\scalebox{.7}{$\frown$}}}
    {\raisebox{.7ex}{\scalebox{.5}{$\frown$}}}
    {\raisebox{.7ex}{\scalebox{.5}{$\frown$}}}
  }
}
\newcommand{\cocart}{\mathrm{cocart}}
\newcommand{\cart}{\mathrm{cart}}
\providecommand{\cororef}[1]{Corollary~\ref{#1}}
\providecommand{\defref}[1]{Definition~\ref{#1}}
\providecommand{\exref}[1]{Example~\ref{#1}}
\providecommand{\lemref}[1]{Lemma~\ref{#1}}
\providecommand{\propref}[1]{Proposition~\ref{#1}}
\providecommand{\thmref}[1]{Theorem~\ref{#1}}
\providecommand{\secref}[1]{Section~\ref{#1}}
\providecommand{\dfn}{\coloneqq}
\providecommand{\of}{\circ}
\providecommand{\iso}{\cong}
\providecommand{\brar}{\slashedrightarrow}
\providecommand{\xrar}[1]{\xrightarrow{#1}}
\providecommand{\xlar}[1]{\xleftarrow{#1}}
\providecommand{\xbrar}[1]{\xslashedrightarrow{#1}}
\providecommand{\Rar}{\Rightarrow}
\providecommand{\eps}{\varepsilon}
\newcommand{\dash}{\makebox[1.2ex]{\text{--}}}
\providecommand{\tens}{\otimes}
\providecommand{\ul}[1]{\underline{#1}{}}
\providecommand{\brcs}[1]{\lbrace #1 \rbrace}
\providecommand{\brks}[1]{\lbrack #1 \rbrack}
\providecommand{\pars}[1]{\left(#1\right)}
\providecommand{\bigpars}[1]{\bigl(#1\bigr)}
\providecommand{\Bigpars}[1]{\Bigl(#1\Bigr)}
\providecommand{\lns}[1]{\lvert#1\rvert}
\providecommand{\angles}[1]{\langle#1\rangle}
\providecommand{\gen}[1]{\angles{#1}}
\providecommand{\ol}[1]{\overline{#1}}
\providecommand{\set}[1]{\brcs{#1}}
\providecommand{\natarrow}{\Rightarrow}
\providecommand{\map}[3]{#1\colon#2\to#3}
\providecommand{\mono}[3]{#1\colon#2\rightarrowtail#3}
\providecommand{\nat}[3]{#1\colon#2\natarrow#3}
\providecommand{\cell}[3]{#1\colon#2\Rightarrow#3}
\providecommand{\hmap}[3]{#1\colon#2\slashedrightarrow#3}
\providecommand{\inv}[1]{{#1}^{-1}}
\newcommand{\id}{\mathrm{id}}
\newcommand{\yon}{\mathrm{y}}
\providecommand{\ladj}{\dashv}
\DeclareMathOperator{\colim}{colim}
\providecommand{\intl}{\int\limits}
\providecommand{\op}[1]{#1^\textup{op}}
\providecommand{\co}[1]{#1^\textup{co}}
\providecommand{\ps}[1]{\widehat{#1}}
\providecommand{\catvar}[1]{\mathcal{#1}}
\providecommand{\2}{\mathsf 2}
\providecommand{\D}{\catvar D}
\providecommand{\E}{\catvar E}
\providecommand{\K}{\catvar K}
\renewcommand{\L}{\catvar L}
\providecommand{\V}{\catvar V}
\providecommand{\Set}{\mathsf{Set}}
\providecommand{\Cat}{\mathsf{Cat}}
\providecommand{\enCat}[1]{#1\text-\Cat}
\providecommand{\inCat}[1]{\Cat(#1)}
\providecommand{\twoCat}{2\text{-}\Cat}
\providecommand{\MonCat}{\mathsf{MonCat}}
\providecommand{\AugVirtDblCat}{\mathsf{AugVirtDblCat}}
\providecommand{\VirtDblCat}{\mathsf{VirtDblCat}}
\providecommand{\Rel}{\mathsf{Rel}}
\providecommand{\Span}[1]{\mathsf{Span}(#1)}
\providecommand{\Mat}[1]{#1\text-\mathsf{Mat}}
\providecommand{\Mod}{\mathsf{Mod}}
\providecommand{\Prof}{\mathsf{Prof}}
\providecommand{\MonProf}{\mathsf{MonProf}}
\providecommand{\enProf}[1]{#1\text-\Prof}
\providecommand{\ensProf}[1]{#1\text-\mathsf{sProf}}
\providecommand{\inProf}[1]{\Prof(#1)}
\providecommand{\spFib}[1]{\mathsf{spFib}(#1)}
\providecommand{\Rep}{\mathsf{Rep}}
\providecommand{\opRep}{\mathsf{opRep}}
\providecommand{\Alg}[1]{#1\text-\mathsf{Alg}}
\providecommand{\hc}{\odot}
\providecommand{\tab}[1]{\gen{#1}}
\providecommand{\cur}[1]{#1^{\scriptscriptstyle\lambda}}
\DeclareMathOperator{\fc}{\mathsf{fc}}
	\title{Augmented virtual double categories}
	\author{Seerp Roald Koudenburg}
	\address{Mathematics Research and Teaching Group\\Middle East Technical University\\Northern Cyprus Campus\\ 99738 Kalkanl\i, G\"uzelyurt\\Turkish Republic of Northern Cyprus\\via Mersin 10, Turkey}
	\thanks{Parts of this paper were written during my visit to Macquarie University in September--November 2015. I would like to thank Richard Garner, Mark Weber and Ram\'on Abud Alcal\'a for helpful discussions, and I am grateful to the Macquarie University Research Centre for its funding of my visit. This paper has greatly benefitted from discussions with Robert Par\'e, especially those that took place during my visit to Dalhousie University in August 2016. I would like to thank Bob, as well as the @CAT-group for partly funding my visit to Dalhousie. I thank the anonymous referee for the prompt review of this paper and their helpful suggestions, which led to several improvements. Thanks to Vladimir Sotirov for suggesting corrections to Def.\ 1.2 and Ex.\ 1.6 (February 2025).}
	\keywords{augmented virtual double category, multicategory, Yoneda structure}
\begin{document}
	\maketitle
	\begin{abstract}
		In this article the notion of virtual double category (also known as fc"/multicategory) is extended as follows. While cells in a virtual double category classically have a horizontal multi-source and single horizontal target, the notion of augmented virtual double category introduced here extends the latter notion by including cells with empty horizontal target as well.
		
		Any augmented virtual double category comes with a \emph{built-in} notion of ``locally small object'' and we describe advantages of using augmented virtual double categories as a setting for formal category theory rather than 2-categories, which are classically \emph{equipped} with a notion of ``admissible object'' by means of a Yoneda structure in the sense of Street and Walters.

An object is locally small precisely if it admits a horizontal unit, and we show that the notions of augmented virtual double category and virtual double category coincide in the presence of all horizontal units. Without assuming the existence of horizontal units we show that most of the basic theory for virtual double categories, such as that of restriction and composition of horizontal morphisms, extends to augmented virtual double categories. We introduce and study in augmented virtual double categories the notion of ``pointwise'' composition of horizontal morphisms, which formalises the classical composition of profunctors given by the coend formula.
	\end{abstract}
	\tableofcontents	

	\section*{Introduction}\addcontentsline{toc}{section}{\protect\numberline{}Introduction}
	Analogous to the generalisation of monoidal category to multicategory, Burroni in \cite{Burroni71} generalised the notion of double category to that of virtual double category \cite{Cruttwell-Shulman10} (Burroni used the term `multicat\'egorie'). A virtual double category consists of objects $A$, $B$, $C,\dotsc$, two types of morphism $\map fAC$ and $\hmap JAB$ (which we will draw vertically and horizontally respectively) and cells $\phi$ of the form as on the left below, each with a single morphism $\hmap KCD$ as horizontal target and a (potentially empty) path $\bar J = (A_0 \xbrar{J_1} A_1 \dotsb A_{n-1} \xbrar{J_n} A_n)$ of morphisms as horizontal source.
	\begin{displaymath}
		\begin{tikzpicture}[baseline]
			\matrix(m)[math35, column sep={3.25em,between origins}]{A_0 & A_1 & A_{n-1} & A_n \\ C & & & D \\};
			\path[map]	(m-1-1) edge[barred] node[above] {$J_1$} (m-1-2)
													edge node[left] {$f$} (m-2-1)
									(m-1-3) edge[barred] node[above] {$J_n$} (m-1-4)
									(m-1-4) edge node[right] {$g$} (m-2-4)
									(m-2-1) edge[barred] node[below] {$K$} (m-2-4);
			\path[transform canvas={xshift=1.625em}]	(m-1-2) edge[cell] node[right] {$\phi$} (m-2-2);
			\draw				($(m-1-2)!0.5!(m-1-3)$) node[xshift=-1.5pt] {$\dotsb$};
		\end{tikzpicture} \qquad\qquad\qquad\qquad \begin{tikzpicture}[baseline]
			\matrix(m)[math35, column sep={1.625em,between origins}]
				{A_0 & & A_1 & & A_{n-1} & & A_n \\ & & & C & & & \\};
			\path[map]	(m-1-1) edge[barred] node[above] {$J_1$} (m-1-3)
													edge node[below left] {$f$} (m-2-4)
									(m-1-5) edge[barred] node[above] {$J_n$} (m-1-7)
									(m-1-7) edge node[below right] {$g$} (m-2-4);
			\path				(m-1-4) edge[cell] node[right] {$\psi$} (m-2-4);
			\draw				(m-1-4) node[xshift=-1.5pt] {$\dotsb$};
		\end{tikzpicture}
	\end{displaymath}
	The present article introduces the notion of `augmented virtual double category', which extends that of virtual double category by including cells $\psi$ as on the right above, with empty horizontal targets. The prototypical augmented virtual double category $\Prof$ has as morphisms functors $\map fAC$ and profunctors \mbox{$\map J{\op A \times B}{\Set}$} between categories $A$, $B$, $C, \dotsc$ that need not be locally small, i.e.\ need not have all hom"/sets isomorphic to objects in $\Set$. As does any augmented virtual double category, $\Prof$ contains a virtual double category $U(\Prof)$ consisting of cells of the form $\phi$ above only (see \exref{virtual double category of unary cells} below).
	
	In contrast to the vertical morphisms, horizontal morphisms are not equipped with composition in either notion of virtual double category. In both $\Prof$ and $U(\Prof)$ for example the composite of two profunctors along a properly large category does not exist in general. A fortuitous path $\ul J = (J_1, \dotsc, J_n)$ of horizontal morphisms however may still admit a composite $(J_1 \hc \dotsb \hc J_n)$, defined as such by a universal cell \mbox{$(J_1, \dotsc, J_n) \Rar (J_1 \hc \dotsb \hc J_n)$}. Likewise an object $A$ may admit a horizontal unit morphism $\hmap{I_A}AA$ defined by a universal cell $(A) \Rar I_A$, whose horizontal source is the empty path at $A$. E.g.\ $A \in \Prof$ admits a horizontal unit $I_A$ if and only if $A$ is locally small, in which case $I_A$ consists of its hom"/sets.
	
	A fundamental advantage of working with an augmented virtual double category $\K$ is that its collection of vertical morphisms form a $2$"/category $V(\K)$, whose cells are those of the form $\psi$ above with empty horizontal source $\bar J = (A_0)$. In contrast vertical morphisms in a virtual double category only form a category \emph{a priori}. E.g.\ while $\Prof$ contains all natural transformations $\cell\psi fg$ between functors $f$ and $\map g{A_0}C$, only those with $C$ locally small can be canonically identified with cells in the virtual double category $U(\Prof)$ contained in $\Prof$, namely the cells $\phi$ above with $\ul J = (A_0)$ and $K = I_C$; for details see \exref{(Set, Set')-Prof} below.
	
	One of the main results of this paper (\thmref{unital virtual double categories} below) asserts that the notions of virtual double category and augmented virtual double category are equivalent whenever all horizontal units exist; such (augmented) virtual double categories we will call unital virtual double categories. In any unital virtual double category cells $\cell\psi fg$ as above, with $\ul J = (A_0)$, correspond precisely to cells of the form
	\begin{displaymath}
		\begin{tikzpicture}[textbaseline]
			\matrix(m)[math35]{A_0 & A_0 \\ C & C \\};
			\path[map]	(m-1-1) edge[barred] node[above] {$I_{A_0}$} (m-1-2)
													edge node[left] {$f$} (m-2-1)
									(m-1-2) edge node[right] {$g$} (m-2-2)
									(m-2-1) edge[barred] node[below] {$I_C$} (m-2-2);
			\path[transform canvas={xshift=1.75em}]	(m-1-1) edge[cell] (m-2-1);
		\end{tikzpicture}
	\end{displaymath}
	(see \cororef{vertical cells} below).
	
	A further advantage of using augmented virtual double categories is that they allow for suppressing all `unit coherence cells', such as $\lambda$ in the composite on the left"/hand side below, which are often used in compositions of cells in unital virtual double categories. Indeed using the language of augmented virtual double categories the cell $\psi$ in the left"/hand side, with the horizontal unit $I_C$ as horizontal target, corresponds to the cell $\psi'$ in the right"/hand side, such that the two composites below coincide. Moreover the right"/hand side allows us to consider this composite even when the horizontal unit $I_C$ does not exist. Thus proofs of results for unital virtual double categories can both be significantly shortened as well as be generalised to proofs that apply to (not necessarily unital) augmented virtual double categories. Parts of \lemref{companion identities lemma}, \cororef{horizontal cells} and \lemref{restrictions and composites} below are obtained in this way from analogous results in \cite{Cruttwell-Shulman10}.
	\begin{displaymath}
		\begin{tikzpicture}[textbaseline]
			\matrix(m)[math35]{A_0 & A_n & B_m \\ C & C & D \\ C & & D \\};
			\path[map]	(m-1-1) edge[barred] node[above] {$\ul J$} (m-1-2)
													edge node[left] {$f$} (m-2-1)
									(m-1-2) edge[barred] node[above] {$\ul H$} (m-1-3)
													edge node[right] {$g$} (m-2-2)
									(m-1-3) edge node[right] {$h$} (m-2-3)
									(m-2-1) edge[barred] node[below] {$I_C$} (m-2-2)
													edge node[left] {$\id_C$} (m-3-1)
									(m-2-2) edge[barred] node[below] {$K$} (m-2-3)
									(m-2-3) edge node[right] {$\id_D$} (m-3-3)
									(m-3-1) edge[barred] node[below] {$K$} (m-3-3);
			\path[transform canvas={xshift=1.75em}] (m-1-1) edge[cell] node[right] {$\psi$} (m-2-1)
									(m-1-2) edge[cell] node[right] {$\phi$} (m-2-2);
			\path				(m-2-2) edge[cell] node[right] {$\lambda$} (m-3-2);
		\end{tikzpicture} \quad = \quad \begin{tikzpicture}[textbaseline]
				\matrix(m)[math35, column sep={1.75em,between origins}]{A_0 & & A_n & & B_m \\ & C & & D & \\};
				\path[map]	(m-1-1) edge[barred] node[above] {$\ul J$} (m-1-3)
														edge[transform canvas={xshift=-1pt}] node[left] {$f$} (m-2-2)
										(m-1-3) edge[barred] node[above] {$\ul H$} (m-1-5)
														edge[transform canvas={xshift=1pt}] node[right] {$g$} (m-2-2)
										(m-1-5) edge[transform canvas={xshift=1pt}] node[right] {$h$} (m-2-4)
										(m-2-2) edge[barred] node[below] {$K$} (m-2-4);
				\path[transform canvas={yshift=0.25em}]	(m-1-2) edge[cell] node[right, inner sep=2pt] {$\psi'$} (m-2-2);
				\path[transform canvas={xshift=0.875em}]	(m-1-3) edge[cell] node[right] {$\phi$} (m-2-3);
			\end{tikzpicture}
	\end{displaymath}
	
	Our main purpose for augmented virtual double categories is to use them as a convenient ``double dimensional'' setting for the internalisation of the notion of Yoneda embedding, thus giving an alternative to the classical $2$"/categorical approach of Street and Walters' Yoneda structures \cite{Street-Walters78}. While such work will have to appear as a sequel to the present paper (for a draft see Sections~4 and 5 of \cite{Koudenburg19b}) we will, after having given the outline of this paper below, close this introduction by broadly describing its ideas and some of its benefits.
	
	This article is largely based on Sections~1, 2 and 3 of the draft \cite{Koudenburg19b}. Since the material presented here is significantly more streamlined as well as expanded in several ways, the author encourages readers to consult the present article rather than the latter sections. The first version of \cite{Koudenburg19b} used the term ``hypervirtual double category'' for what is the main notion of this article, where presently we use ``augmented virtual double category'' instead, as suggested to the author by Robert Par\'e.
	
	\subsection*{Outline}
	We start by introducing the notion of augmented virtual double category in \secref{augmented virtual double category section}. Examples are given in \secref{examples section}, including the augmented virtual double category $\enProf\V$ of $\V$"/enriched profunctors (\exref{enriched profunctors}); $\enProf{(\V, \V')}$ of $\V$"/enriched profunctors between $\V'$"/categories, where $\V' \supset \V$ is a universe enlargement of $\V$ in the sense of Section~3.11 of \cite{Kelly82} (\exref{(V, V')-Prof}); $\ensProf\V$ of small $\V$"/enriched profunctors in the sense of \cite{Day-Lack07} (\exref{small V-profunctors}); $\inProf\E$ of profunctors internal to a category $\E$ with pullbacks (\exref{internal profunctors}); $\Rel(\E)$ of relations in a category $\E$ with pullbacks (\exref{internal relations}) and $\spFib\K$ of split two"/sided fibrations in a finitely complete $2$"/category $\K$ (\exref{internal split fibrations}). In \secref{2-category of augmented virtual double categories section} the $2$"/category of augmented virtual double categories, the functors between them and their transformations is introduced, and its equivalences are characterised as functors that are full, faithful and essentially surjective in the appropriate sense.
	
	In \secref{restriction section} the notion of restriction $\hmap{K(f, g)}AB$ of a horizontal morphism $\hmap KCD$ along vertical morphisms $\map fAC$ and $\map gBD$, that was introduced in Section~7 of \cite{Cruttwell-Shulman10} for virtual double categories, is translated to augmented virtual double categories as well as expanded to include that of `nullary restriction' $\hmap{C(f, g)}AB$ of an object $C$ along morphisms $\map fAC$ and $\map gBC$. Both types of restriction are defined by cells with a certain universal property; such cells are called `cartesian', while `weakly cocartesian' cells satisfy a vertical dual property. Full and faithful morphisms are defined in terms of cartesian cells, and the horizontal unit $\hmap{I_A}AA$ of an object $A$ is defined to be the nullary restriction $I_A \dfn A(\id_A, \id_A)$.
	
	In \secref{companion and conjoint section} the `companion' $\hmap{f_*}AC$ and `conjoint' $\hmap{f^*}CA$ of a vertical morphism $\map fAC$ are introduced as the nullary restrictions $f_* \dfn C(f, \id_C)$ and $f^* \dfn C(\id_C, f)$; they can be thought of as the horizontal morphisms that are respectively ``isomorphic'' and ``adjoint'' to $f$. Unlike similar definitions for unital virtual double categories given in \cite{Cruttwell-Shulman10} we need not require that the horizontal unit $I_C$ exists. Analogous to observations for double categories in Section~4 of \cite{Shulman08} we prove that the companion $f_*$ can be equivalently defined in three ways: by a cartesian cell $(f_*) \Rightarrow (A)$, by a weakly cocartesian cell $(A) \Rightarrow (f_*)$, or by a pair of cells $(f_*) \Rightarrow (A)$ and $(A) \Rightarrow (f_*)$ satisfying certain ``companion identities''; a horizontal dual result holds for the conjoint $f^*$. These identities and their horizontal duals imply that companions, conjoints and horizontal units are preserved by any functor of augmented virtual double categories. Horizontal units $I_A$ can both be regarded as the companion and conjoint of the identity $\id_A$; we prove that their defining cells $(I_A) \Rightarrow (A)$ and $(A) \Rightarrow (I_A)$ are both cartesian as well as weakly cocartesian. We prove lemmas that relate the notions of nullary restriction, horizontal unit and full and faithful morphism. We describe adjunctions and absolute left liftings in the $2$"/category $V(\K)$ in terms of companions and conjoints in $\K$.
	
	In \secref{representable morphism section} we consider horizontal morphisms $\hmap JAB$ that are representable by a vertical morphism $\map fAB$, i.e.\ $J \iso f_*$. Given an augmented virtual double category $\K$ in \thmref{lower star} we describe its locally full sub-augmented virtual double category $\Rep(\K)$ of representable horizontal morphisms in terms of its vertical $2$"/category $V(\K)$.

In \secref{composition section} we study composites $(J_1 \hc \dotsb \hc J_n)$ of paths $\ul J = (J_1, \dotsc, J_n)$ of horizontal morphisms. As described above, these are defined by universal cells $\ul J \Rightarrow (J_1 \hc \dotsb \hc J_n)$. The main lemma of \secref{restrictions and extensions in terms of companions and conjoints section} proves that, for morphisms $A \xrar f C \xbrar K D \xlar g B$, the restriction $K(f, g)$ and the composite $f^* \hc K \hc g_*$ coincide. This translates and extends Theorem~7.16 of \cite{Cruttwell-Shulman10} from unital virtual double categories to augmented virtual double categories; here too we need not require the existence of any horizontal units. Internalising the composition of profunctors given by the ``coend formula'', in \secref{pointwise horizontal composites section} we introduce and study `pointwise' horizontal composites. Informally, a horizontal composite is pointwise whenever any of its restrictions are again horizontal composites. Finally in \secref{unital virtual double categories section} we prove the equivalence of the notions of virtual double category and augmented virtual double category in the presence of all horizontal units.
	
	\subsection*{Motivation: internalising Yoneda embeddings}
	Following Wood \cite{Wood82} and Grandis and Par\'e \cite{Grandis-Pare08}, who used `proarrow equipments' and double categories respectively to formalise parts of classical category theory, recently certain unital virtual double categories have been used to study formal category theory in less well behaved settings, as follows. Cruttwell and Shulman in \cite{Cruttwell-Shulman10} internalise the notion of fully faithful morphism in the unital virtual double category $\mathbb M\mathsf{od}(\mathbb X)$ of `modules' in a virtual double category $\mathbb X$, while Riehl and Verity in \cite{Riehl-Verity17} internalise the notions of fully faithful morphism, `exact square' and (pointwise) Kan extension in the unital virtual double category $\ul{\textup{Mod}}_\mathcal K$ of modules between $\infty$"/categories in the homotopy $2$"/category of a `$\infty$"/cosmos' $\mathcal K$.
	
	In line with the previous our goal for augmented virtual double categories is to use them as a setting for internalising the notion of Yoneda embedding, as we will now sketch roughly. We start by recalling the classical internalisation of Yoneda embeddings, in the form of a Yoneda structure on a $2$"/category \cite{Street-Walters78}. First let us recall some details of the classical theory of Yoneda embeddings. Given a properly large, locally small category $A$, recall from \cite{Freyd-Street95} that the category $PA \dfn \Set^{\op A}$ of small set valued presheaves $\op A \rightarrow \Set$ is necessarily locally properly large. Thus the natural $2$"/dimensional environment for classical Yoneda embeddings $\map{yA}A{PA}$ for such $A$, that map each $x \in A$ to the representable presheaf $A(\dash, x)$, is the $2$"/category $\Cat$ of locally large categories, functors and natural transformations. Using that any functor $\map fAB$, with small hom"/sets $B(fx, y)$ for all $x \in A$ and $y \in B$, induces a functor $\map{B(f, 1)}B{PA}$ given by $B(f, 1)(y) = B(f\dash, y)$, Yoneda's lemma can be rephrased internally to $\Cat$ as follows: the canonical natural transformation
	\begin{displaymath}
  	\begin{tikzpicture}
			\matrix(m)[math35, column sep={1.75em,between origins}]{A & & B \\ & PA & \\};
			\path[map]	(m-1-1) edge node[above] {$f$} (m-1-3)
													edge[transform canvas={xshift=-1pt}] node[left] {$yA$} (m-2-2)
									(m-1-3) edge[transform canvas={xshift=1pt}] node[right] {$B(f, 1)$} (m-2-2);
			\path[transform canvas={xshift=0.2em,yshift=-0.1em}]	($(m-1-1)!0.5!(m-2-2)$) edge[cell] node[above left, inner sep=0pt] {$\chi^f$} (m-1-3);
		\end{tikzpicture} 		
	\end{displaymath}
	exhibits $f$ as the `absolute left lifting' (see e.g.\ \cite{Street-Walters78}) of $yA$ along $B(f, 1)$.
	
	A Yoneda structure on a $2$"/category $\mathcal C$ formalises the previous as follows. Firstly it postulates a right ideal\footnote{A right ideal of a category is a class $\mathcal I$ of morphisms closed under precomposition: if $f$ and $g$ are composable then $g \in \mathcal I$ implies $g \of f \in \mathcal I$.}  of `admissible morphisms' $\map fAC$ in $\mathcal C$, which internalises the smallness condition on the functors $f$ above; an object $A$ is then called admissible whenever its identity morphism $\id_A$ is so. Secondly it provides a morphism \mbox{$\map{yA}A{PA}$} for each admissible object $A$, internalising the Yoneda embedding, together with a cell $\chi^f$ as above for each admissible $\map fAB$. The cells $\chi^f$ are required to satisfy three axioms \cite{Street-Walters78}:
	\begin{enumerate}[label=(\arabic*)]
		\item $\chi^f$ exhibits $B(f, 1)$ as the left Kan extension of $f$ along $yA$ (together with (3) below this formalises $yA$ being dense);
		\item $\chi^f$ exhibits $f$ as the absolute left lifting of $yA$ along $B(f, 1)$ (as above);
		\item roughly, the assignment $f \mapsto \chi_f$ is pseudofunctorial.
	\end{enumerate}
	
	The stronger notion of `good Yoneda structure' on a finitely complete $2$"/category $\mathcal C$, introduced by Weber \cite{Weber07}, is defined as above except for replacing axioms (1) and (3) with the following stronger axiom:
	\begin{enumerate}
		\item[(4)] if any cell $\phi$ in $\mathcal C$, of the form as below and with $f$ admissible, exhibits $f$ as the absolute left lifting of $yA$ along $g$ then it exhibits $g$ as the pointwise left Kan extension of $yA$ along $f$ (in the sense of \cite{Street74}).
	\end{enumerate}
	\begin{displaymath}
  	\begin{tikzpicture}
			\matrix(m)[math35, column sep={1.75em,between origins}]{A & & B \\ & PA & \\};
			\path[map]	(m-1-1) edge node[above] {$f$} (m-1-3)
													edge[transform canvas={xshift=-1pt}] node[left] {$yA$} (m-2-2)
									(m-1-3) edge[transform canvas={xshift=1pt}] node[right] {$g$} (m-2-2);
			\path[transform canvas={xshift=0.2em,yshift=-0.1em}]	($(m-1-1)!0.5!(m-2-2)$) edge[cell] node[above left, inner sep=1pt] {$\phi$} (m-1-3);
		\end{tikzpicture} 		
	\end{displaymath}
	While Yoneda embeddings for $2$"/categories, that is categories enriched in $\Cat$, combine to form a Yoneda structure, satisfying axioms (1)--(3) above, they do not satisfy axiom (4); this is explained in Remark~9 of \cite{Walker18}.
	
	The main idea of internalising the notion of Yoneda embedding in an augmented virtual double category $\K$ is the following. Instead of postulating a notion of admissible morphism in $\K$ we internalise that notion, simply by regarding all horizontal morphisms of $\K$ to be admissible; consequently a vertical morphism $\map fAC$ (respectively an object $A$) is considered admissible whenever its companion $\hmap{f_*}AC$ (respectively its horizontal unit $\hmap{I_A}AA$) exists. Compare the prototypal example $\K = \Prof$, in which all horizontal morphisms are small"/set"/valued profunctors. This allows for a simpler notion of Yoneda embedding: we do not have to specify an ideal of admissible morphisms and, instead of having to provide a full coherent family of Yoneda embeddings as in a Yoneda structure, we may simply consider a single Yoneda embedding $\map \yon A{\ps A}$ in $\K$, as follows. To exhibit $\yon$ as a Yoneda embedding amounts to providing, for each horizontal morphism $\hmap JAB$ in $\K$, a cell $\chi$ as below on the left, satisfying the following `Yoneda' and `density' axioms, analogous to (2) and (4) above:
	\begin{enumerate}
		\item[(y)] $\chi$ is a `cartesian cell' (see \secref{restriction section} below), thus exhibiting $J$ as the restriction of the object $\ps A$ along the morphisms $\yon$ and $J^\lambda$;
		\item[(d)] any cartesian cell $\phi$ in $\K$, as on the right, defines $g$ as a pointwise left Kan extension of $\yon$ along $J$ (in the sense of Section~4.2 of \cite{Koudenburg19b}). 
	\end{enumerate}
	\begin{displaymath}
		\begin{tikzpicture}[baseline]
			\matrix(m)[math35, column sep={1.75em,between origins}]{A & & B \\ & \ps A & \\};
			\path[map]	(m-1-1) edge[barred] node[above] {$J$} (m-1-3)
													edge[transform canvas={xshift=-1pt}, ps] node[left] {$\yon$} (m-2-2)
									(m-1-3) edge[transform canvas={xshift=1pt}, ps] node[right] {$J^\lambda$} (m-2-2);
			\path[transform canvas={yshift=0.25em}]	(m-1-2) edge[cell] node[right, inner sep=3pt] {$\chi$} (m-2-2);
		\end{tikzpicture} \qquad\qquad\qquad\qquad\qquad\qquad \begin{tikzpicture}[baseline]
			\matrix(m)[math35, column sep={1.75em,between origins}]{A & & B \\ & \ps A & \\};
			\path[map]	(m-1-1) edge[barred] node[above] {$J$} (m-1-3)
													edge[transform canvas={xshift=-1pt}, ps] node[left] {$\yon$} (m-2-2)
									(m-1-3) edge[transform canvas={xshift=1pt}, ps] node[right] {$g$} (m-2-2);
			\path[transform canvas={yshift=0.25em}]	(m-1-2) edge[cell] node[right, inner sep=3pt] {$\phi$} (m-2-2);
		\end{tikzpicture}
	\end{displaymath}
	In $\K = \Prof$ the Yoneda embedding $\yon \dfn yA$ for a locally small category $A$ is defined as before, with $\ps A = \Set^{\op A}\mspace{-3mu}$, while the functors $J^\lambda$ are defined by $J^\lambda y = J(\dash, y)$ for $y \in B$. The components $\map{\chi_{x, y}}{J(x, y)}{\ps A(\yon x, J^\lambda y)}$ of $\chi$, which axiom (y) requires to be isomorphisms, are supplied by Yoneda's lemma.
	
	To conclude this motivation we list some benefits of using augmented virtual double categories $\K$ to internalise the notion of Yoneda embedding $\map\yon A{\ps A}$.
	\begin{itemize}
		\item If all nullary restrictions $\ps A(\yon ,g)$ exist in $\K$, as they do in all well known examples (e.g.\ $\K = \Prof$), then the assigment $J \mapsto J^\lambda$ induces an equivalence between morphisms of the forms $A \brar B$ and $B \to \ps A$. In contrast the assignment $f \mapsto B(f, 1)$ induced by a Yoneda structure is in general not essentially surjective onto morphisms $B \to PA$ (e.g.\ take $A = 1 = B$ the terminal category in $\mathcal C = \Cat$). 
		
		\item Several types of Yoneda embedding satisfy the axioms (1)--(3) of a Yoneda structure but their appropriate notion of admissible morphism does not form a right ideal, so that the theory of Yoneda structures does not apply fully. For a well"/known example consider a closed symmetric monoidal, small complete category $\V$. The appropriate notion of admissible $\V$"/functor for the $\V$"/enriched Yoneda embeddings $\map\yon A{\ps A\strut^\textup s}$, where $\ps A\strut^\textup s$ denotes the $\V$"/category of `small $\V$"/presheaves on $A$' in the sense of \cite{Day-Lack07}, does not form a right ideal. In contrast, it is not hard to prove that these $\yon$ do form Yoneda embeddings in the augmented virtual double category $\ensProf\V$ (\exref{small V-profunctors} below), so that the theory of \cite{Koudenburg19b} applies fully. Likewise Yoneda embeddings induced by a `KZ doctrine', as studied by Walker in \cite{Walker18}, do not satisfy the right ideal property; they too are likely to form Yoneda embeddings in some appropriately chosen augmented virtual double category.
		
		\item As noted previously $\V$"/enriched Yoneda embeddings, in the classical sense of e.g.\ Section~2.4 of \cite{Kelly82}, form Yoneda structures that do not in general satisfy axiom~(4). On the other hand they do satisfy a stronger, i.e.\ $\V$"/enriched, version of axiom~(1). Thus neither notion of Yoneda structure captures the notion of $\V$"/enriched Yoneda embedding exactly. The augmented virtual double category $\enProf{(\V, \V')}$, as described in \exref{(V, V')-Prof} below, is the right setting in this case: therein axioms (y) and (d) capture correctly the $\V$"/enriched notion of Yoneda embedding. Roughly this is because the pointwise notion of Kan extension in axiom~(d) above, when considered in $\enProf{(\V, \V')}$, coincides with the classical notion of $\V$"/enriched Kan extension (see Section~4.4 of \cite{Koudenburg19b}).
		
		\item Regarding all horizontal morphisms of augmented virtual double categories as admissible allows us to prove results that assert admissibility of morphisms or objects. For instance consider any full and faithful morphism $\map hCE$ in an augmented virtual double category that has all restrictions of the form $K(f, g)$. \lemref{full and faithfulness and horizontal units} below proves that $h$ `reflects admissibility', that is $C$ is admissible (i.e.\ the horizontal unit $I_C$ exists) whenever $E$ is. Even though inside a unital virtual double category, i.e.\ with all horizontal units, our notion of full and faithful coincides with that of \cite{Cruttwell-Shulman10}, notice that in unital virtual double categories this result is meaningless. For another example remember that (good) Yoneda structures provide a Yoneda embedding for each admissible object. Inside augmented virtual double categories a weak converse holds: given a Yoneda embedding $\map yA{\ps A}$ the horizontal unit of $A$ exists whenever all nullary restrictions of the form $\ps A(\yon, g)$ exist; see Section~5.1 of \cite{Koudenburg19b}.
		
		\item For an example of a formalisation of a more involved result, similar to those of the previous item, let $\map fAC$ be a $\V$"/functor and $\map{f^\sharp}{\ps A\strut^\textup s}{\ps C\strut^\textup s}$ be given by left Kan extending small $\V$"/presheaves on $A$ along $f$. In our terms Proposition~3.3 of \cite{Day-Lack07} can be rephrased as follows: $f^\sharp$ has a right adjoint if and only if $f$ is admissible (in other words its companion $\hmap{f_*}AC$ exists) in the augmented virtual double category $\ensProf\V$ (\exref{small V-profunctors} below). This result partially formalises to any $\map fAC$ in a general augmented virtual double category $\K$, assuming that the Yoneda embeddings $\map{y_A}A{\ps A}$ and $\map{y_C}C{\ps C}$ exist: the morphism $f^\sharp$ can then be internalised as being the left Kan extension of $y_C \of f$ along $y_A$ and the implications
		\begin{displaymath}
			f^\sharp \text{ has a right adjoint} \quad \Leftrightarrow \quad (y_C \of f)_* \text{ exists} \quad \Rightarrow \quad f_* \text{ exists}
		\end{displaymath}
		hold under mild conditions on $\K$; see Section~5.2 of \cite{Koudenburg19b}.
		
		\item Axiom (y) above allows us to capture a monoidal variant of Yoneda's lemma as follows. Recall that a monoidal structure $\tens$ on a category $A$ induces a monoidal structure $\ps\tens$ on its category of presheaves $\ps A \dfn \Set^{\op A}$ that is given by Day convolution \cite{Day70}
			\begin{equation} \label{Day convolution}
		(p \mathbin{\ps\tens} q)(x) \dfn \int^{u, v \in A} A(x, u \tens v) \times pu \times qv, \qquad\quad\quad \text{where $p, q \in \ps A$}.
		\end{equation}
		With respect to $\ps\tens$ the Yoneda embedding $\map\yon A{\ps A}$ forms a monoidal functor, i.e.\ it admits a coherent family of isomorphisms $\bar\yon \colon \yon x \mathbin{\ps\tens} \yon y \iso \yon(x \tens y)$ where $x$, $y \in A$. Thus a monoidal functor $\map{(\yon, \bar\yon)}{(A, \tens)}{(\ps A, \ps\tens)}$ satisfies the following monoidal variant of the Yoneda axiom (y) for profunctors: any lax monoidal profunctor \mbox{$\hmap JAB$} (i.e.\ equipped with coherent maps \mbox{$\map{\bar J}{J(x_1, y_1) \times J(x_2, y_2)}{J(x_1 \tens x_2, y_1 \tens y_2)}$}) induces a lax monoidal functor $\map{\cur J}B{\ps A}$ such that $\ps A(\yon\dash, \cur J\dash) \iso J$ as lax monoidal profunctors. In detail, we can take $\cur J$ to be as defined before: $\cur Jy \dfn J(\dash, y)$, and take the coherence morphisms $\nat{\bar{\cur J}}{\cur J y_1 \mathbin{\ps\tens} \cur J y_2}{\cur J(y_1 \tens y_2)}$ to be induced by the composites
		\begin{multline*}
			A(x, u \tens v) \times J(u, y_1) \times J(v, y_2) \\
			\xrar{\id \times \bar J} A(x, u \tens v) \times J(u \tens v, y_1 \tens y_2) \to J(x, y_1 \tens y_2),
		\end{multline*}
		where the unlabelled morphism is induced by the functoriality of $J$ in $A$.
	
		In fact, we may consider the augmented virtual double category $\MonProf$ of lax monoidal functors and lax monoidal profunctors between (possibly large) monoidal categories, and show that the monoidal Yoneda embedding $(\yon, \bar\yon)$ satisfies both axioms (y) and (d) therein. As described in the first item above, together these axioms imply an equivalence between lax monoidal profunctors $A \brar B$ and lax monoidal functors $B \to \ps A$.\footnote{This observation is not new: it follows from Pisani's study of exponentiable multicategories in Section~2 of \cite{Pisani14}.} We remark that the monoidal Yoneda embeddings $(\yon, \bar\yon)$ do not combine to form a Yoneda structure on the two $2$"/categories consisting of either lax or colax monoidal functors between (possibly large) monoidal categories: this is because colax monoidal structures on a functor $\map fAB$ correspond to \emph{lax} monoidal structures on the corresponding functor $\map{B(f, 1)}B{\ps A}$ and similarly lax monoidal structures on $f$ do in general not induce monoidal structures on $B(f, 1)$.
	
		\item One of the main results of \cite{Koudenburg19b} formalises Day convolution for monoidal structures to algebraic structures defined by monads $T$ on augmented virtual double categories $\K$. More precisely, given a Yoneda embedding $\map\yon A{\ps A}$ in $\K$ it gives conditions under which a $T$"/algebra structure $\map a{TA}A$ on $A$ induces such a structure $\ps a$ on $\ps A$, in a way that makes $\yon$ into a Yoneda embedding in the augmented virtual double category $\Alg T$ of $T$"/algebras. Formalising a result of Im and Kelly \cite{Im-Kelly86} one can then show, for instance, that $\yon$, as a $T$"/morphism, defines $(\ps A, \ps a)$ as the free cocompletion of $(A, a)$ in $\Alg T$; see Section~5.4 of \cite{Koudenburg19b}.
	\end{itemize}
	
	\section{Augmented virtual double categories}\label{augmented virtual double category section}
	The definition of augmented virtual double category below uses the notion of \emph{directed graph}, by which we mean a parallel pair of functions
	\begin{displaymath}
	  A = \bigl(\mspace{-11mu} \begin{tikzpicture}[textbaseline]
		\matrix(m)[math35, column sep=1em]{A_1 & A_0 \\};
		\path[map]	(m-1-1) edge[above, transform canvas={yshift=2pt}] node {$s$} (m-1-2)
												edge[below, transform canvas={yshift=-2pt}] node {$t$} (m-1-2);
	\end{tikzpicture}\mspace{-11mu}\bigr)
	\end{displaymath}
	from a class $A_1$ of \emph{edges} to a class $A_0$ of \emph{vertices}. An edge $e$ with $(s,t)(e) = (x,y)$ is denoted $x \xrar e y$; the vertices $x$ and $y$ are called its \emph{source} and \emph{target}. Any category $\catvar C$ has an underlying graph $\catvar C_1 \rightrightarrows \catvar C_0$ with $\catvar C_1$ and $\catvar C_0$ the classes of morphisms and objects of $\catvar C$ respectively. Conversely, remember that any graph $A$ generates a \emph{free category} $\fc A$, with as objects the vertices of $A$ and as morphisms $x \to y$ (possibly empty) paths \mbox{$\ul e = (x = x_0 \xrar{e_1} x_1 \xrar{e_2} \dotsb \xrar{e_n} x_n = y)$} of edges in $A$; we write $\lns{\ul e} \dfn n$ for their lengths. Composition in $\fc A$ is given by concatenation
	\begin{displaymath}
	  (\ul e, \ul f) \mapsto \ul e \conc \ul f \dfn (x_0 \xrar{e_1} \dotsb \xrar{e_n} x_n = y_0 \xrar{f_1} \dotsb \xrar{f_m} y_m)  
	\end{displaymath}
	of paths, while the empty path $(x)$ forms the identity at $x \in A_0$.
	
	\begin{notation}
	  For any integer $n \geq 1$ we write $n' \dfn n - 1$.
	\end{notation}
	
	\begin{definition} \label{augmented virtual double category}
		An \emph{augmented virtual double category} $\K$ consists of
		\begin{itemize}[label=-]
			\item a class $\K_0$ of \emph{objects} $A$, $B, \dotsc$
			\item a category $\K_\textup v$ with $\K_{\textup v0} = \K_0$, whose morphisms $\map fAC$, $\map gBD, \dotsc$ are called \emph{vertical morphisms};
			\item a directed graph $\K_\textup h$ with $\K_{\textup h0} = \K_0$, whose edges are called \emph{horizontal morphisms} and denoted by slashed arrows $\hmap JAB$, $\hmap KCD, \dotsc$;
			\item a class of \emph{cells} $\phi$, $\psi, \dotsc$ that are of the form
				\begin{equation} \label{cell}
					\begin{tikzpicture}[textbaseline]
						\matrix(m)[math35]{A_0 & A_n \\ C & D \\};
						\path[map]	(m-1-1) edge[barred] node[above] {$\ul J$} (m-1-2)
																edge node[left] {$f$} (m-2-1)
												(m-1-2) edge node[right] {$g$} (m-2-2)
												(m-2-1) edge[barred] node[below] {$\ul K$} (m-2-2);
						\path[transform canvas={xshift=1.75em}]	(m-1-1) edge[cell] node[right] {$\phi$} (m-2-1);
					\end{tikzpicture}
				\end{equation}
				where $\ul J$ and $\ul K$ are (possibly empty) paths in $\K_\textup h$ with $\lns{\ul K} \leq 1$;
			\item	for any path of cells
				\begin{equation} \label{composable sequence}
					\begin{tikzpicture}[textbaseline]
						\matrix(m)[math35, column sep={4.5em,between origins}]
							{	A_{10} & A_{1m_1} & A_{2m_2} &[1em] A_{n'm_{n'}} & A_{nm_n} \\
								C_0 & C_1 & C_2 & C_{n'} & C_n \\ };
						\path[map]	(m-1-1) edge[barred] node[above] {$\ul J_1$} (m-1-2)
																edge node[left] {$f_0$} (m-2-1)
												(m-1-2) edge[barred] node[above] {$\ul J_2$} (m-1-3)
																edge node[right] {$f_1$} (m-2-2)
												(m-1-3) edge node[right] {$f_2$} (m-2-3)
												(m-1-4) edge[barred] node[above] {$\ul J_n$} (m-1-5)
																edge node[left] {$f_{n'}$} (m-2-4)
												(m-1-5) edge node[right] {$f_n$} (m-2-5)
												(m-2-1) edge[barred] node[below] {$\ul K_1$} (m-2-2)
												(m-2-2) edge[barred] node[below] {$\ul K_2$} (m-2-3)
												(m-2-4) edge[barred] node[below] {$\ul K_n$} (m-2-5);
						\path[transform canvas={xshift=2.25em}]	(m-1-1) edge[cell] node[right] {$\phi_1$} (m-2-1)
												(m-1-2)	edge[cell] node[right] {$\phi_2$} (m-2-2)
												(m-1-4) edge[cell] node[right] {$\phi_n$} (m-2-4);
						\draw				($(m-1-3)!0.5!(m-2-4)$) node {$\dotsb$};
					\end{tikzpicture}
				\end{equation}
				of length $n \geq 1$ and a cell $\psi$ as on the left below, a \emph{vertical composite} as on the right;
				\begin{equation} \label{vertical composite}
					\begin{tikzpicture}[textbaseline]
						\matrix(m)[math35, column sep={8em,between origins}]{C_0 & C_n \\ E & F \\};
						\path[map]	(m-1-1) edge[barred] node[above] {$\ul K_1 \conc \ul K_2 \conc \dotsb \conc \ul K_n$} (m-1-2)
																edge node[left] {$h$} (m-2-1)
												(m-1-2) edge node[right] {$k$} (m-2-2)
												(m-2-1) edge[barred] node[below] {$\ul L$} (m-2-2);
						\path[transform canvas={xshift=4em}]	(m-1-1) edge[cell] node[right] {$\psi$} (m-2-1);
					\end{tikzpicture} \qquad\quad\qquad \begin{tikzpicture}[textbaseline]
						\matrix(m)[math35, column sep={8em,between origins}]{A_{10} & A_{nm_n} \\ E & F \\};
						\path[map]	(m-1-1) edge[barred] node[above] {$\ul J_1 \conc \ul J_2 \conc \dotsb \conc \ul J_n$} (m-1-2)
																edge node[left] {$h \of f_0$} (m-2-1)
												(m-1-2) edge node[right] {$k \of f_n$} (m-2-2)
												(m-2-1) edge[barred] node[below] {$\ul L$} (m-2-2);
						\path[transform canvas={xshift=1.15em}]	(m-1-1) edge[cell] node[right] {$\psi \of (\phi_1, \dotsc, \phi_n)$} (m-2-1);
					\end{tikzpicture}
				\end{equation}
			\item	\emph{horizontal identity cells} as on the left below, one for each $\hmap JAB$;
				\begin{displaymath}
					\begin{tikzpicture}[baseline]
						\matrix(m)[math35]{A & B \\ A & B \\};
						\path[map]	(m-1-1) edge[barred] node[above] {$(J)$} (m-1-2)
																edge node[left] {$\id_A$} (m-2-1)
												(m-1-2) edge node[right] {$\id_B$} (m-2-2)
												(m-2-1) edge[barred] node[below] {$(J)$} (m-2-2);
						\path[transform canvas={xshift=1.75em}]	(m-1-1) edge[cell] node[right] {$\id_J$} (m-2-1);
					\end{tikzpicture} \qquad\qquad\qquad\qquad \begin{tikzpicture}[baseline]
						\matrix(m)[math35]{A & A \\ C & C \\};
						\path[map]	(m-1-1) edge[barred] node[above] {$(A)$} (m-1-2)
																edge node[left] {$f$} (m-2-1)
												(m-1-2) edge node[right] {$f$} (m-2-2)
												(m-2-1) edge[barred] node[below] {$(C)$} (m-2-2);
						\path[transform canvas={xshift=1.75em}]	(m-1-1) edge[cell] node[right] {$\id_f$} (m-2-1);
					\end{tikzpicture}
				\end{displaymath}
			\item \emph{vertical identity cells} as on the right above, one for each $\map fAC$, with empty horizontal source $(A)$ and target $(C)$, that are preserved by vertical composition: $\id_h \of (\id_f) = \id_{h \of f}$; we write $\id_A \dfn \id_{\id_A}$.
		\end{itemize}
		The vertical composition above is required to satisfy the \emph{associativity axiom}
		\begin{multline} \label{associativity axiom}
			\chi \of \bigpars{\psi_1 \of (\phi_{11}, \dotsc, \phi_{1m_1}), \dotsc, \psi_n \of (\phi_{n1}, \dotsc, \phi_{nm_n})} \\
			= \bigpars{\chi \of (\psi_1, \dotsc, \psi_n)} \of (\phi_{11}, \dotsc, \phi_{nm_n}),
		\end{multline}
		whenever both sides\ make sense, as well as the \emph{unit axioms}
		\begin{flalign*}
			&& \id_C \of (\phi) = \phi, \quad \id_K \of (\phi) = \phi, \quad \phi \of (\id_A) &= \phi, \quad \phi \of (\id_{J_1}, \dotsc, \id_{J_n}) = \phi & \\
			\text{and} && \psi \of (\phi_1, \dotsc, \phi_i, \id_{f_i}, \phi_{i+1}, \dotsc, \phi_n) &= \psi \of (\phi_1, \dotsc, \phi_i, \phi_{i+1}, \dotsc, \phi_n) &
		\end{flalign*}
		whenever these make sense and where, in the last axiom, $0 \leq i \leq n$ (in the cases $i = 0$ and $i = n$ the identity cells $\id_{f_0}$ and $\id_{f_n}$ are respectively the first and last cell in the path that is composed with $\psi$).
	\end{definition}
	
	For a cell $\phi$ as in \eqref{cell} above we call the vertical morphisms $f$ and $g$ its \emph{vertical source} and \emph{target} respectively, the path of horizontal morphisms $\ul J = (J_1, \dotsc, J_n)$ its \emph{horizontal source} and $\ul K$ its \emph{horizontal target}. We write $\lns \phi \dfn (\lns{\ul J}, \lns{\ul K})$ for the \emph{arity} of $\phi$. An $(n,1)$"/ary cell will be called \emph{unary}, $(n,0)$-ary cells \emph{nullary} and $(0,0)$-ary cells \emph{vertical}.
	
	When writing down paths $(J_1, \dotsc, J_n)$ of length $n \leq 1$ we will often leave out parentheses and simply write $J_1 \dfn (A_0 \xbrar{J_1} A_1)$ or $A_0 \dfn (A_0)$. Likewise in the composition of cells: $\psi \of \phi_1 \dfn \psi \of (\phi_1)$. We will often denote unary cells simply by $\cell\phi{(J_0, \dotsc, J_n)}K$ and nullary cells by $\cell\psi{(J_0, \dotsc, J_n)}C$, leaving out their vertical source and target. When drawing compositions of cells it is often helpful to depict them in full detail and, in the case of nullary cells, draw their empty horizontal target as a single object, as shown below.
	\begin{displaymath}
		\begin{tikzpicture}[baseline]
			\matrix(m)[math35]{A_0 \\ C \\};
			\path[map]	(m-1-1) edge[bend right=45] node[left] {$f$} (m-2-1)
													edge[bend left=45] node[right] {$g$} (m-2-1);
			\path				(m-1-1) edge[cell] node[right] {$\psi$} (m-2-1);
		\end{tikzpicture} \quad \begin{tikzpicture}[baseline]
			\matrix(m)[math35, column sep={1.625em,between origins}]
				{A_0 & & A_1 & \dotsb & A_{n'} & & A_n \\ & & & C & & & \\};
			\path[map]	(m-1-1) edge[barred] node[above] {$J_1$} (m-1-3)
													edge node[below left] {$f$} (m-2-4)
									(m-1-5) edge[barred] node[above] {$J_n$} (m-1-7)
									(m-1-7) edge node[below right] {$g$} (m-2-4);
			\path				(m-1-4) edge[cell] node[right] {$\psi$} (m-2-4);
		\end{tikzpicture}	\quad \begin{tikzpicture}[baseline]
			\matrix(m)[math35, column sep={1.75em,between origins}]{& A_0 & \\ C & & D \\};
			\path[map]	(m-1-2) edge[transform canvas={xshift=-1pt}] node[left] {$f$} (m-2-1)
													edge[transform canvas={xshift=1pt}] node[right] {$g$} (m-2-3)
									(m-2-1) edge[barred] node[below] {$K$} (m-2-3);
			\path				(m-1-2) edge[cell, transform canvas={yshift=-0.25em}] node[right, inner sep=2.5pt] {$\phi$} (m-2-2);
		\end{tikzpicture} \quad \begin{tikzpicture}[baseline]
			\matrix(m)[math35, column sep={3.25em,between origins}]{A_0 & A_1 & A_{n'} & A_n \\ C & & & D \\};
			\path[map]	(m-1-1) edge[barred] node[above] {$J_1$} (m-1-2)
													edge node[left] {$f$} (m-2-1)
									(m-1-3) edge[barred] node[above] {$J_n$} (m-1-4)
									(m-1-4) edge node[right] {$g$} (m-2-4)
									(m-2-1) edge[barred] node[below] {$K$} (m-2-4);
			\path[transform canvas={xshift=1.625em}]	(m-1-2) edge[cell] node[right] {$\phi$} (m-2-2);
			\draw				($(m-1-2)!0.5!(m-1-3)$) node {$\dotsb$};
		\end{tikzpicture}
	\end{displaymath}
	
	A cell with identities as vertical source and target is called \emph{horizontal}. A horizontal cell $\cell\phi JK$ with unary horizontal source is called \emph{invertible} if there exists a horizontal cell $\cell\psi KJ$ such that $\phi \of \psi = \id_K$ and $\psi \of \phi = \id_J$; in that case we write $\inv\phi \dfn \psi$. When drawing diagrams we shall often depict identity morphisms by equal signs ($=$), while in identity cells we will leave out the arrows $(\Downarrow\! \id)$, leaving them empty instead. Because composition of cells is associative we will leave out bracketings when writing down composites.
	
	For convenience we use the `whisker' notation from $2$-category theory and define
	\begin{displaymath}
		h \of (\phi_1, \dotsc, \phi_n) \dfn \id_h \of (\phi_1, \dotsc, \phi_n) \qquad \text{and} \qquad \psi \of f \dfn \psi \of \id_f,
	\end{displaymath}
	whenever the right-hand side makes sense. Moreover, for any path
	\begin{displaymath}
		\begin{tikzpicture}
			\matrix(m)[math35]{A_0 & A_1 & A_{n'} & A_n & B_1 & B_{m'} & B_m \\ C & & & D & & & G \\};
			\path[map]	(m-1-1) edge[barred] node[above] {$J_1$} (m-1-2)
													edge node[left] {$f$} (m-2-1)
									(m-1-3) edge[barred] node[above] {$J_n$} (m-1-4)
									(m-1-4) edge[barred] node[above] {$H_1$} (m-1-5)
													edge node[right] {$g$} (m-2-4)
									(m-1-6) edge[barred] node[above] {$H_m$} (m-1-7)
									(m-1-7) edge node[right] {$h$} (m-2-7)
									(m-2-1) edge[barred] node[below] {$\ul K$} (m-2-4)
									(m-2-4) edge[barred] node[below] {$\ul L$} (m-2-7);
			\path[transform canvas={xshift=1.75em}]	(m-1-2) edge[cell] node[right] {$\phi$} (m-2-2)
									(m-1-5) edge[cell] node[right] {$\psi$} (m-2-5);
			\draw				($(m-1-2)!0.5!(m-1-3)$) node {$\dotsb$}
									($(m-1-5)!0.5!(m-1-6)$) node {$\dotsb$};
		\end{tikzpicture}
	\end{displaymath}
	with $\lns{\ul K} + \lns{\ul L} \leq 1$ we define the \emph{horizontal composite} $\cell{\phi \hc \psi}{\ul J \conc \ul H}{\ul K \conc \ul L}$ by
	\begin{displaymath}
		\phi \hc \psi \dfn \id_{\ul K \conc \ul L} \of (\phi, \psi),
	\end{displaymath}
	where $\id_{\ul K \conc \ul L}$ is to be interpreted as the identity $\map{\id_C}CC$ in the case that $\ul K \conc \ul L = (C)$. The following lemma follows easily from the associativity and unit axioms for vertical composition.
	\begin{lemma} \label{horizontal composition}
		Horizontal composition $(\phi, \psi) \mapsto \phi \hc \psi$, as defined above, satisfies the \emph{associativity} and \emph{unit axioms}
		\begin{displaymath}
			(\phi \hc \psi) \hc \chi = \phi \hc (\psi \hc \chi), \qquad (\id_f \hc \phi) = \phi \qquad \text{and} \qquad (\phi \hc \id_g) = \phi
		\end{displaymath}
		whenever these make sense. Moreover, horizontal and vertical composition satisfy the \emph{interchange axioms}
		\begin{flalign*}
			&& \bigpars{\psi \of (\phi_1, \dotsc, \phi_n)} \hc \bigl(\chi \of (\xi_1, \dotsc, \xi_m)\bigr ) &= (\psi \hc \chi) \of (\phi_1, \dotsc, \phi_n, \xi_1, \dotsc, \xi_m) & \\
			\text{and} && \psi \of \bigpars{\phi_1, \dotsc, (\phi_{i'} \hc \phi_{i}), \dotsc, \phi_n} &= \psi \of (\phi_1, \dotsc, \phi_{i'}, \phi_i, \dotsc, \phi_n) &
		\end{flalign*}
		whenever they make sense.
	\end{lemma}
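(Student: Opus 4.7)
My plan is to derive each identity by unfolding the definition $\phi \hc \psi \dfn \id_{\ul K \conc \ul L} \of (\phi, \psi)$ and then reducing to the associativity and unit axioms of vertical composition from Definition~\ref{augmented virtual double category}. The key bookkeeping fact is that the constraint $\lns{\ul K} + \lns{\ul L} \leq 1$ guarantees that $\id_{\ul K \conc \ul L}$ makes sense as a cell, being either a horizontal identity $\id_K$ on a single morphism or a vertical identity cell $\id_A$ on a common object; in all the manipulations below, one or more of the inserted identity cells will be of the vertical type, which is crucial since only those can be absorbed via the unit axiom $\psi \of (\dotsc, \id_{f_i}, \dotsc) = \psi \of (\dotsc)$.

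I would dispatch the easy cases first. For the unit axiom, unfolding gives $\id_f \hc \phi = \id_{\ul K} \of (\id_f, \phi)$; horizontal composability forces $f$ to equal $\phi$'s vertical source, so applying the cited unit axiom drops $\id_f$ and a further axiom $\id_C \of \phi = \phi$ or $\id_K \of \phi = \phi$ completes the reduction to $\phi$. The identity $\phi \hc \id_g = \phi$ is dual. The first interchange axiom is similarly immediate: expanding its left-hand side yields $\id_{\ul K \conc \ul L} \of (\psi \of (\phi_1, \dotsc, \phi_n), \chi \of (\xi_1, \dotsc, \xi_m))$, to which a single application of vertical associativity yields $(\psi \hc \chi) \of (\phi_1, \dotsc, \phi_n, \xi_1, \dotsc, \xi_m)$, as required.

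For associativity of horizontal composition and the second interchange axiom I plan to use the same pattern twice: expand the inner horizontal composite, rewrite every remaining outer factor $\theta$ as $\id_{\textup{target}(\theta)} \of \theta$ via a unit axiom, then apply vertical associativity to regroup all the ``identity-on-target'' cells into a single outer factor. For associativity this reduces the claim to the identities
\[\id_{\ul K \conc \ul L \conc \ul M} \of (\id_{\ul K \conc \ul L}, \id_{\ul M}) = \id_{\ul K \conc \ul L \conc \ul M} = \id_{\ul K \conc \ul L \conc \ul M} \of (\id_{\ul K}, \id_{\ul L \conc \ul M}),\]
each of which holds because at most one of $\ul K, \ul L, \ul M$ is nonempty, so at least one inner factor is a vertical identity cell $\id_A$ that may be dropped. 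For the second interchange axiom the analogous reduction leaves $\psi \of (\id_{\ul K_1}, \dotsc, \id_{\ul K_{i'} \conc \ul K_i}, \dotsc, \id_{\ul K_n})$ as the outer factor, which collapses to $\psi$ by first dropping every $\id_{A_j}$ (one for each empty $\ul K_j$) and then applying $\psi \of (\id_{J_1}, \dotsc, \id_{J_n}) = \psi$. The hard part will not be mathematical depth but combinatorial bookkeeping: one must carefully track which of the identity cells floating around are horizontal versus vertical, since only the vertical ones can be absorbed, and verify that the length-at-most-one constraint always provides enough of the absorbable kind.
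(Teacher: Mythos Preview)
Your proposal is correct and follows exactly the approach the paper indicates: the paper simply asserts that the lemma ``follows easily from the associativity and unit axioms for vertical composition'' without spelling out any details, and you have supplied precisely that derivation. Your bookkeeping about which identity cells are vertical (hence absorbable) versus horizontal is the right way to organise the case analysis, and each of your reductions checks out.
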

	
	The following examples relate augmented virtual double categories to some classical $2$"/dimensional categorical notions. Further examples are given in the next section.
	\begin{example} \label{virtual double category of unary cells}
	  By restricting to augmented virtual double categories in which all nullary cells are vertical identities, that is $\id_f$ for some vertical morphism $\map fAC$, we recover the classical notion of \emph{virtual double category}, in the sense of \cite{Cruttwell-Shulman10} or Section~5.1 of \cite{Leinster04} (where it is called $\fc$-multicategory). Virtual double categories were originally introduced by Burroni \cite{Burroni71} who called them `multicat\'egories'. It follows that every augmented virtual double category $\K$ contains a virtual double category $U(\K)$ consisting of its objects, vertical and horizontal morphisms, and unary cells.
	\end{example}
	
	\begin{example} \label{vertical 2-category}
	  Augmented virtual double categories with no horizontal morphisms, so that all cells are vertical, correspond precisely to \emph{$2$"/categories}, with the compositions $\hc$ and $\of$\ corresponding to the vertical and horizontal composition in $2$"/categories respectively. Thus every augmented virtual double category $\K$ contains a \emph{vertical $2$"/category} $V(\K)$, consisting of its objects, vertical morphisms and vertical cells. As remarked in the Introduction virtual double categories do not canonically contain $2$"/categories of vertical morphisms unless they have all horizontal units (see Proposition~6.1 of \cite{Cruttwell-Shulman10}). In \thmref{unital virtual double categories} below we will see that the notions of augmented virtual double category and virtual double category coincide in the presence of horizontal units (see \defref{cocartesian paths}).
	\end{example}
	
	\begin{example}
	  Restricting to augmented virtual double categories $\K$ with $\K_\textup v = 1$, the terminal category, and whose only nullary cell is the identity cell $\id_*$ for the unique object $* \in \K$, recovers the notion of \emph{multicategory} (see e.g.\ Section~2.1 of \cite{Leinster04}). Similarly augmented virtual double categories $\K$ with $\K_\textup v = 1$ whose only \emph{vertical} cell is $\id_*$ can be regarded as multicategories $\K$ equipped with a bimodule $\K \brar 1$, in the sense of Definition~2.3.6 of \cite{Leinster04}, where $1$ denotes the terminal multicategory.
	\end{example}
	
	\begin{example} \label{augmented virtual double category from a unital virtual double category}
		Let us recall the notion of a \emph{horizontal unit} $\hmap{I_A}AA$ for an object $A$ in a virtual double category $\K$ from e.g.\ Section~8 of \cite{Hermida00} or Section~5 of \cite{Cruttwell-Shulman10}; see also \secref{composition section} below. It is defined by an \emph{cocartesian cell} $\eta_A$ as on the left below, satisfying the following universal property: any cell $\phi$ in $\K$, with $A$ an object in its horizontal source as in the middle below, factors uniquely through $\eta_A$ as a cell $\phi'$ as shown, where the empty cells denote paths of identity cells.
		\begin{equation} \label{factorisation through unit}
			\begin{tikzpicture}[textbaseline]
					\matrix(m)[math35, column sep={1.75em,between origins}]{& A & \\ A & & A \\};
					\path[map]	(m-2-1) edge[barred] node[below] {$I_A$} (m-2-3);
					\path				(m-1-2) edge[eq, transform canvas={xshift=-1pt}] (m-2-1)
															edge[eq, transform canvas={xshift=1pt}] (m-2-3)
											(m-1-2) edge[cell, transform canvas={yshift=-0.333em, xshift=-0.4em}] node[right] {$\eta_A$} (m-2-2);
				\end{tikzpicture} \qquad\qquad \begin{tikzpicture}[textbaseline]
					\matrix(m)[math35, column sep={1.75em,between origins}]{X_0 & & A & & Y_m \\ & C & & D & \\};
					\path[map]	(m-1-1) edge[barred] node[above] {$\ul J$} (m-1-3)
															edge[transform canvas={xshift=-1pt}] node[left] {$f$} (m-2-2)
											(m-1-3) edge[barred] node[above] {$\ul H$} (m-1-5)
											(m-1-5) edge[transform canvas={xshift=1pt}] node[right] {$g$} (m-2-4)
											(m-2-2) edge[barred] node[below] {$K$} (m-2-4);
					\path				(m-1-3) edge[cell] node[right] {$\phi$} (m-2-3);
				\end{tikzpicture} = \begin{tikzpicture}[textbaseline]
					\matrix(m)[math35, column sep={1.75em,between origins}]
						{ & X_0 & & A & & Y_m & \\
							X_0 & & A & & A & & Y_m \\
							& C & & & & D & \\};
					\path[map]	(m-1-2) edge[barred] node[above] {$\ul J$} (m-1-4)
											(m-1-4) edge[barred] node[above] {$\ul H$} (m-1-6)
											(m-2-1) edge[barred] node[below] {$\ul J$} (m-2-3)
															edge[transform canvas={xshift=-1pt}] node[left] {$f$} (m-3-2)
											(m-2-3) edge[barred] node[below, inner sep=2pt] {$I_A$} (m-2-5)
											(m-2-5) edge[barred] node[below] {$\ul H$} (m-2-7)
											(m-2-7) edge[transform canvas={xshift=1pt}] node[right] {$g$} (m-3-6)
											(m-3-2) edge[barred] node[below] {$K$} (m-3-6);
					\path				(m-1-2) edge[eq, transform canvas={xshift=-1pt}] (m-2-1)
											(m-1-4)	edge[eq, transform canvas={xshift=-1pt}] (m-2-3)
															edge[eq, transform canvas={xshift=1pt}] (m-2-5)
											(m-1-6) edge[eq, transform canvas={xshift=1pt}] (m-2-7)
											(m-1-4) edge[cell, transform canvas={yshift=-0.333em, xshift=-0.4em}] node[right] {$\eta_A$} (m-2-4)
											(m-2-4) edge[cell] node[right] {$\phi'$} (m-3-4);
				\end{tikzpicture}
		\end{equation}
		
		We call a virtual double category \emph{unital} if each of its objects admits a horizontal unit. Choosing a cocartesian cell $\eta_A$ for each object $A$ in a unital virtual double category $\K$ allows $\K$ to be regarded as an augmented virtual double category $N(\K)$, as we shall now explain. The augmented virtual double category $N(\K)$ has as objects, morphisms and unary cells the objects, morphisms and cells of $\K$ while the nullary cells $\xi$ of $N(\K)$, of the shape as on the left below, are the cells $\xi$ of $\K$ that are of the shape as on the right.\!\footnote{Notice that a cell $\cell\xi{(J_1, \dotsc, J_n)}{I_C}$ in $\K$ as above appears as a cell in $N(\K)$ in two ways: once as a unary cell $\cell\xi{(J_1, \dotsc, J_n)}{I_C}$ and once as a nullary cell $\cell\xi{(J_1, \dotsc, J_n)}C$.}
		\begin{displaymath}
			\begin{tikzpicture}[baseline]
				\matrix(m)[math35, column sep={1.75em,between origins}]
					{A_0 & & A_1 & \dotsb & A_{n'} & & A_n \\ & & & C & & & \\};
				\path[map]	(m-1-1) edge[barred] node[above] {$J_1$} (m-1-3)
														edge node[below left] {$f$} (m-2-4)
										(m-1-5) edge[barred] node[above] {$J_n$} (m-1-7)
										(m-1-7) edge node[below right] {$g$} (m-2-4);
				\path				(m-1-4) edge[cell] node[right] {$\xi$} (m-2-4);
			\end{tikzpicture}	\qquad\qquad\qquad \begin{tikzpicture}[baseline]
				\matrix(m)[math35]{A_0 & A_1 & A_{n'} & A_n \\ C & & & C \\};
				\path[map]	(m-1-1) edge[barred] node[above] {$J_1$} (m-1-2)
														edge node[left] {$f$} (m-2-1)
										(m-1-3) edge[barred] node[above] {$J_n$} (m-1-4)
										(m-1-4) edge node[right] {$g$} (m-2-4)
										(m-2-1) edge[barred] node[below] {$I_C$} (m-2-4);
				\path[transform canvas={xshift=1.75em}]	(m-1-2) edge[cell] node[right] {$\xi$} (m-2-2);
				\draw				($(m-1-2)!0.5!(m-1-3)$) node {$\dotsb$};
			\end{tikzpicture}
		\end{displaymath}
		Composition $\psi \of (\phi_1, \dotsc, \phi_n)$ in $N(\K)$, with $\cell{\phi_i}{\ul J_i}{\ul K_i}$ as in \eqref{composable sequence} and $\cell\psi{\ul K_1 \conc \dotsb \conc \ul K_n}{\ul L}$ as in \eqref{vertical composite}, is defined as the composite in $\K$
		\begin{displaymath}
			\psi \of (\phi_1, \dotsc, \phi_n) \dfn \psi' \of (\phi_1, \dotsc, \phi_n)
		\end{displaymath}
		where $\psi'$ is defined as follows. Writing $\ul\eta_{\ul \phi}$ for the path of cocartesian cells $(\eta_{\phi_1}, \dotsc, \eta_{\phi_n})$, where $\eta_{\phi_i} \dfn \eta_{C_{i'}}$ if $\phi_i$ is nullary with horizontal target $C_{i'}$ and $\eta_{\phi_i} \dfn \id_{K_i}$ if $\phi_i$ is unary with horizontal target $\hmap{K_i}{C_{i'}}{C_i}$, the cell $\psi'$ is the unique factorisation in $\psi = \psi' \of \ul\eta_{\ul \phi}$. This factorisation $\psi'$ exists by the universal property of the $\eta_{C_{i'}}$, and it contains a unit $I_{C_{i'}}$ in its horizontal source for each nullary cell $\phi_i$ in $\ul\phi$. Finally the horizontal identity cells $\id_J$ in $N(\K)$ are simply those of $\K$, while the vertical identity cells $\id_f$ in $N(\K)$, one for each $\map fAC$, are the composites $\id_f \dfn \eta_C \of f$ in $\K$. That the composition for $N(\K)$ as defined above satisfies the associativity and unit axioms is a straightforward consequence of those axioms in $\K$, combined with the uniqueness of the factorisations $\psi'$.
		
		In \thmref{unital virtual double categories} below we will see that the assigment $\K \mapsto N(\K)$ is part of an equivalence between unital virtual double categories and augmented virtual double categories that have all horizontal units.
	\end{example}
	
	Every augmented virtual double category has a horizontal dual as follows.
	\begin{definition} \label{horizontal dual}
		Let $\K$ be an augmented virtual double category. The \emph{horizontal dual} of $\K$ is the augmented virtual double category $\co\K$ that has the same objects and vertical morphisms, that has a horizontal morphism $\hmap{\co J}AB$ for each $\hmap JBA$ in $\K$, and a cell $\co\phi$ as on the left below for each cell $\phi$ in $\K$ as on the right.
		\begin{displaymath}
			\begin{tikzpicture}[baseline]
				\matrix(m)[math35, column sep={3.25em,between origins}]{A_0 & A_1 & A_{n'} & A_n \\ C & & & D \\};
				\path[map]	(m-1-1) edge[barred] node[above] {$\co J_1$} (m-1-2)
														edge node[left] {$f$} (m-2-1)
										(m-1-3) edge[barred] node[above] {$\co J_n$} (m-1-4)
										(m-1-4) edge node[right] {$g$} (m-2-4)
										(m-2-1) edge[barred] node[below] {$\co{\ul K}$} (m-2-4);
				\path[transform canvas={xshift=1.625em}]	(m-1-2) edge[cell] node[right] {$\co\phi$} (m-2-2);
				\draw				($(m-1-2)!0.5!(m-1-3)$) node {$\dotsb$};
			\end{tikzpicture} \qquad\qquad\qquad \begin{tikzpicture}[baseline]
				\matrix(m)[math35, column sep={3.25em,between origins}]{A_n & A_{n'} & A_1 & A_0 \\ D & & & C \\};
				\path[map]	(m-1-1) edge[barred] node[above] {$J_n$} (m-1-2)
														edge node[left] {$g$} (m-2-1)
										(m-1-3) edge[barred] node[above] {$J_1$} (m-1-4)
										(m-1-4) edge node[right] {$f$} (m-2-4)
										(m-2-1) edge[barred] node[below] {$\ul K$} (m-2-4);
				\path[transform canvas={xshift=1.625em}]	(m-1-2) edge[cell] node[right] {$\phi$} (m-2-2);
				\draw				($(m-1-2)!0.5!(m-1-3)$) node {$\dotsb$};
			\end{tikzpicture}
		\end{displaymath}
		Identities and compositions in $\co\K$ are induced by those of $\K$:
		\begin{displaymath}
			\id_{\co J} \dfn \co{(\id_J)}, \quad \id_f \dfn \co{(\id_f)} \quad \text{and} \quad \co\psi \of (\co\phi_1, \dotsc, \co\phi_n) \dfn \co{\bigpars{\psi \of (\phi_n, \dotsc, \phi_1)}}.
		\end{displaymath}
	\end{definition}
	
	We end this section with a remark on the associativity of composition of cells in augmented virtual double categories.
	\begin{remark}
	  Consider a configuration of composable cells as in the scheme below, where $\phi_2$ is a nullary cell and the other cells are unary. Notice that there are two ways of vertically composing these cells if we compose the top rows first: in that case the cell $\phi_2$ can be composed either with $\psi_1$ or with $\psi_2$. In contrast, if we start by first composing the bottom two rows then there is only one way to form the composite.
	  
	  This example shows why the associativity axiom \eqref{associativity axiom} for vertical composition has to be ``read from left to right'': when read in the other direction, in general, there might be multiple ways in which the cells $(\phi_1, \dotsc, \phi_m)$ of top row can be ``distributed'' over the cells $(\psi_1, \dotsc, \psi_n)$ in the middle row.
	  \begin{displaymath}
	    \begin{tikzpicture}[scheme, x=1.8em, y=1.8em]
	      \draw (0,3) -- (3,3) -- (2.5,2) -- (2.5, 1) -- (2,0) -- (1,0) -- (0.5,1) -- (0.5,2) -- (0,3)
	            (0.5,2) -- (2.5,2)
	            (1,3) -- (1.5,2) -- (1.5,1)
	            (2,3) -- (1.5,2)
	            (0.5,1) -- (2.5,1);
	      \draw (0.75,2.5) node {$\phi_1$}
	            (1.5,2.75) node {$\phi_2$}
	            (2.25,2.5) node {$\phi_3$}
	            (1,1.5) node {$\psi_1$}
	            (2,1.5) node {$\psi_2$}
	            (1.5,0.5) node {$\chi$};
	    \end{tikzpicture}
	  \end{displaymath}
	  
	  Formally the above observation is a manifestation of the fact that, when regarded as monoids, augmented virtual double categories $\K$ (with a fixed directed graph $\K_\textup h$)\ are monoids in a \emph{skew-monoidal category}, in the sense of Szlach\'anyi \cite{Szlachanyi12}, instead of monoids in an ordinary monoidal category. This is made precise in \cite{Koudenburg19a}.
	\end{remark}
	
	\section{Examples} \label{examples section}
		Our main source of augmented virtual double categories will be virtual double categories, as will be explained in this section. Briefly, given a virtual double category $\K$ we will consider `monoids' and `bimodules' in $\K$, as recalled from Section~5.3 of \cite{Leinster04} (or Section~2 of \cite{Cruttwell-Shulman10}) in the definition below, and these arrange into a virtual double category $\Mod(\K)$. The latter admits all horizontal units so that we can apply \exref{augmented virtual double category from a unital virtual double category}, thus obtaining an augmented virtual double category $(N \of \Mod)(\K)$. Often we will then consider a sub"/augmented virtual double category of $(N \of \Mod)(\K)$ by ``restricting the size of bimodules''. For instance, while the canonical notion of bimodule between large categories (i.e.\ categories internal to a category $\Set'$ of `large sets') is a profunctor $\hmap JAB$ with images $J(a, b)$ that are possibly large, we take the viewpoint (see \exref{(Set, Set')-Prof} below) that it is preferable to consider profunctors with all images $J(a, b)$ small.
		
	\begin{definition}[Leinster] \label{monoids and bimodules}
		Let $\K$ be a virtual double category.
		\begin{enumerate}[label =-]
			\item A \emph{monoid} $A$ in $\K$ is a quadruple $A = (A, \alpha, \bar\alpha, \tilde\alpha)$ consisting of a horizontal morphism $\hmap\alpha AA$ in $\K$ equipped with \emph{multiplication} and \emph{unit} cells
			\begin{displaymath}
				\begin{tikzpicture}[textbaseline]
					\matrix(m)[math35, column sep={1.75em,between origins}]{A & & A & & A \\ & A & & A & \\};
					\path[map]	(m-1-1) edge[barred] node[above] {$\alpha$} (m-1-3)
											(m-1-3) edge[barred] node[above] {$\alpha$} (m-1-5)
											(m-2-2) edge[barred] node[below] {$\alpha$} (m-2-4);
					\path				(m-1-1) edge[eq] (m-2-2)
											(m-1-5) edge[eq] (m-2-4)
											(m-1-3) edge[cell] node[right] {$\bar\alpha$} (m-2-3);
				\end{tikzpicture} \qquad\qquad \text{and} \qquad\qquad \begin{tikzpicture}[textbaseline]
					\matrix(m)[math35, column sep={1.75em,between origins}]{& A & \\ A & & A, \\};
					\path[map]	(m-2-1) edge[barred] node[below] {$\alpha$} (m-2-3);
					\path				(m-1-2) edge[eq, transform canvas={xshift=-1pt}] (m-2-1)
															edge[eq, transform canvas={xshift=1pt}] (m-2-3)
											(m-1-2) edge[cell, transform canvas={yshift=-0.333em}] node[right] {$\tilde\alpha$} (m-2-2);
				\end{tikzpicture}
			\end{displaymath}
			that satisfy the associativity axiom $\bar\alpha \of (\bar\alpha, \id_\alpha) = \bar\alpha \of (\id_\alpha, \bar\alpha)$ and the unit axioms $\bar\alpha \of (\tilde\alpha, \id_\alpha) = \id_\alpha = \bar\alpha \of (\id_\alpha, \tilde\alpha)$.
			\item	A \emph{morphism} $A \to C$ of monoids is a vertical morphism $\map fAC$ in $\K$ that is equipped with a cell
			\begin{displaymath}
				\begin{tikzpicture}[textbaseline]
					\matrix(m)[math35]{A & A \\ C & C \\};
					\path[map]	(m-1-1) edge[barred] node[above] {$\alpha$} (m-1-2)
															edge node[left] {$f$} (m-2-1)
											(m-1-2) edge node[right] {$f$} (m-2-2)
											(m-2-1) edge[barred] node[below] {$\gamma$} (m-2-2);
					\path[transform canvas={xshift=1.75em}]	(m-1-1) edge[cell] node[right] {$\bar f$} (m-2-1);
				\end{tikzpicture}
			\end{displaymath}
			satisfying the associativity and unit axioms $\bar\gamma \of (\bar f, \bar f) = \bar f \of \bar \alpha$ and $\tilde \gamma \of f = \bar f \of \tilde \alpha$.
			\item A \emph{bimodule} $A \brar B$ between monoids is a horizontal morphism $\hmap JAB$ in $\K$ that is equipped with left and right \emph{action} cells
			\begin{displaymath}
				\begin{tikzpicture}[textbaseline]
					\matrix(m)[math35, column sep={1.75em,between origins}]{A & & A & & B \\ & A & & B & \\};
					\path[map]	(m-1-1) edge[barred] node[above] {$\alpha$} (m-1-3)
											(m-1-3) edge[barred] node[above] {$J$} (m-1-5)
											(m-2-2) edge[barred] node[below] {$J$} (m-2-4);
					\path				(m-1-1) edge[eq] (m-2-2)
											(m-1-5) edge[eq] (m-2-4)
											(m-1-3) edge[cell] node[right] {$\lambda$} (m-2-3);
				\end{tikzpicture} \qquad\qquad \text{and} \qquad\qquad \begin{tikzpicture}[textbaseline]
					\matrix(m)[math35, column sep={1.75em,between origins}]{A & & B & & B \\ & A & & B, & \\};
					\path[map]	(m-1-1) edge[barred] node[above] {$J$} (m-1-3)
											(m-1-3) edge[barred] node[above] {$\beta$} (m-1-5)
											(m-2-2) edge[barred] node[below] {$J$} (m-2-4);
					\path				(m-1-1) edge[eq] (m-2-2)
											(m-1-5) edge[eq] (m-2-4)
											(m-1-3) edge[cell] node[right] {$\rho$} (m-2-3);
				\end{tikzpicture}
			\end{displaymath}
			satisfying the usual associativity, unit and compatibility axioms for bimodules:
			\begin{align*}
				\lambda \of (\bar\alpha, \id_J) & = \lambda \of (\id_\alpha, \lambda); & \rho \of (\id_J, \bar\beta) &= \rho \of (\rho, \id_\beta); \\
				\lambda \of (\tilde\alpha, \id_J) & = \id_J = \rho \of (\id_J, \tilde\beta); &\rho \of (\lambda, \id_\beta) &= \lambda \of (\id_\alpha, \rho).
			\end{align*}
			\item	A cell
			\begin{displaymath}
				\begin{tikzpicture}
					\matrix(m)[math35, column sep={3.25em,between origins}]{A_0 & A_1 & A_{n'} & A_n \\ C & & & D \\};
					\path[map]	(m-1-1) edge[barred] node[above] {$J_1$} (m-1-2)
															edge node[left] {$f$} (m-2-1)
											(m-1-3) edge[barred] node[above] {$J_n$} (m-1-4)
											(m-1-4) edge node[right] {$g$} (m-2-4)
											(m-2-1) edge[barred] node[below] {$K$} (m-2-4);
					\path[transform canvas={xshift=1.625em}]	(m-1-2) edge[cell] node[right] {$\phi$} (m-2-2);
					\draw				($(m-1-2)!0.5!(m-1-3)$) node {$\dotsb$};
				\end{tikzpicture}
			\end{displaymath}
			of bimodules, where $n \geq 1$, is a cell $\phi$ in $\K$ between the underlying morphisms satisfying the \emph{external equivariance} axioms
			\begin{align*}
				\phi \of (\lambda_{J_1}, \id_{J_2}, \dotsc, \id_{J_n}) &= \lambda_K \of (\bar f, \phi) \\
				\phi \of (\id_{J_1}, \dotsc, \id_{J_{n'}}, \rho_{J_n}) &= \rho_K \of (\phi, \bar g)
			\end{align*}
			and the \emph{internal equivariance} axioms
			\begin{multline*}
				\phi \of (\id_{J_1}, \dotsc, \id_{J_{i''}}, \rho_{J_{i'}}, \id_{J_i}, \id_{J_{i+1}}, \dotsc, \id_{J_n}) \\
					= \phi \of (\id_{J_1}, \dotsc, \id_{J_{i''}}, \id_{J_{i'}}, \lambda_{J_i}, \id_{J_{i+1}}, \dotsc, \id_{J_n})
			\end{multline*}
			for $2 \leq i \leq n$.
			\item A cell
			\begin{displaymath}
				\begin{tikzpicture}
					\matrix(m)[math35, column sep={1.75em,between origins}]{& A & \\ C & & D \\};
					\path[map]	(m-1-2) edge[transform canvas={xshift=-1pt}] node[left] {$f$} (m-2-1)
															edge[transform canvas={xshift=1pt}] node[right] {$g$} (m-2-3)
											(m-2-1) edge[barred] node[below] {$K$} (m-2-3);
					\path				(m-1-2) edge[cell, transform canvas={yshift=-0.25em}] node[right, inner sep=2.5pt] {$\phi$} (m-2-2);
				\end{tikzpicture}	
			\end{displaymath}
			of bimodules is a cell $\phi$ in $\K$ between the underlying morphisms satisfying the \emph{external equivariance} axiom $\lambda \of (\bar f, \phi) = \rho \of (\phi, \bar g)$.
		\end{enumerate}
		Monoids in $\K$, their morphisms and bimodules, as well as the cells between them, form a virtual double category $\Mod(\K)$, whose composition and identities are simply those of $\K$. In fact the assignment $\K \mapsto \Mod(\K)$ extends to an endo"/$2$"/functor on the $2$"/category $\VirtDblCat$ of virtual double categories; see Proposition~3.9 of \cite{Cruttwell-Shulman10}.
	\end{definition}
	As is shown in Proposition~5.5 of \cite{Cruttwell-Shulman10} the virtual double category $\Mod(\K)$ has all horizontal units, in the sense of \exref{augmented virtual double category from a unital virtual double category}. Indeed the unit $I_A$ for a monoid $A = (A, \alpha, \bar\alpha, \tilde\alpha)$ in $\K$ is the bimodule $\hmap{I_A \dfn \alpha}AA$, whose actions $\lambda$ and $\rho$ are both given by multiplication $\cell{\bar\alpha}{(\alpha, \alpha)}\alpha$. The cocartesian cell $\cell{\eta_A}A\alpha$ is the unit cell $\eta_A \dfn \tilde\alpha$: the factorisation of a cell $\phi$ through $\eta_A$, that is of the form as in \eqref{factorisation through unit}, is obtained by composing $\phi$ with the right or left action of $A$ on either bimodule $\hmap{J_n}{X_{n'}}A$ or $\hmap{H_1}A{Y_1}$ in its horizontal source.
	\begin{example} \label{augmented virtual double category of monoids}
		Let $\K$ be a virtual double category. Applying \exref{augmented virtual double category from a unital virtual double category} to the unital virtual double category $\Mod(\K)$ of bimodules in $\K$ we obtain the augmented virtual double category $(N \of \Mod)(\K)$ whose objects, morphisms and unary cells are the same as those of $\Mod(\K)$, as in \defref{monoids and bimodules}, while the nullary cells $\xi$ of $(N \of \Mod)(\K)$, of the shape as on the left below, are cells of bimodules $\xi$ of the shape as on the right, where $\hmap\gamma CC$ is the horizontal unit bimodule for the monoid $C = (C, \gamma, \bar\gamma, \tilde\gamma)$ as described above.
		\begin{displaymath}
			\begin{tikzpicture}[baseline]
				\matrix(m)[math35, column sep={1.75em,between origins}]
					{A_0 & & A_1 & \dotsb & A_{n'} & & A_n \\ & & & C & & & \\};
				\path[map]	(m-1-1) edge[barred] node[above] {$J_1$} (m-1-3)
														edge node[below left] {$f$} (m-2-4)
										(m-1-5) edge[barred] node[above] {$J_n$} (m-1-7)
										(m-1-7) edge node[below right] {$g$} (m-2-4);
				\path				(m-1-4) edge[cell] node[right] {$\xi$} (m-2-4);
			\end{tikzpicture}	\qquad\qquad\qquad \begin{tikzpicture}[baseline]
				\matrix(m)[math35]{A_0 & A_1 & A_{n'} & A_n \\ C & & & C \\};
				\path[map]	(m-1-1) edge[barred] node[above] {$J_1$} (m-1-2)
														edge node[left] {$f$} (m-2-1)
										(m-1-3) edge[barred] node[above] {$J_n$} (m-1-4)
										(m-1-4) edge node[right] {$g$} (m-2-4)
										(m-2-1) edge[barred] node[below] {$\gamma$} (m-2-4);
				\path[transform canvas={xshift=1.75em}]	(m-1-2) edge[cell] node[right] {$\xi$} (m-2-2);
				\draw				($(m-1-2)!0.5!(m-1-3)$) node {$\dotsb$};
			\end{tikzpicture}	
		\end{displaymath}
	\end{example}
	
	The remainder of this section consists of examples of (augmented) virtual double categories. They can be split into two kinds: Examples \ref{enriched profunctors}---\ref{small V-profunctors}	are examples of enriched structures, while Examples \ref{internal profunctors}---\ref{internal split fibrations} are examples of internal structures.
	\begin{notation}
	  Throughout this article we assume given a category $\Set'$ of \emph{large sets}, as well as a full subcategory $\Set \subsetneqq \Set'$ of \emph{small sets}, such that the collection of morphisms of $\Set$ forms an object in $\Set'$. A large set $A \in \Set'$ will be called \emph{properly large} if it is not isomorphic to any small set.
	\end{notation}

	\begin{example} \label{enriched profunctors}
		Let $\V = (\V, \tens, I)$ be a monoidal category. The virtual double category $\Mat\V$ of \emph{$\V$-matrices} has large sets and functions as objects and vertical morphisms, while a horizontal morphism $\hmap JAB$ is a $\V$-matrix, given by a family $J(x, y)$ of $\V$"/objects indexed by pairs $(x, y) \in A \times B$. A cell 
		\begin{displaymath}
		  \begin{tikzpicture}[baseline]
				\matrix(m)[math35]{A_0 & A_1 & A_{n'} & A_n \\ C & & & D \\};
				\path[map]	(m-1-1) edge[barred] node[above] {$J_1$} (m-1-2)
														edge node[left] {$f$} (m-2-1)
										(m-1-3) edge[barred] node[above] {$J_n$} (m-1-4)
										(m-1-4) edge node[right] {$g$} (m-2-4)
										(m-2-1) edge[barred] node[below] {$K$} (m-2-4);
				\path[transform canvas={xshift=1.75em}]	(m-1-2) edge[cell] node[right] {$\phi$} (m-2-2);
				\draw				($(m-1-2)!0.5!(m-1-3)$) node {$\dotsb$};
			\end{tikzpicture}
		\end{displaymath}
		in $\Mat\V$ is a family of $\V$-maps
		\begin{displaymath}
			\map{\phi_{(x_0, \dotsc, x_n)}}{J_1(x_0, x_1) \tens \dotsb \tens J_n(x_{n'}, x_n)}{K(fx_0, gx_n)}
		\end{displaymath}
		indexed by $(n+1)$"/tuples $(x_0, \dotsc, x_n) \in A_0 \times \dotsb \times A_n$, where the tensor product is taken to be the monoidal unit $I$ in the case that $n = 0$.
		
		The augmented virtual double category $\enProf\V \dfn (N \of \Mod)(\Mat\V)$ of monoids and bimodules in $\Mat\V$ is that of large \emph{$\V$-enriched} categories, $\V$"/functors, $\V$"/profunctors and $\V$"/natural transformations. In some more detail: a $\V$"/profunctor $\hmap JAB$, between $\V$"/categories $A$ and $B$, consists of a family of $\V$-objects $J(x, y)$, indexed by pairs of objects $x \in A$ and $y \in B$, that is equipped with associative and unital actions
		\begin{displaymath}
			\map\lambda{A(x_1, x_2) \tens J(x_2, y)}{J(x_1, y)} \qquad \text{and} \qquad \map\rho{J(x,y_1) \tens B(y_1, y_2)}{J(x, y_2)}
		\end{displaymath}
		satisfying the usual compatibility axiom for bimodules; see e.g.\ Section~3 of \cite{Lawvere73}. If $\V$ is closed symmetric monoidal, so that it can be considered as enriched over itself, then $\V$-profunctors $\hmap JAB$ can be identified with $\V$-functors of the form \mbox{$\map J{\op A \tens B}\V$,} where $\op A$ denotes the dual of $A$ (see e.g.\ Section~1.4 of \cite{Kelly82}). In \exref{restrictions of V-profunctors} we will see that $\enProf\V$ has all horizontal units so that by \thmref{unital virtual double categories} it can equivalently be regarded as a virtual double category.
		
		A vertical cell $\cell\phi fg$ in $\enProf\V$, between $\V$-functors $f$ and $\map gAC$, is a $\V$"/natural transformation $f \Rar g$ in the usual sense; see for instance Section~1.2 of \cite{Kelly82}. We conclude that the vertical $2$-category $V(\enProf\V)$ contained in $\enProf\V$ (\exref{vertical 2-category}) equals the $2$-category $\enCat\V$ of $\V$"/categories, $\V$"/functors and $\V$"/natural transformations.
		
		Taking $\V = \Set$ in the above we obtain the augmented virtual double category $\enProf\Set$ of locally small (i.e.\ $\Set$"/enriched) categories, functors, \emph{$\Set$"/profunctors} \mbox{$\map J{\op A \times B}\Set$} and transformations. Likewise $\enProf{\Set'}$ is the augmented virtual double category of categories (possibly with large hom-sets), functors, \emph{$\Set'$"/profunctors} \mbox{$\map J{\op A \times B}\Set'$} and transformations. We will call a $\Set'$"/category $A$ \emph{locally properly large} if $A(x_1, x_2)$ is properly large for some $x_1$, $x_2 \in A$. Likewise a $\Set'$"/profunctor $\hmap JAB$ is \emph{properly large} if $J(x, y)$ is properly large for some $x \in A$ and $y \in B$.
	\end{example}
	
	\begin{example} \label{quantale-enriched profunctors}
		A \emph{quantale} $\V$ (see e.g.\ Section II.1.10 of \cite{Hofmann-Seal-Tholen14}) is a complete lattice equipped with a monoid structure $\tens$ that preserves suprema on both sides. Equivalently, a quantale can be thought of as a thin category $\V$ that is complete (hence cocomplete) and equipped with a closed monoidal structure.
		
		The extended positive real line $\V = \brks{0, \infty}$ for example, equipped with the reversed order $\geq$, forms a quantale whose monoid structure is given by addition $(+, 0)$ while its closed structure is truncated subtraction $\brks{x, y} \dfn \max(y - x, 0)$. Categories enriched in $\brks{0, \infty}$ form Lawvere's paradigmatic example of enriched category theory \cite{Lawvere73}: they can be regarded as \emph{generalised metric spaces}, that is sets $A$ equipped with a (not necessarily symmetric) distance function \mbox{$A \times A \to \brks{0, \infty}$} (which we again denote by $A$). Both vertical morphisms $\map fAC$ and horizontal morphisms $\hmap JAB$ in $\enProf{\brks{0, \infty}}$ are required to be \emph{non"/expanding}, that is $A(x_1, x_2) \geq C(fx_1, fx_2)$ and 
		\begin{displaymath}
			A(x_1, x_2) + J(x_2, y) \geq J(x_1, y) \qquad \text{and} \qquad J(x, y_1) + B(y_1, y_2) \geq J(x, y_2)
		\end{displaymath}
		respectively, for all $x_1, x_2, x \in A$ and $y, y_1, y_2 \in B$.
		
		Notice that, because quantales $\V$ are thin categories, their induced augmented virtual double categories $\enProf\V$ are \emph{locally thin}: any cell in $\enProf\V$ is uniquely determined by its (horizontal and vertical) sources and targets. Locally thin augmented virtual double categories of the form $\enProf\V$, where $\V$ is a quantale, form a natural setting for the study of `monoidal topology' \cite{Hofmann-Seal-Tholen14}, see for instance \cite{Koudenburg18}.
	\end{example}
	
	The following example motivates our choice of augmented virtual double categories as the optimal `double dimensional' environment for classical category theory.
	\begin{example} \label{(Set, Set')-Prof}
		Taking $\V = \Set'$ in \exref{enriched profunctors}, we write $\enProf{(\Set, \Set')}$ for the locally full sub"/augmented virtual double category of $\enProf{\Set'}$ that is generated by $\Set$"/profunctors. In detail: $\enProf{(\Set, \Set')}$ consists of all $\Set'$"/categories and functors, only those profunctors $\hmap JAB$ with $J(x, y) \in \Set$ for all \mbox{$(x, y) \in A \times B$}, and all cells between such $\Set$"/profunctors (including the nullary and vertical cells).
		
		Thus we have a chain of sub"/augmented virtual double categories
		\begin{displaymath}
			 \enProf\Set \subsetneqq \enProf{(\Set, \Set')} \subsetneqq \enProf{\Set'},
		\end{displaymath}
		and we take the view that the classical theory of locally small categories is best considered in $\enProf{(\Set, \Set')}$, motivated as follows. Recall from \cite{Freyd-Street95} that, for a locally small category $A$, the category $\Set^{\op A}$ of presheaves on $A$ is locally small if and only if $A$ is essentially small. Thus, on one hand, presheaves on a locally small category $A$ in general do not form an object in $\enProf\Set$, while they do form one in $\enProf{(\Set, \Set')}$. On the other hand, writing $\map\yon A{\Set^{\op A}}$ for the Yoneda embedding, Yoneda's lemma supplies, for each horizontal morphism $\hmap JAB$ in $\enProf{(\Set, \Set')}$, a functor $\map{\cur J}B{\Set^{\op A}}$ equipped with a natural isomorphism of $\Set$"/profunctors $J \iso \Set^{\op A}(\yon, \cur J)$\footnote{Indeed, take $\cur J(y) \dfn J(\dash, y)$ for $y \in B$.}; of course such a $\cur J$ does not exist for the properly large profunctors $J$ contained in $\enProf{\Set'}$. Thus the objects in $\enProf{(\Set, \Set')}$ are ``large enough'' for it to contain all presheaf categories $\Set^{\op A}$ with $A$ locally small while its horizontal morphisms are ``small enough'' to allow for a simple universal property of the Yoneda embeddings $\map yA{\Set^{\op A}}$, that is given in terms of \emph{all} horizontal morphisms of $\enProf{(\Set, \Set')}$ instead of a certain subclass of ``admissible'' ones, the latter such as in the definition of Yoneda structure \cite{Street-Walters78}; in particular this universal property is straightforward to formalise.
		
		For an example of an advantage of working in the augmented virtual double category $\enProf{(\Set, \Set')}$ rather than in the virtual double category $U\pars{\enProf{(\Set, \Set')}}$ that it contains (\exref{virtual double category of unary cells}) notice that, for any two functors $f$ and $\map gAC$ into a locally properly large category $C$, the natural transformations $\nat\phi fg$ are contained in the former but cannot be considered the latter. Indeed in $\enProf{(\Set, \Set')}$ they exist as vertical cells $\cell\phi fg$, but these are removed when passing to $U\pars{\enProf{(\Set, \Set')}}$. And while such natural transformations correspond to cells in $\enProf{\Set'}$ of the form below, where $I_C$ is the `unit profunctor' given by the hom-sets $I_C(x,y) = C(x,y)$, the properly large profunctor $I_C$ is not contained in $\enProf{(\Set, \Set')}$ (see \exref{restrictions in (V, V')-Prof}) and thus neither in $U\pars{\enProf{(\Set, \Set')}}$.
		\begin{displaymath}
			\begin{tikzpicture}
				\matrix(m)[math35, column sep={1.75em,between origins}]{& A & \\ C & & C \\};
				\path[map]	(m-1-2) edge node[left] {$f$} (m-2-1)
														edge node[right] {$g$} (m-2-3)
										(m-2-1) edge[barred] node[below] {$I_C$} (m-2-3);
				\path				(m-1-2) edge[cell, transform canvas={yshift=-0.25em}] node[right, inner sep=2.5pt] {$\phi$} (m-2-2);
			\end{tikzpicture}
		\end{displaymath}
	\end{example}
	
	Considering $\Set'$"/categories is one way of dealing with the size of the categories $\Set^{\op A}$ of presheaves on locally small categories $A$. Another way is to restrict to `small' presheaves on $A$ instead, as recalled in \exref{small V-profunctors} below. The next example generalises the construction of $\enProf{(\Set, \Set')}$ above to the enriched setting.
	\begin{example} \label{(V, V')-Prof}
	Analogous to the previous example we can consider sub"/augmented virtual double categories $\enProf{(\mathsf{Ab}, \mathsf{Ab}')} \subset \enProf{\mathsf{Ab}'}$, $\enProf{(\Cat, \Cat')} \subset \enProf{\Cat'}$, etc., where $\mathsf{Ab} \subset \mathsf{Ab}'$, $\Cat \subset \Cat'$, etc., are embeddings obtained by considering abelian groups, categories, etc., in both categories of sets $\Set$ and $\Set'$ respectively. Again we prefer to work in e.g.\ $\enProf{(\mathsf{Ab}, \mathsf{Ab}')}$ instead of $\enProf{\mathsf{Ab}}$ or $\enProf{\mathsf{Ab'}}$, for reasons similar to the ones given in the previous example.
	
	More generally we will follow Kelly's approach in Section 3.11 of \cite{Kelly82}, which is based on \cite{Day70}, and enrich both in a monoidal category $\V$ as well as in a `universe enlargement' of $\V$, as follows. A \emph{universe enlargement} of a large (not necessarily closed) monoidal category $\V$ is a monoidal full embedding $\V \subset \V'$ of $\V$ into a closed\footnote{$\V' = (\V', \tens', I')$ is closed if, for every object $x' \in \V'$, the endofunctor $x' \tens \dash$ has a right adjoint $\brks{x', \dash}$; see e.g.\ Section~1.5 of \cite{Kelly82}.} monoidal category $\V'$ that satisfies the following axioms:
	\begin{enumerate}[label=(\alph*)]
		\item $\V'$ is locally large, that is $\V'(x',y') \in \Set'$ for all $x', y' \in \V'$; 
		\item $\V'$ is large complete and large cocomplete;
		\item $\V \subset \V'$ preserves all limits.
	\end{enumerate}
	
	One can show that the embeddings $\Set \subset \Set'$, $\mathsf{Ab} \subset \mathsf{Ab}'$ and $\Cat \subset \Cat'$ are universe enlargements in the above sense, as long as $\Set$ has infinite sets. More generally Kelly shows that the Yoneda embedding $\map\yon \V{{\Set'}^{\op\V}}$ defines the category ${\Set'}^{\op\V}$ of $\Set'$"/presheaves on $\V$ as a universe enlargement of $\V$, with the monoidal structure $\tens'$ on the category ${\Set'}^{\op\V}$ given by `Day convolution' \cite{Day70} (or see \eqref{Day convolution} above). If $\V$ is closed monoidal then, besides (a)---(c) above, the Yoneda embedding $\map\yon \V{{\Set'}^{\op\V}}$ also is a closed monoidal embedding, that is $\yon(\brks{x, y}) \iso \brks{\yon x, \yon y}'$ coherently for all $x$, $y \in \V$. In that case, as is shown in Section~3.12 of \cite{Kelly82}, the factorisation of $\yon$ through the full subcategory \mbox{$\V' \subset \Set'^{\op \V}$} of $\Set'$"/presheaves that preserve all large limits in $\op \V$ is a universe enlargement $\V \subset \V'$ that, besides preserving all limits, preserves large colimits as well.
	
	Returning to a universe enlargement $\V \subset \V'$ with $\V$ not necessarily closed, consider a $\V'$-profunctor $\hmap JAB$ in $\enProf{\V'}$ (see \exref{enriched profunctors}). We will call $J$ a \emph{$\V$-profunctor} whenever $J(x, y)$ is a $\V$-object for all pairs $x \in A$ and $y \in B$. Analogous to the definition of $\enProf{(\Set, \Set')}$ in the previous example we denote by $\enProf{(\V, \V')}$ the sub"/augmented virtual double category of $\enProf{\V'}$ that consists of all $\V'$-categories and $\V'$-functors, as well as $\V$-profunctors and their transformations.
	\end{example}
		
	In the next example we recall the notion of `small $\V$"/profunctor' and show that such profunctors form a sub"/augmented virtual double category of $\enProf\V$ (\exref{enriched profunctors}). In doing so we use the classical coend formula that defines compositions of $\V$"/profunctors, which we first recall briefly. Let \mbox{$\ul J = (A_0 \xbrar{J_1} A_1, \dotsc, A_{n'} \xbrar{J_n} A_n)$} be a non"/empty path of $\V$"/profunctors and let $x \in A_0$ and $y \in A_n$ be objects. Inspired by Mac Lane's construction of ends as limits in Section~IX.5 of \cite{MacLane98} we consider the following diagram functor $\map{\ul J^\S(x,y)}{\ul J^\textup S}\V$. The objects of $J^\textup S$ are of two kinds: they are either $n'$"/tuples of pairs $\bigpars{(v_1, w_1), (v_2, w_2), \dotsc, (v_{n'}, w_{n'})}$, with each pair $(v_i, w_i)$ objects in $A_i$, or they are $n'$"/tuples $(u_1, u_2, \dotsc, u_n)$ of objects $u_i \in A_i$. The non"/identity morphisms of $J^\textup S$ are the legs of spans of the form
	\begin{displaymath}
		(v_1, v_2, \dotsc, v_{n'}) \leftarrow \bigpars{(v_1, w_1), (v_2, w_2), \dotsc, (v_{n'}, w_{n'})} \rightarrow (w_1, w_2, \dotsc, w_{n'});
	\end{displaymath}
	consequently in any composable pair of morphisms in $J^\textup S$ either morphism necessarily is an identity. Having defined $J^\textup S$ we next denote by $\ul J^\S(x, y)$ the diagram $J^\textup S \rightarrow \V$ that maps each span above to the following span in $\V$.
		\begin{displaymath}
			\begin{tikzpicture}[textbaseline]
				\matrix(m)[minimath, column sep={11em,between origins}]
					{	& J_1(x, v_1) \tens A_1(v_1, w_1) \tens J_2(w_1, v_2) \tens A_2(v_2, w_2) \tens \dotsb \tens A_n(v_{n'}, w_{n'}) \tens J_n(w_{n'}, y) & \\
						J_1(x, v_1) \tens J_2(v_1, v_2) \tens \dotsb \tens J_n(v_{n'}, y) & & J_1(x, w_1) \tens J_2(w_1, w_2) \tens \dotsb \tens J_n(w_{n'}, y) \\ };
				\path[map]	(m-1-2) edge node[left, inner sep=15pt] {$\id \tens \lambda \tens \dotsb \tens \lambda$} (m-2-1)
														edge node[right, inner sep=20pt] {$\rho \tens \dotsb \tens \rho \tens \id$} (m-2-3);
			\end{tikzpicture}
		\end{displaymath}
		In the case $\ul J = (J_1, J_2)$ the colimits of $(J_1, J_2)^\S(x, y)$, if they exist for all \mbox{$x \in A_0$} and $y \in A_2$, combine to form the composite $\V$"/profunctor `$\hmap{J_2 \of J_1}{A_0}{A_2}$' as defined in Section~3 of \cite{Lawvere73}. For general $\ul J$, if $\V$ is closed symmetric monoidal, so that each $\hmap{J_i}{A_{i'}}{A_i}$ can be identified with a $\V$-functor $\map{J_i}{\op{A_{i'}} \tens A_i}{\V}$, then the colimit of $\ul J^\S(x, y)$ is easily checked to coincide with the iterated \emph{coend} on the left"/hand side below; for the definition of the dual notion `end' see e.g.\ Section~2.1 of \cite{Kelly82}. We will use the coend notation
	\begin{displaymath}
		\intl^{u_1 \in A_1} \dotsb \intl^{u_{n'} \in A_{n'}} J_1(x, u_1) \tens \dotsb \tens J_n(u_{n'}, y) \dfn \colim \ul J^\S(x,y)
	\end{displaymath}
	for the colimit of $\ul J^\S(x, y)$ regardless of whether the monoidal category $\V$ is closed symmetric. If $\tens$ preserves large colimits on both sides then the coends above, if they exist for all $x \in A_0$ and $x \in A_n$, combine into a $\V$"/profunctor $A_0 \brar A_n$.
	\begin{example} \label{small V-profunctors}
	  Let $\V = (\V, \tens, I)$ be a monoidal category such that $v \tens \dash$ preserves large colimits for each $v \in \V$. A $\V$"/profunctor $\hmap JAB$ in $\enProf\V$ (\exref{enriched profunctors}) is called \emph{small} if for each object $y \in B$ there exists a small sub"/$\V$"/category $A_y \subseteq A$ such that the coends below exists together with isomorphisms
	  \begin{displaymath}
	    J(x, y) \iso \intl^{x' \in A_y} A(x, x') \tens J(x', y)
	  \end{displaymath}
	  that are equivariant in $x \in A$ (\exref{monoids and bimodules}). For example if $\V = \Set$ and $A$ is any large set seen as a discrete category, then a $\Set$"/profunctor $\hmap JAB$, with $B$ any category, is small precisely if for each $y \in B$ the set
	  \begin{displaymath}
	    \set{x \in A \mid J(x, y) \neq \emptyset}
	  \end{displaymath}
	  is small, which in that case we can take as $A_y$. In general notice that any $\V$"/profunctor $\hmap JAB$ is small whenever $A$ is a small $\V$"/category.
	  
	  We denote by $\ensProf\V \subseteq \enProf\V$ the sub"/augmented virtual double category consisting of all $\V$"/categories, all $\V$"/functors, only small $\V$"/profunctors, and all cells between them (including the nullary and vertical ones). We will see in \exref{restrictions of small profunctors} that $\ensProf\V$ has horizontal units and, in \exref{composites of small profunctors}, that, unlike $\enProf\V$ and $\enProf{(\V, \V')}$, it has all horizontal composites (see \secref{composition section}) whenever $\V$ is small cocomplete such that its monoidal product $\tens$ preserves large colimits on both sides. Thus in that case $\ensProf\V$ is a pseudo double category in the sense of \cite{Grandis-Pare99} (or see \secref{composition section} below).
	  
	  To see that, when $\V$ is closed symmetric monoidal, the above notion agrees with the usual notion of smallness for $\V$"/profunctors notice that, by equation~(4.25) of \cite{Kelly82}, for each $y \in B$ the isomorphisms above exhibit the $\V$"/presheaf \mbox{$\map{J(\dash, y)}{\op A}\V$} as the left Kan extension of $J(\dash, y)$ along the inclusion $A_y \subseteq A$. Hence each $J(\dash, y)$ is an `accessible' $\V$"/presheaf in the sense of Proposition~4.83 of \cite{Kelly82}; more recently (e.g.\ \cite{Day-Lack07}) such $\V$"/presheaves have been termed \emph{small}. Assuming that the $\V$"/category $\brks{B, \V}$ of $\V$"/functors $B \to \V$ exists (see Section~2 of \cite{Kelly82}), it follows that $\hmap JAB$ is small in the above sense precisely if the corresponding $\V$"/functor $\op A \to \brks{B, \V}$ is `pointwise small' in the sense of \cite{Day-Lack07}.
	\end{example}
	
	\begin{example} \label{internal profunctors}
	  Let $\E$ be a category with pullbacks. The augmented virtual double category $\Span\E$ of \emph{spans} in $\E$ has as objects and vertical morphisms the objects and morphisms of $\E$, while its horizontal morphisms $\hmap JAB$ are spans \mbox{$A \leftarrow J \rightarrow B$} in $\E$. A unary cell $\phi$ in $\Span\E$, as on the left below, is a morphism \mbox{$\map \phi{J_1 \times_{A_1} \dotsb \times_{A_{n'}} J_n}K$} in $\E$ lying over $f$ and $g$, where the wide pullback is taken to be $A_0$ if the horizontal source of $\phi$ is empty. Nullary cells in $\Span\E$ on the other hand are uniquely determined by their boundary: a cell $\psi$ as in the middle exists precisely if the square on the right commutes.
 		\begin{displaymath}
		  \begin{tikzpicture}[baseline]
				\matrix(m)[math35]{A_0 & A_n \\ C & D \\};
				\path[map]	(m-1-1) edge[barred] node[above] {$\ul J$} (m-1-2)
														edge node[left] {$f$} (m-2-1)
										(m-1-2) edge node[right] {$g$} (m-2-2)
										(m-2-1) edge[barred] node[below] {$K$} (m-2-2);
				\path[transform canvas={xshift=1.75em}]	(m-1-1) edge[cell] node[right] {$\phi$} (m-2-1);
			\end{tikzpicture} \qquad\qquad\qquad \begin{tikzpicture}[baseline]
				\matrix(m)[math35, column sep={1.75em,between origins}]
					{A_0 & & A_n \\ & C & \\};
				\path[map]	(m-1-1) edge[barred] node[above] {$\ul J$} (m-1-3)
														edge node[left] {$f$} (m-2-2)
										(m-1-3) edge node[right] {$g$} (m-2-2);
				\path				(m-1-2) edge[cell, transform canvas={yshift=0.25em}] node[right, inner sep=2pt] {$\psi$} (m-2-2);
			\end{tikzpicture} \qquad\qquad\qquad \begin{tikzpicture}[baseline]
				\matrix(m)[math35]
					{ & J_1 \times_{A_1} \dotsb \times_{A_{n'}} J_n & \\ A_0 & & A_n \\ & C & \\};
				\path[map]	(m-1-2) edge (m-2-1)
														edge (m-2-3)
										(m-2-1) edge node[below left] {$f$} (m-3-2)
										(m-2-3) edge node[below right] {$g$} (m-3-2);
			\end{tikzpicture}
		\end{displaymath}
		
		The virtual double category $U(\Span\E)$ (\exref{virtual double category of unary cells}) contained in $\Span\E$ is the same as that considered in Example~2.7 of \cite{Cruttwell-Shulman10}. The augmented virtual double category $\inProf\E \dfn (N \of \Mod)(U(\Span\E))$ of monoids and bimodules in $U(\Span\E)$ is that of \emph{internal} categories, functors, profunctors and transformations in $\E$. The vertical $2$"/category $V(\inProf\E)$ contained inside $\inProf\E$ (\exref{vertical 2-category}) is the $2$"/category $\inCat\E$ of internal categories, functors and transformations in $\E$; the latter in the classical sense of \cite{Street74}.
		
		We will see that $\Span\E$ has all horizontal units (\exref{restrictions of spans}) and composites (\exref{Span(E) has composites}), so that it can be equivalently regarded as a pseudo double category (see \secref{composition section}).
	\end{example}
	
	\begin{example} \label{internal relations}
	  As in the previous example let $\E$ be a category with pullbacks. A span $A \xlar{j_0} J \xrar{j_1} B$ in $\E$ is called a \emph{relation} (see e.g.\ \cite{Carboni-Kasangian-Street84}) if any two horizontal cells $\cell{\phi, \psi}HJ$ in $\Span\E$ are equal, that is $j_0$ and $j_1$ are jointly monic. We denote by $\Rel(\E) \subseteq \Span\E$ the sub"/augmented virtual double category generated by the relations in $\E$. Like $\Span\E$, $\Rel(\E)$ has all horizontal units (see \exref{restrictions of spans}), so that it can be equivalently regarded as a virtual double category by \thmref{unital virtual double categories}.  Notice that $\Rel(\E)$ is a locally thin augmented virtual double category in the sense of \exref{quantale-enriched profunctors}.
	  
	  We remark that in order to be able to arrange relations in $\E$ into a bicategory or a pseudo double category (see \cite{Grandis-Pare99} or \secref{composition section} below) one needs $\E$ to be \emph{regular} (see e.g.\ \cite{Carboni-Kasangian-Street84}); in constrast, to form $\Rel(\E)$ as an (augmented) virtual double category it suffices that $\E$ has pullbacks.
	\end{example}	
	
	\begin{example} \label{internal split fibrations}
	  Let $\K$ be a finitely complete $2$"/category, that is $\K$ has all finite conical limits as well as cotensors with the ``walking arrow'' category $\2 \dfn (0 \rightarrow 1)$. `Split bifibrations' in $\K$, introduced in \cite{Street74} and recently called \emph{split two"/sided fibrations}, can be regarded as profunctors internal to $\K_0$, the category underlying $\K$, as follows. For $A \in \K$ the cotensor $\Phi A \dfn \brks{\2, A}$ is defined by a cell
	  \begin{displaymath}
			\begin{tikzpicture}[textbaseline]
						\matrix(m)[math35]{\Phi A \\ A \\};
						\path[map]	(m-1-1) edge[bend right=45] node[left] {$d_0$} (m-2-1)
																edge[bend left=45] node[right] {$d_1$} (m-2-1);
						\path				(m-1-1) edge[cell] (m-2-1);
			\end{tikzpicture}
		\end{displaymath}
		whose universal property induces on the span $A \xlar{d_0} \Phi A \xrar{d_1} A$ the structure of a category internal to $\K_0$. In fact, Proposition~2 of \cite{Street74} shows that choosing a cotensor $\Phi A$ for each $A \in \K$ induces a functor $\map\Phi{\K_0}{\inCat{\K_0} = V(\inProf{\K_0})}$ (\exref{internal profunctors}). Given objects $A$, $B \in \K$, an internal profunctor $\hmap J{\Phi A}{\Phi B}$ in $\inProf{K_0}$ is precisely a `split bifibration' $A \leftarrow J \rightarrow B$ in $\K$ in the sense of \cite{Street74}, which follows easily from Proposition~12 therein. Likewise horizontal cells $\cell\phi JK$ in $\inProf{K_0}$, with $\hmap K{\Phi A}{\Phi B}$, are morphisms of bifibrations in the sense of \cite{Street74}.
		
		In light of the above we denote by $\spFib\K$ the augmented virtual double category whose objects and vertical morphisms are those of $\K_0$, and whose horizontal morphisms $\hmap JAB$ are profunctors $\hmap J{\Phi A}{\Phi B}$ internal to $\K_0$. Cells in $\spFib\K$, with vertical source $\map f{A_0}C$ and target $\map g{A_n}D$, are cells in $\inProf{\K_0}$ with vertical source $\map{\Phi f}{\Phi A_0}{\Phi C}$ and target $\map {\Phi g}{\Phi A_n}{\Phi D}$; their compositions are defined as in $\inProf{\K_0}$. 
	\end{example}
		
	\section{The \texorpdfstring{$2$}{2}-category of augmented virtual double categories} \label{2-category of augmented virtual double categories section}
	Having introduced the notion of augmented virtual double categories next we consider the functors between them, as well as their transformations.
	\begin{definition}
		A \emph{functor} $\map F\K\L$ between augmented virtual double categories consists of a functor $\map F{\K_\textup v}{\L_\textup v}$ as well as assignments mapping the horizontal morphisms and cells of $\K$ to those of $\L$, as shown below, in a way that preserves vertical composition and identity cells strictly.
		\begin{align*}
			\hmap JAB \quad\qquad&\mapsto\quad\qquad \hmap{FJ}{FA}{FB} \\
			\begin{tikzpicture}[textbaseline, ampersand replacement=\&]
				\matrix(m)[math35]{A_0 \& A_1 \& A_{n'} \& A_n \\ C \& \& \& D \\};
				\path[map]	(m-1-1) edge[barred] node[above] {$J_1$} (m-1-2)
														edge node[left] {$f$} (m-2-1)
										(m-1-3) edge[barred] node[above] {$J_n$} (m-1-4)
										(m-1-4) edge node[right] {$g$} (m-2-4)
										(m-2-1) edge[barred] node[below] {$K$} (m-2-4);
				\path[transform canvas={xshift=1.75em}]	(m-1-2) edge[cell] node[right] {$\phi$} (m-2-2);
				\draw				($(m-1-2)!0.5!(m-1-3)$) node {$\dotsb$};
			\end{tikzpicture} \qquad&\mapsto\qquad \begin{tikzpicture}[textbaseline, ampersand replacement=\&]
				\matrix(m)[math35]{FA_0 \& FA_1 \& FA_{n'} \& FA_n \\ FC \& \& \& FD \\};
				\path[map]	(m-1-1) edge[barred] node[above] {$FJ_1$} (m-1-2)
														edge node[left] {$Ff$} (m-2-1)
										(m-1-3) edge[barred] node[above] {$FJ_n$} (m-1-4)
										(m-1-4) edge node[right] {$Fg$} (m-2-4)
										(m-2-1) edge[barred] node[below] {$FK$} (m-2-4);
				\path[transform canvas={xshift=1.75em}]	(m-1-2) edge[cell] node[right] {$F\phi$} (m-2-2);
				\draw				($(m-1-2)!0.5!(m-1-3)$) node {$\dotsb$};
			\end{tikzpicture} \\
			\begin{tikzpicture}[textbaseline, ampersand replacement=\&]
					\matrix(m)[math35, column sep={1.75em,between origins}]
						{A_0 \& \& A_1 \& \dotsb \& A_{n'} \& \& A_n \\ \& \& \& C \& \& \& \\};
					\path[map]	(m-1-1) edge[barred] node[above] {$J_1$} (m-1-3)
															edge node[below left] {$f$} (m-2-4)
											(m-1-5) edge[barred] node[above] {$J_n$} (m-1-7)
											(m-1-7) edge node[below right] {$g$} (m-2-4);
					\path				(m-1-4) edge[cell] node[right] {$\phi$} (m-2-4);
				\end{tikzpicture} \qquad&\mapsto\qquad \begin{tikzpicture}[textbaseline, ampersand replacement=\&]
					\matrix(m)[math35, column sep={1.75em,between origins}]
						{FA_0 \& \& FA_1 \& \dotsb \& FA_{n'} \& \& FA_n \\ \& \& \& FC \& \& \& \\};
					\path[map]	(m-1-1) edge[barred] node[above] {$FJ_1$} (m-1-3)
															edge node[below left] {$Ff$} (m-2-4)
											(m-1-5) edge[barred] node[above] {$FJ_n$} (m-1-7)
											(m-1-7) edge node[below right] {$Fg$} (m-2-4);
					\path				(m-1-4) edge[cell] node[right] {$F\phi$} (m-2-4);
				\end{tikzpicture}
		\end{align*}
	\end{definition}
	Notice that $\map F\K\L$ preserving vertical composition $\of$ implies that $F$ preserves horizontal composition $\hc$ (see \lemref{horizontal composition}).
	\begin{definition} \label{transformation}
		A \emph{transformation} $\nat\xi FG$ of functors $F$, $\map G\K\L$ of augmented virtual double categories consists of a natural transformation $\nat\xi{F_\textup v}{G_\textup v}$ as well as a family of $(1,1)$-ary cells
		\begin{displaymath}
			\begin{tikzpicture}
				\matrix(m)[math35]{FA & FB \\ GA & GB \\};
				\path[map]	(m-1-1) edge[barred] node[above] {$FJ$} (m-1-2)
														edge node[left] {$\xi_A$} (m-2-1)
										(m-1-2) edge node[right] {$\xi_B$} (m-2-2)
										(m-2-1) edge[barred] node[below] {$GJ$} (m-2-2);
				\path[transform canvas={xshift=1.75em}]	(m-1-1) edge[cell] node[right] {$\xi_J$} (m-2-1);
			\end{tikzpicture}
		\end{displaymath}
		in $\L$, one for each $\hmap JAB \in \K$, that satisfies the \emph{naturality axiom}
		\begin{displaymath}
			G\phi \of \xi_{\ul J} = \xi_{\ul K} \of F\phi
		\end{displaymath}
		for all cells $\cell\phi{\ul J}{\ul K}$ in $\K$, where $\xi_{\ul J} \dfn (\xi_{J_1}, \dotsc, \xi_{J_n})$ if $\ul J = (J_1, \dotsc, J_n)$ and $\xi_{\ul J} \dfn \xi_A$ if $\ul J = (A)$.
	\end{definition}
	
	In \exref{virtual double category of unary cells} we saw that restricting to augmented virtual double categories with only vertical identity cells as nullary cells recovers the notion of virtual double category. Likewise, under this restriction the definitions above reduce to that of functor and transformation for virtual double categories as given in Section 3 of \cite{Cruttwell-Shulman10}. The latter combine into a $2$-category of virtual double categories which we denote $\VirtDblCat$.	Remember that every augmented virtual double category $\K$ contains a $2$"/category $V(\K)$ (\exref{vertical 2-category}) and a virtual double category $U(\K)$ (\exref{virtual double category of unary cells}). In the following proposition, which is easily checked, $\twoCat$ denotes the $2$"/category of $2$"/categories, strict $2$"/functors and $2$"/natural transformations.
	\begin{proposition} \label{2-category of augmented virtual double categories}
		Augmented virtual double categories, the functors between them and their transformations form a $2$-category $\AugVirtDblCat$. Both the assignments \mbox{$\K \mapsto V(\K)$} and $\K \mapsto U(\K)$ extend to strict $2$-functors
		\begin{displaymath}
			\map V\AugVirtDblCat\twoCat \qquad \text{and} \qquad \map U\AugVirtDblCat\VirtDblCat. 
		\end{displaymath}
	\end{proposition}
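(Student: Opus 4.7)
The plan is to spell out the composition and identity data for $\AugVirtDblCat$, verify the 2-category axioms, and then define $V$ and $U$ by restriction. I will present the constructions in the order needed and rely on the corresponding laws in the target augmented virtual double category to verify each axiom.

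First I would define composition of functors objectwise: $(G \of F)(\phi) \dfn G(F\phi)$ on objects, vertical and horizontal morphisms, and cells. Preservation of vertical composition and identity cells by $G \of F$ follows immediately from the same property for $F$ and $G$ individually. Identity functors $\id_\K$ are defined in the obvious way. Next I would define vertical composition of transformations: given $\nat\xi FG$ and $\nat\zeta GH$ between functors $\K \to \L$, set $(\zeta \of \xi)_A \dfn \zeta_A \of \xi_A$ (composition in $\L_\textup v$) and, for each $\hmap JAB$ in $\K$, set $(\zeta \of \xi)_J \dfn \zeta_J \of \xi_J$, the vertical composition of $(1,1)$-ary cells in $\L$. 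The naturality axiom for $\zeta \of \xi$ is obtained from two applications of the naturality axioms for $\xi$ and $\zeta$ together with the associativity axiom \eqref{associativity axiom} in $\L$. The identity transformation $\id_F$ has components $(\id_F)_A \dfn \id_{FA}$ and $(\id_F)_J \dfn \id_{FJ}$.

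Next I would define horizontal composition (whiskering) of a transformation $\nat\xi F{F'}$ with functors, so that for $\map H\L\M$ one has $(H\xi)_A \dfn H(\xi_A)$ and $(H\xi)_J \dfn H(\xi_J)$, and for $\map K{\K'}\K$ one has $(\xi K)_A \dfn \xi_{KA}$ and $(\xi K)_J \dfn \xi_{KJ}$; both are transformations thanks to functoriality. Horizontal composition of two transformations is then the usual pasting $\zeta * \xi \dfn (\zeta F') \of (G\xi) = (G'\xi) \of (\zeta F)$, and this equality reduces to the naturality axiom applied to the cell $\xi_J$. The 2-category axioms—associativity and unit laws for both compositions, and the interchange law—all reduce, objectwise and cellwise, to the corresponding axioms in $\L_\textup v$ and to the associativity/unit laws of vertical composition in $\L$ given by \defref{augmented virtual double category}; in particular the interchange law uses exactly the associativity axiom \eqref{associativity axiom} applied to the $(1,1)$-ary cell components.

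Finally, for the 2-functors, I would set $V(F)$ to be the restriction of $F$ to objects, vertical morphisms and vertical cells, and $U(F)$ to be the restriction to objects, vertical and horizontal morphisms and unary cells. On a transformation $\xi$, let $V(\xi)$ have components $\xi_A$ (which assemble into a 2-natural transformation between 2-functors since the naturality axiom restricted to vertical cells of $\K$ is precisely 2-naturality) and let $U(\xi)$ retain all cell components $\xi_J$ together with the vertical part. Both assignments respect composition and identities strictly on the nose because $F$ preserves $\of$ and identity cells strictly, and because both $V$ and $U$ are defined by mere restriction. The main bookkeeping obstacle is checking that the cell $(\zeta \of \xi)_J$ really satisfies the naturality axiom for a general cell $\cell\phi{\ul J}{\ul K}$ of arbitrary arity in $\K$—this is the one place where one needs to invoke associativity \eqref{associativity axiom} carefully, distributing the path $\xi_{\ul J} = (\xi_{J_1}, \dotsc, \xi_{J_n})$ against $\zeta_{\ul K}$ through the middle cell $G\phi$; every other axiom is immediate.
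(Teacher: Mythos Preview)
Your proposal is correct and is exactly the routine verification one would expect; the paper itself omits the proof entirely, stating only that the proposition ``is easily checked.'' Your outline fills in precisely the details the paper leaves implicit, and your identification of the one nontrivial step---invoking the associativity axiom \eqref{associativity axiom} to verify naturality of the vertically composed transformation $\zeta \of \xi$ against a general cell $\phi$---is accurate.
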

	
	\begin{example}
		Every lax monoidal functor $\map F\V{\catvar W}$ between monoidal categories induces a functor $\map{\Mat F}{\Mat\V}{\Mat{\catvar W}}$ between the virtual double categories of matrices in $\V$ and $\catvar W$ (\exref{enriched profunctors}) in the evident way. Likewise the components of any monoidal transformation $\nat\xi FG$ form the cell-components of an induced transformation $\nat{\Mat\xi}{\Mat F}{\Mat G}$. The assignments $F \mapsto \Mat F$ and \mbox{$\xi \mapsto \Mat\xi$} combine to form a strict $2$-functor $\map{\Mat{(\dash)}}{\MonCat_\textup l}{\VirtDblCat}$, where $\MonCat_\textup l$ denotes the $2$-category of monoidal categories, lax monoidal functors and monoidal transformations.
	\end{example}
	
	\begin{example} \label{Span is a 2-functor}
		Similarly any pullback-preserving functor $\map F\D\E$, between categories with pullbacks, induces a functor $\map{\Span F}{\Span\D}{\Span\E}$ between the augmented virtual double categories of spans in $\D$ and $\E$ (see \exref{internal profunctors}). This too extends to a strict $2$-functor $\map{\Span{\dash}}{\Cat_\textup{pb}}{\AugVirtDblCat}$, where $\Cat_\textup{pb}$ denotes the $2$"/category of categories with pullbacks, pullback-preserving functors and all natural transformations between them.
	\end{example}
		
	By an equivalence of augmented virtual double categories we, of course, mean an internal equivalence in the $2$"/category $\AugVirtDblCat$. The goal of the remainder of this section is to prove that, like in classical category theory (see e.g.\ Section~IV.4 of \cite{MacLane98}), giving an equivalence $\K \simeq \L$ of augmented virtual double categories is the same as giving a functor $\map F\K\L$ that is `full, faithful and essentially surjective'. The following definitions generalise analogous definitions for functors between double categories given in Section~7 of \cite{Shulman08}.
	
	We start with the notions full and faithful. Let $\map F\K\L$ be a functor between augmented virtual double categories. Its restriction $J \mapsto FJ$ to horizontal morphisms preserves sources and targets, so that it extends to an assignment
	\begin{displaymath}
		\ul J = (J_1, \dotsc, J_n) \mapsto F\ul J \dfn (FJ_1, \dotsc, FJ_n)	
	\end{displaymath}
	on paths. Likewise, for any pair of morphisms $\map f{A_0}C$ and $\map g{A_n}D$ in $\K$, together with paths $\hmap{\ul J}{A_0}{A_n}$ and $\hmap{\ul K}CD$ where $\lns K \leq 1$, the functor $F$ restricts to an assignment below, between classes of cells with sources and targets as shown.
	\begin{displaymath}
		\Bigl\lbrace\begin{tikzpicture}[textbaseline]
			\matrix(m)[math35]{A_0 & A_n \\ C & D \\};
			\path[map]	(m-1-1) edge[barred] node[above] {$\ul J$} (m-1-2)
													edge node[left] {$f$} (m-2-1)
									(m-1-2) edge node[right] {$g$} (m-2-2)
									(m-2-1) edge[barred] node[below] {$\ul K$} (m-2-2);
			\path[transform canvas={xshift=1.75em}]	(m-1-1) edge[cell] node[right] {$\phi$} (m-2-1);
		\end{tikzpicture}\Bigr\rbrace \qquad \xrar F \qquad \Bigl\lbrace\begin{tikzpicture}[textbaseline]
			\matrix(m)[math35]{FA_0 & FA_n \\ FC & FD \\};
			\path[map]	(m-1-1) edge[barred] node[above] {$F\ul J$} (m-1-2)
													edge node[left] {$Ff$} (m-2-1)
									(m-1-2) edge node[right] {$Fg$} (m-2-2)
									(m-2-1) edge[barred] node[below] {$F\ul K$} (m-2-2);
			\path[transform canvas={xshift=1.75em}]	(m-1-1) edge[cell] node[right] {$\psi$} (m-2-1);
		\end{tikzpicture}\Bigr\rbrace
	\end{displaymath}
	\begin{definition} \label{full and faithful functor}
		A functor $\map F\K\L$ between augmented virtual double categories is called \emph{locally faithful} (resp.\ \emph{locally full}) if, for any $\map f{A_0}C$, $\map g{A_n}D$, \mbox{$\hmap{\ul J}{A_0}{A_n}$} and $\hmap{\ul K}CD$ with $\lns K \leq 1$ in $\K$, the assignment above is injective (resp.\ surjective). If moreover the restriction $\map F{\K_\textup v}{\L_\textup v}$, to the vertical categories, is faithful (resp.\ full), then $F$ is called \emph{faithful} (resp.\ \emph{full}).
	\end{definition}
	
	The following is Definition 7.6 of \cite{Shulman08} applied verbatim to the setting of augmented virtual double categories.
	\begin{definition} \label{essentially surjective}
		A functor $\map F\K\L$ of augmented virtual double categories is called \emph{essentially surjective} if we can simultaneously make the following choices:
		\begin{enumerate}[label=-]
			\item for each object $A \in \L$, an object $A' \in \K$ and an isomorphism $\sigma_A\colon FA' \iso A$;
			\item for each horizontal morphism $\hmap JAB$ in $\L$, a morphism $\hmap{J'}{A'}{B'}$ in $\K$ and an invertible cell
			\begin{displaymath}
				\begin{tikzpicture}
					\matrix(m)[math35]{FA' & FB' \\ A & B. \\};
					\path[map]	(m-1-1) edge[barred] node[above] {$FJ'$} (m-1-2)
															edge node[left] {$\sigma_A$} (m-2-1)
											(m-1-2) edge node[right] {$\sigma_B$} (m-2-2)
											(m-2-1) edge[barred] node[below] {$J$} (m-2-2);
					\path[transform canvas={xshift=1.75em}]	(m-1-1) edge[cell] node[right] {$\sigma_J$} (m-2-1);
				\end{tikzpicture}
			\end{displaymath}
		\end{enumerate}
	\end{definition}
	
	\begin{proposition} \label{equivalences}
		A functor $\map F\K\L$ between augmented virtual double categories is part of an equivalence $\K \simeq \L$ if and only if it is full, faithful and essentially surjective.
	\end{proposition}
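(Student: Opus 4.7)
The plan is to prove both directions separately, with the ``if'' direction forming the bulk of the argument.

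For the \emph{only if} direction, suppose $F$ is part of an equivalence in $\AugVirtDblCat$. Standard $2$"/categorical reasoning upgrades any such equivalence to an adjoint equivalence with pseudo-inverse $\map G\L\K$ and invertible transformations $\nat\eta{\id_\K}{GF}$ and $\nat\epsilon{FG}{\id_\L}$ satisfying the triangle identities. Essential surjectivity is then immediate by setting $A' \dfn GA$, $J' \dfn GJ$, $\sigma_A \dfn \epsilon_A$ and $\sigma_J \dfn \epsilon_J$. Faithfulness of $F_\textup v$ follows from naturality of $\eta$: if $Ff = Fg$ then $\eta_B \of f = GFf \of \eta_A = GFg \of \eta_A = \eta_B \of g$, whence $f = g$ by invertibility of $\eta_B$. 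Fullness of $F_\textup v$ is obtained by setting $k \dfn \inv{\eta_B} \of Gh \of \eta_A$ for $\map h{FA}{FB}$ and computing $Fk = h$ via the triangle identities. Local faithfulness and fullness of $F$ on cells follow analogously, using the cell-components $\eta_J$ of $\eta$ together with the naturality axiom of \defref{transformation}.

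For the \emph{if} direction we build a pseudo-inverse $\map G\L\K$ step by step. Set $GA \dfn A'$ and $GJ \dfn J'$ using the choices of \defref{essentially surjective}. For $\map fAB$ in $\L$, the composite $\inv{\sigma_B} \of f \of \sigma_A$ is a morphism $FA' \to FB'$, so fullness and faithfulness of $F_\textup v$ supply a unique $\map{Gf}{A'}{B'}$ in $\K$ with $F(Gf) = \inv{\sigma_B} \of f \of \sigma_A$. For a cell $\cell\phi{\ul J}{\ul K}$ in $\L$, the appropriate vertical composition of $\phi$ with the cells $\sigma_{J_i}$ on top and $\inv{\sigma_K}$ on the bottom (or $\inv{\sigma_C}$ whiskered in when $\ul K = (C)$ is empty) produces a cell $\tilde\phi$ in $\L$ with horizontal source $F(G\ul J)$, horizontal target $F(G\ul K)$ and vertical sides $F(Gf)$ and $F(Gg)$. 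By local fullness and faithfulness of $F$ there is then a unique cell $G\phi$ in $\K$ with $F(G\phi) = \tilde\phi$, and this defines $G$ on cells.

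Functoriality of $G$ --- preservation of vertical composition and of the horizontal and vertical identity cells --- follows from the uniqueness clause of local faithfulness: each required equation, after applying $F$, reduces via the associativity and unit axioms in $\L$ and the cancellations $\sigma_J \of \inv{\sigma_J} = \id_J$ to a true equation in $\L$. The natural isomorphism $\nat\sigma{FG}{\id_\L}$ is given directly by the chosen components $\sigma_A$ and $\sigma_J$; its naturality axiom is just the defining equation $F(G\phi) = \tilde\phi$ rewritten. The inverse natural isomorphism $\nat\eta{\id_\K}{GF}$ is constructed by using fullness and faithfulness of $F_\textup v$ to produce unique invertible $\map{\eta_A}A{GFA}$ with $F\eta_A = \inv{\sigma_{FA}}$, and local fullness and faithfulness of $F$ to produce unique invertible cells $\eta_J$ with $F\eta_J = \inv{\sigma_{FJ}}$; its naturality reduces under $F$ to that of $\sigma$ and so holds by faithfulness.

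The main obstacle is the careful bookkeeping needed to show $G$ preserves vertical composition of cells: given a composable configuration $\psi \of (\phi_1, \dotsc, \phi_n)$ in $\L$, one verifies that $F$ sends both $G\psi \of (G\phi_1, \dotsc, G\phi_n)$ and $G\bigpars{\psi \of (\phi_1, \dotsc, \phi_n)}$ to the same cell by expanding the definition of $G$ on cells and cancelling the inverse pairs $\sigma_{K_i} \of \inv{\sigma_{K_i}}$ that arise at the interfaces. Preservation of horizontal composition is then automatic by \lemref{horizontal composition}.
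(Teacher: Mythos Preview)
Your proposal is correct and follows essentially the same approach as the paper's own proof sketch: construct the pseudo-inverse $G$ on objects and horizontal morphisms via the essential-surjectivity choices, define it on vertical morphisms and cells as the unique preimages under $F$ of the $\sigma$-conjugates, and obtain the transformations $\sigma\colon FG \iso \id_\L$ and $\eta\colon \id_\K \iso GF$ from the $\sigma$-components and their preimages under $F$. You supply considerably more detail than the paper does---in particular for the ``only if'' direction (which the paper dismisses as straightforward) and for the functoriality bookkeeping---but the underlying argument is the same.
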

	\begin{proof}[(sketch)]
		The `only if'-part is straightforward; we will sketch the `if'"/part. First, because $F$ is essentially surjective, we can choose objects $A' \in \K$, for each $A \in \L$, and horizontal morphisms $\hmap{J'}{A'}{B'} \in \K$, for each $\hmap JAB \in \L$, as in the definition above, together with isomorphisms $\sigma_A\colon FA' \iso A$ and $\sigma_J\colon FJ' \iso J$. Using the full and faithfulness of $F$ these choices can be extended to a functor $\map{(\dash)'}\L\K$ as follows: for each vertical morphism $\map fAC$ in $\L$ we define $\map{f'}{A'}{C'}$ to be the unique map in $\K$ such that $Ff = \inv\sigma_C \of f \of \sigma_A$, and for each cell $\cell\phi{\ul J}{\ul K}$ in $\L$ we define $\cell{\phi'}{\ul J'}{\ul K'}$ to be the unique cell in $\K$ such that $F\phi' = \inv\sigma_{\ul K} \of \phi \of \sigma_{\ul J}$, where the notation $\sigma_{\ul J}$ is as in \defref{transformation}. Using that $F$ is faithful it is easily checked that these assignments preserve the composition and identities of $\L$.
		
		Finally the isomorphisms $(\sigma_A)_{A \in \L}$ and $(\sigma_J)_{J \in \L}$ combine to form a transformation $\sigma \colon F \of (\dash)' \iso \id_\L$. Conversely, a transformation $\eta\colon\id_K\iso (\dash)' \of F$ is obtained by defining $\eta_A$, where $A \in \K$, to be unique with $F\eta_A = \inv\sigma_{FA}$ and defining $\eta_J$, where $\hmap JAB$ in $\K$, such that $F\eta_J = \inv\sigma_{FJ}$.
	\end{proof}
	
	\section{Restriction of horizontal morphisms}\label{restriction section}
	In this section we consider the restriction of horizontal morphisms along vertical morphisms, a construction that is often used in the study of formal category theory internal to (generalised) double categories. Restrictions of horizontal morphisms are defined by `cartesian cells' as in the following definition, which generalises the notions of $(1,1)$"/ary cartesian cell considered in Section~7 of \cite{Cruttwell-Shulman10}, for virtual double categories, and in Section~4 of \cite{Shulman08}, for double categories, to $(n, m)$"/ary cartesian cells where $n, m \leq 1$.
	
	\begin{definition} \label{cartesian cells}
		A cell $\cell\psi{\ul J}{\ul K}$ with $\lns{\ul J} \leq 1$, as in the right-hand side below, is called \emph{cartesian} if any cell $\chi$, as on the left-hand side, factors uniquely through $\psi$ as a cell $\phi$ as shown.
		\begin{displaymath}
			\begin{tikzpicture}[textbaseline]
				\matrix(m)[math35]{X_0 & X_1 & X_{n'} & X_n \\ A & & & B \\ C & & & D \\};
				\path[map]	(m-1-1) edge[barred] node[above] {$H_1$} (m-1-2)
														edge node[left] {$h$} (m-2-1)
										(m-1-3) edge[barred] node[above] {$H_n$} (m-1-4)
										(m-1-4) edge node[right] {$k$} (m-2-4)
										(m-2-1) edge node[left] {$f$} (m-3-1)
										(m-2-4) edge node[right] {$g$} (m-3-4)
										(m-3-1) edge[barred] node[below] {$\ul K$} (m-3-4);
				\draw				($(m-1-2)!0.5!(m-1-3)$) node {$\dotsc$};
				\path[transform canvas={yshift=-1.625em}]	($(m-1-1.south)!0.5!(m-1-4.south)$) edge[cell] node[right] {$\chi$} ($(m-2-1.north)!0.5!(m-2-4.north)$);
			\end{tikzpicture} \quad = \quad \begin{tikzpicture}[textbaseline]
				\matrix(m)[math35]{X_0 & X_1 & X_{n'} & X_n \\ A & & & B \\ C & & & D \\};
				\path[map]	(m-1-1) edge[barred] node[above] {$H_1$} (m-1-2)
														edge node[left] {$h$} (m-2-1)
										(m-1-3) edge[barred] node[above] {$H_n$} (m-1-4)
										(m-1-4) edge node[right] {$k$} (m-2-4)
										(m-2-1) edge[barred] node[below] {$\ul J$} (m-2-4)
														edge node[left] {$f$} (m-3-1)
										(m-2-4) edge node[right] {$g$} (m-3-4)
										(m-3-1) edge[barred] node[below] {$\ul K$} (m-3-4);
				\draw				($(m-1-2)!0.5!(m-1-3)$) node {$\dotsc$};
				\path				($(m-1-1.south)!0.5!(m-1-4.south)$) edge[cell] node[right] {$\phi$} ($(m-2-1.north)!0.5!(m-2-4.north)$)
										($(m-2-1.south)!0.5!(m-2-4.south)$) edge[cell, transform canvas={yshift=-2pt}] node[right] {$\psi$} ($(m-3-1.north)!0.5!(m-3-4.north)$);
			\end{tikzpicture}
		\end{displaymath}
		Vertically dual, provided that $\lns{\ul J} = 1$ the cell $\phi$ is called \emph{weakly cocartesian}\footnote{The stronger notion of \emph{cocartesian} cell will be defined in \defref{cocartesian paths}.} if any cell $\chi$ factors uniquely through $\phi$ as a cell $\psi$ as shown.
	\end{definition}
	
	If a $(1,n)$-ary cartesian cell $\psi$ of the form above exists then its horizontal source \mbox{$\hmap JAB$} is called the \emph{restriction} of $\hmap{\ul K}CD$ along $f$ and $g$, and denoted $\ul K(f, g) \dfn J$. If $\ul K = (C \xbrar K D)$ then we call $K(f, g)$ \emph{unary}; in the case that $\ul K = (C)$ we call $C(f, g)$ \emph{nullary}. Restrictions of the form $\ul K(f, \id)$ and $\ul K(\id, g)$ are called restrictions \emph{on the left} and \emph{right}. We will call the nullary restriction \mbox{$\hmap{C(\id, \id)}CC$} the \emph{(horizontal) unit} of the object $C$ and denote it $I_C \dfn C(\id, \id)$; if $I_C$ exists then we call $C$ \emph{unital}. In \secref{composition section} below we will see that the horizontal morphims $I_C$ form the units for composition of horizontal morphisms. In \thmref{unital virtual double categories} we will see that the notions of an augmented virtual double category with all horizontal units is equivalent to that of a virtual double category with all horizontal units; the latter in the sense of Section~5 of \cite{Cruttwell-Shulman10}. Consequently we call an (augmented) virtual double category a \emph{unital virtual double category} whenever it has all horizontal units. Recall that, as motivated in the Introduction, an advantage of taking augmented virtual double categories as a setting for the formalisation of Yoneda embeddings, rather than $2$"/categories, is that the ``built"/in'' notion of unital object can be taken to replace the notion of `admissible' object as defined by a Yoneda structure~\cite{Street-Walters78}.
	
	By their universal property any two cartesian cells defining the same restriction factor through each other as invertible horizontal cells. We will often not name cartesian cells, but simply depict them as on the left below.
	\begin{displaymath}
		\begin{tikzpicture}
			\matrix(m)[math35]{A & B \\ C & D \\};
			\path[map]	(m-1-1) edge[barred] node[above] {$\ul J$} (m-1-2)
													edge node[left] {$f$} (m-2-1)
									(m-1-2) edge node[right] {$g$} (m-2-2)
									(m-2-1) edge[barred] node[below] {$\ul K$} (m-2-2);
			\draw				($(m-1-1)!0.5!(m-2-2)$) node[font=\scriptsize] {$\cart$};
		\end{tikzpicture} \qquad\qquad\qquad\qquad\qquad\qquad \begin{tikzpicture}
			\matrix(m)[math35]{X_0 & X_n \\ A & B \\};
			\path[map]	(m-1-1) edge[barred] node[above] {$\ul H$} (m-1-2)
													edge node[left] {$h$} (m-2-1)
									(m-1-2) edge node[right] {$k$} (m-2-2)
									(m-2-1) edge[barred] node[below] {$J$} (m-2-2);
			\draw				($(m-1-1)!0.5!(m-2-2)$) node[font=\scriptsize] {$\cocart$};
		\end{tikzpicture}
	\end{displaymath}
	
	If the weakly cocartesian cell on the right above exists then we call its horizontal target $J$ the \emph{extension} of $\ul H$ along $h$ and $k$. Like restrictions, extensions are unique up to isomorphism. When considered in a virtual double category, by restricting the factorisations of \defref{cartesian cells} to unary cells $\chi$, our notion of cocartesian cell coincides with that of weakly cocartesian cell considered in Remark 5.8 of \cite{Cruttwell-Shulman10}.	We shall see in \cororef{extensions and composites} that the extension of $\ul H$ along $h$ and $k$ above coincides with the `horizontal composite' $(A(\id, h) \hc H_1 \hc \dotsb \hc H_n \hc B(k, \id))$ whenever it exists, where \mbox{$\hmap{A(\id, h)}A{X_0}$} and $\hmap{B(k, \id)}{X_n}B$ are nullary restrictions as defined above. Analogously, in \lemref{restrictions and composites} we will see that the restriction of $\ul K$ along $f$ and $g$, defined by the cartesian cell on the left above, coincides with the composite $(C(f, \id) \hc \ul K \hc D(\id, g))$.
	
	The following examples describe restrictions and horizontal units in various augmented double categories. At the end of this section weakly cocartesian cells of a certain shape are characterised in $\enProf\V$ (\exref{enriched profunctors}), $\Span\E$ (\exref{internal profunctors}) and $\Rel(\E)$ (\exref{internal relations}).
	
	\begin{example} \label{restrictions of V-profunctors}
		In the augmented virtual double category $\enProf\V$ of $\V$-profunctors (\exref{enriched profunctors}) unary restrictions $K(f, g)$ are indeed obtained by restricting the profunctor $K$: they consist of the family of $\V$-objects $K(fx, gy)$, for all $x \in A$, $y \in B$. Likewise the nullary restriction $C(f, g)$ of two $\V$-functors $f$ and $g$, with common target $C$, is given by the hom-objects $C(fx, gy)$; in particular $I_C(x, y) = C(x, y)$ defines the unit profunctor $I_C$. From this it easily follows that a cell $\cell\psi{\ul J}{\ul K}$ in $\enProf\V$, as in \defref{cartesian cells} above, is cartesian if and only if all its components $\map{\psi_{x, y}}{\ul J(x, y)}{\ul K(fx, gy)}$ are invertible.
	\end{example}
	
	\begin{example} \label{restrictions of spans}
		Let $\E$ be a category with pullbacks. One easily checks that in $\Span\E$ (\exref{internal profunctors}) the restriction of a span $\hmap KCD$ along morphisms $\map fAC$ and $\map gBD$ is the ``wide pullback'' of the diagram $A \xrar f C \leftarrow K \rightarrow D \xlar g B$. Similarly the nullary restriction $C(f, g)$ is obtained by pulling back the cospan $A \xrar f C \xlar g B$, while horizontal units are unit spans $I_A = (A \xlar{\id} A \xrar{\id} A)$. It is clear that the latter spans form relations in $\E$ so that, by \lemref{locally full and faithful functors reflect cartesian cells} below, they also form nullary restrictions and horizontal units in $\Rel(\E) \subseteq \Span\E$, the sub"/augmented virtual double category of relations in $\E$ (\exref{internal relations}).
	\end{example}
	
	\begin{example} \label{isomorphisms have cartesian identity cells}
		For any isomorphism $\map fAC$ in an augmented virtual double category the vertical identity cell $\id_f$ is cartesian. Similarly invertible vertical cells of the form $s \iso \id_A$ or $\id_A \iso s$, where $\map sAA$, are cartesian: factorisations through $s \iso \id_A$ for instance are obtained by composing on the left with its inverse $\id_A \iso s$.
	\end{example}
	
	The following straightforward lemma is useful for constructing restrictions in locally full sub"/augmented virtual double categories. A vertically dual result holds for weakly cocartesian cells, see \lemref{locally full and faithful functors reflect cocartesian paths} below.
	\begin{lemma} \label{locally full and faithful functors reflect cartesian cells}
		Any locally full and faithful functor $\map F\K\L$ (\defref{full and faithful functor}) reflects cartesian cells, that is a cell $\phi \in \K$ is cartesian whenever its image $F\phi$ is cartesian in $\L$.
	\end{lemma}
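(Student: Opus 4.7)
Suppose $\phi$ is a cell in $\K$ whose image $F\phi$ is cartesian in $\L$; we must show that $\phi$ itself is cartesian, that is, any cell $\chi$ in $\K$ with the appropriate boundary factors uniquely through $\phi$. The plan is to transport such a factorisation problem across $F$ to $\L$, solve it there using the cartesianness of $F\phi$, and then lift the solution back to $\K$ using the local fullness and faithfulness of $F$.

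In detail, assume $\phi$ has the form of the cell $\psi$ in \defref{cartesian cells}, and let $\chi$ be a cell in $\K$ with the boundary shown there. Apply $F$ to obtain a cell $F\chi$ in $\L$ with exactly the boundary required for factorisation through $F\phi$. Since $F\phi$ is cartesian, there exists a unique cell $\phi_\L$ in $\L$ of the appropriate shape with $F\chi = F\phi \of (\ldots, \phi_\L, \ldots)$. Because $F$ preserves vertical sources, targets, and the underlying horizontal paths strictly, the sources and targets of $\phi_\L$ match the $F$"/images of the boundary of the cell we seek in $\K$. Local fullness of $F$ then produces a cell $\phi'$ in $\K$ with that boundary and with $F\phi' = \phi_\L$.

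To conclude existence, note that $F\bigpars{\phi \of (\ldots, \phi', \ldots)} = F\phi \of (\ldots, \phi_\L, \ldots) = F\chi$, so local faithfulness of $F$ gives $\chi = \phi \of (\ldots, \phi', \ldots)$ as required. For uniqueness, suppose $\phi'_1$ and $\phi'_2$ are two such factorisations in $\K$; applying $F$ and invoking the uniqueness part of the cartesianness of $F\phi$ yields $F\phi'_1 = F\phi'_2$, and another application of local faithfulness gives $\phi'_1 = \phi'_2$.

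There is no real obstacle beyond a bookkeeping check that the boundary of $\phi_\L$ in $\L$ is literally the $F$"/image of a boundary in $\K$, so that local fullness applies; this is immediate from the fact that $F$ is strict on sources, targets, and paths of horizontal morphisms.
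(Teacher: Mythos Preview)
Your proof is correct and follows exactly the straightforward argument the paper has in mind; the paper in fact omits the proof entirely, calling the lemma ``straightforward''. One minor notational quibble: since a cartesian cell has horizontal source of length at most one, the factorisation is through a single cell, so you should write $F\phi \of \phi_\L$ rather than $F\phi \of (\ldots, \phi_\L, \ldots)$; the ellipses suggest a path of cells where there is none.
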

	
	\begin{example} \label{restrictions in (V, V')-Prof}
		For every universe enlargement $\V \subset \V'$ (\exref{(V, V')-Prof}) the locally full embedding $\enProf{(\V, \V')} \hookrightarrow \enProf{\V'}$ reflects cartesian cells. Since $K(f, g)$ is a $\V$"/profunctor whenever $K$ is, it follows that $\enProf{(\V, \V')}$ has all unary restrictions. Similarly the nullary restriction $\hmap{C(f, g)}AB$ exists in $\enProf{(\V, \V')}$ whenever the hom"/objects $C(fx, gy)$ are isomorphic to $\V$"/objects for all $x \in A$ and \mbox{$y \in B$}, and a $\V'$"/category $C$ is unital in $\enProf{(\V, \V')}$ whenever all its hom"/objects are isomorphic to $\V$"/objects. For example, in $\enProf{(\Set, \Set')}$ (\exref{(Set, Set')-Prof}) all nullary restrictions $C(f, g)$ exist as soon as the category $C$ is locally small. We will see in \exref{necessary condition for the existence of companions and conjoints in (V, V')-Prof} below that for the aforementioned sufficient condition for `one"/sided' nullary restrictions $C(f, \id)$ and $C(\id, g)$ in $\enProf{(\V, \V')}$, as well as that for unitality of $\V'$"/categories, are necessary conditions as well.
	\end{example}
	
	In the next example we denote by $I$ the unit $\V$-category, consisting of a single object $*$ and hom-object $I(*, *) = I$, the unit of $\V$. We can identify $\V$-functors $I \to A$ with objects in $A$ and $\V$"/profunctors $I \brar I$ with $\V$"/objects; cells between such profunctors can be identified with $\V$"/maps.
	
	\begin{example} \label{restrictions of small profunctors}
		The full embedding $\ensProf\V \hookrightarrow \enProf\V$ (\exref{small V-profunctors}) reflects cartesian cells by the previous lemma; we claim that it preserves cartesian cells as well. To see this consider any cartesian cell $\map\psi J{\ul K}$ in $\ensProf\V$, which defines a small $\V$"/profunctor $\hmap JAB$ as the restriction $\ul K(f, g)$ say. By \exref{restrictions of V-profunctors} it suffices to show that the components $\map{\psi_{(x, y)}}{J(x, y)}{\ul K(fx, gy)}$ are invertible for all $x \in A$ and $y \in B$. Notice that the cartesian cell $\cell\phi{J(x, y)}J$, which restricts $J$ along $\map xIA$ and $\map yIB$, is reflected by the embedding. It follows from \lemref{pasting lemma for cartesian cells} below that the composite $\cell{\psi \of \phi}{J(x, y)}{\ul K}$, which consists of the single component $\psi_{(x, y)}$, is a cartesian cell in $\ensProf\V$ that defines $\hmap{J(x, y)}II$ as the restriction of $\ul K$ along $\map{fx}IC$ and $\map{gy}ID$. But the latter restriction is reflected by the embedding too so that, by \exref{restrictions of V-profunctors} and the fact that restrictions are unique up to isomorphism, we may conclude that $\psi_{(x, y)}$ is invertible.

		It follows from the above that the restriction $\ul K(f, g)$ exists in $\ensProf\V$ if and only if $\ul K(f, g)$, when constructed in $\enProf\V$ as $\ul K(f, g)(x, y) = \ul K(fx, gy)$, is a small $\V$"/profunctor; in that case the two restrictions coincide. Clearly this is so for all unary restrictions $K(\id, g)$ on the right. It is easy to show that all unit $\V$"/profunctors $I_C$ are small too so that, using \cororef{restrictions in terms of units} below, we conclude that $\ensProf\V$ has all nullary restrictions $C(\id, g)$ on the right as well.
		
		To see that $\ensProf\V$ does not have restrictions $\ul K(f, \id)$ on the left in general take $\V = \Set$ and consider the terminal endoprofunctor $\hmap 111$ on the terminal category $1 = \set{*}$, i.e.\ $1(*, *)$ is the singleton set. It follows from the characterisation of small $\Set$"/profunctors given in \exref{small V-profunctors} that the restriction of $1$ along a terminal functor $\map !A1$, where $A$ is any properly large set regarded as a discrete category, is not small.
	\end{example}
	
	\begin{example} \label{restrictions of bimodules}
		Unary restrictions in the augmented virtual double category $(N \of \Mod)(\K)$ of bimodules in a virtual double category $\K$ (\exref{augmented virtual double category of monoids}) can be created in $\K$. For a bimodule $\hmap{(K, \lambda, \rho)}CD$ and monoid morphisms $\map{(f, \bar f)}AC$ and $\map{(g, \bar g)}BD$ this means that the restriction $K(f, g)$ in $\K$, if it exists, admits a bimodule structure that is unique in making its defining cartesian cell into a cartesian cell in $(N \of \Mod)(\K)$. Proving this is straightforward; see Proposition~11.10~of~\cite{Shulman08} for the analogous result in the case of pseudo double categories.
		
		Similarly the nullary restriction $C(f, g)$ in $(N \of \Mod)(\K)$, of a monoid $C = (C, \gamma, \bar\gamma, \tilde\gamma)$ and along monoid morphisms $\map{(f, \bar f)}AC$ and $\map{(g, \bar g)}BC$, can be created in $\K$ from the restriction $\gamma(f, g)$, if it exists. In particular every monoid $A = (A, \alpha, \bar\alpha, \tilde\alpha)$ has a horizontal unit given by $I_A = (\alpha, \bar\alpha, \bar\alpha)$ in $(N \of \Mod)(\K)$; in other words $(N \of \Mod)(\K)$ is a unital virtual double category.
	\end{example}
		
	\begin{example}
		By the previous example unary restrictions of internal profunctors in $\inProf\E$ (\exref{internal profunctors}) can be created as in $\Span\E$, that is as wide pullbacks (\exref{restrictions of spans}). Since the embedding $\spFib\K \hookrightarrow \inProf{\K_0}$ (\exref{internal split fibrations}) is locally full and faithful as well as surjective on horizontal morphisms, by \lemref{locally full and faithful functors reflect cartesian cells} above the restrictions of split two"/sided fibrations in $\K$ are given by wide pullbacks as well; this partially recovers Corollary~13 of \cite{Street74}.
	\end{example}
	
	It is clear from e.g.\ \cite{Cruttwell-Shulman10}, \cite{Koudenburg14}, \cite{Koudenburg18} and \cite{Shulman08} that the notion of a (generalised) double category that has all restrictions is a useful one. In \cite{Cruttwell-Shulman10} virtual double categories are called `virtual equipments' if they have all restrictions and all horizontal units---a term derived from Wood's `bicategories equipped with proarrows' \cite{Wood82}. As we have seen in the examples above some important augmented virtual double categories do not have all nullary restrictions (e.g.\ $\enProf{(\V, \V')}$) or only have restrictions on the right (e.g.\ $\ensProf\V$). This is why we consider the following generalisations of the notion of `equipment' as appropriate for augmented virtual double categories.
	\begin{definition} \label{augmented virtual equipment}
		An augmented virtual double category $\K$ is said to have \emph{restrictions on the left (resp.\ right)} if it has all unary restrictions of the form $K(f, \id)$ (resp.\ $K(\id, g)$). An \emph{augmented virtual equipment} is an augmented virtual double category that has all unary restrictions $K(f, g)$. A \emph{unital virtual equipment} is a unital virtual double category that has all restrictions $\ul K(f, g)$.
	\end{definition}
	For a unital virtual double category $\K$ to be a unital virtual equipment it suffices that $\K$ has all unary restrictions, by \cororef{restrictions in terms of units} below. Under the equivalence of \thmref{unital virtual double categories} our notion of unital virtual equipment coincides with that of `virtual equipment' studied in \cite{Cruttwell-Shulman10}. Table~\ref{notions} lists most of the (generalised) equipments that are considered in this paper.
	
	\begin{table}
		\centering
		\begin{tabular}{ll}
			\textbf{Notion}						& \textbf{Example(s)} \\
			\hline
			\rule[-1.2em]{0pt}{3em}\begin{tabular}{@{}l@{}}virtual double category \\ \quad w/ restrictions\end{tabular} & $\Mat\V$ \\
			\hline
			\rule[-1em]{0pt}{2.6em}augmented virtual equipment			& $\enProf{(\V, \V')}$ \\
			\hline
			\rule[-1.2em]{0pt}{3em}\begin{tabular}{@{}l@{}}unital virtual double category \\ \quad w/ restrictions on the right\end{tabular} & $\ensProf\V$ \\
			\hline
			\rule[-2.5em]{0pt}{5.6em}unital virtual equipment	& \begin{tabular}{@{}l@{}}$\Rel(\E)$ \\ $\inProf{\E}$ \\ $\spFib\K$ \\ $\enProf\V $\end{tabular}\\
			\hline
			\rule[-1.2em]{0pt}{3em}\begin{tabular}{@{}l@{}}pseudo double category \\ \quad w/ restrictions on the right\end{tabular} & $\ensProf\V$ ($\V$ is small cocomplete and closed) \\
			\hline
			\rule{0pt}{3.6em}equipment												& \begin{tabular}{@{}l@{}}$\Span{\E}$\\ $\Rel(\E)$ ($\E$ is regular) \\ $\inProf\E$ ($\E$ has coequalisers pres.\ by pullback)  \\ $\spFib\K$ ($\K$ has coequalisers pres.\ by pullback) \\ $\enProf{\V'}$ ($\V'$ large cocomplete and closed) \end{tabular}
		\end{tabular}
		\caption{Most examples of \secref{examples section} grouped according to whether they have all unary restrictions $K(f, g)$ (`equipment') and/or all horizontal units $I_A$ (`unital'). In the bottom two rows a `pseudo double category'/`equipment' is a unital virtual double category/unital virtual equipment that has all `horizontal composites', see \secref{composition section}; in these examples horizontal composites are in fact `pointwise' in the sense of \secref{pointwise horizontal composites section}.}
		\label{notions}
	\end{table}
	
	The following example demonstrates the relation between cartesian vertical identity cells and full and faithful vertical morphisms.
	\begin{example}
		In the augmented virtual double category $\enProf{(\V, \V')}$ (\exref{(V, V')-Prof}) the identity cell $\id_f$ of a $\V'$"/functor $\map fAC$ is cartesian if $f$ is full and faithful. Indeed if the actions $\map{\bar f}{A(x, y)}{C(fx, fy)}$ of $f$ on hom"/objects are invertible then the unique factorsation of a cell \mbox{$\cell\chi{(H_1, \dotsc, H_n)}C$} through $\id_f$, as in \defref{cartesian cells}, is obtained by composing the components of $\chi$ with the inverses of $\bar f$. The converse holds as soon as the nullary restriction $\hmap{C(f, f)}AA$ exists in $\enProf{(\V, \V')}$: the inverses of $\bar f$ can then be recovered as the components of the factorisation of the cartesian cell that defines $C(f, f)$ through $\id_f$.
	\end{example}
	
	In view of the previous we make the following definition.
	\begin{definition} \label{full and faithful morphism}
		A vertical morphism $\map fAC$ is called \emph{full and faithful} if its identity cell $\id_f$ is cartesian.
	\end{definition}
	In Section~8 of \cite{Cruttwell-Shulman10} a notion of full and faithfulness is introduced for morphisms of monoids, in terms of the unital virtual double category $\Mod(\K)$ of bimodules in a virtual double category $\K$ (\exref{augmented virtual double category of monoids}). Under the equivalence of \thmref{unital virtual double categories} this notion coincides with ours, as follows from the discussion following \cororef{vertical cells}.
	
	\begin{example} \label{isomorphisms are full and faithful}
		Isomorphisms are full and faithful by \exref{isomorphisms have cartesian identity cells}.
	\end{example}
	
	The converse to the following lemma holds as soon as $\K$ has `all weakly cocartesian paths of $(0, 1)$"/ary cells', see \propref{converses with weakly cocartesian paths of (0,1)-ary cells} below.
	\begin{lemma} \label{full and faithful in V(K)}
		If $\map fAC$ is full and faithful in the augmented virtual double category $\K$ then it is full and faithful in the $2$-category $V(\K)$ (\exref{vertical 2-category}): for any $X \in \K$ the functor $\map{V(\K)(X,f)}{V(\K)(X,A)}{V(\K)(X,C)}$, given by postcomposition with $f$, is full and faithful (see e.g.\ \cite{Street74}).
	\end{lemma}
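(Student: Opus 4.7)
The plan is to unpack the cartesian property of $\id_f$ directly, in its simplest instance. Recall that to say the functor $V(\K)(X, f)$ is full and faithful means that, for every pair of vertical morphisms $h, k \colon X \to A$, the map
\begin{displaymath}
  V(\K)(X, A)(h, k) \to V(\K)(X, C)(f \of h, f \of k), \qquad \phi \mapsto f \of \phi
\end{displaymath}
is a bijection; here the right hand side consists of vertical cells with source $f \of h$ and target $f \of k$, and $f \of \phi = \id_f \of \phi$ in the whisker notation introduced after \lemref{horizontal composition}.

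To establish the bijection, I would specialise the defining universal property of the cartesian cell $\id_f$ (\defref{cartesian cells}) to the case in which the top path $\ul H$ is empty, taking the single top vertex to be $X$ and choosing $h$, $k$ in the roles of the two outer vertical morphisms $X_0 \to A$ and $X_n \to B$ (with $A = B$, $C = D$, $f = g$, $\ul J = (A)$, $\ul K = (C)$). The cartesian property then asserts that every vertical cell $\chi \colon f \of h \Rar f \of k$ factors uniquely as $\chi = \id_f \of \phi$ for some vertical cell $\phi \colon h \Rar k$. Existence of the factorisation gives surjectivity of the displayed map (fullness), while uniqueness gives injectivity (faithfulness).

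No real obstacle arises: the whole argument is a straightforward specialisation of the cartesian factorisation property to nullary source and empty horizontal source. The only point to observe is that $\id_f$ satisfies the hypothesis $\lns{\ul J} \leq 1$ of \defref{cartesian cells} trivially, since $\lns{\ul J} = 0$, so the universal property applies; and that the factorising cell $\phi$ produced there is, by its prescribed boundary, precisely a vertical cell $h \Rar k$ in the sense of \exref{vertical 2-category}, i.e.\ a $2$"/cell of $V(\K)$.
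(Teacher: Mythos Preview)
Your argument is correct and is exactly the intended one: the paper does not supply a proof for this lemma, treating it as an immediate consequence of the definition, and your specialisation of the cartesian property of $\id_f$ to cells with empty horizontal source is precisely that unpacking. There is nothing to add.
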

	
	Cartesian cells satisfy the following pasting lemma. As a consequence, taking restrictions is `pseudofunctorial' in the sense that $\ul K(f, g)(h, k) \iso \ul K(f \of h, g \of k)$ and $K(\id, \id) \iso K$.
	\begin{lemma}[Pasting lemma] \label{pasting lemma for cartesian cells}
		If the cell $\phi$ in the composite below is cartesian then the composite $\phi \of \psi$ is cartesian if and only if $\psi$ is.
		\begin{displaymath}
			\begin{tikzpicture}
    		\matrix(m)[math35]{X & Y \\ A & B \\ C & D \\};
    		\path[map]  (m-1-1) edge[barred] node[above] {$\ul H$} (m-1-2)
        		                edge node[left] {$h$} (m-2-1)
            		    (m-1-2) edge node[right] {$k$} (m-2-2)
            		    (m-2-1) edge[barred] node[below] {$\ul J$} (m-2-2)
            		            edge node[left] {$f$} (m-3-1)
            		    (m-2-2) edge node[right] {$g$} (m-3-2)
            		    (m-3-1) edge[barred] node[below] {$\ul K$} (m-3-2);
    		\path[transform canvas={xshift=1.75em}]
        		        (m-1-1) edge[cell] node[right] {$\psi$} (m-2-1)
        		        (m-2-1) edge[transform canvas={yshift=-0.25em}, cell] node[right] {$\phi$} (m-3-1);
  		\end{tikzpicture}
		\end{displaymath}
	\end{lemma}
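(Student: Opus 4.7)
The plan is a direct diagram chase using the universal property of cartesian cells twice. Throughout, write $\psi\colon\ul H\Rightarrow\ul J$ with vertical sides $h$, $k$ and $\phi\colon\ul J\Rightarrow\ul K$ with vertical sides $f$, $g$, so that $\phi\of\psi$ has vertical sides $f\of h$ and $g\of k$. Since $\phi$ is assumed cartesian we have $\lns{\ul J}\leq 1$ at the outset; the hypothesis ``$\psi$ cartesian'' or ``$\phi\of\psi$ cartesian'' each forces $\lns{\ul H}\leq 1$, so the two shape requirements match.

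For the ``if''-direction, assume $\psi$ is cartesian. Given a test cell $\chi\colon(G_1,\dots,G_m)\Rightarrow\ul K$ with vertical sides $(f\of h)\of p$ and $(g\of k)\of q$ (the general shape demanded by the definition of cartesianness for $\phi\of\psi$), first apply the universal property of $\phi$ to factor $\chi=\phi\of\chi'$ uniquely, where $\chi'$ has target $\ul J$ and vertical sides $h\of p$, $k\of q$. Then apply the universal property of $\psi$ to factor $\chi'=\psi\of\chi''$ uniquely, where $\chi''$ has target $\ul H$ and vertical sides $p$, $q$. By associativity $\chi=(\phi\of\psi)\of\chi''$, giving existence. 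Uniqueness is immediate: if $(\phi\of\psi)\of\chi'''=\chi$, then $\phi\of(\psi\of\chi''')=\phi\of\chi'$, so by uniqueness for $\phi$ we get $\psi\of\chi'''=\chi'$, and then by uniqueness for $\psi$ we get $\chi'''=\chi''$.

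For the ``only if''-direction, assume $\phi\of\psi$ is cartesian and let $\chi\colon(G_1,\dots,G_m)\Rightarrow\ul J$ have vertical sides $h\of p$, $k\of q$. Form $\phi\of\chi\colon(G_1,\dots,G_m)\Rightarrow\ul K$, which has vertical sides $(f\of h)\of p$, $(g\of k)\of q$, and apply the universal property of $\phi\of\psi$ to obtain a unique cell $\chi'$, with target $\ul H$ and vertical sides $p$, $q$, satisfying $(\phi\of\psi)\of\chi'=\phi\of\chi$. The step I expect to be the main (and only) subtlety is the next one: we need $\psi\of\chi'=\chi$, but a priori we only know the equality after further postcomposition with $\phi$. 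Here the uniqueness clause of $\phi$'s cartesian property does the work: both $\psi\of\chi'$ and $\chi$ are cells with target $\ul J$ whose postcomposition with $\phi$ yields $\phi\of\chi$, so by the uniqueness of factorisation through $\phi$ they coincide. Uniqueness of $\chi'$ as a factorisation through $\psi$ follows from the uniqueness clause of $\phi\of\psi$: any $\chi''$ with $\psi\of\chi''=\chi$ satisfies $(\phi\of\psi)\of\chi''=\phi\of\chi$ and hence $\chi''=\chi'$. This completes both directions.
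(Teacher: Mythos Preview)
Your proof is correct and is exactly the standard argument one expects for this kind of pasting lemma. The paper in fact states the lemma without proof, treating it as routine; your two-pass factorisation (through $\phi$ then $\psi$ for the ``if'' direction, and through $\phi\of\psi$ then cancelling $\phi$ for the ``only if'' direction) is precisely the expected elementary verification.
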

	
	Restricting to the case where the cartesian cell $\phi$ above defines a horizontal unit $I_C$ we find that nullary restrictions can be obtained as unary restrictions of horizontal units, as in the following corollary. Consequently in an augmented virtual equipment all nullary restrictions $C(f, g)$ exist whenever the object $C$ is unital.
	\begin{corollary} \label{restrictions in terms of units}
		Let $\map fAC$ and $\map gBC$ be morphisms into a unital object $C$. The nullary restriction $C(f, g)$ exists if and only if the unary restriction $I_C(f, g)$ does, and in that case they are isomorphic.
		\begin{displaymath}
  		\begin{tikzpicture}[textbaseline]
				\matrix(m)[math35, column sep={1.75em,between origins}]{A & & B \\ & C & \\};
				\path[map]	(m-1-1) edge[barred] node[above] {$J$} (m-1-3)
														edge[transform canvas={xshift=-1pt}] node[left] {$f$} (m-2-2)
										(m-1-3) edge[transform canvas={xshift=1pt}] node[right] {$g$} (m-2-2);
				\path[transform canvas={yshift=0.25em}]	(m-1-2) edge[cell] node[right, inner sep=3pt] {$\phi$} (m-2-2);
			\end{tikzpicture} \quad = \quad \begin{tikzpicture}[textbaseline]
  			\matrix(m)[math35, column sep={1.75em,between origins}]{A & & B \\ C & & C \\ & C & \\};
  			\path[map]	(m-1-1) edge[barred] node[above] {$J$} (m-1-3)
  													edge node[left] {$f$} (m-2-1)
  									(m-1-3) edge node[right] {$g$} (m-2-3)
  									(m-2-1) edge[barred] node[below, inner sep=1.5pt] {$I_C$} (m-2-3);
  			\path				(m-2-1)	edge[eq, transform canvas={xshift=-2pt}] (m-3-2)
  									(m-2-3) edge[eq, transform canvas={xshift=2pt}] (m-3-2);
  			\path[transform canvas={xshift=1.75em}]
										(m-1-1) edge[cell] node[right] {$\phi'$} (m-2-1);
				\draw				([yshift=0.25em]$(m-2-2)!0.5!(m-3-2)$) node[font=\scriptsize] {$\cart$};
			\end{tikzpicture}
  	\end{displaymath}
  	In detail a nullary cell $\phi$, as on the left-hand side above, is cartesian if and only if its factorisation $\phi'$ through $I_C$ is cartesian.
	\end{corollary}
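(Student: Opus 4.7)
The proof is a direct application of the pasting lemma, \lemref{pasting lemma for cartesian cells}. The plan is as follows. Since $C$ is unital, by definition the horizontal unit $\hmap{I_C}CC$ is the nullary restriction $C(\id_C, \id_C)$, so there is a cartesian nullary cell $\eta\colon I_C \Rightarrow C$ whose vertical source and target are both $\id_C$. The first step is to observe that, given any nullary cell $\phi$ as on the left-hand side of the displayed equation, the universal property of $\eta$ yields a unique unary cell $\phi'$, with vertical source $f$ and target $g$ and horizontal target $I_C$, such that $\phi = \eta \of \phi'$; this is exactly the factorisation depicted on the right-hand side.

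Next I would invoke \lemref{pasting lemma for cartesian cells} with the cartesian cell $\eta$ playing the role of the bottom cell $\phi$ in the lemma, and $\phi'$ playing the role of the top cell $\psi$. The lemma then asserts that the composite $\eta \of \phi'$ is cartesian if and only if $\phi'$ is cartesian; since this composite is $\phi$, we conclude that $\phi$ is cartesian precisely when $\phi'$ is. This is exactly the assertion made in the second displayed equation of the statement.

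Finally, to deduce the ``if and only if'' part about existence, suppose the nullary restriction $C(f, g)$ exists, witnessed by a cartesian cell $\phi$ with horizontal source $J$; its factorisation $\phi'$ through $\eta$ is then cartesian by the above, exhibiting $J$ as $I_C(f, g)$. Conversely, if $I_C(f, g)$ exists, witnessed by a cartesian cell $\phi'$ with source $J$, then $\eta \of \phi'$ is cartesian by the pasting lemma, exhibiting $J$ as $C(f, g)$. Because cartesian cells with a common target define the restriction up to (unique invertible horizontal) isomorphism, we obtain the stated isomorphism $C(f, g) \iso I_C(f, g)$.

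I do not foresee any real obstacle here: the proof is essentially a one-line appeal to the pasting lemma, once one recognises that $\eta$ is a cartesian cell and that every nullary cell into $C$ factors uniquely through it. The only thing to be slightly careful about is matching the shapes of the cells in \lemref{pasting lemma for cartesian cells} (whose ``bottom cell'' $\phi$ there is unary) with our present situation where the cartesian cell $\eta$ is nullary; but the lemma is stated for $(n,m)$-ary cells with $n, m \leq 1$, so this causes no issue.
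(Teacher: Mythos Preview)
Your proposal is correct and is essentially the paper's own argument: the corollary is presented there as the special case of the pasting lemma (\lemref{pasting lemma for cartesian cells}) in which the bottom cartesian cell is the one defining $I_C$. The only cosmetic difference is that the paper reserves the letter $\eta$ for the \emph{cocartesian} cell $C \Rightarrow I_C$, so you may wish to rename your cartesian cell $I_C \Rightarrow C$ to avoid a clash.
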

	
	Horizontal composition with the (co)unit of an adjunction preserves nullary cartesian cells as follows.
	\begin{lemma} \label{horizontal composition with (co-)units preserves nullary cartesian cells}
		In an augmented virtual double category $\K$ consider the composite below. If $\eta$ is the unit of an adjunction $f \ladj g$ (in $V(\K)$) then $\phi$ is cartesian precisely if the composite is so. A horizontally dual result holds for composition on the right with a counit.
		\begin{displaymath}
			\begin{tikzpicture}
  			\matrix(m)[math35, column sep={0.875em,between origins}]
  				{	A & & & & B \\
  					& C & & & \\
  					& & E & & \\
  					& C & & & \\ };
  			\path[map]	(m-1-1) edge[barred] node[above] {$\ul J$} (m-1-5)
  													edge[transform canvas={xshift=-1pt}] node[left] {$h$} (m-2-2)
  									(m-1-5) edge[transform canvas={xshift=1pt}] node[right] {$k$} (m-3-3)
  									(m-2-2) edge node[right, inner sep=0.75pt, yshift=2pt] {$f$} (m-3-3)
  									(m-3-3) edge[bend left=12] node[right] {$g$} (m-4-2);
  			\path				(m-1-3) edge[cell, transform canvas={yshift=-0.5em}] node[right] {$\phi$} (m-2-3)
  									(m-2-2) edge[cell, transform canvas={yshift=-1.625em, xshift=-4pt}] node[right, inner sep=2.5pt] {$\eta$} (m-3-2)
  									(m-2-2) edge[eq, bend right=32] (m-4-2);
  		\end{tikzpicture}
  	\end{displaymath}
  \end{lemma}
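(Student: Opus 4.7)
The plan is to transport the cartesian universal property across the adjunction $f \ladj g$ by constructing a bijection on cells using the counit $\eps \colon f g \Rar \id_E$. Observe first that the composite in the diagram, call it $\psi$, equals the horizontal composite $(\eta \of h) \hc (g \of \phi)$ by the definitions of whiskering and of $\hc$ in \lemref{horizontal composition}; it is a nullary cell $\ul J \Rar (C)$ with vertical source $h$ and target $g \of k$. By a horizontally symmetric construction using the counit, define, for any nullary cell $\tilde\psi \colon \ul J \Rar (C)$ with vertical source $h$ and target $g \of k$, the nullary cell
\begin{equation*}
 T\tilde\psi \dfn (f \of \tilde\psi) \hc (\eps \of k) \colon \ul J \Rar (E)
\end{equation*}
with vertical source $f \of h$ and target $k$. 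The triangle identities $(\eps f) \of (f \eta) = \id_f$ and $(g \eps) \of (\eta g) = \id_g$ in $V(\K)$, combined with the interchange and unit axioms, force $T\psi = \phi$; symmetrically, applying the ``$\eta$-construction'' $\tilde\phi \mapsto (\eta \of h) \hc (g \of \tilde\phi)$ to $T\tilde\psi$ recovers $\tilde\psi$, so that $T$ and this construction are mutually inverse.

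This bijection extends to arbitrary test cells: for any $\chi \colon \ul H \Rar (C)$ with vertical boundaries $h \of \tilde h$ on the left and $g \of k \of \tilde k$ on the right, one obtains a corresponding $T\chi \colon \ul H \Rar (E)$ with boundaries $f \of h \of \tilde h$ and $k \of \tilde k$. The crucial step is to show that, for every candidate factorisation $\phi_0 \colon \ul H \Rar \ul J$ with vertical source $\tilde h$ and target $\tilde k$, the equation $\psi \of (\phi_0) = \chi$ is equivalent to $\phi \of (\phi_0) = T\chi$. This follows by applying $T$ to both sides of the first equation, using associativity to commute $T$ past the precomposition with $\phi_0$, and collapsing the resulting expression via the triangle identities. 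Because the bijection commutes with factorisation in this precise sense, the cartesian factorisation property of $\phi$ transfers to $\psi$ and vice versa, establishing both directions of the claim. The horizontally dual statement regarding the counit is obtained by the evidently dual argument.

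The main obstacle is the bookkeeping needed to verify that $T$ and the $\eta$-construction genuinely invert one another and that they commute with vertical precomposition by $\phi_0$. Each such verification is a routine but fiddly application of the associativity, interchange, and unit axioms of \defref{augmented virtual double category} and \lemref{horizontal composition}; no difficulty arises beyond the standard pattern of transporting a universal property across an adjunction, though whiskering nullary cells in an augmented virtual double category requires some care.
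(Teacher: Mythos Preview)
Your proof is correct and follows essentially the same idea as the paper's: both transport the cartesian universal property across the bijection on nullary cells induced by the adjunction $f \ladj g$. The paper packages this as a commutative triangle of assignments---with $\phi \of \dash$ and $\bigl((\eta \of h) \hc (g \of \phi)\bigr) \of \dash$ on top and the adjunction bijection $\chi \mapsto (\eta \of h \of p) \hc (g \of \chi)$ along the bottom---so that cartesianness of $\phi$ (bijectivity of the left leg) is equivalent to cartesianness of the composite (bijectivity of the right leg); you instead name the inverse bijection $T$ explicitly and verify by hand that it commutes with precomposition by $\phi_0$, which amounts to checking the same commutativity.
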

  \begin{proof}
  	Consider the commutative diagram of assignments below, between collections of cells in $\K$ that are of the shape as shown.
  	\begin{displaymath}
  		\begin{tikzpicture}
  			\matrix(k)[math35, column sep={0.5833em,between origins}, xshift=-11em]
  				{	X & & & & & & Y \\
  					& A & & & & & \\
  					& & C & & & & \\
  					& & & E & & & \\ };
  			\draw				($(k-1-7)!0.5!(k-4-4)$) node(B) {$B$};
  			\path[map]	(k-1-1) edge[barred] node[above] {$\ul H$} (k-1-7)
  													edge[transform canvas={xshift=-1pt}] node[left] {$p$} (k-2-2)
  									(k-1-7) edge[transform canvas={xshift=1pt}] node[right] {$q$} (B)
  									(k-2-2) edge[transform canvas={xshift=-1pt}] node[left] {$h$} (k-3-3)
  									(k-3-3) edge[transform canvas={xshift=-1pt}] node[left] {$f$} (k-4-4)
  									(B) edge[transform canvas={xshift=1pt}] node[right] {$k$} (k-4-4);
  			\path				(k-1-4) edge[cell, transform canvas={yshift=-0.75em}] node[right] {$\chi$} (k-2-4);
  			
  			\matrix(m)[math35]{X & Y \\ A & B \\};
				\path[map]	(m-1-1) edge[barred] node[above] {$\ul H$} (m-1-2)
														edge node[left] {$p$} (m-2-1)
										(m-1-2) edge node[right] {$q$} (m-2-2)
										(m-2-1) edge[barred] node[below] {$\ul J$} (m-2-2);
				\path[transform canvas={xshift=1.75em}]	(m-1-1) edge[cell] node[right] {$\psi$} (m-2-1);
  			
  			\matrix(n)[math35, column sep={0.5833em,between origins}, xshift=14.5em]
  				{	X & & & & & & Y \\
  					& & & & & B & \\
  					& & & & E & & \\
  					& & & C & & & \\ };
  			\draw				($(n-1-1)!0.5!(n-4-4)$) node(A) {$A$};
  			\path[map]	(n-1-1) edge[barred] node[above] {$\ul H$} (n-1-7)
  													edge[transform canvas={xshift=-1pt}] node[left] {$p$} (A)
  									(n-1-7) edge[transform canvas={xshift=1pt}] node[right] {$q$} (n-2-6)
  									(n-2-6) edge[transform canvas={xshift=1pt}] node[right] {$k$} (n-3-5)
  									(n-3-5) edge[transform canvas={xshift=1pt}] node[right] {$g$} (n-4-4)
  									(A) edge[transform canvas={xshift=-1pt}] node[left] {$h$} (n-4-4);
  			\path				(n-1-4) edge[cell, transform canvas={yshift=-0.75em}] node[right] {$\xi$} (n-2-4);
  			
  			\draw[font=\Large]	(-2.8em,0) node {$\lbrace$}
										(2.8em,0) node {$\rbrace$}
										(-13.75em,0) node {$\lbrace$}
										(-8.5em,0) node {$\rbrace$}
										(12em,0) node {$\lbrace$}
										(17.25em,0) node {$\rbrace$};
				\path[map]	(-3.8em,0) edge node[above] {$\phi \of \dash$} (-7.5em,0)
										(3.8em,0) edge node[above] {$\bigpars{(\eta \of h) \hc (g \of \phi)} \of \dash$} (11em,0)
										(-8em,-2em) edge[bend right=25] node[below] {$(\eta \of h \of p) \hc (g \of \dash)$} (11.5em,-2em);	
  		\end{tikzpicture}
		\end{displaymath}
		Notice that the $\phi$ is cartesian precisely if the top left assignment is a bijection, and that the composite $(\eta \of h) \hc (g \of \phi)$ is cartesian precisely if the top right assignment is a bijection. The proof follows from the fact that the bottom assignment is a bijection: its inverse is given by composing the cells $\xi$ on the right with the counit of $f \ladj g$.
	\end{proof}
	
	Closing this section we characterise weakly cocartesian cells (of a certain shape) in the virtual double categories $\enProf\V$, $\Span\E$ and $\Rel(\E)$.
	\begin{example} \label{weakly cocartesian cells in V-Prof}
		Consider cells $\phi$ in $\enProf\V$ (\exref{enriched profunctors}) of the form below, where $I$ denotes the unit $\V$"/category as recalled before \exref{restrictions of small profunctors}. Notice that such cells correspond precisely to cocones $\ul H^\S(*, *) \Rar J$, where $J \dfn J(*,*)$ and $\ul H^\S(*, *)$ is the diagram defining the iterated coend $\int^{u_1 \in X_1} \!\dotsb \int^{u_{n'} \in X_{n'}} H_1(*, u_1) \tens \dotsb \tens H_n(u_{n'}, *)$; see the definition preceding \exref{small V-profunctors}. Indeed, the internal equivariance axioms satisfied by such $\phi$ (\defref{monoids and bimodules}) correspond precisely to the naturality of such cocones. Weakly cocartesian cells thus correspond to colimiting cocones, that is the cell $\phi$ below is weakly cocartesian in $\enProf\V$ precisely if it defines the $\V$-object $J$ as the afore-mentioned coend.
		\begin{displaymath}
			\begin{tikzpicture}[textbaseline]
				\matrix(m)[math35]{I & X_1 & X_{n'} & I \\ I & & & I \\};
				\path[map]	(m-1-1) edge[barred] node[above] {$H_1$} (m-1-2)
										(m-1-3) edge[barred] node[above] {$H_n$} (m-1-4)
										(m-2-1) edge[barred] node[below] {$J$} (m-2-4);
				\path				(m-1-1) edge[eq] (m-2-1)
										(m-1-4) edge[eq] (m-2-4);
				\path[transform canvas={xshift=1.75em}]	(m-1-2) edge[cell] node[right] {$\phi$} (m-2-2);
				\draw				($(m-1-2)!0.5!(m-1-3)$) node {$\dotsb$};
			\end{tikzpicture}
		\end{displaymath}
	\end{example}
	
	\begin{example} \label{weakly cocartesian cells in Span(E)}
		In $\Span\E$ (\exref{internal profunctors}) a horizontal cell $\phi$, of the form as below, is weakly cocartesian if and only if its underlying morphism $\map\phi{H_1 \times_{X_1} \dotsb \times_{X_{n'}} H_n}J$ is an isomorphism.
		\begin{displaymath}
			\begin{tikzpicture}
				\matrix(m)[math35]{X_0 & X_1 & X_{n'} & X_n \\ X_0 & & & X_n \\};
				\path[map]	(m-1-1) edge[barred] node[above] {$H_1$} (m-1-2)
										(m-1-3) edge[barred] node[above] {$H_n$} (m-1-4)
										(m-2-1) edge[barred] node[below] {$J$} (m-2-4);
				\path				(m-1-1) edge[eq] (m-2-1)
										(m-1-4) edge[eq] (m-2-4);
				\path[transform canvas={xshift=1.75em}]	(m-1-2) edge[cell] node[right] {$\phi$} (m-2-2);
				\draw				($(m-1-2)!0.5!(m-1-3)$) node {$\dotsb$};
			\end{tikzpicture}
		\end{displaymath}
	\end{example}
	
	\begin{example} \label{weakly cocartesian cells in Rel(E)}
		Consider a horizontal cell $\phi$ in the virtual double category $\Rel(\E)$ (\exref{internal relations}) of the form as above. Recall that it is given by a morphism of spans $\map\phi{H_1 \times_{X_1} \dotsb \times_{X_{n'}} H_n}J$ as in the bottom left commuting triangle in the diagram below, where the relation $X_0 \xlar{j_0} J \xrar{j_1} X_n$ is drawn as the (jointly monic) pair of morphisms \mbox{$\mono{(j_0, j_1)}J{(X_0, X_n)}$}; composition in this diagram is defined in the obvious way.
		\begin{displaymath}
			\begin{tikzpicture}
				\matrix(m)[math35, column sep=2em, row sep=2.75em]{H_1 \times_{X_1} \dotsb \times_{X_{n'}} H_n &[-1em] & K \\ J & (X_0, X_n) & (C, D) \\};
				\path[map]	(m-1-1) edge[transform canvas={yshift=1pt}] node[above] {$\chi$} (m-1-3)
														edge (m-2-2)
														edge node[left] {$\phi$} (m-2-1)
										(m-1-3) edge[mono] node[right] {$(k_0, k_1)$} (m-2-3)
										(m-2-1) edge[dashed] node[above] {$\psi$} (m-1-3)
														edge[mono, transform canvas={yshift=-1pt}] node[below] {$(j_0, j_1)$} (m-2-2)
										(m-2-2) edge node[below] {$(f, g)$} (m-2-3);
			\end{tikzpicture}
		\end{displaymath}
		The cell $\phi$ is weakly cocartesian (\defref{cartesian cells}) if for any relation $C \xlar{k_0} K \xrar{k_1} D$ and any morphism of spans $\map\chi{H_1 \times_{X_1} \dotsb \times_{X_{n'}} H_n}K$, as in the commuting trapezium on the right in the diagram above, there exists a (necessarily unique) lift $\psi$ making the diagram commute. Such lifts exist in particular when $\phi$ is a \emph{strong epimorphism} in the sense of Section~3 of \cite{Carboni-Kasangian-Street84}: in that case lifts $\map\psi JK$ exist in any commuting square of the form $(k_0, k_1) \of \chi = (p, q) \of \phi$ where $C \xlar p J \xrar q D$ is any span.
	\end{example}
	
	\section{Companions and conjoints} \label{companion and conjoint section}
	Here we study nullary restrictions of the forms $C(f, \id)$ and $C(\id, f)$ where $\map fAC$ is any vertical morphism. These have been called respectively `horizontal companions' and `horizontal adjoints' in the setting of double categories \cite{Grandis-Pare04}; we follow Section~7 of \cite{Cruttwell-Shulman10} and call them `companions' and `conjoints'. We remark that the latter only defines companions and conjoints in virtual double categories that have all horizontal units; in contrast the definition for augmented virtual double categories below does not require horizontal units. As foreshadowed in the discussion following \defref{cartesian cells}, companions and conjoints can be regarded as building blocks for restrictions and extensions as will be explained in \secref{restrictions and extensions in terms of companions and conjoints section}.
	
	\begin{definition}
		Let $\map fAC$ be a vertical morphism in an augmented virtual double category. The nullary restriction $\hmap{C(f, \id)}AC$ is called the \emph{companion} of $f$ and denoted $f_*$. Likewise $\hmap{C(\id, f)}CA$ is called the \emph{conjoint} of $f$ and denoted $f^*$.
	\end{definition}
	
	Notice that the notions of companion and conjoint are interchanged when moving from $\K$ to its horizontal dual $\co\K$ (\defref{horizontal dual}).
	
	\begin{example}
		It follows from \exref{restrictions of V-profunctors} that the companion $f_*$ and conjoint $f^*$ of a $\V$"/functor $\map fAC$ in $\enProf\V$ are the representable $\V$"/profunctors given by $f_*(x, z) = C(fx, z)$ and $f^*(z, x) = C(z, fx)$. Companions and conjoints in $\enProf{(\V, \V')}$ (\exref{(V, V')-Prof}) are characterised in \exref{necessary condition for the existence of companions and conjoints in (V, V')-Prof} below.
	\end{example}
	
	\begin{example} \label{companions and conjoints in Span(E)}
		From \exref{restrictions of spans} it follows that in the augmented virtual double categories $\Span\E$ (\exref{internal profunctors}) and $\Rel(\E)$ (\exref{internal relations}) the companion and conjoint of a morphism $\map fAC$ are the relations $f_* = (A \xlar{\id} A \xrar f C)$ and $f^* = (C \xlar f A \xrar{\id} A)$ in $\E$.
	\end{example}
	
	While the companion and conjoint of a morphism $\map fAC$ have been defined as nullary restrictions along $f$, the following lemma and its horizontal dual show that they can equivalently be defined as extensions. More precisely it gives, for a horizontal morphism $\hmap JAC$, a bijective correspondence between cartesian cells $\psi$ defining $J$ as the companion of $f$ and weakly cocartesian cells $\phi$ defining $J$ as the extension of $(A)$ along $\id_A$ and $f$, in such a way that each corresponding pair $(\psi, \phi)$ satisfies the identities below; these are called the \emph{companion identities}. Horizontally dual identities are satisfied by pairs of corresponding cartesian and weakly cocartesian cells defining a conjoint; these are called the \emph{conjoint identities}. In \cororef{cocartesian cells for companions and conjoints} below we will see that any weakly cocartesian cell defining a companion or conjoint satisfies the stronger notion of `cocartesian cell' in the sense of \secref{composition section}.
	\begin{lemma} \label{companion identities lemma}
		Consider the factorisation of a vertical identity cell on the left below. The following conditions are equivalent: $\psi$ is cartesian; the identity on the right below holds; $\phi$ is weakly cocartesian.
	 	\begin{displaymath}
	  	\begin{tikzpicture}[textbaseline]
						\matrix(m)[math35]{A \\ C \\};
						\path[map]	(m-1-1) edge[bend right=45] node[left] {$f$} (m-2-1)
																edge[bend left=45] node[right] {$f$} (m-2-1);
						\path[transform canvas={xshift=-0.5em}]	(m-1-1) edge[cell] node[right] {$\id_f$} (m-2-1);
			\end{tikzpicture} \quad = \quad \begin{tikzpicture}[textbaseline]
    		\matrix(m)[math35, column sep={1.75em,between origins}]{& A & \\ A & & C \\ & C & \\};
    		\path[map]	(m-1-2) edge node[right] {$f$} (m-2-3)
    								(m-2-1) edge[barred] node[below, inner sep=2pt] {$J$} (m-2-3)
    												edge node[left] {$f$} (m-3-2);
    		\path				(m-1-2) edge[eq, transform canvas={xshift=-1pt}] (m-2-1)
    								(m-2-3) edge[eq, transform canvas={xshift=1pt}] (m-3-2);
    		\path				(m-1-2) edge[cell, transform canvas={yshift=-0.25em}] node[right, inner sep=2pt] {$\phi$} (m-2-2)
    								(m-2-2) edge[cell, transform canvas={yshift=0.1em}]	node[right, inner sep=2pt] {$\psi$} (m-3-2);
  		\end{tikzpicture} \qquad\qquad\quad \begin{tikzpicture}[textbaseline]
  			\matrix(m)[math35, column sep={1.75em,between origins}]{& A & & C \\ A & & C & \\};
  			\path[map]	(m-1-2) edge[barred] node[above] {$J$} (m-1-4)
  													edge node[above right, inner sep=0.5pt] {$f$} (m-2-3)
  									(m-2-1) edge[barred] node[below] {$J$} (m-2-3);
  			\path				(m-1-2) edge[eq, transform canvas={xshift=-1pt}] (m-2-1)
  									(m-1-4) edge[eq, transform canvas={xshift=1pt}] (m-2-3);
  			\path				(m-1-2) edge[cell, transform canvas={yshift=-0.25em}]	node[right, inner sep=2pt] {$\phi$} (m-2-2)
  									(m-1-3) edge[cell, transform canvas={yshift=0.25em}] node[right, inner sep=2pt] {$\psi$} (m-2-3);
  		\end{tikzpicture} \mspace{6mu} = \quad \begin{tikzpicture}[textbaseline]
  			\matrix(m)[math35]{A & C \\ A & C \\};
  			\path[map]	(m-1-1) edge[barred] node[above] {$J$} (m-1-2)
  									(m-2-1) edge[barred] node[below] {$J$} (m-2-2);
  			\path				(m-1-1) edge[eq] (m-2-1)
  									(m-1-2) edge[eq] (m-2-2);
  			\path[transform canvas={xshift=1.75em, xshift=-5.5pt}]	(m-1-1) edge[cell] node[right] {$\id_J$} (m-2-1);
  		\end{tikzpicture}
	  \end{displaymath}
	\end{lemma}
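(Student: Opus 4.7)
The approach is to prove $(a) \iff (b)$ directly and then deduce $(b) \iff (c)$ by the vertically dual argument. The pivot is to extract from the hypothesis $\psi \of (\phi) = \id_f$ the structural identity $\psi \of (\phi, \psi) = \psi$, which is obtained by a single application of the interchange axiom of \lemref{horizontal composition}:
\begin{align*}
\psi = \id_f \hc \psi = \bigl(\psi \of (\phi)\bigr) \hc \bigl(\id_C \of (\psi)\bigr) = (\psi \hc \id_C) \of (\phi, \psi) = \psi \of (\phi, \psi),
\end{align*}
where the outer steps invoke the unit axioms $\id_f \hc \psi = \psi$, $\id_C \of \psi = \psi$, and $\psi \hc \id_C = \psi$. (Here $\id_C$ denotes $\id_{\id_C}$, the vertical identity cell on $\id_C$.)

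For $(a) \implies (b)$, associativity and the vertical unit axiom then give $\psi \of (\phi \hc \psi) = \psi \of \bigl(\id_J \of (\phi, \psi)\bigr) = \psi \of (\phi, \psi) = \psi = \psi \of \id_J$, so both $\id_J$ and $\phi \hc \psi$ are cells $(J) \Rightarrow (J)$ with identity vertical legs that factor $\psi$ through itself; cartesianness of $\psi$ forces $\phi \hc \psi = \id_J$. For $(b) \implies (a)$, given a cell $\chi \colon \ul H \Rightarrow (C)$ with legs $f \of h$ on the left and $k$ on the right, I propose the factorization $\phi' \dfn (\phi \of h) \hc \chi$; the identity $\psi \of \phi' = \chi$ follows from the same interchange pattern, now applied to $\chi = \id_{fh} \hc \chi$ using $\psi \of (\phi \of h) = (\psi \of \phi) \of h = \id_f \of h = \id_{fh}$. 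Uniqueness is obtained by the collapsing chain
\begin{align*}
\phi' = (\phi \of h) \hc (\psi \of \phi'') \stackrel{\text{interch.}}{=} (\phi \hc \psi) \of (\id_h, \phi'') \stackrel{(b)}{=} \id_J \of (\id_h, \phi'') = \id_h \hc \phi'' = \phi''
\end{align*}
valid for any alternative factorization $\phi''$.

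For $(b) \iff (c)$, the vertically dual identity $(\phi \hc \psi) \of (\phi) = \phi$ is obtained by dualising the interchange move: from $\phi = \phi \hc \id_f = (\phi \of \id_A) \hc (\psi \of \phi)$ interchange yields $(\phi \hc \psi) \of (\id_A, \phi) = \phi$, and the middle-insertion unit axiom collapses the leading $\id_A$ to give $(\phi \hc \psi) \of (\phi) = \phi$. This exhibits $\id_J$ and $\phi \hc \psi$ both as factorizations of $\phi$ through itself, so $\phi$ being weakly cocartesian forces equality; conversely, given (b) the required factorization $\cell{\psi''}{(J)}{\ul K}$ of a cell $\chi \colon (A) \Rightarrow \ul K$ is built by the evident dual of the construction above. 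The principal subtlety throughout is recognising that a single interchange move converts the hypothesis $\psi \of (\phi) = \id_f$ into the structural identity $\psi \of (\phi, \psi) = \psi$; everything else reduces to bookkeeping with the unit, associativity, and interchange axioms.
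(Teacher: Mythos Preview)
Your proof is correct and follows essentially the same approach as the paper's sketch: both establish the identities $\psi \of (\phi \hc \psi) = \psi$ and $(\phi \hc \psi) \of \phi = \phi$ (you via explicit interchange manipulations, the paper by simply asserting that ``both sides coincide after composing with $\psi$ below or with $\phi$ above''), deduce $(a)\Rightarrow(b)$ and $(c)\Rightarrow(b)$ from uniqueness of factorisations, and construct the factorisations for the converse directions by composing with $\phi$ on the left or $\psi$ on the right. Your version is simply a fully worked-out expansion of the paper's sketch.
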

	\begin{proof}[(sketch)]
		Notice that both sides of the identity on the right coincide after composing them with $\psi$ below or with $\phi$ above. Hence the identity itself follows when $\psi$ is cartesian or $\phi$ is weakly cocartesian, by the uniqueness of factorisations through (weakly co)cartesian cells.
		
		Conversely if both identities hold then the unique factorisation of any nullary cell $\chi$ through $\psi$, as in \defref{cartesian cells} but with $g = \id_C$, is obtained by composing $\chi$ on the left with $\phi$; this shows that $\psi$ is cartesian. Unique factorisations through $\phi$ are likewise obtained by composing with $\psi$ on the right, showing that $\phi$ is weakly cocartesian.
	\end{proof}
	
	As an immediate consequence we find that, unlike functors between virtual double categories, functors of augmented virtual double categories preserve companions, conjoints and horizontal units.
	\begin{corollary} \label{functors preserve companions and conjoints}
		Any functor of augmented virtual double categories preserves the cartesian and weakly cocartesian cells that define companions, conjoints or horizontal units.
	\end{corollary}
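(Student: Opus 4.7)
The plan is to derive this as an almost immediate consequence of \lemref{companion identities lemma} (and its horizontal dual for conjoints), exploiting that the companion identities involve only vertical composition, vertical and horizontal identity cells, and horizontal composites built from these---all of which any functor of augmented virtual double categories preserves strictly by definition.

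Concretely, suppose $\cell\psi J{(C)}$ is a cartesian cell in $\K$ exhibiting $\hmap JAC$ as the companion of $\map fAC$. By \lemref{companion identities lemma}, $\psi$ factors the vertical identity $\id_f$ together with a (weakly cocartesian) cell $\phi$, and this pair $(\psi, \phi)$ satisfies the companion identities $\psi \of \phi = \id_f$ and $(\phi \hc \psi) = \id_J$ (where the horizontal composite is defined as in \lemref{horizontal composition}). Applying $\map F\K\L$ to both identities, and using that $F$ preserves vertical composition, identity cells $\id_{Ff}$ and $\id_{FJ}$, and hence horizontal composites, yields the identities $F\psi \of F\phi = \id_{Ff}$ and $(F\phi \hc F\psi) = \id_{FJ}$ in $\L$. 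Invoking the converse direction of \lemref{companion identities lemma} now in $\L$, we conclude that $F\psi$ is cartesian---so that $FJ$ is the companion of $Ff$---and simultaneously that $F\phi$ is weakly cocartesian. The same argument, starting instead from a weakly cocartesian cell defining $J$ as the extension of $(A)$ along $\id_A$ and $f$, shows that $F$ preserves such weakly cocartesian cells as well, since each one pairs uniquely with a corresponding cartesian cell under the same identities.

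The conjoint case is handled by the horizontal dual of \lemref{companion identities lemma}; alternatively, one may simply apply the argument above to the induced functor $\map{\co F}{\co\K}{\co\L}$ between horizontal duals (\defref{horizontal dual}), under which conjoints become companions. Horizontal units $I_C = C(\id_C, \id_C)$ are simultaneously the companion and the conjoint of $\id_C$, so their defining cartesian and weakly cocartesian cells are preserved by either of the previous cases.

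I anticipate no genuine obstacle: the entire argument reduces to verifying that the companion (respectively conjoint) identities transport along $F$, which is immediate from the strict preservation clauses in the definition of a functor of augmented virtual double categories. The only mild point of care is to interpret correctly the horizontal composite $\phi \hc \psi$ in the identity $(\phi \hc \psi) = \id_J$---but this is handled uniformly by \lemref{horizontal composition}, whose interchange axioms guarantee that $F$ commutes with the formation of $\hc$ once it commutes with $\of$ and preserves vertical and horizontal identity cells.
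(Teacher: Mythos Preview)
Your proof is correct and follows essentially the same approach as the paper's: both derive the result directly from \lemref{companion identities lemma} by observing that functors preserve the companion and conjoint identities strictly, since these involve only vertical composition and identity cells. Your version is simply more explicit in spelling out the preservation of each identity and in treating the conjoint and horizontal-unit cases separately.
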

	\begin{proof}
		This follows immediately from the fact that functors preserve vertical composition strictly, so that the companion and conjoint identities of (the horizontal dual of) the previous lemma are preserved.
	\end{proof}
	
	\begin{example} \label{necessary condition for the existence of companions and conjoints in (V, V')-Prof}
		In \exref{restrictions in (V, V')-Prof} we saw that a $\V'$"/functor \mbox{$\map fAC$} has a companion $f_*$ in $\enProf{(\V, \V')}$ as soon as all hom-objects $C(fx, z)$ are isomorphic to $\V$-objects. Using the previous lemma we can prove the converse, as follows. If the companion $f_*$ exists in $\enProf{(\V, \V')}$, as a $\V$"/profunctor $\hmap{f_*}AC$, then consider cells $\cell\psi{f_*}C$ and $\cell\phi A{f_*}$ as in the lemma. It is straightforward to check that the companion identities for $\phi$ and $\psi$ imply that the composites below are inverses for the components $f_*(x, z) \to C(fx, z)$ of $\psi$, thus showing that $C(fx, z) \iso f_*(x, z)$, the latter of which are $\V$"/objects for all $x \in A$ and $z \in C$.
		\begin{displaymath}
			C(fx, y) \iso I \tens' C(fx, y) \xrar{\phi_x \tens' \id} J(x, fx) \tens' C(fx, y) \xrar\rho J(x, y)
		\end{displaymath}
	
		Horizontally dual, the conjoint $\hmap{f^*}CA$ exists in $\enProf{(\V, \V')}$ if and only if the hom"/objects $C(z, fx)$ are isomorphic to $\V$"/objects.
	\end{example}
	
	The companion identities of the lemma above, together with the conjoint identities, directly imply the following. The analogous result for unital virtual equipments was proved as Theorem~7.20 of \cite{Cruttwell-Shulman10}.
	\begin{corollary} \label{horizontal cells}
		Let $\map f{A_0}C$ and $\map g{A_n}D$ be morphisms such that the conjoint $f^*$ and the companion $g_*$ exist. Horizontally composing with the cartesian cells defining $f^*$ and $g_*$ gives a bijection between cells of the form
		\begin{displaymath}
			\begin{tikzpicture}[textbaseline]
				\matrix(m)[math35]{A_0 & A_n \\ C & D \\};
				\path[map]	(m-1-1) edge[barred] node[above] {$\ul J$} (m-1-2)
														edge node[left] {$f$} (m-2-1)
										(m-1-2) edge node[right] {$g$} (m-2-2)
										(m-2-1) edge[barred] node[below] {$\ul K$} (m-2-2);
				\path[transform canvas={xshift=1.75em}]	(m-1-1) edge[cell] node[right] {$\phi$} (m-2-1);
			\end{tikzpicture} \quad\qquad \qquad \text{and} \quad\qquad \qquad \begin{tikzpicture}[textbaseline]
				\matrix(m)[math35]{C & D \\ C & D. \\};
				\path[map]	(m-1-1) edge[barred] node[above, inner sep=7pt] {$f^* \conc \ul J \conc g_*$} (m-1-2)
										(m-2-1) edge[barred] node[below] {$\ul K$} (m-2-2);
				\path				(m-1-1) edge[eq] (m-2-1)
										(m-1-2) edge[eq] (m-2-2);
				\path[transform canvas={xshift=1.75em}]	(m-1-1) edge[cell] node[right] {$\psi$} (m-2-1);
			\end{tikzpicture}
		\end{displaymath}
	\end{corollary}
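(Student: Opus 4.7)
My plan is to exhibit an explicit inverse $\beta$ to the map $\alpha\colon \phi \mapsto \tilde\psi_f \hc \phi \hc \tilde\psi_g$ of the corollary, where $\tilde\psi_f$ and $\tilde\psi_g$ denote the cartesian cells defining $f^*$ and $g_*$. Concretely, I would take $\beta(\psi) \dfn \psi \of (\tilde\phi_f, \id_{J_1}, \dotsc, \id_{J_n}, \tilde\phi_g)$, where $\tilde\phi_f$ and $\tilde\phi_g$ are the weakly cocartesian cells paired with $\tilde\psi_f, \tilde\psi_g$ by \lemref{companion identities lemma} and its horizontal dual. These four cells jointly satisfy the conjoint and companion identities
\[ \tilde\psi_f \of \tilde\phi_f = \id_f, \quad \tilde\psi_f \hc \tilde\phi_f = \id_{f^*}, \quad \tilde\psi_g \of \tilde\phi_g = \id_g, \quad \tilde\phi_g \hc \tilde\psi_g = \id_{g_*}, \]
and the proof will use these identities together with the associativity, interchange and unit axioms.

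For $\beta \of \alpha = \id$ I would compute directly: unfolding horizontal composition gives $\alpha(\phi) = \id_{\ul K} \of (\tilde\psi_f, \phi, \tilde\psi_g)$, so $\beta(\alpha(\phi)) = \bigpars{\id_{\ul K} \of (\tilde\psi_f, \phi, \tilde\psi_g)} \of (\tilde\phi_f, \id_{\ul J}, \tilde\phi_g)$. Associativity (permissible because the verticals $f, g$ of $\phi$ match those of $\tilde\psi_f, \tilde\psi_g$) rewrites this as $\id_{\ul K} \of (\tilde\psi_f \of \tilde\phi_f, \phi, \tilde\psi_g \of \tilde\phi_g) = \id_{\ul K} \of (\id_f, \phi, \id_g)$ by the first forms of the identities, after which the unit axioms drop $\id_f, \id_g$ and leave $\phi$.

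For $\alpha \of \beta = \id$ I would argue in two stages. First, I would rewrite $\psi$ using the unit axiom together with the second forms of the identities:
\[ \psi = \psi \of (\id_{f^*}, \id_{\ul J}, \id_{g_*}) = \psi \of (\tilde\psi_f \hc \tilde\phi_f, \id_{\ul J}, \tilde\phi_g \hc \tilde\psi_g) = \psi \of (\tilde\psi_f, \tilde\phi_f, \id_{\ul J}, \tilde\phi_g, \tilde\psi_g), \]
the last equality splitting the inner $\hc$"/composites via the second form of interchange. Second, I would recognise the vertical composite on the right as equal to the horizontal sandwich $\tilde\psi_f \hc \bigbrks{\psi \of (\tilde\phi_f, \id_{\ul J}, \tilde\phi_g)} \hc \tilde\psi_g = \alpha(\beta(\psi))$.

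The main obstacle is this last identification. A naive attempt to distribute the outer horizontal composition via the first form of interchange would produce the composite $\tilde\psi_f \hc \psi \hc \tilde\psi_g$, which is ill-defined: the verticals $\id_C, \id_D$ of $\psi$ fail to meet the verticals $f, g$ required at the $\tilde\psi_f, \tilde\psi_g$ boundaries. The resolution is that the cells $\tilde\phi_f, \tilde\phi_g$ inserted by $\beta$ provide exactly the needed vertical transitions, so the full sandwich $\alpha(\beta(\psi))$ is well-defined; careful bookkeeping of the vertical morphisms through the interchange and unit axioms (using $\tilde\psi_f \of \id_{f^*} = \tilde\psi_f$ and $\tilde\psi_g \of \id_{g_*} = \tilde\psi_g$) identifies the sandwich with the single vertical composite $\psi \of (\tilde\psi_f, \tilde\phi_f, \id_{\ul J}, \tilde\phi_g, \tilde\psi_g)$, completing the proof.
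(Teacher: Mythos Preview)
Your approach is the same as the paper's: construct the inverse $\beta$ via the weakly cocartesian cells and verify $\alpha\beta=\id$, $\beta\alpha=\id$ using the companion and conjoint identities together with the axioms of \lemref{horizontal composition}. The paper simply asserts that these identities ``directly imply'' the result, so your write-up is in fact more detailed.

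One point deserves correction. In the last step you correctly identify the obstacle---that $\tilde\psi_f \hc \psi$ is ill-formed---but the hint you offer for its resolution, namely rewriting $\tilde\psi_f = \tilde\psi_f \of \id_{f^*}$, leads straight back to that ill-formed composite when fed into the first interchange axiom. The trick that works is to rewrite on the \emph{other} side: use $\tilde\psi_f = \id_{\id_C} \of \tilde\psi_f$ (the whisker $\id_C \of \tilde\psi_f$). Then first interchange gives
\[
\tilde\psi_f \hc \beta(\psi) = (\id_{\id_C} \of \tilde\psi_f) \hc \bigpars{\psi \of (\tilde\phi_f, \id_{\ul J}, \tilde\phi_g)} = (\id_{\id_C} \hc \psi) \of (\tilde\psi_f, \tilde\phi_f, \id_{\ul J}, \tilde\phi_g) = \psi \of (\tilde\psi_f, \tilde\phi_f, \id_{\ul J}, \tilde\phi_g),
\]
which is well-formed because the vertical target of $\id_{\id_C}$ is $\id_C$, matching the vertical source of $\psi$. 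A symmetric application on the right, using $\tilde\psi_g = \id_{\id_D} \of \tilde\psi_g$, then yields $\alpha(\beta(\psi)) = \psi \of (\tilde\psi_f, \tilde\phi_f, \id_{\ul J}, \tilde\phi_g, \tilde\psi_g)$, which your first stage already identified with $\psi$.
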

	
	\begin{example} \label{restrictions of internal relations}
		As was recalled in \exref{internal relations}, a relation $J$ internal to a category $\E$ is a span \mbox{$\hmap JAB$} in $\E$ such that any two horizontal cells $\cell{\phi, \psi}HJ$ in $\Span\E$ coincide. Since $\Span\E$ has all companions and conjoints (\exref{companions and conjoints in Span(E)}), by the corollary the latter is equivalent to asking that any two cells $\cell{\phi, \psi}HJ$, of the same shape but not necessarily horizontal, coincide in $\Span\E$.
		
		Now consider a unary restriction $K(f, g)$ of a relation $K$ in $\Span\E$ (\exref{restrictions of spans}). By the universal property of $K(f, g)$ and the preceding it follows that $K(f, g)$ is again a relation and thus, using \lemref{locally full and faithful functors reflect cartesian cells}, forms the restriction of $K$ in $\Rel(\E)$. Since $\Rel(\E)$ has all nullary restrictions as well (\exref{restrictions of spans}), we conclude that $\Rel(\E)$ is a unital virtual equipment.
	\end{example}
	
	Since horizontal units $I_C$ are a special kind of companions, i.e.\ $I_C \dfn C(\id, \id) = (\id_C)_*$ (see the discussion following \defref{cartesian cells}), they too are defined by pairs $(\psi, \phi)$ of cells satisfying two `horizontal unit identities', as the lemma below explains. It also shows that the cells $\psi$ and $\phi$ are both cartesian as well as weakly cocartesian; in \lemref{cocartesian unit identities} below we will see that they are `cocartesian' as well, in the sense of \secref{composition section}.
	\begin{lemma} \label{unit identities}
		Consider cells $\psi$ and $\phi$ as in the identities below, and assume that either identity holds. The following conditions are equivalent: \textup{(a)} $\psi$ is cartesian; \textup{(b)} $\psi$ is weakly cocartesian; \textup{(c)} both identities hold; \textup{(d)} $\phi$ is cartesian; \textup{(e)} $\phi$ is  weakly cocartesian.
		\begin{displaymath}
			\begin{tikzpicture}[textbaseline]
				\matrix(m)[math35, column sep={1.75em,between origins}]{A \\ A \\};
				\path				(m-1-1) edge[bend left = 55, eq] (m-2-1)
														edge[bend right = 55, eq] (m-2-1);
				\path[transform canvas={xshift=-0.6em}]	(m-1-1) edge[cell] node[right] {$\id_A$} (m-2-1);
			\end{tikzpicture} \quad \overset{\textup{(A)}} = \quad \begin{tikzpicture}[textbaseline]
    		\matrix(m)[math35, column sep={1.75em,between origins}]{& A & \\ A & & A \\ & A & \\};
    		\path[map]	(m-2-1) edge[barred] node[below, inner sep=1.5pt] {$J$} (m-2-3);
    		\path				(m-1-2) edge[transform canvas={xshift=2pt}, eq] (m-2-3)
    								(m-2-3)	edge[transform canvas={xshift=2pt}, eq] (m-3-2)
    								(m-1-2) edge[eq, transform canvas={xshift=-2pt}] (m-2-1)
    								(m-2-1) edge[eq, transform canvas={xshift=-2pt}] (m-3-2);
    		\path				(m-1-2) edge[cell, transform canvas={yshift=-0.25em}] node[right] {$\phi$} (m-2-2)
    								(m-2-2) edge[cell, transform canvas={yshift=0.1em}]	node[right] {$\psi$} (m-3-2);
  		\end{tikzpicture} \qquad\qquad\qquad\quad \begin{tikzpicture}[textbaseline]
  			\matrix(m)[math35]{A & A \\ A & A \\};
  			\path[map]	(m-1-1) edge[barred] node[above] {$J$} (m-1-2)
  									(m-2-1) edge[barred] node[below] {$J$} (m-2-2);
  			\path				(m-1-1) edge[eq] (m-2-1)
  									(m-1-2) edge[eq] (m-2-2);
  			\path[transform canvas={xshift=1.75em, xshift=-5.5pt}]	(m-1-1) edge[cell] node[right] {$\id_J$} (m-2-1);
  		\end{tikzpicture} \quad \overset{\textup{(J)}} = \quad \begin{tikzpicture}[textbaseline]
  			\matrix(m)[math35, column sep={1.75em,between origins}]{A & & A \\ & A & \\ A & & A \\};
  			\path[map]	(m-1-1) edge[barred] node[above] {$J$} (m-1-3)
  									(m-3-1) edge[barred] node[below] {$J$} (m-3-3);
  			\path				(m-1-1) edge[eq, transform canvas={xshift=-2pt}] (m-2-2)
  									(m-1-3) edge[eq, transform canvas={xshift=2pt}] (m-2-2)
  									(m-2-2) edge[eq, transform canvas={xshift=-2pt}] (m-3-1)
  													edge[eq, transform canvas={xshift=2pt}] (m-3-3);
  			\path				(m-1-2) edge[cell, transform canvas={yshift=0.25em}] node[right] {$\psi$} (m-2-2)
  									(m-2-2) edge[cell, transform canvas={yshift=-0.25em}] node[right] {$\phi$} (m-3-2);
  		\end{tikzpicture}
  	\end{displaymath}
  	Consequently any cell of the form as $\psi$ or $\phi$ above is cartesian if and only if it is weakly cocartesian.
	\end{lemma}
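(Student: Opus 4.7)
Since the horizontal unit $I_A = J$ is simultaneously the companion $(\id_A)_*$ and the conjoint $(\id_A)^*$ of the identity $\id_A$, this lemma is essentially \lemref{companion identities lemma} and its horizontal dual applied to $f = \id_A$. The plan is first to verify, via the interchange axiom of \lemref{horizontal composition}, that for cells $\phi, \psi$ whose verticals are all identities one has $\phi \hc \psi = \phi \of \psi = \psi \hc \phi$. Specifically, applying interchange to $(\phi \of (\id_A)) \hc (\id_A \of (\psi)) = (\phi \hc \id_A) \of (\id_A, \psi)$ and then the unit axioms on both sides yields $\phi \hc \psi = \phi \of \psi$; a symmetric substitution gives $\psi \hc \phi = \phi \of \psi$. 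Hence the identity (J) coincides with both the second companion identity $\phi \hc \psi = \id_J$ and the second conjoint identity $\psi \hc \phi = \id_J$ for $\id_A$.

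Under hypothesis (A), which is the first companion (and conjoint) identity for $\id_A$, \lemref{companion identities lemma} gives (a) $\iff$ (J) $\iff$ (e), while its horizontal dual gives (d) $\iff$ (J) $\iff$ (b); together with (c) being (A) and (J), this establishes the equivalence of all five conditions. For the remaining case where (J) is the hypothesis but (A) is not yet known, I would argue directly that each of (a), (b), (d), (e) combined with (J) implies (A), and hence (c). For example, assuming (J) together with $\psi$ being cartesian, factorise $\id_A$ uniquely as $\psi \of \chi'$ for some $\chi'\colon (A) \Rar (J)$; composing with $\phi$ on the left then gives $\phi = \phi \of \id_A = \phi \of (\psi \of \chi') = (\phi \of \psi) \of \chi' = \id_J \of \chi' = \chi'$, so $\chi' = \phi$ and (A) follows. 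The cases (b), (d), (e) combined with (J) are handled by analogous factorisations of $\id_A$ or $\id_J$ through the given cartesian or weakly cocartesian cell, with the factoriser identified via composition and the known identity.

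The final claim, that any cell of the shape of $\psi$ or $\phi$ is cartesian if and only if it is weakly cocartesian, follows by combining the equivalences with the universal properties: a cartesian $\psi$ admits a unique factorisation of $\id_A$ through it, yielding a cell $\phi$ satisfying (A), whereupon by the equivalences $\psi$ is also weakly cocartesian; the converse and the analogue for $\phi$ are symmetric. The main subtlety here is the preliminary interchange-based identification of the three compositions $\phi \hc \psi$, $\phi \of \psi$, and $\psi \hc \phi$; after this step, the rest reduces to routine applications of \lemref{companion identities lemma}, its horizontal dual, and the universal properties of cartesian and weakly cocartesian cells, together with (A), (J), associativity, and the unit axioms.
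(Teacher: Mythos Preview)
Your overall strategy is sound and closely parallels the paper's, but there is a genuine gap at the point where you invoke ``its horizontal dual gives (d) $\iff$ (J) $\iff$ (b)'' under hypothesis~(A). When $f = \id_A$, the shapes of the cells in the conjoint lemma coincide with those in the companion lemma: in both cases the $(1,0)$-ary cell $J \Rightarrow A$ plays the cartesian role and the $(0,1)$-ary cell $A \Rightarrow J$ plays the weakly cocartesian role. So the horizontal dual of \lemref{companion identities lemma}, specialised to $f = \id_A$, again yields exactly (a) $\iff$ (J) $\iff$ (e), not (d) $\iff$ (J) $\iff$ (b). The horizontal dual swaps companion with conjoint, but it does not swap which of the two cells is asserted cartesian versus weakly cocartesian.

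To incorporate (d) and (b) the paper uses genuinely different ingredients. Under~(A) it obtains (a) $\Rightarrow$ (d) from the pasting lemma for cartesian cells (\lemref{pasting lemma for cartesian cells}) applied to $\id_A = \psi \of \phi$, together with the fact that $\id_A$ is cartesian; it then shows (d) $\Rightarrow$ (c) by a direct factorisation argument of the same flavour as your sample ``(J) $+$ (a) $\Rightarrow$ (A)'' argument. The implications (e) $\Rightarrow$ (b) $\Rightarrow$ (c) are obtained by \emph{vertically} dual reasoning (a pasting lemma for weakly cocartesian cells and an analogous factorisation), not by horizontal duality. Your direct arguments under hypothesis~(J) are fine and match the paper's; the fix is to replace the appeal to the horizontal dual by these pasting/factorisation arguments for (d) and (b).
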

	\begin{proof}
		We will show that under the assumption of the identity (A) the implications (a)~$\Leftrightarrow$~(c)~$\Leftrightarrow$~(e) and (a) $\Rightarrow$ (d) $\Rightarrow$ (c) hold while, under the assumption of (J), either (a) or (d) implies (A). Vertically dual, one similarly shows that (e)~$\Rightarrow$~(b)~$\Rightarrow$~(c) under the assumption of (A), while (e) or (b) implies (J) $\Rightarrow$ (A). Together these complete the proof of the main assertion.
		
		Assuming (A) first notice that (a), (c) and (e) are equivalent by \lemref{companion identities lemma}, using the fact that $\phi \of \psi = \phi \hc \psi$ by the interchange axioms (\lemref{horizontal composition}). Applying the pasting lemma to (A) shows that (a) $\Rightarrow$ (d).
		
		Next we show that under the assumption of (d) the identities (A) and (J) are equivalent so that, in particular, (d) $\Rightarrow$ (c) follows from (A). If (d) holds, that is $\phi$ is cartesian, then there exists a unique cell $\psi'$ such that $\id_J = \phi \of \psi'$. Because $\phi \of \psi' \of \phi = \phi$ and $\phi$ is cartesian, $\psi' \of \phi = \id_A$ follows. If (A) holds then $\psi = \psi \of \phi \of \psi' = \psi'$ follows, so that $\id_J = \phi \of \psi' = \phi \of \psi$, which is (J). On the other hand if (J) then $\psi = \psi' \of \phi \of \psi = \psi'$, so that $\id_A = \psi' \of \phi = \psi \of \phi$, which is (A).
		
		It remains to prove that (a) implies (J) $\Rightarrow$ (A). If (a) holds then $\id_A$ factors as $\id_A = \psi \of \phi'$; assuming (J) we then have $\phi = \phi \of \psi \of \phi' = \phi'$ so that (A) follows. For the final assertion notice that any (weakly co)cartesian cell of the form $\phi$ or $\psi$ can be used to obtain a factorisation of either form (A) or (J), so that the equivalence follows from applying the main assertion.
	\end{proof}
	
	The following is similar to \cororef{horizontal cells}.
	\begin{corollary} \label{vertical cells}
		Let $A$ and $C$ be unital objects. Vertically composing with the cartesian cells $I_A \Rightarrow A$ and $C \Rightarrow I_C$ that define the horizontal units $I_A$ and $I_C$ gives a bijection between cells of the form
		\begin{displaymath}
			\begin{tikzpicture}[textbaseline]
			\matrix(m)[math35]{A \\ C \\};
			\path[map]	(m-1-1) edge[bend right=45] node[left] {$f$} (m-2-1)
													edge[bend left=45] node[right] {$g$} (m-2-1);
			\path				(m-1-1) edge[cell] node[right] {$\phi$} (m-2-1);
		\end{tikzpicture} \quad \qquad \qquad \text{and} \quad \qquad \qquad \begin{tikzpicture}[textbaseline]
				\matrix(m)[math35]{A & A \\ C & C \\};
				\path[map]	(m-1-1) edge[barred] node[above] {$I_A$} (m-1-2)
														edge node[left] {$f$} (m-2-1)
										(m-1-2) edge node[right] {$g$} (m-2-2)
										(m-2-1) edge[barred] node[below] {$I_C$} (m-2-2);
				\path[transform canvas={xshift=1.75em}]	(m-1-1) edge[cell] node[right] {$\psi$} (m-2-1);
			\end{tikzpicture}
		\end{displaymath}
		which preserves cartesian cells.
	\end{corollary}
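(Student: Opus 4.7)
The plan is to construct the bijection explicitly via two-step factorizations through $\psi_A$ and $\phi_C$, and then verify preservation of cartesianness using \lemref{pasting lemma for cartesian cells} together with a short direct argument. First, by applying \lemref{unit identities} to each of $I_A$ and $I_C$, the cartesian cell $\psi_A \colon (I_A) \Rar (A)$ defining $I_A$ is also weakly cocartesian, and there exists a cell $\phi_C \colon (C) \Rar (I_C)$---namely the unique factorization of $\id_C$ through the cartesian cell defining $I_C$---that is simultaneously cartesian and weakly cocartesian.

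The forward map is $F(\phi) \dfn \phi_C \of \phi \of \psi_A$, which by the associativity axiom is independent of bracketing, and whose shape, vertical source, and vertical target are verified by a direct check to give a cell $(I_A) \Rar (I_C)$ with vertical source $f$ and target $g$. The inverse $G$ is defined in two steps: given $\psi \colon (I_A) \Rar (I_C)$ with vertical source $f$ and target $g$, factor $\psi = \phi_C \of \psi'$ uniquely via cartesianness of $\phi_C$, obtaining $\psi' \colon (I_A) \Rar (C)$; then factor $\psi' = \phi \of \psi_A$ uniquely via weak cocartesianness of $\psi_A$, obtaining a vertical cell $\phi \colon f \Rar g$. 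That $F$ and $G$ are mutually inverse follows from the uniqueness of both factorizations, combined with the observation that $F(\phi) = \phi_C \of (\phi \of \psi_A)$ already exhibits the very factorizations used by $G$.

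To show $\phi$ is cartesian iff $\psi$ is, first write $\psi = \phi_C \of (\phi \of \psi_A)$ and apply \lemref{pasting lemma for cartesian cells} with $\phi_C$ in the role of the bottom cartesian cell; this reduces the question to the equivalence ``$\phi \of \psi_A$ cartesian iff $\phi$ cartesian''. For the latter, note that cartesianness of $\psi_A$ yields a bijection $\rho \mapsto \psi_A \of \rho$ between cells $\ul H \Rar (I_A)$ and cells $\ul H \Rar (A)$, under which the factorizations of any $\chi \colon \ul H \Rar (C)$ through $\phi \of \psi_A$ correspond exactly to its factorizations through $\phi$; hence uniqueness and existence of such factorizations transfer in both directions.

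The main obstacle is this last step: \lemref{pasting lemma for cartesian cells} as stated fixes the \emph{bottom} cell as cartesian, so the required ``top-cartesian'' variant for $\psi_A$ has to be handled by the direct bijection argument above rather than by a second invocation of the pasting lemma. The remaining bookkeeping---alternating between cartesian and weakly cocartesian universality in the construction of $G$, and checking that $f$ and $g$ propagate correctly through the composites and factorizations---is routine.
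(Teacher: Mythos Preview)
Your proof is correct and follows essentially the approach the paper intends: the corollary is stated without proof beyond ``similar to \cororef{horizontal cells}'', and both that corollary and yours rest on the identity pairs supplied by \lemref{companion identities lemma} and \lemref{unit identities} respectively.

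Two minor simplifications are available. First, rather than constructing $G$ via two successive factorizations, you can write it down directly as $G(\psi) \dfn \psi_C \of \psi \of \phi_A$, where $\psi_C \colon I_C \Rar C$ is the cartesian cell defining $I_C$ and $\phi_A \colon A \Rar I_A$ its counterpart from \lemref{unit identities}; the unit identities $(\textup A)$ and $(\textup J)$ for both $A$ and $C$ then give $G \of F = \id$ and $F \of G = \id$ immediately. Second, your final ``top-cartesian'' step can also be handled by the pasting lemma: one direction is $\phi$ cartesian $\Rightarrow$ $\phi \of \psi_A$ cartesian (pasting lemma with bottom $\phi$, since $\psi_A$ is cartesian); for the converse write $\phi = (\phi \of \psi_A) \of \phi_A$ using $\psi_A \of \phi_A = \id_A$, then apply the pasting lemma with bottom $\phi \of \psi_A$, using that $\phi_A$ is cartesian by \lemref{unit identities}. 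Your direct bijection argument is of course also fine.
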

	Restricting to vertical identity cells $\phi = \id_f$ in the above we find that choosing a horizontal unit $I_A$ for each unital object $A$ in an augmented virtual double category extends uniquely to a functorial assignment
	\begin{displaymath}
		(\map fAC) \quad \qquad \mapsto \quad \qquad \begin{tikzpicture}[textbaseline]
				\matrix(m)[math35]{A & A \\ C & C, \\};
				\path[map]	(m-1-1) edge[barred] node[above] {$I_A$} (m-1-2)
														edge node[left] {$f$} (m-2-1)
										(m-1-2) edge node[right] {$f$} (m-2-2)
										(m-2-1) edge[barred] node[below] {$I_C$} (m-2-2);
				\path[transform canvas={xshift=1.75em}]	(m-1-1) edge[cell] node[right] {$I_f$} (m-2-1);
			\end{tikzpicture}
	\end{displaymath}
	where $f$ is full and faithful (\defref{full and faithful morphism}) if and only if $I_f$ is cartesian.
		
	The remainder of this section records some useful properties of companions, conjoints and horizontal units. The first of these is an immediate consequence of the pasting lemma for cartesian cells (\lemref{pasting lemma for cartesian cells}).
	\begin{lemma} \label{companion of a composite}
		Let $\map fAC$ and $\map hCE$ be morphisms and assume that the companion $\hmap{h_*}CE$ exists. The companion $(h \of f)_*$ exists if and only if the restriction $h_*(f, \id)$ does, and in that case they are isomorphic.
		\begin{displaymath}
			\begin{tikzpicture}[textbaseline]
  			\matrix(m)[math35, column sep={0.875em,between origins}]
  				{	A & & & & E \\
  					& C & & & \\
  					& & E & & \\ };
  			\path[map]	(m-1-1) edge[barred] node[above] {$J$} (m-1-5)
  													edge[transform canvas={xshift=-1pt}] node[left] {$f$} (m-2-2)
  									(m-2-2) edge[transform canvas={xshift=-1pt}] node[left] {$h$} (m-3-3);
  			\path				(m-1-3) edge[cell, transform canvas={yshift=-0.5em}] node[right] {$\chi$} (m-2-3)
  									(m-1-5) edge[eq, transform canvas={xshift=1pt}] (m-3-3);
  		\end{tikzpicture} \quad = \quad \begin{tikzpicture}[textbaseline]
  			\matrix(m)[math35, column sep={1.875em,between origins}]{A & & E \\ C & & E \\ & E & \\};
  			\path[map]	(m-1-1) edge[barred] node[above] {$J$} (m-1-3)
  													edge node[left] {$f$} (m-2-1)
  									(m-2-1) edge[barred] node[below] {$h_*$} (m-2-3)
  													edge[transform canvas={xshift=-2pt}] node[left] {$h$} (m-3-2);
  			\path				(m-1-3) edge[eq] (m-2-3)
  									(m-2-3) edge[eq, transform canvas={xshift=2pt}] (m-3-2)
  									(m-1-2) edge[cell] node[right] {$\psi$} (m-2-2);
        \draw[font=\scriptsize]	([yshift=0.25em]$(m-2-2)!0.5!(m-3-2)$) node {$\cart$};
  		\end{tikzpicture}
		\end{displaymath}
		In detail, the cell $\chi$ above is cartesian if and only if the cell $\psi$ is.
	\end{lemma}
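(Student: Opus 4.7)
The plan is to derive this as a direct application of the pasting lemma for cartesian cells (\lemref{pasting lemma for cartesian cells}). The displayed equation exhibits $\chi$ as the vertical composite of two cells: the upper cell is $\psi$, with horizontal target $h_*$, and the lower cell is the cartesian cell that defines $\hmap{h_*}CE$ as the nullary restriction $E(h, \id_E)$. By the hypothesis that $h_*$ exists, the lower cell is cartesian; applying \lemref{pasting lemma for cartesian cells} immediately yields the ``in detail'' conclusion that $\chi$ is cartesian if and only if $\psi$ is.

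From this equivalence the first assertion follows by interpreting each cell via its universal property. A cartesian cell of the shape of $\chi$, with vertical sources $h \of f$ and $\id_E$ and empty horizontal target, exhibits its horizontal source $J$ as the nullary restriction $E(h \of f, \id_E)$, which by definition is the companion $(h \of f)_*$. A cartesian cell of the shape of $\psi$ exhibits $J$ as the unary restriction $h_*(f, \id_C)$. Combined with the fact that restrictions are unique up to invertible horizontal cells, it follows that $(h \of f)_*$ exists if and only if $h_*(f, \id_C)$ does, and when both exist they are each canonically isomorphic to $J$, hence to each other.

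There is no real obstacle in this argument; the one small point worth noting is that any $\chi$ of the indicated shape does factor through the cartesian cell defining $h_*$ as asserted, by the universal property of the latter---so the displayed equation is simply the canonical such factorisation, and the result really is nothing more than the pasting lemma applied in this configuration.
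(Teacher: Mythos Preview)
Your proof is correct and takes exactly the same approach as the paper, which simply states that the lemma is an immediate consequence of the pasting lemma for cartesian cells (\lemref{pasting lemma for cartesian cells}). One tiny slip: the restriction defined by a cartesian $\psi$ is $h_*(f, \id_E)$, not $h_*(f, \id_C)$, since the right vertical boundary of $\psi$ is the identity on $E$; this does not affect the argument.
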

	
	\begin{lemma} \label{pasting lemma for full and faithful morphisms}
	  Let $\map fAC$, $\map gBC$ and $\map hCE$ be morphisms and assume that $h$ is full and faithful. The nullary restriction $C(f, g)$ exists if and only if \mbox{$\hmap{E(h \of f, h \of g)}AC$} does, and in that case they are isomorphic.
		\begin{displaymath}
			\begin{tikzpicture}[textbaseline]
  			\matrix(m)[math35, column sep={0.875em,between origins}]
  				{	A & & & & B \\
  					& C & & C & \\
  					& & E & & \\ };
  			\path[map]	(m-1-1) edge[barred] node[above] {$J$} (m-1-5)
  													edge[transform canvas={xshift=-1pt}] node[left] {$f$} (m-2-2)
  									(m-2-2) edge[transform canvas={xshift=-1pt}] node[left] {$h$} (m-3-3)
  									(m-1-5) edge[transform canvas={xshift=1pt}] node[right] {$g$} (m-2-4)
  									(m-2-4) edge[transform canvas={xshift=1pt}] node[right] {$h$} (m-3-3);
  			\path				(m-1-3) edge[cell, transform canvas={yshift=-0.5em}] node[right] {$\chi$} (m-2-3);
  		\end{tikzpicture} \quad = \quad \begin{tikzpicture}[textbaseline]
				\matrix(m)[math35, column sep={1.875em,between origins}]{A & & B \\ & C & \\ & E & \\};
				\path[map]	(m-1-1) edge[barred] node[above] {$J$} (m-1-3)
														edge[transform canvas={xshift=-2pt}] node[left] {$f$} (m-2-2)
										(m-1-3) edge[transform canvas={xshift=2pt}] node[right] {$g$} (m-2-2)
										(m-2-2) edge[bend right=45] node[left] {$h$} (m-3-2)
														edge[bend left=45] node[right] {$h$} (m-3-2);
				\path				(m-1-2) edge[cell, transform canvas={yshift=0.25em}] node[right] {$\psi$} (m-2-2)
										(m-2-2) edge[cell, transform canvas={xshift=-0.35em}] node[right] {$\id$} (m-3-2);
			\end{tikzpicture}
		\end{displaymath}
		In detail, the cell $\chi$ above is cartesian if and only if the cell $\psi$ is.
	\end{lemma}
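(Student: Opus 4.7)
The plan is to observe that the diagram on the right-hand side displays $\chi$ as a vertical composite $\chi = \id_h \of \psi$, and then to invoke the pasting lemma for cartesian cells (\lemref{pasting lemma for cartesian cells}). Since $h$ is assumed full and faithful, by \defref{full and faithful morphism} its vertical identity cell $\id_h$ is cartesian. The pasting lemma then immediately yields that $\chi = \id_h \of \psi$ is cartesian if and only if $\psi$ is cartesian, which is precisely the "in detail" part of the claim.

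From this local statement the existence claim follows in both directions. If $C(f, g)$ exists with cartesian defining cell $\cell\psi{(J)}C$, then $\chi \dfn \id_h \of \psi$ is a cartesian cell exhibiting the same horizontal morphism $J$ as $E(h \of f, h \of g)$. Conversely, assume $E(h \of f, h \of g)$ exists, defined by a cartesian cell $\cell{\chi'}{(J')}E$ with vertical source $h \of f$ and target $h \of g$. Using that $\id_h$ is cartesian, the defining property of cartesian cells gives a unique factorisation $\chi' = \id_h \of \psi'$ for some nullary cell $\cell{\psi'}{(J')}C$ with vertical source $f$ and target $g$; by the pasting lemma $\psi'$ is cartesian, so $J' \iso C(f, g)$.

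The main (really the only) point is thus that $\id_h$ is cartesian, after which the result is formal. No obstacle is expected; the lemma is a direct companion to \lemref{companion of a composite}, differing only in that the cartesian cell appearing in the pasting is supplied by the full and faithfulness of $h$ rather than by a companion.
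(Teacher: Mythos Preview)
Your proposal is correct and follows exactly the paper's approach: the paper's proof simply notes that $\id_h$ is cartesian by \defref{full and faithful morphism} and then applies the pasting lemma (\lemref{pasting lemma for cartesian cells}) to the factorisation $\chi = \id_h \of \psi$. Your additional paragraph spelling out the existence claim in both directions is fine and makes explicit what the paper leaves implicit.
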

	\begin{proof}
		Because $h$ is full and faithful its identity cell is cartesian by \defref{full and faithful morphism}. The proof follows immediately from applying the pasting lemma (\lemref{pasting lemma for cartesian cells}) to the factorisation above.  
	\end{proof}

	Recall that any isomorphism $\map hCE$ is full and faithful (\exref{isomorphisms are full and faithful}), so that taking $g = \inv h$ in previous lemma gives the following.

	\begin{corollary} \label{companions of morphisms composed with an isomorphism}
		Let $\map fAC$ and $\map hCE$ be morphisms and assume that $h$ is an isomorphism. The companion $(h \of f)_*$ exists if and only if the nullary restriction $C(f, \inv h)$ does, and in that case they are isomorphic.
	\end{corollary}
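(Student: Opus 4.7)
The plan is to apply \lemref{pasting lemma for full and faithful morphisms} directly, taking $g \dfn \inv h$ (so that in the notation of the lemma we have $B = E$). The hypothesis that $h$ be full and faithful is satisfied because isomorphisms are full and faithful by \exref{isomorphisms are full and faithful}. With this choice, $h \of g = h \of \inv h = \id_E$, and therefore the restriction $E(h \of f, h \of g)$ appearing in the lemma becomes $E(h \of f, \id_E)$, which is by definition the companion $(h \of f)_*$. The lemma's equivalence then yields that $(h \of f)_*$ exists if and only if $C(f, \inv h)$ does, and its ``in detail'' statement (that a nullary cell is cartesian iff its factorisation is) shows that the two horizontal morphisms, when they exist, are defined by cartesian cells with the same vertical source and target; hence they are isomorphic via the unique invertible horizontal cell between them, as per the discussion following \defref{cartesian cells}.

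No step here is genuinely an obstacle; the only point worth checking is that the precomposition by $\id_E$ on the right in the lemma's composite factorisation reduces, up to the unit axioms of vertical composition, to the configuration defining the companion in \lemref{companion identities lemma}. Since vertical identities act strictly on the right by the unit axioms of \defref{augmented virtual double category}, this identification is immediate and no bookkeeping of coherence data is required.
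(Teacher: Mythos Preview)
Your proof is correct and follows exactly the same approach as the paper: apply \lemref{pasting lemma for full and faithful morphisms} with $g = \inv h$, using that isomorphisms are full and faithful (\exref{isomorphisms are full and faithful}), so that $E(h \of f, h \of g) = E(h \of f, \id_E) = (h \of f)_*$. The paper's proof is the one-line remark preceding the corollary, and your additional paragraph about unit axioms is just making explicit what the paper leaves implicit.
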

	
	Together with \lemref{unit identities}, \lemref{pasting lemma for full and faithful morphisms} implies the following.
	\begin{lemma} \label{full and faithfulness and horizontal units}
		Consider the factorisation on the left below. Any two of the following properties imply the third:
		\begin{enumerate}[label=\textup{(\alph*)}]
			\item the cell $\chi$ is cartesian (defining $J$ as the nullary restriction $E(h, h)$);
			\item the cell $\psi$ is cartesian (defining $J$ as the horizontal unit of $C$);
			\item the morphism $h$ is full and faithful.
		\end{enumerate}
		Moreover if \textup{(a)} holds then \textup{(b)} is equivalent to
		\begin{enumerate}
			\item[\textup{(b')}] the factorisation $\id'$, as on the right below, is cartesian.
		\end{enumerate}
		\begin{displaymath}
			\begin{tikzpicture}[textbaseline]
  			\matrix(m)[math35, column sep={1.75em,between origins}]{C & & C \\ & E & \\};
				\path[map]	(m-1-1) edge[barred] node[above] {$J$} (m-1-3)
														edge[transform canvas={xshift=-1pt}] node[left] {$h$} (m-2-2)
										(m-1-3) edge[transform canvas={xshift=1pt}] node[right] {$h$} (m-2-2);
				\path[transform canvas={yshift=0.25em}]	(m-1-2) edge[cell] node[right, inner sep=3pt] {$\chi$} (m-2-2);
			\end{tikzpicture} \quad = \quad \begin{tikzpicture}[textbaseline]
				\matrix(m)[math35, column sep={1.75em,between origins}]{C & & C \\ & C & \\ & E & \\};
				\path[map]	(m-1-1) edge[barred] node[above] {$J$} (m-1-3)
										(m-2-2) edge[bend right=45] node[left] {$h$} (m-3-2)
														edge[bend left=45] node[right] {$h$} (m-3-2);
				\path				(m-1-1) edge[eq, transform canvas={xshift=-2pt}] (m-2-2)
										(m-1-3) edge[eq, transform canvas={xshift=2pt}] (m-2-2)
										(m-1-2)	edge[cell, transform canvas={yshift=0.25em}] node[right] {$\psi$} (m-2-2)
										(m-2-2) edge[cell, transform canvas={xshift=-0.35em}] node[right] {$\id$} (m-3-2);
			\end{tikzpicture} \qquad \qquad \quad \qquad \begin{tikzpicture}[textbaseline]
						\matrix(m)[math35]{C \\ E \\};
						\path[map]	(m-1-1) edge[bend right=45] node[left] {$h$} (m-2-1)
																edge[bend left=45] node[right] {$h$} (m-2-1);
						\path[transform canvas={xshift=-0.35em}]	(m-1-1) edge[cell] node[right] {$\id$} (m-2-1);
			\end{tikzpicture} \quad = \quad \begin{tikzpicture}[textbaseline]
    		\matrix(m)[math35, column sep={1.75em,between origins}]{& C & \\ C & & C \\ & E & \\};
    		\path[map]	(m-2-1) edge[barred] node[below, inner sep=2pt] {$J$} (m-2-3)
    												edge[transform canvas={xshift=-2pt}] node[left] {$h$} (m-3-2)
    								(m-2-3) edge[transform canvas={xshift=2pt}] node[right] {$h$} (m-3-2);
    								
    		\path				(m-1-2) edge[eq, transform canvas={xshift=-2pt}] (m-2-1)
    												edge[eq, transform canvas={xshift=2pt}] (m-2-3);
    		\path				(m-1-2) edge[cell, transform canvas={yshift=-0.25em, xshift=-0.45em}] node[right] {$\id'$} (m-2-2)
    								(m-2-2) edge[cell, transform canvas={yshift=0.1em}]	node[right] {$\chi$} (m-3-2);
  		\end{tikzpicture}
  	\end{displaymath}
		Consequently any full and faithful morphism $\map hCE$ in an augmented virtual equipment (\defref{augmented virtual equipment}) `reflects unitality': if the target $E$ is unital then so is the source $C$.
	\end{lemma}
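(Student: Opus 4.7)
My plan is to reduce everything to the pasting lemma (\lemref{pasting lemma for cartesian cells}) together with \lemref{unit identities}, leveraging the two factorisations displayed in the statement.

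I first establish the ``moreover'' part. Assuming \textup{(a)}, the unique factorisation $\id_h = \chi \of \id'$ through the cartesian cell $\chi$ exists, as displayed on the right. Using the identity $\id_h \of \psi = \chi = \chi \of \id_J$ and the uniqueness of factorisations through $\chi$, one computes $\id' \of \psi = \id_J$. This is precisely identity~(J) of \lemref{unit identities} applied to the pair $(\psi, \id')$: both are cells between the empty path at $C$ and $J$ with identity verticals, with $\psi$ nullary and $\id'$ being $(0,1)$-ary. That lemma then delivers \textup{(b)}~$\iff$~\textup{(b')}.

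For the main trichotomy, the two implications in which \textup{(c)} appears as a hypothesis follow directly from the pasting lemma applied to the left-hand factorisation $\chi = \id_h \of \psi$, since \textup{(c)} exactly asserts that $\id_h$ is cartesian. For the remaining implication, \textup{(a)}+\textup{(b)}~$\Rightarrow$~\textup{(c)}, I invoke the moreover part: \textup{(a)}+\textup{(b)} implies \textup{(b')}, whence the pasting lemma applied to the right-hand factorisation $\id_h = \chi \of \id'$ (now using that $\chi$ is cartesian by \textup{(a)}) shows that $\id_h$ is cartesian, i.e.~\textup{(c)}.

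For the reflection of unitality, suppose $E$ is unital in an augmented virtual equipment. Then the unary restriction $I_E(h, h)$ exists, and by \cororef{restrictions in terms of units} so does the nullary restriction $E(h, h) \iso I_E(h, h)$, with some defining cartesian cell $\chi$. Since \textup{(c)} holds, $\id_h$ is cartesian, so $\chi$ factors uniquely as $\chi = \id_h \of \psi$. The main result, applied to this factorisation under \textup{(a)} and \textup{(c)}, yields \textup{(b)}: $\psi$ is cartesian, exhibiting its horizontal source $E(h, h)$ as the horizontal unit $I_C$.

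The main subtlety I expect is recognising that the right-hand factorisation $\id_h = \chi \of \id'$, available as soon as \textup{(a)} holds, fits the hypotheses of \lemref{unit identities} via the relation $\id' \of \psi = \id_J$; once this is noticed, the remaining work is a bookkeeping exercise in the pasting lemma.
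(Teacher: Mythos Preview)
Your proof is correct and follows essentially the same approach as the paper: both first derive $\id' \of \psi = \id_J$ from the two factorisations and invoke \lemref{unit identities} for the equivalence \textup{(b)}~$\Leftrightarrow$~\textup{(b')}, then use the pasting lemma on $\chi = \id_h \of \psi$ and $\id_h = \chi \of \id'$ for the trichotomy, and finally appeal to \cororef{restrictions in terms of units} for the reflection of unitality. The only cosmetic difference is that the paper routes the implications with \textup{(c)} as hypothesis through \lemref{pasting lemma for full and faithful morphisms} (itself a one-line consequence of the pasting lemma), whereas you apply the pasting lemma directly.
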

	\begin{proof}
		We first prove the second assertion: if $\chi$ is cartesian then (b) $\Leftrightarrow$ (b'). Combining both identities above gives $\chi \of \id' \of \psi = \chi$, so that $\id' \of \psi = \id_J$ by uniqueness of factorisations through $\chi$. But this is identity (J) in \lemref{unit identities}, which asserts that $\psi$ is cartesian precisely if $\id'$ is, that is (b) $\Leftrightarrow$ (b').
		
		The main assertion now follows easily. Taking $f = \id_C = g$ in \lemref{pasting lemma for full and faithful morphisms} we find that (c) implies \mbox{(a) $\Leftrightarrow$ (b)}. Conversely assume (a) and (b): by the previous (b') follows so that both $\chi$ and $\id'$ in the identity on the right above are cartesian. Applying the pasting lemma (\lemref{pasting lemma for cartesian cells}) we find that $\id_h$ is cartesian, showing that $h$ is full and faithful.
				
		For the final assertion notice that $E$ being unital in an augmented virtual equipment implies that the nullary restriction $E(h, h)$ exists, by \cororef{restrictions in terms of units}. Applying the main assertion we find that a cartesian cell defining the horizontal unit of $C$ can be obtained by factorising the cartesian cell that defines $E(h, h)$ through $\id_h$.
	\end{proof}
	
	\begin{corollary}
		Let $\map hCE$ be a full and faithful morphism in an augmented virtual double category $\K$ and assume that the nullary restriction $E(h, h)$ exists. A functor $\map F\K\L$ preserves the full and faithful morphism $h$ if and only if it preserves the cartesian cell defining $E(h, h)$.
	\end{corollary}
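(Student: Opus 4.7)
My plan is to reduce the corollary to the preceding \lemref{full and faithfulness and horizontal units} by factorising $\chi$ through the horizontal unit of $C$ and then pushing the factorisation forward along $F$, exploiting that functors preserve horizontal units automatically.

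First, I would apply \lemref{full and faithfulness and horizontal units} inside $\K$. Since $h$ is full and faithful (condition (c)) and the cartesian cell $\chi$ defining $E(h,h)$ is given (condition (a)), the lemma yields condition (b): in the displayed factorisation $\chi = h \of \psi$ the cell $\psi$ is cartesian and exhibits the horizontal unit $I_C$ of $C$. In particular, $C$ is unital in $\K$ and $J = E(h,h) = I_C$.

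Next, because $F$ preserves vertical composition and vertical identity cells strictly, applying $F$ to the factorisation gives $F\chi = Fh \of F\psi$ in $\L$. By \cororef{functors preserve companions and conjoints}, the image $F\psi$ of a cartesian cell defining a horizontal unit is again cartesian and exhibits $I_{FC}$. Thus, when viewing the factorisation $F\chi = Fh \of F\psi$ as an instance of the one appearing in \lemref{full and faithfulness and horizontal units} (now with morphism $Fh$ into $FE$), condition (b) automatically holds in $\L$.

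Finally, I would invoke \lemref{full and faithfulness and horizontal units} a second time, applied in $\L$ to this pushed-forward factorisation. Since (b) holds for free, the ``any two imply the third'' trichotomy collapses to the single equivalence (a) $\iff$ (c), which reads: $F\chi$ is cartesian (i.e.\ $F$ preserves the cartesian cell defining $E(h,h)$) if and only if $Fh$ is full and faithful (i.e.\ $F$ preserves $h$). This is exactly the claim. No genuinely hard step is anticipated; the only point to verify with care is that the shape of the factorisation is preserved by $F$, which is immediate from the strict preservation of identity cells and vertical composition.
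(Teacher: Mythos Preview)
Your proposal is correct and takes essentially the same approach as the paper: both factor $\chi$ through the cartesian cell $\psi$ defining the horizontal unit of $C$ via \lemref{full and faithfulness and horizontal units}, push the factorisation forward along $F$, and use that $F$ preserves horizontal units (\cororef{functors preserve companions and conjoints}). The only cosmetic difference is that the paper also pushes forward the second factorisation $\id_h = \chi \of \id'$ and applies the pasting lemma to both identities directly, whereas you simply reinvoke \lemref{full and faithfulness and horizontal units} in $\L$ with condition (b) already secured---which amounts to the same thing but is arguably tidier.
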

	In \cororef{functors preserving cartesian cells} we will see that $F$ preserves $E(h, h)$ whenever the companion $h_*$ and the conjoint $h^*$ exist in $\K$.
	\begin{proof}
		Factorising the cartesian cell $\chi$ that defines $E(h, h)$ through the vertical identity cell of $h$ we obtain $\chi = \id_h \of \psi$ as in the previous lemma, where $\psi$ is the cartesian cell defining $E(h, h)$ as the horizontal unit of $C$. Applying $F$ to both factorisations considered in the previous lemma we obtain
		\begin{displaymath}
			F\chi = \id_{Fh} \of F\psi \qquad \qquad \qquad \text{and} \qquad \qquad \qquad \id_{Fh} = F\chi \of F\id_h',
		\end{displaymath}
		where $F$ preserves the cartesian cells $\psi$ and $\id_h'$ by \cororef{functors preserve companions and conjoints}. Applying the pasting lemma for cartesian cells (\lemref{pasting lemma for cartesian cells}) to these identities we conclude that $F\chi$ is cartesian precisely if $\id_{Fh}$ is.
	\end{proof}
	
	Recall from \exref{vertical 2-category} that the objects, vertical morphisms and vertical cells of any augmented virtual double category $\K$ form a $2$-category $V(\K)$. The next lemmas reformulate the notions of adjunction and \emph{absolute left lifting} (see Section~1 of \cite{Street-Walters78} or Section~2.4 of \cite{Weber07}) in $V(\K)$ in terms of companions in $\K$.
	\begin{lemma} \label{adjunctions}
		In an augmented virtual double category $\K$ let $\map fAC$ be a vertical morphism whose companion $f_*$ exists. Consider vertical cells $\eta$ and $\eps$ below as well as their factorisations through $f_*$, as shown.
		\begin{displaymath}
			\begin{tikzpicture}[textbaseline]
				\matrix(m)[math35, column sep={1.75em,between origins}]{& A & \\ & & C \\ & A & \\};
				\path[map]	(m-1-2) edge[bend left = 18] node[right] {$f$} (m-2-3)
										(m-2-3) edge[bend left = 18] node[right] {$g$} (m-3-2);
				\path				(m-1-2) edge[bend right = 45, eq] (m-3-2);
				\path[transform canvas={yshift=-1.625em}]	(m-1-2) edge[cell] node[right] {$\eta$} (m-2-2);
			\end{tikzpicture} \quad = \quad \begin{tikzpicture}[textbaseline]
    		\matrix(m)[math35, column sep={1.75em,between origins}]{& A & \\ A & & C \\ & A & \\};
    		\path[map]	(m-1-2) edge[transform canvas={xshift=2pt}] node[right] {$f$} (m-2-3)
    								(m-2-1) edge[barred] node[below, inner sep=2pt] {$f_*$} (m-2-3)
    								(m-2-3)	edge[transform canvas={xshift=2pt}] node[right] {$g$} (m-3-2);
    		\path				(m-1-2) edge[eq, transform canvas={xshift=-2pt}] (m-2-1)
    								(m-2-1) edge[eq, transform canvas={xshift=-2pt}] (m-3-2);
    		\path				(m-2-2) edge[cell, transform canvas={yshift=0.1em}]	node[right, inner sep=3pt] {$\eta'$} (m-3-2);
    		\draw				([yshift=-0.5em]$(m-1-2)!0.5!(m-2-2)$) node[font=\scriptsize] {$\cocart$};
  		\end{tikzpicture} \qquad\qquad\qquad \begin{tikzpicture}[textbaseline]
				\matrix(m)[math35, column sep={1.75em,between origins}]{& C & \\ A & & \\ & C & \\};
				\path[map]	(m-1-2) edge[bend right = 18] node[left] {$g$} (m-2-1)
										(m-2-1) edge[bend right = 18] node[left] {$f$} (m-3-2);
				\path				(m-1-2) edge[bend left = 45, eq] (m-3-2);
				\path[transform canvas={yshift=-1.625em}]	(m-1-2) edge[cell] node[right] {$\eps$} (m-2-2);
			\end{tikzpicture} \quad = \quad \begin{tikzpicture}[textbaseline]
    		\matrix(m)[math35, column sep={1.75em,between origins}]{& C & \\ A & & C \\ & C & \\};
    		\path[map]	(m-1-2) edge[transform canvas={xshift=-2pt}] node[left] {$g$} (m-2-1)
    								(m-2-1) edge[barred] node[below, inner sep=2pt] {$f_*$} (m-2-3)
    												edge[transform canvas={xshift=-2pt}] node[left] {$f$} (m-3-2);
    		\path				(m-1-2) edge[eq, transform canvas={xshift=2pt}] (m-2-3)
    								(m-2-3) edge[eq, transform canvas={xshift=2pt}] (m-3-2);
    		\path				(m-1-2) edge[cell, transform canvas={yshift=-0.4em}] node[right, inner sep=3pt] {$\eps'$} (m-2-2);
    		\draw				([yshift=0.25em]$(m-2-2)!0.5!(m-3-2)$) node[font=\scriptsize] {$\cart$};
  		\end{tikzpicture}
		\end{displaymath}
		The following are equivalent:
		\begin{enumerate}[label=\textup{(\alph*)}]
			\item	$(\eta, \eps)$ defines an adjunction $f \ladj g$ in $V(\K)$;
			\item $(\eta', \eps')$ satisfies the conjoint identities (horizontally dual to the companion identities of \lemref{companion identities lemma}), thus defining $f_*$ as the conjoint of $g$ in $\K$.
		\end{enumerate}
	\end{lemma}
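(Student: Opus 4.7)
The plan is to use the universal properties of the cocartesian cell $\phi$ and the cartesian cell $\psi$ that together define the companion $f_*$ (see \lemref{companion identities lemma}) to establish a bijective correspondence $\eta \leftrightarrow \eta'$ and $\eps \leftrightarrow \eps'$ via the displayed factorisations, and then verify that the two adjunction triangle identities in $V(\K)$ translate under this bijection precisely to the two conjoint identities in $\K$. The bijections are immediate: weakly cocartesian $\phi$ makes $\eta = \eta' \of \phi$ a unique factorisation, and cartesian $\psi$ makes $\eps = \psi \of \eps'$ unique. The substance is the correspondence of identities.

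For the triangle identity landing in $\id_g$, I would expand the whiskerings as $\eta \cdot g = \eta' \of (\phi \of g)$ and $g \cdot \eps = (g \of \psi) \of \eps'$ using associativity, translate the $V(\K)$-vertical composition into the $\K$-horizontal composite $(\eta \cdot g) \hc (g \cdot \eps)$, and apply the interchange axiom together with the companion identity $\phi \hc \psi = \id_{f_*}$ to collapse the intervening $\phi$ and $\psi$ halves; the composite reduces to $\eta' \of \eps'$, so the triangle identity reads exactly $\eta' \of \eps' = \id_g$, which is the first conjoint identity.

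For the triangle identity landing in $\id_f$, the crucial intermediate identity to establish in $\K$ is
\begin{displaymath}
  (f \cdot \eta) \hc (\eps \cdot f) \;=\; \psi \of (\eta' \hc \eps') \of \phi,
\end{displaymath}
obtained by expanding $f \cdot \eta = \psi \of (\phi \of \eta)$ and $\eps \cdot f = \psi \of (\eps' \of f)$, using the second interchange axiom to rewrite $(\eta' \hc \eps')$ as the path $(\eta', \eps')$ inside a vertical composite, and rearranging via associativity. By the universal properties of $\phi$ and $\psi$, any cell $A \Rightarrow C$ with both vertical legs equal to $f$ admits a unique double factorisation through $\psi$ and $\phi$ as $\psi \of Y \of \phi$ for $Y$ of shape $f_* \Rightarrow f_*$ with identity legs. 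Since $\id_f = \psi \of \phi = \psi \of \id_{f_*} \of \phi$ by companion identities (A) and (J) of \lemref{companion identities lemma}, the displayed identity shows $(f \cdot \eta) \hc (\eps \cdot f) = \id_f$ if and only if $\eta' \hc \eps' = \id_{f_*}$, as required.

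The main technical obstacle is the skew"/associative interaction between $\of$ and $\hc$ emphasised in the remark following \lemref{horizontal composition}: several natural applications of interchange or associativity produce intermediate composites whose horizontal sources would be invalid concatenations of paths (requiring matching intermediate objects that generally differ between $A$ and $C$). The expansions above must therefore be chosen so that every intermediate cell is well"/formed, with the companion identities applied at precisely the right moment.
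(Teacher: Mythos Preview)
Your proposal is correct and follows essentially the same route as the paper: substitute $\eta = \eta' \of \cocart$ and $\eps = \cart \of \eps'$ into each triangle identity, then use the interchange axioms together with the companion identities to reduce one triangle to $\eta' \of \eps' = \id_g$ (via $\cocart \hc \cart = \id_{f_*}$) and the other to $\cart \of (\eta' \hc \eps') \of \cocart = \id_f$, whence $\eta' \hc \eps' = \id_{f_*}$ by uniqueness of factorisations. One small point: your expansion $f \cdot \eta = \psi \of (\phi \of \eta)$, while true, still contains $\eta$ rather than $\eta'$; the paper's more direct $f \of \eta = f \of \eta' \of \cocart$ (and dually $\eps \of f = \cart \of \eps' \of f$) makes the interchange step cleaner, since with your version you must first substitute $\eta = \eta' \of \phi$ and cancel $\psi \of \phi = \id_f$ before the path $(\eta', \eps')$ becomes visible.
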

	\begin{proof}
		We claim that the triangle identities for $\eta$ and $\eps$ in $V(\K)$ are equivalent to the conjoint identities $\eta' \of \eps' = \id_g$ and $\eta' \hc \eps' = \id_{f_*}$ in $\K$. Indeed we have
		\begin{align*}
			(f \of \eta) \hc (\eps \of f) = \id_f \; &\Leftrightarrow \; (f \of \eta' \of \cocart) \hc (\cart \of \eps' \of f) = \id_f \\
			&\Leftrightarrow \; \cart \of (\eta' \hc \eps') \of \cocart = \id_f \; \Leftrightarrow \; \eta' \hc \eps' = \id_{f_*},
		\end{align*}
		where the second equivalence follows from the interchange axioms (\lemref{horizontal composition}), and the third from the vertical companion identity $\cart \of \id_{f_*} \of \cocart = \id_f$ together with the uniqueness of factorisations through (co)cartesian cells. Likewise
		\begin{align*}
			(\eta \of g) \hc (g \of \eps) = \id_g \; &\Leftrightarrow \; (\eta' \of \cocart \of g) \hc (g \of \cart \of \eps') = \id_g \\
			&\Leftrightarrow \; \eta' \of (\cocart \hc \cart) \of \eps' = \id_g \; \Leftrightarrow \; \eta' \of \eps' = \id_g,
		\end{align*}
		where we used the horizontal companion identity $\cocart \hc \cart = \id_{f_*}$.
	\end{proof}
	
	The converse of the following holds whenever $\K$ has `all weakly cocartesian paths of $(0,1)$"/ary cells', see \propref{converses with weakly cocartesian paths of (0,1)-ary cells} below.
	\begin{lemma} \label{restrictions and absolute left liftings}
		In an augmented virtual double category $\K$ consider the factorisation below. The vertical cell $\psi$ defines $j$ as the absolute left lifting of $f$ along $g$ in $V(\K)$ whenever its factorisation $\psi'$ is cartesian in $\K$.
		\begin{displaymath}
			\begin{tikzpicture}[textbaseline]
				\matrix(m)[math35, column sep={1.75em,between origins}]{& A & \\ & & B \\ & C & \\};
				\path[map]	(m-1-2) edge[bend left = 18] node[above right] {$j$} (m-2-3)
														edge[bend right = 45] node[left] {$f$} (m-3-2)
										(m-2-3) edge[bend left = 18] node[below right] {$g$} (m-3-2);
				\path[transform canvas={yshift=-1.625em}]	(m-1-2) edge[cell] node[right] {$\psi$} (m-2-2);
			\end{tikzpicture} \quad = \quad \begin{tikzpicture}[textbaseline]
    		\matrix(m)[math35, column sep={1.75em,between origins}]{& A & \\ A & & B \\ & C & \\};
    		\path[map]	(m-1-2) edge[transform canvas={xshift=2pt}] node[right] {$j$} (m-2-3)
    								(m-2-1) edge[barred] node[below, inner sep=2pt] {$j_*$} (m-2-3)
    												edge[transform canvas={xshift=-2pt}] node[left] {$f$} (m-3-2)
    								(m-2-3)	edge[transform canvas={xshift=2pt}] node[right] {$g$} (m-3-2);
    		\path				(m-1-2) edge[eq, transform canvas={xshift=-2pt}] (m-2-1);
    		\path				(m-2-2) edge[cell, transform canvas={yshift=0.1em}]	node[right, inner sep=3pt] {$\psi'$} (m-3-2);
    		\draw				([yshift=-0.5em]$(m-1-2)!0.5!(m-2-2)$) node[font=\scriptsize] {$\cocart$};
  		\end{tikzpicture}
  	\end{displaymath}
	\end{lemma}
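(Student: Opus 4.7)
The plan is to verify the absolute left lifting property in $V(\K)$ directly, exploiting the factorisation $\psi = \psi' \of \cocart$: the weak cocartesianness of $\cocart$ (equivalently, the companion identities of \lemref{companion identities lemma}) produces one half of the required bijection, while the cartesian property of $\psi'$ produces the other.

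Recall that $(j, \psi)$ exhibits $j$ as the absolute left lifting of $f$ along $g$ if, for every $\map pXA$ and $\map kXB$, the assignment $\tau \mapsto (\psi \of p) \hc (g \of \tau)$ is a bijection between vertical cells $\tau \colon j \of p \Rar k$ and vertical cells $\chi \colon f \of p \Rar g \of k$ in $V(\K)$. I would construct this bijection in two stages. Given $\chi$, regarded as a $(0,0)$"/ary cell with vertical source $f \of p$ and target $g \of k$, the cartesian property of $\psi'$ produces a unique $(0,1)$"/ary factorisation $\chi = \psi' \of \phi$, where $\phi \colon (X) \Rar (j_*)$ has vertical source $p$ and target $k$. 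Such cells $\phi$ are then in bijection with vertical cells $\tau \colon j \of p \Rar k$, via $\tau \mapsto (\cocart \of p) \hc \tau$ and $\phi \mapsto \cart \of \phi$, where $\cart \colon (j_*) \Rar (B)$ denotes the cartesian cell companion to $\cocart$. That these are mutually inverse follows by whiskering the companion identities $\cart \of \cocart = \id_j$ and $\cocart \hc \cart = \id_{j_*}$ by $p$, and applying the interchange and unit axioms of \lemref{horizontal composition}.

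It remains to verify that this composite bijection agrees with the original $\tau \mapsto (\psi \of p) \hc (g \of \tau)$: given $\tau$, I must check that $\psi' \of \bigpars{(\cocart \of p) \hc \tau} = (\psi \of p) \hc (g \of \tau)$. Writing $\psi \of p = \psi' \of (\cocart \of p)$ by associativity, and using $\psi' = \psi' \hc \id_g$ from the unit axiom for $\hc$, a single application of the interchange axiom reduces both sides to $\psi' \of (\cocart \of p, \tau)$.

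The main obstacle is not conceptual but combinatorial: throughout the argument one must keep careful track of the arities and of the horizontal and vertical sources and targets of every intermediate cell, in order to identify which of the associativity, unit, or interchange axioms to invoke at each step.
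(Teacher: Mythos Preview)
Your proof is correct and follows essentially the same approach as the paper's. The paper packages the argument as a commutative triangle of assignments between three collections of cells (vertical cells $\xi\colon j\of h\Rar k$, $(0,1)$-ary cells $\phi$ into $j_*$, and vertical cells $\chi\colon f\of h\Rar g\of k$), observing that the two top assignments $\cart\of\dash$ and $\psi'\of\dash$ are bijections and that the triangle commutes; your two-stage bijection and subsequent verification that it agrees with $\tau\mapsto(\psi\of p)\hc(g\of\tau)$ amount to exactly the same thing, with the commutativity check spelled out via the interchange and unit axioms rather than left as a one-line remark.
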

	\begin{proof}
		Consider the diagram of assignments between collections of cells in $\K$ below, where $\cart$ denotes the cartesian cell that defines $j_*$. That it commutes follows from the identity above and the horizontal companion identity (\lemref{companion identities lemma}).
		\begin{displaymath}
			\begin{tikzpicture}
				\matrix(m)[math35, column sep={1.75em,between origins}, xshift=-12em]{& X & \\ A & & \\ & B & \\};
				\path[map]	(m-1-2) edge[bend right = 18] node[left] {$h$} (m-2-1)
														edge[bend left = 45] node[right, inner sep=2pt] {$k$} (m-3-2)
										(m-2-1) edge[bend right = 18] node[left] {$j$} (m-3-2);
				\path[transform canvas={yshift=-1.625em}]	(m-1-2) edge[cell] node[right] {$\xi$} (m-2-2);
				
				\matrix(m)[math35, column sep={1.75em,between origins}]{& X & \\ A & & B \\};
				\path[map]	(m-1-2) edge[transform canvas={xshift=-2pt}] node[left] {$h$} (m-2-1)
														edge[transform canvas={xshift=2pt}] node[right] {$k$} (m-2-3)
										(m-2-1) edge[barred] node[below] {$j_*$} (m-2-3);
				\path				(m-1-2) edge[cell, transform canvas={yshift=-0.25em}] node[right] {$\phi$} (m-2-2);
				
				\matrix(m)[math35, column sep={1.75em,between origins}, xshift=12em]{& X & \\ A & & B \\ & C & \\};
				\path[map]	(m-1-2) edge[bend left = 18] node[right] {$k$} (m-2-3)
														edge[bend right = 18] node[left] {$h$} (m-2-1)
										(m-2-1) edge[bend right = 18] node[left] {$f$} (m-3-2)
										(m-2-3) edge[bend left = 18] node[right] {$g$} (m-3-2);
				\path[transform canvas={yshift=-1.625em}]	(m-1-2) edge[cell] node[right] {$\chi$} (m-2-2);
				
				\draw[font=\Large]	(-2.5em,0em) node {$\lbrace$}
										(2.5em,0) node {$\rbrace$}
										(-14.5em,0) node {$\lbrace$}
										(-9em,0) node {$\rbrace$}
										(9.25em,0) node {$\lbrace$}
										(14.75em,0) node {$\rbrace$};
				\path[map]	(-4.25em,0) edge node[above] {$\cart \of\, \dash$} (-7.25em,0)
										(4.25em,0) edge node[above] {$\psi' \of \dash$} (7.5em,0)
										(-8.5em,-2.5em) edge[bend right=15] node[below] {$(\psi \of h) \hc (g \of \dash)$} (8.75em,-2.5em);
			\end{tikzpicture}
		\end{displaymath}
		By definition the vertical cell $\psi$ defines $j$ as the absolute left lifting of $f$ along $g$ in $V(\K)$ when the bottom assignment is a bijection, so that the proof follows from the fact that both top assignments are bijections whenever $\psi'$ is cartesian.
	\end{proof}
	
	\section{Representable horizontal morphisms} \label{representable morphism section}
	In this section we study horizontal morphisms $\hmap JAB$ that are `represented' by vertical morphisms $\map fAB$ in the sense that $J \iso f_*$; see the definition below. Given an augmented virtual double category $\K$, the main result (\thmref{lower star}) of this section characterises the sub"/augmented virtual double category of $\K$ generated by representable horizontal morphisms, in terms of the strict double category \mbox{$(Q \of V)(\K)$} of `quintets' in the vertical $2$"/category $V(\K)$ (\exref{vertical 2-category}); see \exref{quintets} below.
	
	Generalising the fact that lax monoidal profunctors (as described in the Introduction) that are representable can be identified with colax monoidal functors, \thmref{lower star} can be used to obtain a correspondence between representable `horizontal $T$"/morphisms' and `colax $T$"/morphisms', where $T$ is any `monad' on an augmented virtual double category; this is done in Section~6.4 of \cite{Koudenburg19b}.
	
	\begin{definition} \label{representable horizontal morphism}
		A vertical morphism $\map jAB$ is said to \emph{represent} the horizontal morphism $\hmap JAB$ if there exists a cartesian cell as on the left below, that is $J$ forms the companion of $j$; in this case we say that $J$ is \emph{representable}. Horizontally dual, $J$ is called \emph{oprepresentable} whenever there exists a cartesian cell as on the right.
		\begin{displaymath}
			\begin{tikzpicture}[baseline]
				\matrix(m)[math35, column sep={1.75em,between origins}]{A & & B \\ & B & \\};
				\path[map]	(m-1-1) edge[barred] node[above] {$J$} (m-1-3)
														edge[transform canvas={xshift=-1pt}] node[left] {$j$} (m-2-2);
				\path				(m-1-3) edge[transform canvas={xshift=2pt}, eq] (m-2-2);
				\draw[font=\scriptsize]	([yshift=0.333em]$(m-1-2)!0.5!(m-2-2)$) node {$\cart$};
			\end{tikzpicture} \qquad\qquad\qquad\qquad\qquad \begin{tikzpicture}[baseline]
				\matrix(m)[math35, column sep={1.75em,between origins}]{A & & B \\ & A & \\};
				\path[map]	(m-1-1) edge[barred] node[above] {$J$} (m-1-3)
										(m-1-3)	edge[transform canvas={xshift=1pt}] node[right] {$g$} (m-2-2);
				\path				(m-1-1) edge[eq, transform canvas={xshift=-2pt}] (m-2-2);
				\draw				([yshift=0.333em]$(m-1-2)!0.5!(m-2-2)$) node[font=\scriptsize] {$\cart$};
			\end{tikzpicture}
    \end{displaymath}
	\end{definition}
	
	For an augmented virtual double category $\K$ we write $\Rep(\K) \subseteq \K$ for the sub"/augmented virtual double category that consists of all objects, all vertical morphisms, only those horizontal morphisms that are representable, and all cells between them. The subcategory $\opRep(\K)$ generated by the oprepresentable horizontal morphisms is defined analogously; notice that $\opRep(\K) = \co{\pars{\Rep(\co\K)}}$ where $\co\K$ denotes the horizontal dual of $\K$ (\defref{horizontal dual}). Because functors of augmented virtual double categories preserve companions and conjoints (\cororef{functors preserve companions and conjoints}), they preserve (op)representable horizontal morphisms as well; whence the following.
	\begin{proposition} \label{2-functor Rep}
		The assignments $\K \mapsto \Rep(\K)$ and $\K \mapsto \opRep(\K)$ extend to strict $2$-endofunctors $\Rep$ and $\opRep$ on $\AugVirtDblCat$.
	\end{proposition}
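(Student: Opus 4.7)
The plan is to define $\Rep$ on each of the three layers of data of the $2$-category $\AugVirtDblCat$ — objects, $1$-cells and $2$-cells — and in each case reduce verification to a single observation: functors of augmented virtual double categories preserve the cartesian cells that define companions (\cororef{functors preserve companions and conjoints}). On objects, $\Rep(\K)$ is defined as a sub-augmented virtual double category of $\K$; we need only remark that the collection of cells of $\Rep(\K)$, consisting of all cells in $\K$ whose horizontal source and target are paths of representable morphisms, is closed under vertical composition, since the horizontal source and target of a composite $\psi \of (\phi_1, \dotsc, \phi_n)$ are the horizontal source of $\ul\phi$ and the horizontal target of $\psi$ respectively.

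Next, given a functor $\map F\K\L$, I would define $\map{\Rep F}{\Rep\K}{\Rep\L}$ to be the restriction of $F$. The only point requiring verification is that $\Rep F$ lands in $\Rep\L$, that is that $FJ$ is representable in $\L$ whenever $\hmap JAB$ is representable in $\K$. By \defref{representable horizontal morphism} such a $J$ admits a cartesian cell $\chi$ exhibiting it as the companion of some $\map jAB$; by \cororef{functors preserve companions and conjoints} the image $F\chi$ is then cartesian in $\L$ and exhibits $FJ$ as the companion of $Fj$. That $\Rep F$ preserves vertical composition, horizontal and vertical identity cells is immediate from the corresponding properties of $F$.

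For a transformation $\nat\xi FG$ between functors $F$, $\map G\K\L$, I would define $\nat{\Rep\xi}{\Rep F}{\Rep G}$ by restricting $\xi$ to the objects and representable horizontal morphisms of $\K$; the naturality axiom of \defref{transformation} for $\Rep\xi$ is a special case of that for $\xi$. Strict functoriality $\Rep(\id_\K) = \id_{\Rep\K}$ and $\Rep(G \of F) = \Rep G \of \Rep F$, as well as the corresponding identities for vertical and horizontal composition of transformations, follow at once from the definitions, since all compositions in $\AugVirtDblCat$ are performed componentwise. The case of $\opRep$ can be handled by applying the same argument using conjoints in place of companions, or more economically by observing that $\opRep = \co{(\dash)} \of \Rep \of \co{(\dash)}$ is a composite of strict $2$-endofunctors, using that horizontal dualisation (\defref{horizontal dual}) is itself a strict $2$-endofunctor on $\AugVirtDblCat$.

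There is no real obstacle in the argument beyond the invocation of \cororef{functors preserve companions and conjoints}; the rest is bookkeeping that the given subclass is closed under the operations of $\K$ and that restriction is strictly compatible with all compositions in $\AugVirtDblCat$.
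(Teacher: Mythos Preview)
Your proposal is correct and matches the paper's approach exactly: the paper does not give a formal proof but simply notes, in the sentence immediately preceding the proposition, that functors preserve companions and conjoints (\cororef{functors preserve companions and conjoints}) and hence preserve (op)representable horizontal morphisms, leaving the remaining bookkeeping implicit. Your sketch faithfully spells out those implicit details, including the handling of $\opRep$ via horizontal dualisation.
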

	
	In \cite{Ehresmann63} Ehresmann defined for any $2$-category $\mathcal C$ a strict double category $Q(\mathcal C)$ of `quintets' in $\mathcal C$. The following example describes $Q(\mathcal C)$ as an augmented virtual double category. 
	\begin{example} \label{quintets}
		Let $\mathcal C$ be a $2$-category. The augmented virtual double category $Q(\mathcal C)$ of \emph{quintets in $\mathcal C$} has as objects those of $\mathcal C$ while both its vertical and horizontal morphisms are morphisms in $\mathcal C$. A unary cell $\phi$ in $Q(\mathcal C)$, as in the middle below, is a cell $\phi$ in $\mathcal C$ as on the left, while the nullary cells of $Q(\mathcal C)$ are cells in $\mathcal C$ as on the left but with $k = \id_C$. Composition in $Q(\mathcal C)$ is induced by that of $\mathcal C$ in the evident way.
		\begin{displaymath}
			\begin{tikzpicture}[baseline]
				\matrix(m)[math35, column sep={1.75em,between origins}]
					{ & A_0 & & & \\
						C & & A_1 & & \\
						& \phantom{.} & & A_{n'} &\\
						& & & & A_n \\
						& & & D & \\ };
				\path[map]	(m-1-2) edge node[above left] {$f$} (m-2-1)
														edge node[above right] {$j_1$} (m-2-3)
										(m-2-1) edge node[below left] {$k$} (m-5-4)
										(m-3-4) edge node[above right] {$j_n$} (m-4-5)
										(m-4-5) edge node[below right] {$g$} (m-5-4);
				\path				(m-3-2) edge[cell, transform canvas={xshift=3pt}] node[above] {$\phi$} (m-3-4)
										(m-2-3)	edge[white] node[sloped, black] {$\dotsb$} (m-3-4);
			\end{tikzpicture} \qquad\quad \begin{tikzpicture}[baseline]
				\matrix(m)[math35, yshift=-4em]{A_n & A_{n'} & A_1 & A_0 \\ D & & & C \\};
				\path[map]	(m-1-1) edge[barred] node[above] {$\co j_n$} (m-1-2)
														edge node[left] {$g$} (m-2-1)
										(m-1-3) edge[barred] node[above] {$\co j_1$} (m-1-4)
										(m-1-4) edge node[right] {$f$} (m-2-4)
										(m-2-1) edge[barred] node[below] {$\co k$} (m-2-4);
				\path[transform canvas={xshift=1.75em}]	(m-1-2) edge[cell] node[right] {$\co\psi$} (m-2-2);
				\draw				($(m-1-2)!0.5!(m-1-3)$) node {$\dotsb$};
				
				\matrix(n)[math35, yshift=4em]{A_0 & A_1 & A_{n'} & A_n \\ C & & & D \\};
				\path[map]	(n-1-1) edge[barred] node[above] {$j_1$} (n-1-2)
														edge node[left] {$f$} (n-2-1)
										(n-1-3) edge[barred] node[above] {$j_n$} (n-1-4)
										(n-1-4) edge node[right] {$g$} (n-2-4)
										(n-2-1) edge[barred] node[below] {$k$} (n-2-4);
				\path[transform canvas={xshift=1.75em}]	(n-1-2) edge[cell] node[right] {$\phi$} (n-2-2);
				\draw				($(n-1-2)!0.5!(n-1-3)$) node {$\dotsb$};
			\end{tikzpicture} \qquad\quad \begin{tikzpicture}[baseline]
				\matrix(m)[math35, column sep={1.75em,between origins}]
					{ & & & A_0 & \\
						& & A_1 & & C \\
						& A_{n'} & & \phantom{.} & \\
						A_n & & & & \\
						& D & & & \\ };
				\path[map]	(m-1-4) edge node[above right] {$f$} (m-2-5)
														edge node[above left] {$j_1$} (m-2-3)
										(m-2-5) edge node[below right] {$k$} (m-5-2)
										(m-3-2) edge node[above left] {$j_n$} (m-4-1)
										(m-4-1) edge node[below left] {$g$} (m-5-2);
				\path				(m-3-2) edge[cell] node[above] {$\psi$} (m-3-4)
										(m-2-3)	edge[white] node[sloped, black] {$\dotsb$} (m-3-2);
			\end{tikzpicture}
		\end{displaymath}

		We abbreviate $\co Q(\mathcal C) \dfn \co{(Q(\co{\mathcal C}))}$, where $\co{\mathcal C}$ denotes the $2$"/category obtained by reversing the direction of the cells in $\mathcal C$. Thus, to each morphism $\map jAB$ in $\mathcal C$ there is a horizontal morphism $\hmap{\co j}BA$ in $\co Q(\mathcal C)$, and to each cell $\phi$ as on the left below there is a unary cell $\co\phi$ in $\co Q(\mathcal C)$ as on the right.
	\end{example}
	
	\begin{proposition} \label{2-functor Q}
		The assignments $\mathcal C \mapsto Q(\mathcal C)$ and $\mathcal C \mapsto \co Q(\mathcal C)$ above extend to strict $2$-functors $\map Q{\twoCat}{\AugVirtDblCat}$ and $\map{\co Q}{\twoCat}{\AugVirtDblCat}$.
	\end{proposition}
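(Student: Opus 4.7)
The plan is to define the action of $Q$ on $1$- and $2$-cells of $\twoCat$, verify that these assignments land in $\AugVirtDblCat$, and then check strict functoriality. Given a strict $2$-functor $\map F{\mathcal C}\mathcal D$, I will define $\map{Q(F)}{Q(\mathcal C)}{Q(\mathcal D)}$ by sending an object $A$, a vertical morphism $\map fAC$, a horizontal morphism $\hmap jAB$, and a cell $\phi$ of $Q(\mathcal C)$ all through $F$; this makes sense because in $Q(\mathcal C)$ vertical morphisms, horizontal morphisms, and cells are simply morphisms and $2$-cells of $\mathcal C$. That $Q(F)$ preserves vertical and horizontal identities, and vertical composition of cells, then reduces to the fact that $F$ preserves identities and both pastings in $\mathcal C$ strictly; this is automatic since $F$ is a strict $2$-functor.

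Next, for a $2$-natural transformation $\nat\xi FG$ of $2$-functors $F, \map G{\mathcal C}\mathcal D$, I will define its image $\nat{Q(\xi)}{Q(F)}{Q(G)}$ as follows. The underlying vertical natural transformation is $\xi$ itself, viewed as a natural transformation of the vertical categories $Q(F)_\textup v = F$ and $Q(G)_\textup v = G$. For each horizontal morphism $\hmap jAB$ in $Q(\mathcal C)$ (that is, each morphism $\map jAB$ in $\mathcal C$) I take the $(1,1)$"/ary cell of $Q(\mathcal D)$ whose underlying cell in $\mathcal D$ is the identity $2$-cell on the common composite of the commuting naturality square $Gj \of \xi_A = \xi_B \of Fj$; such a cell in $Q(\mathcal D)$ exists precisely because $\xi$ is $2$-natural. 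Verifying the naturality axiom of \defref{transformation} for $Q(\xi)$ then amounts to the fact that horizontal and vertical pasting of the identity $2$-cells above along $F\phi$ and $G\phi$ coincide, which follows from the $2$-naturality of $\xi$ together with the strict functoriality of $F$ and $G$.

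With these definitions in place, strict $2$-functoriality of $\map Q\twoCat\AugVirtDblCat$, that is the identities $Q(\id_{\mathcal C}) = \id_{Q(\mathcal C)}$, $Q(G \of F) = Q(G) \of Q(F)$, $Q(\id_F) = \id_{Q(F)}$ and $Q(\eta \of \xi) = Q(\eta) \of Q(\xi)$ for $1$- and $2$-cells of $\twoCat$, follows directly from the analogous strict identities in $\twoCat$, since the assignments above are given on the nose by $F$ and $\xi$. Finally, for $\co Q$ I will simply appeal to the formula $\co Q(\mathcal C) = \co{\pars{Q(\co{\mathcal C})}}$: since $(\dash)^{\textup{co}}$ extends to a strict $2$"/functor $\twoCat \to \twoCat$ in the evident way, and horizontal dualization (\defref{horizontal dual}) extends to a strict $2$"/functor $\AugVirtDblCat \to \AugVirtDblCat$, $\co Q$ is the composite of these with $Q$. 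No step is a genuine obstacle; the only thing to be mildly careful about is bookkeeping of the $2$-naturality squares when defining the cell components of $Q(\xi)$, and likewise for the direction reversal in the $\co Q$ case.
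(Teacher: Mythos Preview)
Your proposal is correct and follows essentially the same approach as the paper: define $Q(F)$ by letting $F$ act on everything, define the cell components of $Q(\xi)$ as the quintets given by the naturality squares $Gj \of \xi_A = \xi_B \of Fj$, and obtain $\co Q$ as the composite $\co{(\dash)} \of Q \of \co{(\dash)}$. The paper's proof is terser and omits the explicit verification of strict $2$-functoriality that you sketch, but the content is the same.
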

	\begin{proof}
		The image $\map{QF}{Q\mathcal C}{Q\mathcal D}$ of a strict $2$-functor $\map F{\mathcal C}{\mathcal D}$ is simply given by letting $F$ act on objects, morphisms and cells. The image $\nat{Q\xi}{QF}{QG}$ of a $2$"/natural transformation $\nat\xi FG$ is given by $(Q\xi)_A \dfn \xi_A$ on objects, while the naturality cell $\cell{(Q\xi)_j}{Fj}{Gj}$ in $Q(\mathcal D)$, for $\hmap jAB$ in $Q(\mathcal C)$, is the quintet given by the naturality square $Gj \of \xi_A = \xi_B \of Fj$. Finally $\mathcal C \mapsto \co Q(\mathcal C)$ is extended by the composite of strict $2$-functors $\co Q \dfn \co{(\dash)} \of Q \of \co{(\dash)}$.
	\end{proof}
	
	Remember that any augmented virtual double category $\K$ contains a $2$-category $V(\K)$ of vertical morphisms and cells; see \exref{vertical 2-category}. We denote by \mbox{$(Q \of V)_*(\K) \subseteq (Q \of V)(\K)$} the sub"/augmented virtual double category generated by all vertical morphisms, those horizontal morphisms $\hmap jAB$ that correspond to morphisms $\map jAB$ that admit companions in $\K$, and all quintets between them. Because functors between augmented virtual double categories preserve cartesian cells that define companions (\cororef{functors preserve companions and conjoints}), this gives a sub"/2"/endofunctor \mbox{$(Q \of V)_* \subseteq Q \of V$} on $\AugVirtDblCat$. The sub-2-endofunctor $(\co Q \of V)^*$ is defined likewise, by mapping each $\K$ to the sub"/augmented virtual double category $(\co Q \of V)^*(\K) \subseteq (\co Q \of V)(\K)$ that is generated by horizontal morphisms $\hmap{\co j}BA$ corresponding to vertical morphism $\map jAB$ that admit conjoints in $\K$.
	\begin{theorem} \label{lower star}
		Let $\K$ be an augmented virtual double category. Choosing for each \mbox{$\hmap jAB$} in $(Q \of V)_*(\K)$, corresponding to $\map jAB$ in $\K$, a cartesian cell
		\begin{displaymath}
  		\begin{tikzpicture}
				\matrix(m)[math35, column sep={1.75em,between origins}]{A & & B \\ & B & \\};
				\path[map]	(m-1-1) edge[barred] node[above] {$j_*$} (m-1-3)
														edge node[left] {$j$} (m-2-2);
				\path				(m-1-3) edge[eq, transform canvas={xshift=1pt}] (m-2-2);
				\path[transform canvas={yshift=0.25em, xshift=-0.3em}]	(m-1-2) edge[cell] node[right, inner sep=3pt] {$\eps_j$} (m-2-2);
			\end{tikzpicture}
		\end{displaymath}
		in $\K$ that defines the companion of $j$ induces and equivalence $(\dash)_*\colon (Q \of V)_*(\K) \xrar{\simeq} \Rep(\K)$
		of augmented virtual double categories as follows. Restricting to the identity on objects and vertical morphisms, $(\dash)_*$ maps each horizontal morphism $\hmap jAB$ in $(Q \of V)_*(K)$ to its chosen companion $j_*$, while a cell $\phi$ of $(Q \of V)_*(K)$, as in the left-hand side below, is mapped to the unique factorisation $\phi_*$ as shown; here $\ul{\eps_k} \dfn \eps_k$ if $\phi$ is unary and $\ul{\eps_k} \dfn \id_C$ otherwise.
		
		Letting $\K$ vary in the above, the functors $(\dash)_*$ combine to form a pseudonatural transformation $\nat{(\dash)_*}{(Q \of V)_*}\Rep$ of strict $2$-endofunctors on $\AugVirtDblCat$.
		
		Analogously, choosing cartesian cells in $\K$ that define conjoints induces an equivalence $(\co Q \of V)^*(\K)\simeq\opRep(\K)$. Their underlying functors too combine to form a pseudonatural transformation $\nat{(\dash)^*}{(\co Q \of V)^*}\opRep$.
		\begin{equation} \label{lower star on quintets}
			\begin{tikzpicture}[textbaseline]
				\matrix(m)[math35, column sep={1.75em,between origins}]
					{ & A_0 & & A_1 & & A_{n'} & & A_n \\
						C & & A_1 & & & & & \\
						& & & A_{n'} & & A_n & \\
						& & & & A_n & & \\
						& & & D & & & \\ };
				\path[map]	(m-1-2) edge[barred] node[above] {$j_{1*}$} (m-1-4)
														edge node[left] {$f$} (m-2-1)
														edge node[left, inner sep=2pt] {$j_1$} (m-2-3)
										(m-1-6) edge[barred] node[above] {$j_{n*}$} (m-1-8)
										(m-2-1) edge node[below left] {$\ul k$} (m-5-4)
										(m-3-4) edge[barred] node[above] {$j_{n*}$} (m-3-6)
														edge node[left] {$j_n$} (m-4-5)
										(m-4-5) edge node[right] {$g$} (m-5-4);
				\path[transform canvas={xshift=1pt}]	(m-1-4) edge[eq] (m-2-3)
										(m-1-6) edge[eq] (m-3-4)
										(m-1-8) edge[eq] (m-3-6)
										(m-3-6) edge[eq] (m-4-5);
				\path				(m-2-3) edge[cell, transform canvas={shift={(-0.25em,-1.625em)}}] node[right] {$\phi$} (m-3-3)
										(m-1-3) edge[cell, transform canvas={shift={(-0.55em,0.333em)}}] node[right] {$\eps_{j_1}$} (m-2-3)
										(m-3-5) edge[cell, transform canvas={shift={(-0.55em,0.333em)}}] node[right] {$\eps_{j_n}$} (m-4-5)
										(m-2-3)	edge[white] node[sloped, black] {$\dotsb$} (m-3-4)
										(m-1-4) edge[white] node[sloped, black] {$\dotsb$} (m-1-6);
			\end{tikzpicture} \mspace{-9mu}= \quad \begin{tikzpicture}[textbaseline]
				\matrix(m)[math35, yshift=1.625em]{A_0 & A_1 & A_{n'} & A_n \\ C & & & D \\};
				\draw	([yshift=-6.5em]$(m-1-1)!0.5!(m-1-4)$) node (D) {$D$};
				\path[map]	(m-1-1) edge[barred] node[above] {$j_{1*}$} (m-1-2)
														edge node[left] {$f$} (m-2-1)
										(m-1-3) edge[barred] node[above] {$j_{n*}$} (m-1-4)
										(m-1-4) edge node[right] {$g$} (m-2-4)
										(m-2-1) edge[transform canvas={yshift=-1pt}] node[below left] {$\ul k$} (D)
										(m-2-1) edge[barred] node[below] {$\ul{k_*}$} (m-2-4);
				\path				(m-2-4)	edge[eq, transform canvas={yshift=-1pt}] (D)
										(m-1-2) edge[cell, transform canvas={xshift=1.75em}] node[right] {$\phi_*$} (m-2-2)
										($(m-2-1.south)!0.5!(m-2-4.south)$) edge[cell, transform canvas={yshift=-0.125em}] node[right] {$\ul{\eps_k}$} (D);
				\draw				($(m-1-2)!0.5!(m-1-3)$) node {$\dotsb$};
			\end{tikzpicture}
		\end{equation}
	\end{theorem}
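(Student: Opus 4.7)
I would first verify that $(-)_*$ is a well-defined functor of augmented virtual double categories, and then apply \propref{equivalences} to reduce the equivalence claim to local full and faithfulness together with essential surjectivity. For well-definedness in the unary case, the composite displayed on the left"/hand side of \eqref{lower star on quintets} is a nullary cell in $\K$ with horizontal source $(j_{1*}, \dotsc, j_{n*})$ and empty horizontal target at $D$, whose vertical source is $k \of f$ and vertical target $g$; the cartesian property of $\eps_k$ then produces $\phi_*$ as the unique factorisation. The nullary case needs no factorisation, since the composite itself already has the desired shape. Strict preservation of vertical identity cells and of vertical composition by $(-)_*$ follows by combining associativity of vertical composition and the interchange axioms of \lemref{horizontal composition} with the uniqueness of factorisations through $\eps_k$.

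\emph{Equivalence.} Essential surjectivity is immediate from the definition of $\Rep(\K)$: every horizontal morphism is representable and hence isomorphic to some chosen companion $j_*$. For local full and faithfulness I would exhibit an explicit inverse to $\phi \mapsto \phi_*$ built from the weakly cocartesian companion cells $\eta_{j_i} \colon (A_{i'}) \Rightarrow (j_{i*})$ supplied by \lemref{companion identities lemma}. Given $\psi$ in $\Rep(\K)$ of the appropriate boundary, the proposed preimage is obtained by composing the $\eta_{j_i}$ on top with $\psi$ and, in the unary case, composing the corresponding $\eta_k \colon (C) \Rightarrow (k_*)$ below. The horizontal companion identities for the pairs $(\eps_{j_i}, \eta_{j_i})$ imply that the result has empty horizontal source and target, and is thus a vertical cell in $\K$, while the vertical companion identity together with the uniqueness of factorisations through $\eps_k$ ensures that the two round trips yield identities.

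\emph{Pseudonaturality and dual case.} For a functor $\map F\K\L$, \cororef{functors preserve companions and conjoints} implies that $F(\eps_j)$ is cartesian in $\L$ and hence defines $F(j_*)$ as a companion of $Fj$; together with the chosen $\eps_{Fj}$ this induces a unique invertible horizontal cell $\xi_j \colon F(j_*) \iso (Fj)_*$. These $\xi_j$ furnish the horizontal components of a transformation in the sense of \defref{transformation} whose naturality axiom is verified on each cell of $(Q \of V)_*(\K)$ by uniqueness of factorisations through $\eps_{Fj}$. The statement for $(-)^*$ and $\opRep$ then follows by passing to horizontal duals via \defref{horizontal dual}, since this interchanges companions and conjoints. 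The main technical obstacle lies in the inverse construction of the preceding paragraph: verifying that the composite with the $\eta_{j_i}$'s (and $\eta_k$) is genuinely vertical requires telescoping the horizontal companion identities across the full path $(j_1, \dotsc, j_n)$, which is a routine but notationally heavy inductive argument best organised with explicit pasting diagrams.
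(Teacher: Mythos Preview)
Your approach is the paper's: verify functoriality of $(-)_*$, then invoke \propref{equivalences} via essential surjectivity together with an explicit inverse on cells built from the companion (co)cartesian cells; obtain pseudonaturality from the comparison isomorphisms $F(j_*) \iso (Fj)_*$; handle the conjoint case by horizontal duality.

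There is one slip in your inverse construction. In the unary case the cell to compose below $\psi$ must be the cartesian cell $\eps_k \colon k_* \Rightarrow D$, not the cocartesian $\eta_k \colon (C) \Rightarrow k_*$. Since $\psi$ has horizontal target $k_*$, placing $\eta_k$ below it does not type-check; what is needed is a cell with horizontal source $k_*$ and empty horizontal target, and that is $\eps_k$. The paper's preimage is
\[
  \phi \dfn \eps_k \of \psi \of (\eta'_{j_1}, \dotsc, \eta'_{j_n}), \qquad \eta'_{j_i} \dfn \eta_{j_i} \of j_{i'} \of \dotsb \of j_1,
\]
the whiskering being what makes the $\eta$'s into a composable path. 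Note also that the result is a vertical cell \emph{by construction} (each $\eta'_{j_i}$ has empty horizontal source and $\eps_k$ has empty horizontal target); the companion identities are not needed for that. Where they \emph{are} needed is the round trip $\phi \mapsto \phi_*$: substituting this $\phi$ into the left-hand side of \eqref{lower star on quintets}, the horizontal companion identities cancel each $\eps_{j_i}$ against the adjacent $\eta'_{j_i}$ telescopically, leaving $\eps_k \of \psi$, whence $\phi_* = \psi$ by uniqueness of factorisation through $\eps_k$. The vertical companion identity then handles the other round trip.
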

	\begin{proof}
		We will construct the functors $\map{(\dash)_*}{(Q \of V)_*(\K)}{\Rep(\K)}$; show that they are full, faithful and essentially surjective, so that they are part of equivalences by \propref{equivalences}; and prove that they are pseudonatural in $\K$. Horizontally dual, the functors \mbox{$\map{(\dash)^*}{(\co Q \of V)^*(\K)}{\opRep(\K)}$} can then be defined as the composites $(\dash)^* \dfn \co{(\dash)} \of (\dash)_* \of \co{(\dash)}$, where we use that companions in $\co\K$ correspond to conjoints in $\K$, so that $\co{\pars{(Q \of V)_*(\co\K)}} = (\co Q \of V)^*(\K)$ and $\co{\pars{\Rep(\co\K)}} = \opRep(\K)$.
		
		It is clear that $\phi \mapsto \phi_*$ preserves identities. To see that it preserves composites $\psi \of (\phi_1, \dotsc, \phi_n)$ too consider the following equation where, as in \eqref{lower star on quintets} above, each cell denoted $\ul \eps$ is either an identity or one of the chosen cartesian cells $\ul \eps = \eps_k$. The identities follow from \eqref{lower star on quintets} above and the definition of composition in $(Q \of V)(\K)$. We conclude that $\psi_* \of (\phi_{1*}, \dotsc, \phi_{n*})$ and $\bigpars{\psi \of (\phi_1, \dotsc, \phi_n)}_*$ coincide after composition with the cell $\ul \eps$ used in the definition of $\psi_*$. By uniqueness of factorisations through cartesian cells we conclude that these composites themselves coincide, showing that the composite $\psi \of (\phi_1, \dotsc, \phi_n)$ is preserved by $(\dash)_*$.
		\begin{align*}
			\begin{tikzpicture}[scheme]
				\draw	(1,3) -- (0,3) -- (0,1) -- (4.5,0) -- (9,1) -- (9,3) -- (8,3) (2,3) -- (3,3) -- (3,2) -- (0,2) (7,3) -- (6,3) -- (6,2) -- (9,2) (0,1) -- (9,1);
				\draw[shift={(0.5,0.5)}]	(1,2) node {$\phi_{1*}$}
							(7,2) node {$\phi_{n*}$}
							(4,1) node {$\psi_*$}
							(4,0) node {$\ul\eps$}
							(1.05,2.5) node[font=\tiny] {$\dotsb$}
							(7.05,2.5) node[font=\tiny] {$\dotsb$}
							(4,2) node {$\dotsb$};
			\end{tikzpicture} \quad & = \quad \begin{tikzpicture}[scheme, yshift=1.6em]
				\draw	(1,3) -- (0,3) -- (0,2) -- (-0.666,1) -- (3.833,-2) -- (4.5,-1) -- (9,2) -- (9,3) -- (8,3) (2,3) -- (3,3) -- (3,2) -- (1.5,1) -- (0,2) (7,3) -- (6,3) -- (6,2) -- (3,0) -- (4.5,-1) (0,2) -- (3,2) (6,2) -- (9,2) (3,0) -- (6,0);
				\draw[shift={(0.5,0.5)}]	(1,2) node {$\phi_{1*}$}
							(7,2) node {$\phi_{n*}$}
							(1,1.125) node {$\ul\eps$}
							(4,-0.875) node {$\ul\eps$}
							(1.584,-0.5) node {$\psi$}
							(1.05,2.5) node[font=\tiny] {$\dotsb$}
							(7.05,2.5) node[font=\tiny] {$\dotsb$}
							(4,2) node {$\dotsb$}
							(2.5,0.5) node[rotate=-33.75] {$\dotsb$};
			\end{tikzpicture} \\[-0.975em]
			& \hspace{-5cm} = \begin{tikzpicture}[scheme, yshift=1.6em]
				\draw	(0,2) -- (0.5,1.666) -- (1,2) -- (0,2) -- (-1.333,0) -- (3.166,-3) -- (4.5,-1) -- (9,2) -- (8,2) -- (4,-0.666) -- (4.5,-1) (-0.666,1) -- (0.833,0) -- (1.5,1) -- (3,2) -- (2,2) -- (1,1.333) -- (1.5,1) (3,0) -- (3.5,-0.333) -- (7,2) -- (6,2) -- (3,0) -- (2.333,-1) -- (3.833,-2) (1,1.333) -- (2,1.333) (3,0) -- (4,0) (4,-0.666) -- (5,-0.666);
				\draw	(0.5,1.85) node {$\eps$}
							(1.5,1.19) node {$\eps$}
							(3.5,-0.15) node {$\eps$}
							(4.5,-0.815) node {$\eps$}
							(0.41625,1) node {$\phi_1$}
							(3.45,-1) node {$\phi_n$}
							($(-1.333,0)!0.5!(3.825,-2)$) node {$\psi$}
							($(0.42,1)!0.5!(3.45,-1)$) node[rotate=-33.75] {$\dotsb$}
							(3.5,1.333) node {$\dotsb$}
							(1,1.666) node[font=\tiny, rotate=-33.75] {$\dotsb$}
							(5.5,0.666) node[font=\tiny, rotate=-33.75] {$\dotsb$};
			\end{tikzpicture} \mspace{-37mu} \begin{tikzpicture}[scheme, yshift=-1.6em]
				\draw	(1,2) -- (0,2) -- (0,1) -- (4.5,0) -- (9,1) -- (9,2) -- (8,2) (2,2) -- (3,2) (7,2) -- (6,2) (0,1) -- (9,1);
				\draw[shift={(0.5,0.5)}]	(4,1) node {$\bigpars{\psi \of (\phi_1, \dotsc, \phi_n)}_*$}
							(4,0) node {$\ul\eps$}
							(1.05,1.5) node[font=\tiny] {$\dotsb$}
							(7.05,1.5) node[font=\tiny] {$\dotsb$}
							(4,1.5) node {$\dotsb$}
							(-1.4,0.75) node[font=] {$\mspace{9mu}=$};
			\end{tikzpicture}
		\end{align*}
		
		To prove that $(\dash)_*$ is part of an equivalence it suffices by \propref{equivalences} to show that it is full, faithfull and essentially surjective. That it is essentially surjective and full and faithful on vertical morphisms is clear; we have to show that it is locally full and faithful, that is full and faithful on cells. To see this we denote, for each $\hmap jAB$ in $(Q \of V)_*(\K)$, by $\eta_j$ the weakly cocartesian cell that corresponds to $\eps_j$ as in \lemref{companion identities lemma}, so that the pair $(\eps_j, \eta_j)$ satisfies the companion identities. To show faithfulness, consider cells $\psi$ and $\cell\chi{\ul j}{\ul k}$ in $(Q \of V)_*(\K)$ such that $\psi_* = \chi_*$. It follows that the left-hand sides of \eqref{lower star on quintets} coincide for $\phi = \psi$ and $\phi = \chi$ so that, by precomposing both with the cells $\eta_{j_1}, \dotsc, \eta_{j_n}$, $\psi = \chi$ follows from the vertical companion identities. To show fullness on unary cells, consider \mbox{$\cell\psi{(j_{1*}, \dotsc, j_{n*})}{k_*}$} in \mbox{$(Q \of V)_*(\K)$}. We claim that the composite
		\begin{displaymath}
			\phi \dfn \eps_k \of \psi \of (\eta_{j_1}', \dotsc, \eta_{j_n}'),
		\end{displaymath}
		where $\eta_{j_i}' \dfn \eta_{j_i} \of j_{i'} \of \dotsb \of j_1$ for each $i = 1, \dotsc, n$, is mapped to $\psi$ by $(\dash)_*$. Indeed, plugging $\phi$ into the left-hand side of \eqref{lower star on quintets} we find $\eps_k \of \psi = \eps_k \of \phi_*$ by using the horizontal companion identities, so that $\psi = \phi_*$ follows. The case of a nullary cell $\cell\psi{(j_{1*}, \dotsc, j_{n*})}C$ is similar: simply take $\phi \dfn \psi \of (\eta_{j_1}', \dotsc, \eta_{j_n}')$ instead.
		
		We now turn to proving that the functors $(\dash)_*$ combine to form a pseudonatural transformation $(Q \of V)_* \Rar \Rep$ of strict $2$-endofunctors on $\AugVirtDblCat$. We have to supply an invertible transformation $\nu_F$ as on the left below, for each functor $\map F\K\L$ of augmented virtual double categories. We take $\nu_F$ to consist of identities $(\nu_F)_A = \id_{FA}$ on objects and, for each $\hmap jAB$ in $(Q \of V)_*(\K)$, the unique factorisation \mbox{$\cell{(\nu_F)_j}{F(j_*)}{(Fj)_*}$} as on the right below. The latter is invertible since $F\eps_j$, on the left-hand side, is cartesian by \cororef{functors preserve companions and conjoints}.
		\begin{displaymath}
			\begin{tikzpicture}[textbaseline]
				\matrix(m)[math35, column sep=1.75em]{(Q \of V)_* (\K) & \Rep(\K) \\ (Q \of V)_* (\L) & \Rep(\L) \\};
				\path[map]	(m-1-1) edge node[above] {$(\dash)_*$} (m-1-2)
														edge node[left] {$(Q \of V)_*(F)$} (m-2-1)
										(m-1-2) edge node[right] {$\Rep(F)$} (m-2-2)
										(m-2-1) edge node[below] {$(\dash)_*$} (m-2-2);
				\path				(m-1-2) edge[cell, shorten >= 1.5em, shorten <= 1.5em] node[below right] {$\nu_F$} (m-2-1);				
			\end{tikzpicture} \qquad\qquad \begin{tikzpicture}[textbaseline]
				\matrix(m)[math35, column sep={1.75em,between origins}]{FA & & FB \\ & FB & \\};
				\path[map]	(m-1-1) edge[barred] node[above] {$F(j_*)$} (m-1-3)
														edge[transform canvas={xshift=-2pt}] node[left] {$Fj$} (m-2-2);
				\path				(m-1-3) edge[eq, transform canvas={xshift=1pt}] (m-2-2);
				\path[transform canvas={shift={(-0.7em,0.25em)}}]	(m-1-2) edge[cell] node[right, inner sep=2.5pt] {$F\eps_j$} (m-2-2);
			\end{tikzpicture} \quad = \quad \begin{tikzpicture}[textbaseline]
  			\matrix(m)[math35, column sep={1.75em,between origins}]{FA & & FB \\ FA & & FB \\ & FB & \\};
  			\path[map]	(m-1-1) edge[barred] node[above] {$F(j_*)$} (m-1-3)
  									(m-2-1) edge[barred] node[above] {$(Fj)_*$} (m-2-3)
  													edge[transform canvas={xshift=-2pt}] node[left] {$Fj$} (m-3-2);
  			\path				(m-1-1) edge[eq] (m-2-1)
  									(m-1-3) edge[eq] (m-2-3)
  									(m-2-3) edge[eq, transform canvas={xshift=1pt}] (m-3-2);
  			\path				(m-1-2) edge[cell, transform canvas={shift={(-0.95em,0.25em)}}] node[right] {$(\nu_F)_j$} (m-2-2)
  									(m-2-2) edge[cell, transform canvas={shift={(-0.7em,0.25em)}}] node[right] {$\eps_{Fj}$} (m-3-2);
  		\end{tikzpicture}
		\end{displaymath}
		We have to show that the components of $\nu_F$ are natural with respect to the cells of $(Q \of V)_*(\K)$, in the sense of \defref{transformation}. We will do so in the case of a unary cell $\cell\phi{(j_1, \dotsc, j_n)}k$; the case of nullary cells is similar. Consider the following equation, where $\eps_{Fj_i}' \dfn Fg \of Fj_n \of \dotsb \of Fj_{i+1} \of \eps_{Fj_i}$ and $\eps_{j_i}' = g \of j_n \of \dotsb \of j_{i+1} \of \eps_{j_i}$ for each $i = 1, \dotsc, n$, as in the left-hand side of \eqref{lower star on quintets}. The identities follow from \eqref{lower star on quintets} for $F\phi$, the identity above, $F$ preserves composition, the $F$-image of \eqref{lower star on quintets} for $\phi$ and the identity above again. Since factorisations through $\eps_{Fk}$ in the left and right-hand side below are unique the naturality of the components of $\nu_F$ with respect to $\phi$ follows. This completes the definition of the transformation $\nu_F$.
		\begin{multline*}
			\eps_{Fk} \of (F\phi)_* \of \bigpars{(\nu_F)_{j_1}, \dotsc, (\nu_F)_{j_n}} = (F\phi \hc \eps_{Fj_1}' \hc \dotsb \hc \eps_{Fj_n}') \of \bigpars{(\nu_F)_{j_1}, \dotsc, (\nu_F)_{j_n}} \\ = F\phi \hc F\eps_{j_1}' \dotsb \hc F\eps_{j_n}' = F(\phi \hc \eps_{j_1}' \hc \dotsb \hc \eps_{j_n}') = F(\eps_k \of \phi_*) = \eps_{Fk} \of (\nu_F)_k \of F(\phi_*)
		\end{multline*}
		
		Finally we have to show that the transformations $\nu_F$ are natural with respect to the transformations $\nat\xi FG$ in $\AugVirtDblCat$, and that they are compatible with compositions and identities, that is $\nu_{\id} = \id$ and $\nu_GF \of G\nu_F = \nu_{G \of F}$. The latter is a direct consequence of the uniqueness of the components of $\nu$. To prove the former we have to show that $(\nu_G)_j \of \xi_{(j_*)} = (\xi_j)_* \of (\nu_F)_j$, for each $\hmap jAB$ in $(Q \of V)_*(\K)$. To do so consider the equation
		\begin{displaymath}
			\eps_{Gj} \of (\nu_G)_j \of \xi_{(j_*)} = G\eps_j \of \xi_{(j_*)} = \xi_B \of F\eps_j = \xi_B \of \eps_{Fj} \of (\nu_F)_j = \eps_{Gj} \of (\xi_j)_* \of (\nu_F)_j,
		\end{displaymath}
		where we have used the defining identities for $(\nu_G)_j$ and $(\nu_F)_j$, the naturality of $\xi$, identity~\eqref{lower star on quintets} for $\xi_j$, and the fact that the latter is simply the quintet given by the naturality square $Gj \of \xi_A = \xi_B \of Fj$; see the proof of \propref{2-functor Q}. Using the cartesianess of $\eps_{Gj}$ we conclude that $(\nu_G)_j \of \xi_{(j_*)} = (\xi_j)_* \of (\nu_F)_j$, proving the naturality of the transformations $\nu_F$. This concludes the proof.
	\end{proof}
	
	\section{Composition of horizontal morphisms}\label{composition section}
	We now turn to compositions of horizontal morphisms in augmented virtual double categories. Analogous to the case of virtual double categories (see Section~2 of \cite{Dawson-Pare-Pronk06} or Section~5 of \cite{Cruttwell-Shulman10}) such composites are defined by horizontal `cocartesian cells', whose universal property strengthens that of horizontal weakly cocartesian cell (\defref{cartesian cells}), as in the following definition. Generalising the latter notions in the obvious way, it also defines (weakly) cocartesian \emph{paths} of cells that are not necessarily horizontal.
	
	\begin{definition} \label{cocartesian paths}
		A path of cells $\ul\phi = (\phi_1, \dotsc, \phi_n)$, as in the right-hand side below, is called \emph{weakly cocartesian} if any cell $\chi$, as on the left-hand side, factors uniquely through $\ul\phi$ as shown.
		\begin{multline*}
			\begin{tikzpicture}[textbaseline, ampersand replacement=\&]
				\matrix(m)[minimath, column sep={0.75em}]
					{	X_{10} \& X_{11} \& X_{1m'_1} \& X_{1m_1} \&[4em] X_{n0} \& X_{n1} \& X_{nm'_n} \& X_{nm_n} \\
						C \& \& \& \& \& \& \& D \\};
				\path[map]	(m-1-1) edge[barred] node[above] {$H_{11}$} (m-1-2)
														edge node[left] {$h \of f_0$} (m-2-1)
										(m-1-3) edge[barred] node[above, xshift=-1pt] {$H_{1m_1}$} (m-1-4)
										(m-1-5)	edge[barred] node[above] {$H_{n1}$} (m-1-6)
										(m-1-7) 	edge[barred] node[above, xshift=-2pt] {$H_{nm_n}$} (m-1-8)
										(m-1-8)	edge node[right] {$k \of f_n$} (m-2-8)
										(m-2-1) edge[barred] node[below] {$\ul K$} (m-2-8);
				\path				($(m-1-1.south)!0.5!(m-1-8.south)$) edge[cell] node[right] {$\chi$} ($(m-2-1.north)!0.5!(m-2-8.north)$);
				\draw[transform canvas={xshift=-1pt}]	($(m-1-2)!0.5!(m-1-3)$) node {$\dotsb$}
										($(m-1-6)!0.5!(m-1-7)$) node {$\dotsb$};
				\draw				($(m-1-4)!0.5!(m-1-5)$) node {$\dotsb$};
			\end{tikzpicture} \\
			= \begin{tikzpicture}[textbaseline, ampersand replacement=\&]
				\matrix(m)[minimath, column sep={0.75em}]
					{	X_{10} \& X_{11} \& X_{1m'_1} \& X_{1m_1} \&[4em] X_{n0} \& X_{n1} \& X_{nm'_n} \& X_{nm_n} \\
						A_0 \& \& \& A_1 \& A_{n'} \& \& \& A_n \\
						C \& \& \& \& \& \& \& D \\};
				\path[map]	(m-1-1) edge[barred] node[above] {$H_{11}$} (m-1-2)
														edge node[left] {$f_0$} (m-2-1)
										(m-1-3) edge[barred] node[above, xshift=-1pt] {$H_{1m_1}$} (m-1-4)
										(m-1-4) edge node[right] {$f_1$} (m-2-4)
										(m-1-5)	edge[barred] node[above] {$H_{n1}$} (m-1-6)
														edge node[left] {$f_{n'}$} (m-2-5)
										(m-1-7) 	edge[barred] node[above, xshift=-2pt] {$H_{nm_n}$} (m-1-8)
										(m-1-8)	edge node[right] {$f_n$} (m-2-8)
										(m-2-1) edge[barred] node[below] {$\ul J_1$} (m-2-4)
														edge node[left] {$h$} (m-3-1)
										(m-2-5)	edge[barred] node[below] {$\ul J_n$} (m-2-8)
										(m-2-8)	edge node[right] {$k$} (m-3-8)
										(m-3-1) edge[barred] node[below] {$\ul K$} (m-3-8);
				\path				($(m-1-1.south)!0.5!(m-1-4.south)$) edge[cell] node[right] {$\phi_1$} ($(m-2-1.north)!0.5!(m-2-4.north)$)
										($(m-1-5.south)!0.5!(m-1-8.south)$) edge[cell] node[right] {$\phi_n$} ($(m-2-5.north)!0.5!(m-2-8.north)$)
										($(m-2-1.south)!0.5!(m-2-8.south)$) edge[cell] node[right] {$\chi'$} ($(m-3-1.north)!0.5!(m-3-8.north)$);
				\draw[transform canvas={xshift=-1pt}]	($(m-1-2)!0.5!(m-1-3)$) node {$\dotsb$}
										($(m-1-6)!0.5!(m-1-7)$) node {$\dotsb$};
				\draw				($(m-1-4)!0.5!(m-2-5)$) node {$\dotsb$};
			\end{tikzpicture}
		\end{multline*}

		A weakly cocartesian path $\ul\phi = (\phi_1, \dotsc, \phi_n)$ is called \emph{cocartesian} if any path of the form below, where $p, q \geq 0$, is weakly cocartesian. (If $p = 0$ then $\phi_1$ is the first cell in the path below; similarly if $q = 0$ then $\phi_n$ is the last cell.)
		\begin{displaymath}
			\begin{tikzpicture}
				\matrix(m)[math35, xshift=-2.5em, column sep={4em,between origins}]
					{	X'_0 & X'_1 & X'_{p''} & X'_{p'} & X_{10} & X_{11} & X_{1m_1'} & X_{1m_1} \\
						X'_0 & X'_1 & X'_{p''} & X'_{p'} & A_0 & & & A_1 \\ };
				\path[map]	(m-1-1) edge[barred] node[above] {$H'_1$} (m-1-2)
										(m-1-3) edge[barred] node[above] {$H'_{p'}$} (m-1-4)
										(m-1-4) edge[barred] node[above] {$H'_p(\id, f_0)$} (m-1-5)
										(m-1-5) edge[barred] node[above] {$H_{11}$} (m-1-6)
														edge node[right] {$f_0$} (m-2-5)
										(m-1-7) edge[barred] node[above, xshift=-1pt] {$H_{1m_1}$} (m-1-8)
										(m-1-8) edge node[right] {$f_1$} (m-2-8)
										(m-2-1) edge[barred] node[below] {$H'_1$} (m-2-2)
										(m-2-3) edge[barred] node[below] {$H'_{p'}$} (m-2-4)
										(m-2-4) edge[barred] node[below] {$H'_p$} (m-2-5)
										(m-2-5) edge[barred] node[below] {$\ul J_1$} (m-2-8);
				\path				(m-1-1) edge[eq] (m-2-1)
										(m-1-2) edge[eq] (m-2-2)
										(m-1-3) edge[eq] (m-2-3)
										(m-1-4) edge[eq] (m-2-4)
										(m-1-6) edge[transform canvas={xshift=1.875em}, cell] node[right] {$\phi_1$} (m-2-6);
				\draw				($(m-1-6)!0.5!(m-1-7)$) node[xshift=-1pt] {$\dotsb$}
										($(m-1-2)!0.5!(m-2-3)$) node {$\dotsb$}
										($(m-1-4)!0.5!(m-2-5)$) node[font=\scriptsize] {$\cart$};
				
				\matrix(m)[math35, yshift=-6.5em, xshift=2.5em, column sep={4em,between origins}]
					{ X_{n0} & X_{n1} & X_{nm_n'} & X_{nm_n} & X''_1 & X''_2 & X''_{q'} & X''_q \\
						A_{n'} & & & A_n & X''_1 & X''_2 & X''_{q'} & X''_q \\ };
				\path[map]	(m-1-1) edge[barred] node[above] {$H_{n1}$} (m-1-2)
														edge node[left, inner sep=1pt] {$f_{n'}$} (m-2-1)
										(m-1-3)	edge[barred] node[above, xshift=-2pt] {$H_{nm_n}$} (m-1-4)
										(m-1-4) edge[barred] node[above, xshift=-6pt] {$H''_1(f_n, \id)$} (m-1-5)
														edge node[left, inner sep=1.5pt] {$f_n$} (m-2-4)
										(m-1-5) edge[barred] node[above] {$H''_2$} (m-1-6)
										(m-1-7) edge[barred] node[above] {$H''_q$} (m-1-8)
										(m-2-1) edge[barred] node[below] {$\ul J_n$} (m-2-4)
										(m-2-4) edge[barred] node[below] {$H''_1$} (m-2-5)
										(m-2-5) edge[barred] node[below] {$H''_2$} (m-2-6)
										(m-2-7) edge[barred] node[below] {$H''_q$} (m-2-8);
				\path				(m-1-5) edge[eq] (m-2-5)
										(m-1-6) edge[eq] (m-2-6)
										(m-1-7) edge[eq] (m-2-7)
										(m-1-8) edge[eq] (m-2-8)
										(m-1-2) edge[transform canvas={xshift=1.875em}, cell] node[right] {$\phi_n$} (m-2-2);
				\draw				($(m-1-1)!0.5!(m-2-1)$) node[xshift=-2.5em] {$\dotsb$}
										($(m-1-2)!0.5!(m-1-3)$) node[xshift=-1pt] {$\dotsb$}
										($(m-1-6)!0.5!(m-2-7)$) node {$\dotsb$}
										($(m-1-4)!0.5!(m-2-5)$) node[font=\scriptsize] {$\cart$};
			\end{tikzpicture}
		\end{displaymath}
	\end{definition}
	Notice that cocartesianness of the path $\ul\phi$ depends on the existence of restrictions along $f_0$ and $f_n$. If no restrictions along $f_0$ and $f_n$ exist then $\ul\phi$ is cocartesian precisely if it is weakly cocartesian.

	Given a cocartesian horizontal cell of the form
	\begin{equation} \label{horizontal cocartesian cell}
		\begin{tikzpicture}[textbaseline]
			\matrix(m)[math35]{X_0 & X_1 & X_{n'} & X_n \\ X_0 & & & X_n \\};
			\path[map]	(m-1-1) edge[barred] node[above] {$H_1$} (m-1-2)
									(m-1-3) edge[barred] node[above] {$H_n$} (m-1-4)
									(m-2-1) edge[barred] node[below] {$J$} (m-2-4);
			\path				(m-1-1) edge[eq] (m-2-1)
									(m-1-4) edge[eq] (m-2-4);
			\path[transform canvas={xshift=1.75em}]	(m-1-2) edge[cell] node[right] {$\phi$} (m-2-2);
			\draw				($(m-1-2)!0.5!(m-1-3)$) node {$\dotsb$};
		\end{tikzpicture}
	\end{equation}
	we call, in the case that $n \geq 1$, $J$ the \emph{(horizontal) composite} of $(H_1, \dotsc, H_n)$ and write $(H_1 \hc \dotsb \hc H_n) \dfn J$. If $n = 0$ then, by \lemref{unit identities}, $\cell\phi{X_0}J$ corresponds to a horizontal cartesian cell $\cell\psi J{X_0}$ that defines $J$ as the (horizontal) unit $I_{X_0} = J$ of $X_0$, in the sense of \secref{restriction section}. Conversely, in \lemref{cocartesian unit identities} below we will see that any horizontal cartesian cell $J \Rightarrow X_0$ corresponds to a horizontal cocartesian cell $X_0 \Rightarrow J$.
	
	By their universal property any two cocartesian horizontal cells defining the same composite or unit factor through each other as invertible horizontal cells. The same property also ensures that composites of composites and units are associative and unital up to isomorphisms, as we will see after \lemref{pasting lemma for cocartesian paths} below. Like weakly cocartesian cells, in diagrams we denote single cocartesian cells simply by ``cocart''.
	
	Recall from the discussion following \defref{cartesian cells} that, when restricting its universal property to unary cells, the notion of weakly cocartesian cell in augmented virtual double categories coincides with the corresponding notion for virtual double categories. From this it follows that the notions of horizontal composite and horizontal unit likewise restrict to the corresponding notions for virtual double categories considered in Section~2 of \cite{Dawson-Pare-Pronk06} or Section 5 of \cite{Cruttwell-Shulman10}.

	Notice that the concatenation $\ul \phi \conc \ul \psi$ of two cocartesian paths $\ul \phi = (\phi_1, \dotsc, \phi_n)$ and $\ul \psi = (\psi_1, \dotsc, \psi_m)$ is again cocartesian whenever the common vertical target of $\phi_n$ and vertical source of $\psi_1$ is an identity morphism. That this is not true in general is shown in \exref{concatenation does not preserve cocartesian paths} below.
	
	\begin{example} \label{compositions of enriched profunctors}
		In \exref{weakly cocartesian cells in V-Prof} we characterised weakly cocartesian cells $\phi$ in $\enProf\V$ (\exref{enriched profunctors}) as in \eqref{horizontal cocartesian cell}, but with $X_0 = I = X_n$ the unit $\V$"/category, as those defining $J$ as the coend $\int^{u_1 \in X_1} \!\dotsb \int^{u_{n'} \in X_{n'}} H_1(*, u_1) \tens \dotsb \tens H_n(u_{n'}, *)$. Such a weakly cocartesian cell $\phi$ is cocartesian if and only if its coend is preserved by all functors $x \tens \dash$ and $\dash \tens x$, for all $x \in \V$.
	\end{example}
	
	\begin{example} \label{Span(E) has composites}
		In $\Span\E$ (\exref{internal profunctors}) all weakly cocartesian cells, as characterised in \exref{weakly cocartesian cells in Span(E)}, are cocartesian. Thus $\Span\E$ has all horizontal composites, besides having all horizontal units (see \exref{restrictions of spans}).
	\end{example}
	
	\begin{example}
		In \exref{weakly cocartesian cells in Rel(E)} we saw that for a cell $\phi$ as in \eqref{horizontal cocartesian cell} to be weakly cocartesian in $\Rel(\E)$ it suffices that its underlying morphism \mbox{$\map\phi{H_1 \times_{X_1} \dotsb \times_{X_{n'}} H_n}J$} is a strong epimorphism. In that case $\phi$ is cocartesian as soon as all pullbacks of $\phi$ are again strong epimorphisms. In particular $\Rel(\E)$ has all horizontal composites and units whenever $\E$ is \emph{regular}, in the sense of Section~3 of \cite{Carboni-Kasangian-Street84}.
	\end{example}
	
	\begin{example} \label{composites of internal profunctors}
		If $\E$ has reflexive coequalisers preserved by pullback then the augmented virtual double category $\inProf\E$ of profunctors internal to $\E$ (\exref{internal profunctors}) has all horizontal composites. The composite of internal profunctors is an ``internal coend''. That $\inProf\E$ has all horizontal units follows from \exref{restrictions of bimodules}.
		
		Next let $\K$ be a finitely complete $2$"/category that has reflexive coequalisers preserved by pullback. Since the embedding $\spFib\K \hookrightarrow \inProf{\K_0}$ (\exref{internal split fibrations}) is surjective on horizontal morphisms as well as locally full and faithful, it follows from \lemref{locally full and faithful functors reflect cocartesian paths} below that $\spFib\K$ too has all units and composites.
	\end{example}
	
	The following lemma shows that in conditions (b) and (e) of \lemref{unit identities} `weakly cocartesian' can be replaced by `cocartesian'.
	\begin{lemma} \label{cocartesian unit identities}
		Let $(\psi, \phi)\mspace{-3mu}$ be a pair of cells that satisfies identities \textup{(A)} and \textup{(J)} of \lemref{unit identities}. Then, both $\psi$ and $\phi$ are cocartesian.
	\end{lemma}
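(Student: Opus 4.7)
My plan is to verify the strengthened universal property in \defref{cocartesian paths} directly, leveraging the companion-style identities (A) and (J) of \lemref{unit identities}. The starting observation is that, since $\phi$ and $\psi$ are horizontal cells (vertical source and target are both $\id_A$), the restrictions $H'_p(\id, f_0)$ and $H''_1(f_n, \id)$ prepended and appended in \defref{cocartesian paths} collapse to $H'_p$ and $H''_1$ themselves. Hence it suffices to show that for any paths of horizontal morphisms $(H'_1, \dotsc, H'_p)$ ending at $A$ and $(H''_1, \dotsc, H''_q)$ beginning at $A$, the two ``padded'' paths
\begin{displaymath}
  \ul\phi \dfn (\id_{H'_1}, \dotsc, \id_{H'_p}, \phi, \id_{H''_1}, \dotsc, \id_{H''_q}) \quad\text{and}\quad \ul\psi \dfn (\id_{H'_1}, \dotsc, \id_{H'_p}, \psi, \id_{H''_1}, \dotsc, \id_{H''_q})
\end{displaymath}
are weakly cocartesian.

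For $\ul\phi$: given any cell $\chi$ with horizontal source $(H'_1, \dotsc, H'_p, H''_1, \dotsc, H''_q)$, I would define the candidate factorisation to be $\chi' \dfn \chi \of \ul\psi$, which has horizontal source $(H'_1, \dotsc, H'_p, J, H''_1, \dotsc, H''_q)$. Verifying $\chi' \of \ul\phi = \chi$ is then a routine application of the associativity axiom: the middle $\psi$ in $\ul\psi$ and $\phi$ in $\ul\phi$ combine into $\psi \of \phi$, which by (A) equals $\id_A$, and the unit axioms of \defref{augmented virtual double category} absorb this $\id_A$ together with the surrounding identity cells $\id_{H'_i}$, $\id_{H''_j}$. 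Uniqueness is the mirror argument: given any other factorisation $\chi''$ satisfying $\chi'' \of \ul\phi = \chi$, insert $\id_J$ in the middle position of $\chi''$ via the unit axioms, rewrite this $\id_J$ as $\phi \of \psi$ by hypothesis (J), and apply associativity in reverse to identify $\chi''$ with $\chi'$.

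The argument for $\ul\psi$ is formally symmetric. Given a cell $\chi$ whose horizontal source is $(H'_1, \dotsc, H'_p, J, H''_1, \dotsc, H''_q)$, I take $\chi' \dfn \chi \of \ul\phi$ (with horizontal source the shorter path obtained by dropping $J$); now existence is secured by identity (J) (which produces $\phi \of \psi = \id_J$ in the middle) and uniqueness by identity (A). The only part that requires any care is the associativity bookkeeping, since $\phi$ is nullary and $\psi$ has nullary target, so applying the associativity axiom of \defref{augmented virtual double category} involves matching the inner paths $(\phi)$ and $(\psi)$ against a single unary slot $J$ in the adjacent outer path; but this is straightforward. There is no substantial obstacle in the proof beyond this notational bookkeeping, which is why the statement can be expected to follow essentially immediately from (A) and (J) together with the axioms of an augmented virtual double category.
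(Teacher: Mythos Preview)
Your proposal is correct and follows essentially the same approach as the paper's sketch: the factorisation through the padded path $(\id_{H'_1}, \dotsc, \phi, \dotsc, \id_{H''_q})$ is obtained by composing with $(\id_{H'_1}, \dotsc, \psi, \dotsc, \id_{H''_q})$ and vice versa, with identities (A) and (J) supplying existence and uniqueness respectively. Your explicit remark that the restrictions in \defref{cocartesian paths} collapse because $f_0 = f_n = \id_A$ is a helpful clarification that the paper leaves implicit.
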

	\begin{proof}[(sketch)]
		Identities (A) and (J) imply that the unique factorisation of a cell $\chi$ through a path of the form $(\id_{H_1'}, \dotsc, \id_{H_p'}, \phi, \id_{H_1''}, \dotsc, \id_{H_q''})$, as in \defref{cocartesian paths}, is given by $\chi' = \chi \of (\id_{H_1'}, \dotsc, \id_{H_p'}, \psi, \id_{H_1''}, \dotsc, \id_{H_q''})$. Likewise factorisations through $(\id_{H_1'}, \dotsc, \id_{H_p'}, \psi, \id_{H_1''}, \dotsc, \id_{H_q''})$ are given by composing with $\phi$.
	\end{proof}
	
	(Weakly) cocartesian paths, like cartesian cells, satisfy a pasting lemma as follows. In proving the assertions (a) and (b) use the pasting lemma for cartesian cells (\lemref{pasting lemma for cartesian cells}).
	\begin{lemma}[Pasting lemma] \label{pasting lemma for cocartesian paths}
		In the configuration of cells below denote by $\ul\phi_j$ the path $\ul\phi_j \dfn (\phi_{j1}, \dotsc, \phi_{jn_j})$, for each $1 \leq j \leq n$, and assume that the path $(\phi_{11}, \dotsc, \phi_{nm_n})$ is weakly cocartesian. Then the path $\ul\psi \dfn (\psi_1, \dotsc, \psi_n)$  is weakly cocartesian if and only if the path of composites $\bigpars{\psi_1 \of \ul \phi_1, \dotsc, \psi_n \of \ul\phi_n}$ is so.
		\begin{displaymath}
			\begin{tikzpicture}[scheme, x=0.40cm]
				\draw	(1,2) -- (0,2) -- (0,0) -- (18,0) -- (18,2) -- (17,2)
							(2,2) -- (3,2) -- (3,1) -- (0,1)
							(7,2) -- (6,2) -- (6,1) -- (12,1) -- (12,2) -- (11,2)
							(8,2) -- (10,2)
							(9,2) -- (9,0)
							(16,2) -- (15,2) -- (15,1) -- (18,1)
							(28,2) -- (27,2) -- (27,0) -- (36,0) -- (36,2) -- (35,2)
							(29,2) -- (30,2) -- (30,1) -- (27,1)
							(34,2) -- (33,2) -- (33,1) -- (36,1);
				\draw[shift={(0.3pt, -0.33pt)}] (1.5,2) node {\scalebox{.6}{$\dotsb$}}
							(7.5,2) node {\scalebox{.6}{$\dotsb$}}
							(10.5,2) node {\scalebox{.6}{$\dotsb$}}
							(16.5,2) node {\scalebox{.6}{$\dotsb$}}
							(28.5,2) node {\scalebox{.6}{$\dotsb$}}
							(34.5,2) node {\scalebox{.6}{$\dotsb$}};
				\draw[shift={(0.165cm, 0.3cm)}] (1,1) node {$\phi_{11}$}
							(7,1) node {$\phi_{1m_1}$}
							(10,1) node {$\phi_{21}$}
							(16,1) node {$\phi_{2m_2}$}
							(28,1) node {$\phi_{n1}$}
							(34,1) node {$\phi_{nm_n}$}
							(4,0) node {$\psi_1$}
							(13,0) node {$\psi_2$}
							(31,0) node {$\psi_n$}
							(4,1) node {$\dotsb$}
							(13,1) node {$\dotsb$}
							(31,1) node {$\dotsb$};
				\draw[font=]	(22.5,1) node {$\dotsb$};
			\end{tikzpicture}
		\end{displaymath}
		
		Next denote the vertical sources of $\phi_{11}$ and $\psi_1$ by $\map{f_{10}}{X_{110}}{A_{10}}$ and $\map{h_0}{A_{10}}{C_0}$, and the vertical targets of $\phi_{nm_n}$ and $\psi_n$ by $\map{f_{nm_n}}{X_{nm_nk_{m_n}}}{A_{nm_n}}$ and $\map{h_n}{A_{nm_n}}{C_n}$. If the path $(\phi_{11}, \dotsc, \phi_{nm_n})$ is cocartesian then the following hold:
		\begin{enumerate}[label=\textup{(\alph*)}]
			\item	if $\ul\psi$ is cocartesian then so is $\bigpars{\psi_1 \of \ul \phi_1, \dotsc, \psi_n \of \ul\phi_n}$ provided that for any horizontal morphisms $\hmap{K'}{C'}{C_0}$ and $\hmap{K''}{C_n}{C''}$ the following holds: if the restrictions $K'(\id, h_0 \of f_{10})$ and $K''(h_n \of f_{nm_n}, \id)$ exist then so do $K'(\id, h_0)$ and $K''(h_n, \id)$;

			\item if $\bigpars{\psi_1 \of \ul \phi_1, \dotsc, \psi_n \of \ul\phi_n}$ is cocartesian then so is $\ul\psi$ provided that for any horizontal morphisms $\hmap{K'}{C'}{C_0}$ and $\hmap{K''}{C_n}{C''}$ the following holds: if the restrictions $K'(\id, h_0)$ and $K''(h_n, \id)$ exist then so do $K'(\id, h_0 \of f_{10})$ and $K''(h_n \of f_{nm_n}, \id)$.
		\end{enumerate}
	\end{lemma}
	
	Applying the pasting lemma to compositions $\psi \of (\phi_1, \dotsc, \phi_n)$ of horizontal cells shows that the collection of horizontal composites and units in an augmented virtual double category is coherent as follows. Let $(\ul J_1, \dotsc, \ul J_n)$ be a path of paths $\ul J_i = (J_{i1}, \dotsc, J_{im_i})$ of horizontal morphisms. If all composites $(J_{11} \hc \dotsb \hc J_{1m_1})$, \dots, $(J_{n1} \hc \dotsb \hc J_{nm_n})$ exist then the composite $(J_{11} \hc \dotsb \hc J_{nm_n})$ of the concatenation $J_{11} \conc \dotsb \conc J_{nm_n}$ exists if and only if
	\begin{displaymath}
		\bigpars{(J_{11} \hc \dotsb \hc J_{1m_1}) \hc \dotsb \hc (J_{n1} \hc \dotsb \hc J_{nm_n})}
	\end{displaymath}
	does, in which case they are canonically isomorphic. Notice that this also includes isomorphisms of the form $(I_A \hc J) \iso J \iso (J \hc I_B)$, for any $\hmap JAB$, and similar.	
	
	Remember that any functor between unital augmented virtual double categories preserves horizontal units by \cororef{functors preserve companions and conjoints}. We follow \cite{Cruttwell-Shulman10} in calling a functor \mbox{$\map F\K\L$} \emph{strong} if it preserves horizontal composites too, that is its image of any horizontal cocartesian cell in $\K$ is cocartesian in $\L$.
	
	To complete the picture we now briefly describe the classical notion of `pseudo double category' as introduced by Grandis and Par\'e in the Appendix to \cite{Grandis-Pare99}; see also Section~2 of \cite{Shulman08}. In our terms, a \emph{pseudo double category} is a virtual double category that contains $(1,1)$-ary cells only and which is equipped with a horizontal composition
	\begin{align*}
		A \xbrar J B \xbrar H E \mspace{-4.25mu}\quad\qquad&\mapsto\mspace{-4.25mu}\quad\qquad A \xbrar{J \hc H} E; \\
		\begin{tikzpicture}[textbaseline, ampersand replacement=\&]
			\matrix(m)[math35]{A \& B \& E \\ C \& D \& F \\};
			\path[map]	(m-1-1) edge[barred] node[above] {$J$} (m-1-2)
													edge node[left] {$f$} (m-2-1)
									(m-1-2) edge[barred] node[above] {$H$} (m-1-3)
													edge node[right] {$g$} (m-2-2)
									(m-1-3) edge node[right] {$h$} (m-2-3)
									(m-2-1) edge[barred] node[below] {$K$} (m-2-2)
									(m-2-2) edge[barred] node[below] {$L$} (m-2-3);
			\path[transform canvas={xshift=1.75em}]	(m-1-1) edge[cell] node[right] {$\phi$} (m-2-1)
									(m-1-2) edge[cell] node[right] {$\psi$} (m-2-2);
		\end{tikzpicture} \qquad&\mapsto\qquad \begin{tikzpicture}[textbaseline, ampersand replacement=\&]
			\matrix(m)[math35, column sep={5em,between origins}]{A \& E \\ C \& F, \\};
			\path[map]	(m-1-1) edge[barred] node[above] {$J \hc H$} (m-1-2)
													edge node[left] {$f$} (m-2-1)
									(m-1-2) edge node[right] {$h$} (m-2-2)
									(m-2-1) edge[barred] node[below] {$K \hc L$} (m-2-2);
			\path[transform canvas={xshift=1.5em}]	(m-1-1) edge[cell] node[right] {$\phi \hc \psi$} (m-2-1);
		\end{tikzpicture}
	\end{align*}
	as well as horizontal units $\hmap{I_A}AA$; $\cell{I_f}{I_A}{I_C}$. These come with horizontal coherence cells of the forms $(J \hc H) \hc M \iso J \hc (H \hc M)$, $I_A \hc J \iso J$ and $J \hc I_B \iso J$ which satisfy the usual coherence axioms, analogous to those for a monoidal category or bicategory; see e.g.\ Section~VII.1 of \cite{MacLane98}. A pseudo double category with identity cells as coherence cells is called a \emph{strict} double category.
	
	Any pseudo double category gives rise to a virtual double category with the same objects and morphisms, whose cells $(J_1, \dotsc, J_n) \Rar K$ correspond to cells \mbox{$J_1 \hc \dotsb \hc J_n \Rar K$} in the double category. The following result, which is Proposition~2.8 of \cite{Dawson-Pare-Pronk06} and Theorem~5.2 of \cite{Cruttwell-Shulman10}, characterises the virtual double categories obtained in this way.
	\begin{proposition}[MacG. Dawson, Par\'e and Pronk] \label{pseudo double categories}
		A virtual double category is induced by a pseudo double category if and only if it has all horizontal composites and units.
	\end{proposition}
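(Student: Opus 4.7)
The plan is to establish both implications separately, with the forward direction (``only if'') being essentially immediate from unfolding definitions, and the converse requiring the construction of a pseudo double category from a virtual double category with all composites and units.

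For the forward direction, suppose $\K = U(\mathbb D)$ for some pseudo double category $\mathbb D$. Given a path $(J_1, \dotsc, J_n)$ of composable horizontal morphisms in $\K$, the identity cell of the composite $J_1 \hc \dotsb \hc J_n$ in $\mathbb D$ corresponds, under the defining bijection, to a cell $\sigma\colon (J_1, \dotsc, J_n) \Rar J_1 \hc \dotsb \hc J_n$ in $\K$. To see that $\sigma$ is cocartesian, note that any cell $\chi$ as in \defref{cocartesian paths}, of the form $(\ul H', J_1, \dotsc, J_n, \ul H'') \Rar \ul K$, corresponds in $\mathbb D$, after applying the canonical coherence isomorphism that collapses the relevant iterated composite, to a cell whose horizontal source is $H'_1 \hc \dotsb \hc J_1 \hc \dotsb \hc J_n \hc \dotsb \hc H''_q$, which by associativity coherence factors uniquely through $\sigma$. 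The unit $I_A$ of $\mathbb D$ serves as horizontal unit at $A$ in $\K$: the identity $\id_{I_A}$ corresponds, via the $n = 0$ case of the bijection, to the cocartesian cell $(A) \Rar I_A$ required by the definition of horizontal unit.

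For the converse, let $\K$ be a virtual double category having all horizontal composites and units. For each composable pair $(J, H)$ choose a cocartesian cell $\sigma_{J, H}\colon (J, H) \Rar J \hc H$, and for each object $A$ choose a cocartesian cell $\sigma_A\colon (A) \Rar I_A$. Define $\mathbb D$ to have the same objects, vertical morphisms, and horizontal morphisms as $\K$, with cells $J \Rar K$ being the $(1, 1)$"/ary cells of $\K$. Horizontal composition of cells is defined by universal property: for $\phi\colon (J) \Rar K$ and $\psi\colon (H) \Rar L$, let $\phi \hc \psi\colon J \hc H \Rar K \hc L$ be the unique factorisation of $\sigma_{K, L} \of (\phi, \psi)$ through $\sigma_{J, H}$. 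Similarly the horizontal identity cell $I_f\colon I_A \Rar I_C$ of a vertical morphism $\map fAC$ is defined as the unique factorisation of $\sigma_C \of f$ through $\sigma_A$. Functoriality of these operations (including the interchange law) follows from uniqueness of cocartesian factorisations, via computations of the form $\bigpars{(\phi_2 \of \phi_1) \hc (\psi_2 \of \psi_1)} \of \sigma = \sigma \of (\phi_2 \of \phi_1, \psi_2 \of \psi_1) = \sigma \of (\phi_2, \psi_2) \of (\phi_1, \psi_1)$, using the interchange axioms of \lemref{horizontal composition}.

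The associators and unitors of $\mathbb D$ arise from the uniqueness of horizontal composites up to canonical isomorphism, as noted after \lemref{pasting lemma for cocartesian paths}. Explicitly, for composable $(J, H, M)$ both of the paths
\begin{displaymath}
	(J, H, M) \xRar{(\sigma_{J, H},\, \id_M)} (J \hc H, M) \xRar{\sigma_{J \hc H,\, M}} (J \hc H) \hc M
\end{displaymath}
and $(J, H, M) \xRar{(\id_J,\, \sigma_{H, M})} (J, H \hc M) \xRar{\sigma_{J,\, H \hc M}} J \hc (H \hc M)$ are cocartesian by the pasting lemma (using that the concatenation of cocartesian paths sharing an identity vertical morphism remains cocartesian, as observed before \exref{compositions of enriched profunctors}), yielding a unique invertible horizontal cell $(J \hc H) \hc M \iso J \hc (H \hc M)$. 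The left and right unit isomorphisms $I_A \hc J \iso J \iso J \hc I_B$ are obtained analogously, using that $\sigma_A$ is cocartesian (\lemref{cocartesian unit identities}) together with the cocartesianness of the identity cell $\id_J\colon (J) \Rar J$. The pentagon and triangle axioms hold because both sides of each axiom provide invertible factorisations of the same universal cocartesian path into the same composite, and so must coincide by uniqueness. Finally, the virtual double category $U(\mathbb D)$ recovers $\K$: an $n$"/ary cell $(J_1, \dotsc, J_n) \Rar K$ in $\K$ corresponds bijectively, via the cocartesian cell $(J_1, \dotsc, J_n) \Rar J_1 \hc \dotsb \hc J_n$ built by iterating the chosen $\sigma$'s (again cocartesian by the pasting lemma), to a $(1,1)$"/ary cell $J_1 \hc \dotsb \hc J_n \Rar K$, which is by definition a cell of $\mathbb D$. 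The main obstacle is the verification of the pentagon and triangle coherence axioms: while conceptually each follows from the uniqueness of factorisations through cocartesian cells, spelling out the relevant pasting diagrams requires several applications of the pasting lemma and careful bookkeeping of which cocartesian cell each side of the axiom factors through.
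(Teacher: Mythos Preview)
The paper does not give its own proof of this proposition; it attributes the result to Dawson--Par\'e--Pronk (Proposition~2.8 of \cite{Dawson-Pare-Pronk06}) and Cruttwell--Shulman (Theorem~5.2 of \cite{Cruttwell-Shulman10}) and merely states it. Your argument is correct and follows essentially the standard proof found in those references: the forward direction unfolds the defining bijection between cells $(J_1,\dotsc,J_n)\Rar K$ and cells $J_1\hc\dotsb\hc J_n\Rar K$, while the converse constructs the pseudo double category by choosing binary cocartesian cells and unit cocartesian cells, then extracts the associators, unitors, and coherence axioms from the uniqueness of factorisations through cocartesian cells via the pasting lemma. Your closing remark about the bookkeeping in the pentagon and triangle verifications is apt; this is exactly where the cited sources spend their effort.
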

	
	In \thmref{unital virtual double categories} below we will see that in the presence of horizontal units the notions of augmented virtual double category and virtual double category coincide. In view of this and the proposition above, by a \emph{pseudo double category} we shall mean either an (augmented) virtual double category that has all horizontal composites and units or, equivalently, a pseudo double category in the classical sense. Following \cite{Cruttwell-Shulman10}, by an \emph{equipment} we shall mean a pseudo double category that has all restrictions. Table~\ref{notions} includes most of the double categories and equipments considered in this paper.
	
	\begin{example} \label{strict double category of quintets}
		The augmented virtual double category $Q(\catvar C)$ of quintets in a $2$"/category $\catvar C$ (\exref{quintets}) clearly is a strict double category: the composite $(j \hc k)$ of two horizontal morphisms in $Q(\catvar C)$ is simply given by their composite $k \of j$ in $\catvar C$.
	\end{example}
	
	In closing this section we consider augmented virtual double categories $\K$ that have all weakly cocartesian paths of $(0,1)$"/ary cells as in the definition below. In such $\K$ general cells $(J_1, \dotsc, J_n) \Rightarrow \ul K$ can be identified with cells into $\ul K$ with empty horizontal source. This can be used to show that certain notions in $\K$ are equivalent to the corresponding notions in the vertical $2$"/category $V(\K)$. The proposition below asserts such equivalences for the notions of full and faithful morphism and absolute left lifting; for the case of pointwise Kan extension see Section~5.5 of \cite{Koudenburg14} and Section~4.6 of \cite{Koudenburg19b}.
	\begin{definition}
		An augmented virtual double category is said to \emph{have all weakly cocartesian paths of $(0,1)$"/ary cells} if, for every path $\hmap{\ul J = (J_1, \dotsc, J_n)}{A_0}{A_n}$ of horizontal morphisms, there exists a weakly cocartesian path $\ul\phi = (\phi_1, \dotsc, \phi_n)$ of $(0,1)$"/ary cells
		\begin{displaymath}
			\begin{tikzpicture}
  			\matrix(m)[math35, column sep={1.75em,between origins}]{& X & \\ A_{i'} & & A_i. \\};
				\path[map]	(m-1-2) edge[transform canvas={xshift=-2pt}] node[left] {$f_{i'}$} (m-2-1)
														edge[transform canvas={xshift=2pt}] node[right] {$f_i$} (m-2-3)
										(m-2-1) edge[barred] node[below] {$J_i$} (m-2-3);
				\path				(m-1-2) edge[cell, transform canvas={yshift=-0.25em}] node[right, inner sep=2.5pt] {$\phi_i$} (m-2-2);
			\end{tikzpicture}
		\end{displaymath}
	\end{definition}
	
	\begin{example}
		The \emph{graph} $\tab{\ul J}$ of a path of $\Set'$"/profunctors $\hmap{\ul J}{A_0}{A_n}$ in $\enProf{\Set'}$ (\exref{enriched profunctors}) is the category whose objects are tuples \mbox{$\ul u = (x_0, u_1, x_1, \dotsc, u_n, x_n)$} of, alternatingly, objects $x_i \in A_i$ and elements $u_i \in J_i(x_{i'}, x_i)$, while its morphisms $\ul u \to \ul u'$ are tuples $(s_1, \dotsc, s_n)$ of morphisms $\map{s_i}{x_i}{x'_i}$ in $A_i$ such that $\lambda(s_{i'}, u'_i) = \rho(u_i, s_i)$.
		\begin{displaymath}
			\begin{tikzpicture}
  			\matrix(m)[math35, column sep={1.75em,between origins}]{& \tab{\ul J} & \\ A_{i'} & & A_i \\};
				\path[map]	(m-1-2) edge[transform canvas={xshift=-2pt}] node[left] {$p_{i'}$} (m-2-1)
														edge[transform canvas={xshift=2pt}] node[right] {$p_i$} (m-2-3)
										(m-2-1) edge[barred] node[below] {$J_i$} (m-2-3);
				\path				(m-1-2) edge[cell, transform canvas={yshift=-0.25em}] node[right, inner sep=2.5pt] {$\pi_i$} (m-2-2);
			\end{tikzpicture}
		\end{displaymath}
		Writing $\map{p_i}{\tab{\ul J}}{A_i}$ for the projections, consider the $(0,1)$"/ary cells $\pi_i$ above, which map $\ul u = (x_0, u_1, x_1, \dotsc, u_n, x_n)$ to $u_i \in J_i(x_{i'}, x_i)$. It is straightforward to check that the path $(\pi_1, \dotsc, \pi_n)$ is cocartesian. Cocartesian paths of $(0,1)$"/ary cells in $\enProf{\Set}$ and $\enProf{(\Set, \Set')}$ (\exref{(Set, Set')-Prof}) can be obtained analogously.
		
		Restricting to the case $n=1$ the single cell $\cell{\pi_1}{\tab{J_1}}{J_1}$ above is universal with respect to all $(0,1)$"/ary cells $\cell\phi X{J_1}$, exhibiting $\tab{J_1}$ as the `tabulation' of $J_1$; see e.g.\ Section~4.5 of \cite{Koudenburg19b}. There it is also shown that, in general, `cocartesian tabulations' can be combined to obtain cocartesian paths $(\pi_1, \dotsc, \pi_n)$, similar to the construction above.
	\end{example}
	
	\begin{proposition} \label{converses with weakly cocartesian paths of (0,1)-ary cells}
		In an augmented virtual double category $\K$ that has all weakly cocartesian paths of $(0,1)$"/ary cells the converses of \lemref{full and faithful in V(K)} and \lemref{restrictions and absolute left liftings} hold.
	\end{proposition}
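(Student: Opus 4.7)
My plan is to exploit the hypothesis on $\K$ via a single uniform reduction. For any path $\hmap{\ul H}{X_0}{X_n}$ of horizontal morphisms I choose a weakly cocartesian path $\ul\pi = (\pi_1, \dotsc, \pi_n)$ of $(0,1)$"/ary cells with common vertex $X$ and projections $\map{p_i}X{X_i}$. Its universal property yields, for each $\ul K$ with $\lns{\ul K} \leq 1$ and each compatible pair of vertical boundaries, a bijection between cells $\chi\colon \ul H \Rar \ul K$ and ``small'' cells of the form $\chi \of \ul\pi\colon (X) \Rar \ul K$. Consequently, verifying the cartesian universal property of a cell with arbitrary horizontal source reduces to verifying its analogue for source $(X)$, where the behaviour is controlled entirely through $V(\K)$.

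For the converse of \lemref{full and faithful in V(K)}, assume $f\colon A \to C$ is full and faithful in $V(\K)$. Given any $\chi\colon \ul H \Rar (C)$ with vertical source $f \of h$ and target $f \of k$, the reduction $\chi \of \ul\pi$ is a vertical cell in $V(\K)$ from $f \of h \of p_0$ to $f \of k \of p_n$; the hypothesis supplies a unique vertical cell $\tilde\phi\colon h \of p_0 \Rar k \of p_n$ with $f \of \tilde\phi = \chi \of \ul\pi$. The universal property of $\ul\pi$ then lifts $\tilde\phi$ to a unique $\phi\colon \ul H \Rar (A)$ with sources $h, k$ satisfying $\tilde\phi = \phi \of \ul\pi$. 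Since $(f \of \phi) \of \ul\pi = f \of \tilde\phi = \chi \of \ul\pi$, uniqueness of factorisations through $\ul\pi$ forces $f \of \phi = \chi$, so $\id_f$ is cartesian; uniqueness of $\phi$ propagates from that of $\tilde\phi$.

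For the converse of \lemref{restrictions and absolute left liftings}, assume $\psi$ defines $j$ as the absolute left lifting of $f$ along $g$ in $V(\K)$. The plan is to reuse, at the level of small cells with source $(X)$, the diagram of assignments from the proof of the forward direction of that lemma. In that diagram the bottom assignment $\xi \mapsto (\psi \of h) \hc (g \of \xi)$ is bijective by hypothesis and the top-left assignment $\cart \of \dash$ is always bijective by cartesianess of the cell defining $j_*$; commutativity of the diagram then forces the top-right assignment $\psi' \of \dash$ to be bijective on small cells. Given any $\chi\colon \ul H \Rar (C)$ with vertical sources $f \of h_0$ and $g \of h_n$, its reduction $\chi \of \ul\pi$ is small and factors uniquely as $\psi' \of \tilde\phi$ for some small $\tilde\phi\colon (X) \Rar (j_*)$; lifting $\tilde\phi$ back through $\ul\pi$ yields a unique $\phi\colon \ul H \Rar (j_*)$ with $\psi' \of \phi = \chi$, establishing that $\psi'$ is cartesian.

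The main bookkeeping in both arguments is verifying that the correspondences respect precomposition by $\ul\pi$, essentially the identities $\bigpars{\psi' \of \phi} \of \ul\pi = \psi' \of \bigpars{\phi \of \ul\pi}$ and its analogue for $f$ in place of $\psi'$, each of which is a direct consequence of the associativity axiom for vertical composition. No deeper obstacle is expected, since the commutativity of the diagram used in the second argument is already recorded in the proof of the forward direction of \lemref{restrictions and absolute left liftings}.
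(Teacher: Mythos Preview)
Your proposal is correct and follows essentially the same approach as the paper's sketch: reduce cells with arbitrary horizontal source to cells with empty horizontal source by precomposing with the weakly cocartesian path $\ul\pi$, then use that the resulting bijection intertwines the universal property defining each notion in $\K$ with the corresponding universal property in $V(\K)$. Your version is simply a more explicit unpacking of the paper's sketch, in particular your reuse of the commutative diagram from the proof of \lemref{restrictions and absolute left liftings} is exactly how one fills in the second converse.
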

	\begin{proof}[(sketch)]
		For any path $\hmap{\ul J}{A_0}{A_n}$ in $\K$ consider a weakly cocartesian path $\ul \phi = (\phi_1, \dotsc, \phi_n)$ of $(0, 1)$"/ary cells, as in the definition above. Composing with $\ul\phi$ gives a bijection between nullary cells $\psi$ with horizontal source $\ul J$ and vertical cells $\chi$ with source $X$ and, in the case of both lemmas, the universal property for the cells $\psi$ under consideration (defining a notion in $\K$) is equivalent to that for the vertical cells $\chi$ (defining the corresponding notion in $V(\K)$) under this bijection.
	\end{proof}
	
	\section{Restrictions and extensions in terms of companions and conjoints} \label{restrictions and extensions in terms of companions and conjoints section}
	Here we make precise the fact that restrictions and extensions of a horizontal morphism can be obtained by composing it with companions and conjoints, as anticipated in the discussion following \defref{cartesian cells}.
	
	We start with restrictions. In the setting of unital virtual equipments the `only if'-part of the first assertion of the following lemma was proved as Theorem~7.16 of~\cite{Cruttwell-Shulman10}; notice that here we do not have to assume the existence of horizontal units. In \lemref{cocartesian cell defining restriction is pointwise} below we will see that the composite of $f_* \conc \ul K \conc g^*$ considered below is in fact `pointwise'.
	\begin{lemma} \label{restrictions and composites}
		In an augmented virtual double category $\K$ assume that the companion $\hmap{f_*}AC$ and the conjoint $\hmap{g^*}DB$ exist. For each path $\hmap{\ul K}CD$ of length $\leq 1$ the restriction $\ul K(f, g)$ exists if and only if the horizontal composite of the path $f_* \conc \ul K \conc g^*$ does, and in that case they are isomorphic.
		\begin{displaymath}
			\begin{tikzpicture}[textbaseline]
				\matrix(m)[math35, column sep={1.75em,between origins}]{A & & C & & D & & B \\ & C & & & & D & \\};
				\path[map]	(m-1-1) edge[barred] node[above] {$f_*$} (m-1-3)
														edge node[below left] {$f$} (m-2-2)
										(m-1-3) edge[barred] node[above] {$\ul K$} (m-1-5)
										(m-1-5) edge[barred] node[above] {$g^*$} (m-1-7)
										(m-1-7) edge node[below right] {$g$} (m-2-6)
										(m-2-2) edge[barred] node[below] {$\ul K$} (m-2-6);
				\path				(m-1-3) edge[eq] (m-2-2)
										(m-1-5) edge[eq] (m-2-6);
				\draw				([yshift=0.333em]$(m-1-2)!0.5!(m-2-2)$) node[font=\scriptsize] {$\cart$}
										([yshift=0.333em]$(m-1-6)!0.5!(m-2-6)$) node[font=\scriptsize] {$\cart$};
			\end{tikzpicture} \quad = \quad \begin{tikzpicture}[textbaseline]
				\matrix(m)[math35, column sep={5em,between origins}]{A & B \\ A & B \\ C & D \\};
				\path[map]	(m-1-1) edge[barred] node[above] {$f_* \conc \ul K \conc g^*$} (m-1-2)
										(m-2-1) edge[barred] node[below] {$J$} (m-2-2)
														edge node[left] {$f$} (m-3-1)
										(m-2-2) edge node[right] {$g$} (m-3-2)
										(m-3-1) edge[barred] node[below] {$\ul K$} (m-3-2);
				\path				(m-1-1) edge[eq] (m-2-1)
										(m-1-2) edge[eq] (m-2-2);
				\path[transform canvas={xshift=2.5em}]	(m-1-1) edge[cell] node[right] {$\phi$} (m-2-1)
										(m-2-1) edge[cell, transform canvas={yshift=-0.25em}] node[right] {$\psi$} (m-3-1);
			\end{tikzpicture}
		\end{displaymath}
		In detail, for a factorisation as above (where the empty cell is the vertical identity cell $\id_C$ if $\ul K$ is empty) the following are equivalent: $\psi$ is cartesian; $\phi$ is cocartesian; the identity below holds. Moreover in this case the path $(\cocart, \psi, \cocart)$, making up the top row of the left"/hand side below, is cocartesian.
		\begin{displaymath}
			\begin{tikzpicture}[textbaseline]
				\matrix(m)[math35, column sep={1.75em,between origins}]{& A & & & & B & \\ A & & C & & D & & B \\ & A & & & & B & \\};
				\path[map]	(m-1-2) edge[barred] node[above] {$J$} (m-1-6)
														edge[transform canvas={xshift=2pt}] node[right] {$f$} (m-2-3)
										(m-1-6) edge[transform canvas={xshift=-2pt}] node[left] {$g$} (m-2-5)
										(m-2-1) edge[barred] node[below] {$f_*$} (m-2-3)
										(m-2-3) edge[barred] node[below] {$\ul K$} (m-2-5)
										(m-2-5) edge[barred] node[below] {$g^*$} (m-2-7)
										(m-3-2) edge[barred] node[below] {$J$} (m-3-6);
				\path				(m-1-2) edge[eq, transform canvas={xshift=-2pt}] (m-2-1)
										(m-1-6) edge[eq, transform canvas={xshift=2pt}] (m-2-7)
										(m-2-1) edge[eq] (m-3-2)
										(m-2-7) edge[eq] (m-3-6);
				\path				(m-1-4) edge[cell] node[right] {$\psi$} (m-2-4)
										(m-2-4) edge[cell, transform canvas={yshift=-0.25em}] node[right] {$\phi$} (m-3-4);
				\draw[font=\scriptsize]	([xshift=1pt]$(m-1-2)!0.666!(m-2-2)$) node {$\cocart$}
										([xshift=-1pt]$(m-1-6)!0.666!(m-2-6)$) node {$\cocart$};
			\end{tikzpicture} \quad = \quad \begin{tikzpicture}[textbaseline]
  			\matrix(m)[math35]{A & C \\ A & C \\};
  			\path[map]	(m-1-1) edge[barred] node[above] {$J$} (m-1-2)
  									(m-2-1) edge[barred] node[below] {$J$} (m-2-2);
  			\path				(m-1-1) edge[eq] (m-2-1)
  									(m-1-2) edge[eq] (m-2-2);
  			\path[transform canvas={xshift=1.75em, xshift=-5.5pt}]	(m-1-1) edge[cell] node[right] {$\id_J$} (m-2-1);
  		\end{tikzpicture}
  	\end{displaymath}
  	
  	Analogous assertions hold for one-sided restrictions. In particular $K(f, \id)$ exists precisely if $f_* \hc K$ does, while $K(\id, g)$ exists if and only if $K \hc g^*$ does.
	\end{lemma}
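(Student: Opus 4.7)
The plan is to exploit the companion and conjoint identities from \lemref{companion identities lemma}: writing $\eps_f, \eps_g$ for the cartesian cells defining $f_*, g^*$ and $\eta_f, \eta_g$ for the corresponding weakly cocartesian cells, these identities read $\eps_f \of \eta_f = \id_f$, $\eta_f \hc \eps_f = \id_{f_*}$, $\eps_g \of \eta_g = \id_g$, and $\eps_g \hc \eta_g = \id_{g^*}$. Denote the cell on the left of the first display by $\kappa \dfn \eps_f \hc \id_{\ul K} \hc \eps_g$, so the first display is the factorisation $\kappa = \psi \of \phi$, and the identity of the second display reads $\phi \of (\eta_f, \psi, \eta_g) = \id_J$.

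For ``$\psi$ cartesian $\Rightarrow$ identity holds'': by associativity
\begin{equation*}
  \psi \of \bigpars{\phi \of (\eta_f, \psi, \eta_g)} = (\psi \of \phi) \of (\eta_f, \psi, \eta_g) = \kappa \of (\eta_f, \psi, \eta_g).
\end{equation*}
Splitting the horizontal composite $\kappa$ into the three-cell sequence $(\eps_f, \id_{\ul K}, \eps_g)$ via the interchange axioms of \lemref{horizontal composition}, and applying the vertical companion/conjoint identities, this reduces to $\id_f \hc \psi \hc \id_g = \psi = \psi \of \id_J$; cancelling the cartesian $\psi$ yields the identity. For the converse, given any $\chi \colon \ul H \Rightarrow \ul K$ with verticals $f \of h$ and $g \of k$, the factorisation through $\psi$ is $\chi' \dfn \phi \of (\eta_f \of h, \chi, \eta_g \of k)$, where $\eta_f \of h \dfn \eta_f \of \id_h$ denotes $\eta_f$ pre-whiskered by $h$ (giving verticals $h$ and $f \of h$). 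Verification that $\psi \of \chi' = \chi$ is the same interchange computation as above; for uniqueness, any alternative $\chi''$ with $\psi \of \chi'' = \chi$ satisfies $\chi'' = \id_J \of \chi'' = \bigpars{\phi \of (\eta_f, \psi, \eta_g)} \of \chi''$, which by associativity (after inserting the vertical identities $\id_h$ and $\id_k$ via unit axioms) equals $\chi'$.

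The equivalence of cocartesianness of $\phi$ with the identity is handled dually, now using the \emph{horizontal} identities $\eta_f \hc \eps_f = \id_{f_*}$ and $\eps_g \hc \eta_g = \id_{g^*}$. Given $\chi \colon f_* \conc \ul K \conc g^* \Rightarrow \ul L$, the candidate factorisation is $\chi' \dfn \chi \of (\eta_f, \psi, \eta_g)$, and the composite $\chi' \of \phi$ unfolds via associativity and $\psi \of \phi = \kappa$ into $\chi \of (\eta_f, \kappa, \eta_g)$; splitting $\kappa$ and then recombining $\eta_f \hc \eps_f$ and $\eps_g \hc \eta_g$ via interchange collapses this to $\chi \of (\id_{f_*}, \id_{\ul K}, \id_{g^*}) = \chi$. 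For the reverse implication, both $\id_J$ and $\phi \of (\eta_f, \psi, \eta_g)$ pre-compose with $\phi$ to give $\phi$ (the latter by the same calculation with $\chi$ replaced by $\phi$), so they coincide by the uniqueness in the universal property of $\phi$. The cocartesianness of the path $(\eta_f, \psi, \eta_g)$ follows by the analogous factorisation $\chi' \dfn \chi \of \phi$, with weak cocartesianness upgrading to cocartesianness because the outer verticals of the path are identities (\defref{cocartesian paths}). The one"/sided statements are obtained by suppressing either $(\eps_f, \eta_f)$ or $(\eps_g, \eta_g)$ throughout. The main obstacle is the combinatorial bookkeeping of horizontal composites inside vertically composed sequences, but every manipulation is dictated by the companion and conjoint identities.
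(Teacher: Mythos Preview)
Your proof takes the same route as the paper's: both construct the mutually inverse assignments $\chi \mapsto \phi \of (\eta_f \of h, \chi, \eta_g \of k)$ (for cartesianness of $\psi$) and $\xi \mapsto \xi \of (\eta_f, \psi, \eta_g)$ (for cocartesianness of $\phi$ and of the path), verifying the bijections via the companion and conjoint identities together with interchange. The computations you outline are correct and match the paper's almost step for step.

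There is one genuine gap. For the implication ``identity $\Rightarrow$ $\phi$ cocartesian'' you only treat cells $\chi$ with horizontal source exactly $f_* \conc \ul K \conc g^*$, which establishes \emph{weak} cocartesianness of $\phi$, not cocartesianness in the sense of \defref{cocartesian paths}. Your attempted upgrade---``weak cocartesianness upgrading to cocartesianness because the outer verticals of the path are identities''---is not a valid general principle: having identity outer verticals merely makes the cartesian cells appearing in the extended path of \defref{cocartesian paths} trivial, but one must still verify that $(\id_{\ul J'}, \phi, \id_{\ul J''})$ (and likewise $(\id_{\ul J'}, \eta_f, \psi, \eta_g, \id_{\ul J''})$) is weakly cocartesian for arbitrary flanking paths $\ul J'$, $\ul J''$. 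This is precisely what distinguishes cocartesian from weakly cocartesian. The paper handles this by directly exhibiting the bijections for such extended paths, with inverse $\xi \mapsto \xi \of (\id_{\ul J'}, \eta_f, \psi, \eta_g, \id_{\ul J''})$ (respectively $\xi' \mapsto \xi' \of (\id_{\ul J'}, \phi, \id_{\ul J''})$). Your interchange computation extends verbatim to this case, so the fix is immediate---but the justification you gave does not stand on its own.
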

	\begin{proof}
		Assuming that the top identity above holds, it follows from the companion and conjoint identities (see \lemref{companion identities lemma} and its horizontal dual) that vertically precomposing the composite on the left"/hand side of the bottom equation with $\phi$ again results in $\phi$, while postcomposing it with $\psi$ gives back $\psi$. Using the uniqueness of factorisations through (co)cartesian cells we conclude that either $\psi$ or $\phi$ being (co)cartesian implies the bottom identity.
		
		Conversely, assume that both identities above hold; we will prove that $\psi$ is cartesian and that $\phi$  and $(\cocart, \psi, \cocart)$ are cocartesian. For the first it suffices to show that the following assignment of cells is a bijection. Indeed the identities imply that its inverse can be given by $\chi \mapsto \phi \of (\cocart \of h, \chi, \cocart \of k)$, where the weakly cocartesian cells define $f_*$ and $g^*$ respectively.
		\begin{displaymath}
			\begin{tikzpicture}
				\matrix(m)[minimath, xshift=-9em]{X_0 & X_1 & X_{n'} & X_n \\ A & & & B \\};
				\path[map]	(m-1-1) edge[barred] node[above] {$H_1$} (m-1-2)
														edge node[left] {$h$} (m-2-1)
										(m-1-3) edge[barred] node[above] {$H_n$} (m-1-4)
										(m-1-4) edge node[right] {$k$} (m-2-4)
										(m-2-1) edge[barred] node[below] {$J$} (m-2-4);
				\draw				($(m-1-2)!0.5!(m-1-3)$) node {$\dotsc$};
				\path				($(m-1-1.south)!0.5!(m-1-4.south)$) edge[cell] node[right] {$\chi'$} ($(m-2-1.north)!0.5!(m-2-4.north)$);
				
				\matrix(m)[minimath, xshift=9em]{X_0 & X_1 & X_{n'} & X_n \\ C & & & D \\};
				\path[map]	(m-1-1) edge[barred] node[above] {$H_1$} (m-1-2)
														edge node[left] {$f \of h$} (m-2-1)
										(m-1-3) edge[barred] node[above] {$H_n$} (m-1-4)
										(m-1-4) edge node[right] {$g \of k$} (m-2-4)
										(m-2-1) edge[barred] node[below] {$\ul K$} (m-2-4);
				\draw				($(m-1-2)!0.5!(m-1-3)$) node {$\dotsc$};
				\path				($(m-1-1.south)!0.5!(m-1-4.south)$) edge[cell] node[right] {$\chi$} ($(m-2-1.north)!0.5!(m-2-4.north)$);
				
				\draw[font=\Large]	(-15.25em,0) node {$\lbrace$}
										(-2.75em,0) node {$\rbrace$}
										(1.75em,0) node {$\lbrace$}
										(16.25em,0) node {$\rbrace$};
				\path[map]	(-1.5em,0) edge node[above] {$\psi \of \dash$} (0.5em,0);
			\end{tikzpicture}
		\end{displaymath}
		Similarly that $\phi$ and $(\cocart, \psi, \cocart)$ are cocartesian follows from the fact that, for any paths $\hmap{\ul J'}{A'_0}{A'_p = A}$ and $\hmap{\ul J''}{B= B'_0}{B'_q}$, the assignments of cells
		\begin{displaymath}
			\begin{tikzpicture}
				\matrix(m)[math35, column sep={22.5em,between origins}]
					{ \lbrace\map{\xi'}{\ul J' \conc J \conc \ul J''}{\ul L}\rbrace &
						\lbrace\map\xi{\ul J' \conc f_* \conc \ul K \conc g^* \conc \ul J''}{\ul L}\rbrace \\ };
				\path[map, transform canvas={yshift=3pt}]	(m-1-1) edge node[above] {$\dash \of (\ul\id, \phi, \ul\id)$} (m-1-2);
				\path[map, transform canvas={yshift=-3pt}]	(m-1-2) edge node[below] {$\dash \of (\ul\id, \cocart, \psi, \cocart, \ul\id)$} (m-1-1);
			\end{tikzpicture}
		\end{displaymath}
		are inverses whenever both identities hold.
	\end{proof}
	
	The remainder of this section consists of corollaries of the lemma above. The first of these shows that functors of augmented virtual double categories behave well with respect to restrictions along morphisms that admit companions/conjoints. This is a variation on the corresponding result for functors between double categories; see Proposition 6.8 of \cite{Shulman08}.
	\begin{corollary} \label{functors preserving cartesian cells}
		Let $\map F\K\L$ be a functor between augmented virtual double categories. Consider morphisms $\map fAC$ and $\map gBD$ in $\K$ and let $\hmap{\ul K}CD$ be a path of length $\leq 1$. If the companion $\hmap{f_*}AC$ and the conjoint $\hmap{g^*}DB$ exist then $F$ preserves both the cartesian cell defining the restriction $\ul K(f, g)$ as well as the cocartesian cell defining the horizontal composite of the path $f_* \conc \ul K \conc g^*$.
		
		Under the same conditions the cartesian cells defining the restrictions of the form $K(f, \id)$ and $K(\id, g)$, as well as the cocartesian ones defining the horizontal composites of the form $(f_* \hc K)$ and $(K \hc g^*)$, are preserved by $F$.
	\end{corollary}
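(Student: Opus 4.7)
The plan is to use \lemref{restrictions and composites} as a bridge between restrictions and horizontal composites and to transport the characterising identity in its bottom display across $F$. More precisely, fix a cartesian cell $\cell\psi J{\ul K}$ in $\K$ defining $J = \ul K(f, g)$, and let $\cell\phi{f_* \conc \ul K \conc g^*}J$ denote the associated factorisation produced by \lemref{restrictions and composites}. By that lemma, $\phi$ is cocartesian and defines $J$ as the horizontal composite of $f_* \conc \ul K \conc g^*$, and moreover the pair $(\psi, \phi)$ satisfies the identity asserted at the bottom of the lemma (namely that the displayed vertical composite involving $\phi$, $\psi$ and the two weakly cocartesian cells defining $f_*$ and $g^*$ equals $\id_J$).

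Next, I would apply $F$ to this identity. Since $F$ preserves vertical composition strictly by definition, and preserves horizontal composition as a consequence (cf.\ the remark after the definition of a functor), the $F$-image of the identity is again an identity in $\L$, with $F\psi$, $F\phi$ and the $F$-images of the weakly cocartesian cells defining $f_*$ and $g^*$ in the appropriate slots. By \cororef{functors preserve companions and conjoints}, these latter $F$-images are themselves the weakly cocartesian cells defining $F(f_*)$ and $F(g^*)$ as the companion of $Ff$ and the conjoint of $Fg$ respectively. Hence the $F$-image of the identity is exactly the identity in \lemref{restrictions and composites} as applied in $\L$, now with $F(f_*)$ and $F(g^*)$ playing the role of companion and conjoint. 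Invoking the ``if'' direction of that lemma in $\L$, we conclude simultaneously that $F\psi$ is cartesian and that $F\phi$ is cocartesian, which is what is required.

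For the one-sided statements, I would repeat the same argument using the ``analogous assertions'' at the end of \lemref{restrictions and composites}: only the companion $f_*$ (respectively the conjoint $g^*$) is needed to compare $K(f, \id)$ with $f_* \hc K$ (respectively $K(\id, g)$ with $K \hc g^*$), and in each case the characterising identity involves only the weakly cocartesian cell preserved by \cororef{functors preserve companions and conjoints}. I do not anticipate any real obstacle here: the argument is essentially a direct transport of the equational characterisation across $F$, and the only thing to watch is bookkeeping of which companion/conjoint cells are involved and ensuring that all of them are among those that \cororef{functors preserve companions and conjoints} guarantees to be preserved.
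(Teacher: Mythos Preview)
Your proposal is correct and takes essentially the same approach as the paper: transport the characterising identities of \lemref{restrictions and composites} across $F$, using \cororef{functors preserve companions and conjoints} to ensure the companion and conjoint cells land where they should. One small point of bookkeeping: to invoke the lemma in $\L$ you need both displayed identities of \lemref{restrictions and composites} to hold after applying $F$, so you should note that $F$ also preserves the \emph{cartesian} cells defining $f_*$ and $g^*$ appearing in the top factorisation, not only the weakly cocartesian ones in the bottom identity---but \cororef{functors preserve companions and conjoints} covers both, as the paper's one-line proof also records.
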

	\begin{proof}
		This follows from the fact that $F$ preserves the identities of the previous lemma as well as the (weakly co)cartesian cells that define $f_*$ and $g^*$; the latter by \cororef{functors preserve companions and conjoints}.
	\end{proof}
	
	\begin{corollary} \label{cocartesian cells for companions and conjoints}
		Weakly cocartesian cells that define companions or conjoints, as in the discussion preceding \lemref{companion identities lemma}, are cocartesian.
	\end{corollary}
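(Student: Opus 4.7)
The plan is to deduce the corollary directly from the ``moreover'' clause of \lemref{restrictions and composites}. Let $\phi$ be a weakly cocartesian cell defining the companion $\hmap{f_*}AC$ of $\map fAC$; by \lemref{companion identities lemma} its vertical source is $\id_A$ and its vertical target is $f$. To verify the cocartesian property of \defref{cocartesian paths} I would consider arbitrary extending paths $\hmap{\ul H'}{X'_0}A$ of length $p \geq 1$ and $\hmap{\ul H''}C{X''_q}$ of length $q \geq 1$ such that the restriction $H''_1(f, \id)$ exists. Since the vertical source of $\phi$ is $\id_A$, the left boundary cartesian cell $H'_p(\id, \id_A) = H'_p$ is the identity $\id_{H'_p}$, so the extended path under consideration reduces to
\begin{displaymath}
    (\id_{H'_1}, \dotsc, \id_{H'_p}, \phi, \cart, \id_{H''_2}, \dotsc, \id_{H''_q}),
\end{displaymath}
where $\cart$ denotes the cartesian cell defining $H''_1(f, \id)$.

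Applying the one-sided analogue of \lemref{restrictions and composites} with $K = H''_1$ and $g = \id$, the ``moreover'' clause asserts that the two-cell path $(\phi, \cart)$ is already cocartesian. Both its outer vertical boundaries are identity morphisms, so any further extension of $(\phi, \cart)$ at these boundaries carries only trivial (identity) boundary cartesian cells. Extending $(\phi, \cart)$ by $\ul H'$ on the left and by $(H''_2, \dotsc, H''_q)$ on the right therefore produces, by its cocartesianness, precisely the weakly cocartesian path displayed above. The analogous assertion for a weakly cocartesian cell defining a conjoint $\hmap{f^*}CA$ follows by horizontal duality via passage to $\co\K$ (\defref{horizontal dual}), under which companions and conjoints interchange.

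The main expected obstacle is the boundary case $q = 1$, in which the right extension of $(\phi, \cart)$ has length zero, something not strictly covered by the $p, q \geq 1$ convention of \defref{cocartesian paths}. This requires the auxiliary observation that, since $(\phi, \cart)$ is already weakly cocartesian, concatenating further identity cells onto a weakly cocartesian path preserves weak cocartesianness; this should be verifiable directly from the universal property using the interchange and unit axioms of \lemref{horizontal composition}, since identity cells contribute nothing to the horizontal source or target structure that an incoming cell $\chi$ needs to match.
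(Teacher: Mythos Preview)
Your approach coincides with the paper's: both deduce the result from the one"/sided ``moreover'' clause of \lemref{restrictions and composites}, which asserts that the path $(\phi, \psi)$ (your $(\phi, \cart)$) is cocartesian whenever $\psi$ is the cartesian cell defining $K(f, \id)$. The paper states tersely that ``by \defref{cocartesian paths} it suffices to prove that for any $K$ the path \dots\ is cocartesian''; you unpack this reduction explicitly.

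Your handling of the boundary case $q = 1$ is flawed, however. The auxiliary observation you propose---that concatenating identity cells onto a \emph{weakly} cocartesian path preserves weak cocartesianness---is false as a general principle: stability of weak cocartesianness under such padding is precisely what distinguishes cocartesian from weakly cocartesian in \defref{cocartesian paths}. The interchange and unit axioms do not help here, because the universal property of a weakly cocartesian path $\ul\alpha$ only produces factorisations for cells whose horizontal source matches that of $\ul\alpha$ exactly, not for cells carrying extra horizontal morphisms $\ul H'$ alongside.

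The gap is easily closed, but differently. Inspecting the proof of \lemref{restrictions and composites}, the mutually inverse assignments displayed at its end are established for \emph{arbitrary} paths $\ul J'$ and $\ul J''$, with no lower bound on their lengths. In the one"/sided form this gives weak cocartesianness of $(\id_{\ul J'}, \phi, \cart, \id_{\ul J''})$ even when $\ul J''$ is empty, which is exactly the $q = 1$ case you need. Alternatively you may invoke the remark immediately following \defref{cocartesian paths}: concatenation of \emph{cocartesian} (not merely weakly cocartesian) paths along an identity boundary remains cocartesian. Applied to the cocartesian path of identity cells $(\id_{H'_1}, \dotsc, \id_{H'_p})$ and the cocartesian path $(\phi, \cart)$, meeting at $\id_A$, this yields that $(\id_{\ul H'}, \phi, \cart)$ is cocartesian, hence in particular weakly cocartesian.
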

	\begin{proof}
		Let $\map fAC$ be a vertical morphism. We will prove that any weakly cocartesian cell defining the companion $f_*$, as in the composite below, is cocartesian; the proof for the conjoint $f^*$ is horizontally dual. By \defref{cocartesian paths} it suffices to prove that for any $\hmap KCD$ the path
		\begin{displaymath}
			\begin{tikzpicture}
				\matrix(m)[math35, column sep={1.75em,between origins}]
					{	& A & & D & \\
						A & & C & & D, \\ };
				\path[map]	(m-1-2) edge[barred] node[above] {$K(f, \id)$} (m-1-4)
														edge[transform canvas={xshift=2pt}] node[above right, inner sep=1pt] {$f$} (m-2-3)
										(m-2-1) edge[barred] node[below] {$f_*$} (m-2-3)
										(m-2-3) edge[barred] node[below] {$K$} (m-2-5);
				\path				(m-1-2) edge[eq, transform canvas={xshift=-1pt}] (m-2-1)
										(m-1-4) edge[eq, transform canvas={xshift=1pt}] (m-2-5)
										(m-1-3) edge[cell, transform canvas={xshift=0.875em}] node[right] {$\psi$} (m-2-3);
				\draw[font=\scriptsize]	($(m-1-2)!0.666!(m-2-2)$) node {$\cocart$};
			\end{tikzpicture}
		\end{displaymath}
		where $\psi$ is the cartesian cell defining $K(f, \id)$, is cocartesian. But this follows directly from the second assertion of \lemref{restrictions and composites} for $K(f, \id)$.
	\end{proof}
	
	\begin{example} \label{concatenation does not preserve cocartesian paths}
		To show that a path of cocartesian cells need not be cocartesian itself in general consider a morphism $\map fAC$ such that $f_*$, $f^*$ and $C(f, f)$ exist. We claim that the path
		\begin{displaymath}
			\begin{tikzpicture}
				\matrix(m)[math35]{& A & \\ A & C & A \\};
				\path[map]	(m-1-2) edge node[right, inner sep=0.25pt] {$f$} (m-2-2)
										(m-2-1) edge[barred] node[below] {$f_*$} (m-2-2)
										(m-2-2) edge[barred] node[below] {$f^*$} (m-2-3);
				\path				(m-1-2) edge[eq, transform canvas={shift={(-2.5pt,1.5pt)}}] (m-2-1)
														edge[eq, transform canvas={shift={(2.5pt,1.5pt)}}] (m-2-3);
				\draw[font=\scriptsize]	($(m-1-1)!0.57!(m-2-2)$) node[rotate=42.5] {$\cocart$}
										($(m-1-3)!0.57!(m-2-2)$) node[rotate=-42.5] {$\cocart$};
			\end{tikzpicture}
		\end{displaymath}
		is weakly cocartesian only if $f$ is full and faithful (\defref{full and faithful morphism}). To see this let the cocartesian cell $\cell\phi{(f_*, f^*)}{C(f, f)}$ and the cartesian cell $\cell\psi{C(f, f)}C$ be as in \lemref{restrictions and composites}. If the path above is weakly cocartesian then so is the composite $\phi\of (\cocart, \cocart)$ by the pasting lemma for cocartesian paths (\lemref{pasting lemma for cocartesian paths}); that is cartesian too then follows from \lemref{unit identities}. Using the pasting lemma for cartesian cells (\lemref{pasting lemma for cartesian cells}) it follows that $\psi \of \phi \of (\cocart, \cocart)$ is cartesian which, by the first identity of \lemref{restrictions and composites} and the vertical companion and conjoint identities (\lemref{companion identities lemma}), equals $\id_f$. We conclude that $f$ is full and faithful.
	\end{example}
	
	Together with the pasting lemma for cocartesian paths (\lemref{pasting lemma for cocartesian paths}) \cororef{cocartesian cells for companions and conjoints} allows us to describe extensions along vertical morphisms in terms of compositions with their companions and conjoints as follows; this is a variation on the corresponding result for unital virtual equipments Theorem~7.20 of \cite{Cruttwell-Shulman10}.
	\begin{corollary} \label{extensions and composites}
		Any composite of the form below is cocartesian, so that it defines $J$ as the extension of $(H_1, \dotsc, H_n)$ along $h$. Cocartesian cells that define extensions on the right or that define two"/sided extensions can be constructed analogously.
		\begin{displaymath}
			\begin{tikzpicture}
				\matrix(m)[math35, column sep={1.75em,between origins}]{& X_0 & & X_1 & & X_{n'} & & X_n & \\ A & & X_0 & & X_1 & & X_{n'} & & X_n \\ & A & & & & & & X_n & \\};
				\path[map]	(m-1-2) edge[barred] node[above] {$H_1$} (m-1-4)
														edge[transform canvas={xshift=-2pt}] node[above left] {$h$} (m-2-1)
										(m-1-6) edge[barred] node[above] {$H_n$} (m-1-8)
										(m-2-1) edge[barred] node[below] {$h^*$} (m-2-3)
										(m-2-3) edge[barred] node[above] {$H_1$} (m-2-5)
										(m-2-7) edge[barred] node[above] {$H_n$} (m-2-9)
										(m-3-2) edge[barred] node[below] {$J$} (m-3-8);
				\path				(m-1-2) edge[eq, transform canvas={xshift=2pt}] (m-2-3)
										(m-1-4) edge[eq, transform canvas={xshift=1pt}] (m-2-5)
										(m-1-6) edge[eq] (m-2-7)
										(m-1-8) edge[eq] (m-2-9)
										(m-2-1) edge[eq] (m-3-2)
										(m-2-9) edge[eq] (m-3-8);
				\draw				($(m-1-6)!0.5!(m-2-5)$) node {$\dotsb$};
				\draw[font=\scriptsize]	([xshift=-1pt]$(m-1-2)!0.666!(m-2-2)$) node {$\cocart$}
										($(m-2-1)!0.5!(m-3-9)$) node {$\cocart$};
			\end{tikzpicture}
		\end{displaymath}
	\end{corollary}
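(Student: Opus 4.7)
We decompose the cell in question as a vertical composite of two pieces and then invoke the pasting lemma. Write $\eta\colon A \Rightarrow h^*$ for the cocartesian cell in the top"/left of the diagram, which defines the conjoint $h^*$, and $\phi\colon (h^*, H_1, \dotsc, H_n) \Rightarrow J$ for the bottom cocartesian cell defining the composite. The cell displayed in the statement is then the vertical composite $\phi \of (\eta, \id_{H_1}, \dotsc, \id_{H_n})$, which has horizontal source $(H_1, \dotsc, H_n)$, horizontal target $J$, vertical source $\id_A \of h = h$, and vertical target $\id_{X_n}$.

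The first step is to show that the top path $(\eta, \id_{H_1}, \dotsc, \id_{H_n})$ is cocartesian. By \cororef{cocartesian cells for companions and conjoints} the cell $\eta$ is cocartesian (not merely weakly so), and each $\id_{H_i}$ is trivially cocartesian as a one"/cell path. Iteratively applying the concatenation observation made in the paragraph following \lemref{pasting lemma for cocartesian paths}---which applies at every junction because the common vertical morphisms are the identities $\id_{X_0}, \dotsc, \id_{X_{n'}}$---we conclude that $(\eta, \id_{H_1}, \dotsc, \id_{H_n})$ is cocartesian.

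The second step is to apply the second assertion of \lemref{pasting lemma for cocartesian paths} with top path $(\eta, \id_{H_1}, \dotsc, \id_{H_n})$ and single bottom cell $\phi$. The hypothesis that the restrictions $K'(\id, h_0)$ and $K''(h_n, \id)$ exist is satisfied vacuously: since the vertical source and target of $\phi$ are $\id_A$ and $\id_{X_n}$, these restrictions reduce to $K'$ and $K''$ themselves. The pasting lemma therefore yields that $\phi \of (\eta, \id_{H_1}, \dotsc, \id_{H_n})$ is cocartesian. In particular, being weakly cocartesian, this cell defines $J$ as the extension of $(H_1, \dotsc, H_n)$ along $h$ and $\id_{X_n}$, in the sense introduced after \defref{cartesian cells}.

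The extension on the right is handled by the horizontally dual construction, pasting identity cells on the $H_i$ with the cocartesian cell defining the companion $k_*$ and then with the cocartesian cell defining $(H_1 \hc \dotsb \hc H_n \hc k_*)$. The two"/sided case combines both: the top path becomes $(\eta, \id_{H_1}, \dotsc, \id_{H_n}, \eps)$, where $\eps$ is the cocartesian cell for $k_*$, which is again cocartesian because every internal junction still involves an identity vertical morphism. The main point of care throughout is exactly this bookkeeping of identity morphisms at the junctions---without it, neither the concatenation rule nor the second assertion of the pasting lemma would yield full cocartesianness from cocartesianness of the parts.
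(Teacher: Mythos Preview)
Your proof is correct and follows essentially the same approach as the paper, which presents the corollary as an immediate consequence of \cororef{cocartesian cells for companions and conjoints} and the pasting lemma for cocartesian paths without spelling out the details. Your explicit bookkeeping of the identity junctions and the vacuous restriction hypotheses is exactly what is needed to make that derivation precise.
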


	\section{Pointwise horizontal composites} \label{pointwise horizontal composites section}
	Consider a path $\hmap{(H_1, \dotsc, H_n)}{X_0}{X_n}$ in the augmented virtual double category $\enProf\V$ of $\V$"/profunctors (\exref{enriched profunctors}). In \exref{compositions of enriched profunctors} we have seen that, in the special case where $X_0 = I = X_n$ is the unit $\V$"/category, the horizontal composite $(H_1 \hc \dotsb \hc H_n)$ is given by the coend $\int^{u_1 \in X_1} \!\dotsb \int^{u_{n'} \in X_{n'}} H_1(*, u_1) \tens \dotsb \tens H_n(u_{n'}, *)$, provided that it is preserved by the monoidal product $\tens$ of $\V$ on both sides. Recall that in the general case, where $X_0$ and $X_n$ are any $\V$"/categories, the composite $(H_1 \hc \dotsb \hc H_n)$ can be built up ``pointwise'' from such coends, by taking
	\begin{displaymath}
		(H_1 \hc \dotsb \hc H_n)(x, y) = \int^{u_1 \in X_1} \!\dotsb \int^{u_{n'} \in X_{n'}} H_1(x, u_1) \tens \dotsb \tens H_n(u_{n'}, y)
	\end{displaymath}
	for each pair $x \in X_0$ and $y \in X_n$. The definition of `pointwise horizontal composite' below formalises the pointwise character of this composite inside an augmented virtual double category; informally it captures that ``any restriction of a pointwise horizontal composite $(H_1 \hc \dotsb \hc H_n)$ is again a horizontal composite''. Pointwise horizontal composites are important in the study of ``pointwise Kan extensions'' in augmented virtual double categories; see Section~4 of \cite{Koudenburg19b}. While the definition below is stated in terms of a path $(\phi_1, \dotsc, \phi_n)$ of unary cells we will mostly apply it to single horizontal cocartesian cells $\cell{\phi_1}{(H_1, \dotsc, H_n)}{(H_1 \hc \dotsb \hc H_n)}$.
	
	\begin{definition} \label{pointwise cocartesian path}
		Consider a path $\ul \phi = (\phi_1, \dotsc, \phi_n)$ of unary cells whose last cell $\phi_n$ has non"/empty horizontal source and trivial vertical target, as in the composite on the left-hand side below. Let $\map fY{X_{nm_n}}$ be any morphism such that both restrictions $H_{nm_n}(\id, f)$ and $J_n(\id, f)$ exist.
		
		We call $\ul \phi$ \emph{right pointwise cocartesian with respect to $f$} if the path $(\phi_1, \dotsc, \phi_n')$ is cocartesian, where $\phi_n'$ is the unique factorisation as below. We call $\ul \phi$ \emph{right pointwise cocartesian} if it is right pointwise cocartesian with respect to all such morphisms $f$.
		\begin{equation} \label{pointwise cocartesian factorisation}
			\begin{tikzpicture}[textbaseline]
				\matrix(m)[math35, column sep={2.1em,between origins}]
					{ X_{n0} & & X_{n1} & & X_{n(m_n)'} &[0.25em] & Y \\
						X_{n0} & & X_{n1} & & X_{n(m_n)'} & & X_{nm_n} \\
						& A_{n'} & & & & X_{nm_n} & \\ };
				\path[map]	(m-1-1) edge[barred] node[above] {$H_{n1}$} (m-1-3)
										(m-1-5) edge[barred] node[above, xshift=-12pt] {$H_{nm_n}(\id, f)$} (m-1-7)
										(m-1-7) edge node[right] {$f$} (m-2-7)
										(m-2-1) edge[barred] node[below] {$H_{n1}$} (m-2-3)
														edge node[left] {$f_{n'}$} (m-3-2)
										(m-2-5) edge[barred] node[below] {$H_{nm_n}$} (m-2-7)
										(m-3-2) edge[barred] node[below] {$J_n$} (m-3-6);
				\path				(m-1-3) edge[eq] (m-2-3)
										(m-1-5) edge[eq] (m-2-5)
										(m-1-1) edge[eq] (m-2-1)
										(m-2-7) edge[eq] (m-3-6)
										(m-2-4) edge[cell, transform canvas={xshift=0.125em}] node[right] {$\phi_n$} (m-3-4);
				\draw[font=\scriptsize]	($(m-1-6)!0.5!(m-2-6)$) node {$\cart$};
				\draw				($(m-1-4)!0.5!(m-2-4)$) node {$\dotsb$};
			\end{tikzpicture} \quad = \quad \begin{tikzpicture}[textbaseline]
				\matrix(m)[math35, column sep={2.1em,between origins}]
					{ X_{n0} & & X_{n1} & & X_{n(m_n)'} & & Y \\
						A_{n'} & & & & & & Y \\
						& A_{n'} & & & & X_{nm_n} & \\ };
				\path[map]	(m-1-1) edge[barred] node[above] {$H_{n1}$} (m-1-3)
														edge node[left] {$f_{n'}$} (m-2-1)
										(m-1-5) edge[barred] node[above, xshift=-12pt] {$H_{nm_n}(\id, f)$} (m-1-7)
										(m-2-1) edge[barred] node[below] {$J_n(\id, f)$} (m-2-7)
										(m-2-7) edge node[right] {$f$} (m-3-6)
										(m-3-2) edge[barred] node[below] {$J_n$} (m-3-6);
				\path				(m-2-1) edge[eq] (m-3-2)
										(m-1-7) edge[eq] (m-2-7)
										(m-1-4) edge[cell] node[right] {$\phi_n'$} (m-2-4);
				\draw[font=\scriptsize]	([yshift=-0.25em]$(m-2-4)!0.5!(m-3-4)$) node {$\cart$};
				\draw				(m-1-4) node[xshift=-3pt] {$\dotsb$};
			\end{tikzpicture}
		\end{equation}
		
		The notion of \emph{left pointwise cocartesian} path is horizontally dual. A path that is both left and right pointwise cocartesian is called \emph{pointwise cocartesian}.
	\end{definition}
	Notice that any right (or left) pointwise cocartesian path is cocartesian, by taking $f = \id_{X_{nm_n}}$ in the above. Conversely, in \lemref{right pointwise cocartesian with respect to a morphism that has a conjoint} below we will see that any cocartesian path is pointwise with respect to morphisms $f$ that admit conjoints. A single horizontal cocartesian cell $\cell\phi{(H_1, \dotsc, H_n)}J$ is called \emph{pointwise cocartesian} whenever the singleton path $(\phi)$ is pointwise cocartesian; in that case we call $J$ the \emph{pointwise composite} of $(H_1, \dotsc, H_n)$.
	
	\begin{example} \label{horizontal composites in (V, V')-Prof}
		Let $\hmap{(H_1, \dotsc, H_n)}{X_0}{X_n}$ be a path of $\V$"/profunctors. As anticipated in the introduction to this section, a horizontal cell $\cell\phi{(H_1, \dotsc, H_n)}J$ is pointwise cocartesian in $\enProf\V$ (\exref{enriched profunctors}) if and only if, for all pairs $x \in X_0$ and $y \in X_n$, the components \mbox{$\map\phi{H_1(x, u_1) \tens \dotsb H_n(u_{n'}, y)}{J(x,y)}$} define $J{(x,y)}$ as the coend
		\begin{displaymath}
			J(x, y) = \intl^{u_1 \in X_1} \dotsb \intl^{u_{n'} \in X_{n'}} H_1(x, u_1) \tens \dotsb \tens H_n(u_{n'}, y)
		\end{displaymath}
		which is preserved by the monoidal product $\tens$ of $\V$ on both sides. The `only if'"/part follows from applying \exref{compositions of enriched profunctors} to the restrictions of $\phi$ along $\V$"/functors of the form $\map xI{X_0}$ and $\map yI{X_n}$. The `if'"/part follows from the ``functoriality of coends'', dual to that of ends as described in Section~2.1 of \cite{Kelly82}. We conclude that $\enProf\V$ is a pseudo double category whenever $\V$ has large colimits that are preserved by $\tens$ on both sides.
		
		Now let $\V \subset \V'$ be a universe enlargement as in \exref{(V, V')-Prof}. Here $\V'$ is large cocomplete and closed, so that $\enProf{\V'}$ is a pseudo double category by the above.  Since the embedding $\enProf{(\V, \V')} \hookrightarrow \enProf{\V'}$ preserves cartesian cells, \lemref{locally full and faithful functors reflect cocartesian paths} below implies that the pointwise composite $(H_1 \hc \dotsb \hc H_n)$ exists in $\enProf{(\V, \V')}$ whenever the coends above, which exist in $\V'$, are isomorphic to $\V$"/objects.
	\end{example}
	
	\begin{example} \label{composites of small profunctors}
		Let $\hmap JAB$ and $\hmap HBE$ be small $\V$"/profunctors between (possibly large) $\V$"/categories; see \exref{small V-profunctors}. If the monoidal product $\tens$ of $\V$ preserves colimits (large ones, if $B$ is large) on both sides then, as we will show, the composite $J \hc H$ can be computed as the family of small colimits
		\begin{displaymath}
			(J \hc H)(x, z) = \intl^{y' \in B_z} J(x, y') \tens H(y', z)
		\end{displaymath}
		where $B_z \subseteq B$ are the small sub"/$\V$"/categories that exhibit $H$ as small (see \exref{small V-profunctors}). Moreover these colimits, if they exist, form a small $\V$"/profunctor which forms the pointwise composite of $J$ and $H$ in $\ensProf\V$. We conclude that $\ensProf\V$, which has horizontal units by \exref{restrictions of small profunctors}, is a pseudo double category whenever $\V$ is small cocomplete and $\tens$ preserves large colimits on both sides.
		
		To see the above choose any universe enlargement $\V \subset \V'$ (\exref{(V, V')-Prof}). By the previous example the pointwise composite $(J \hc H)$ exists in $\enProf{\V'}$: it is defined by the coends on the left below. The cascade of isomorphisms below shows that $(J \hc H)$ can be computed as above. Here we have used the smallness of $H$, the assumption that $\tens$ preserves large colimits on both sides, the ``interchange of coends'' theorem (see e.g.\ Formula~2.9 of \cite{Kelly82}), while the last isomorphism follows from the enriched Yoneda's lemma, see e.g.\ Formula~3.71 of \cite{Kelly82}.
		\begin{multline*}
			(J \hc H)(x, z) = \intl^{y \in B} J(x, y) \tens H(y, z) \iso \intl^{y \in B} J(x, y) \tens \Bigpars{\intl^{y' \in B_z} B(y, y') \tens H(y', z)} \\
			\iso \intl^{y' \in B_z} \Bigpars{\intl^{y \in B} J(x, y) \tens B(y, y')} \tens H(y', z) \iso \intl^{y' \in B_z} J(x, y') \tens H(y', z)
		\end{multline*}
		Now assume that the small colimit above (and thus all colimits above) exists in $\V$. To see that, in this case, $(J \hc H)$ is again a small $\V$"/profunctor take, for each $z \in E$, $A_z \subseteq A$ to be the smallest full sub"/$\V$"/category containing all $A_y$, where $y$ ranges over the objects of $B_z$. Then $A_z$ is small and we have
		\begin{multline*}
			\intl^{x' \in A_z} A(x, x') \tens (J \hc H)(x', z) = \intl^{x' \in A_z} A(x, x') \tens \Bigpars{\intl^{y \in B} J(x', y) \tens H(y, z)}\\
			\iso \intl^{y \in B} \Bigpars{\!\intl^{x' \in A_z} A(x, x') \tens J(x', y)} \tens H(y, z') \iso \intl^{y \in B} J(x, y) \tens H(y, z) = (J \hc H)(x, z),
		\end{multline*}
		which shows that $(J \hc H)$ is small. For the second isomorphism here recall from \exref{small V-profunctors} that each $J(\dash, y)$ is a left Kan extension along $A_y \subseteq A$: the isomorphism follows from the fact that the latter factors as a Kan extension along $A_z \subseteq A$ as a consequence of the ``pasting lemma'' for Kan extensions, see e.g.\ Theorem~4.47 of \cite{Kelly82}. We can now conclude that $(J \hc H)$, as defined above, exists in $\ensProf\V$; that it forms the pointwise composite of $J$ and $H$ there follows from applying the lemma below to the locally full embedding $\ensProf\V \hookrightarrow \enProf{\V'}$ which, as follows from \exref{restrictions of small profunctors}, preserves cartesian cells.
	\end{example}
	
	Besides reflecting restrictions (\lemref{locally full and faithful functors reflect cartesian cells}), locally full and faithful functors reflect horizontal composites.
	\begin{lemma} \label{locally full and faithful functors reflect cocartesian paths}
		Any locally full and faithful functor $\map F\K\L$ (\defref{full and faithful functor}) reflects weakly cocartesian paths, that is a path $(\phi_1, \dotsc, \phi_n) \in \K$ is weakly cocartesian whenever its image $(F\phi_1, \dotsc, F\phi_n)$ is weakly cocartesian in $\L$. Likewise $F$ reflects horizontal cocartesian cells, i.e.\ horizontal composites.
		
		If moreover $F$ preserves unary cartesian cells then it reflects (right/left) (pointwise) cocartesian paths as well.
	\end{lemma}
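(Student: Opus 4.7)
The plan is to prove the weakly cocartesian case directly from the universal property, and then deduce the cocartesian and pointwise cocartesian cases as consequences.

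For the weakly cocartesian case, let $\chi$ be a cell in $\K$ of the shape shown on the left"/hand side of \defref{cocartesian paths}, so that we seek a unique factorisation of $\chi$ through $(\phi_1, \dotsc, \phi_n)$. Applying $F$ produces a cell $F\chi$ in $\L$ of the corresponding shape, which by the assumed weak cocartesianness of $(F\phi_1, \dotsc, F\phi_n)$ factors uniquely as $F\chi = \psi_\L \of (F\phi_1, \dotsc, F\phi_n)$ for some $\psi_\L$ in $\L$. Because the horizontal source and target of $\psi_\L$ are precisely the $F$-images of the corresponding paths in $\K$, local fullness of $F$ supplies a cell $\psi$ in $\K$ with $F\psi = \psi_\L$. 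Since $F$ preserves vertical composition, $F\bigpars{\psi \of (\phi_1, \dotsc, \phi_n)} = F\chi$, and local faithfulness yields $\psi \of (\phi_1, \dotsc, \phi_n) = \chi$. Uniqueness of $\psi$ in $\K$ follows from the uniqueness of $\psi_\L$ in $\L$ together with local faithfulness.

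For the cocartesian case, given any extended path $\ul\Phi$ in $\K$ obtained by prepending and appending, to $(\phi_1, \dotsc, \phi_n)$, cartesian cells defining restrictions that exist in $\K$ together with identity cells, as in \defref{cocartesian paths}, I would apply the first part to $\ul\Phi$ itself. This reduces the problem to checking that $F\ul\Phi$ is weakly cocartesian in $\L$; provided $F$ preserves the relevant cartesian cells, $F\ul\Phi$ is again an extended path in $\L$ in the sense of \defref{cocartesian paths}, and its weak cocartesianness then follows from the assumed cocartesianness of $(F\phi_1, \dotsc, F\phi_n)$ in $\L$.

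Assume now in addition that $F$ preserves cartesian cells, and that $(F\phi_1, \dotsc, F\phi_n)$ is right pointwise cocartesian in $\L$. Given $\map fY{X_{nm_n}}$ in $\K$ such that $H_{nm_n}(\id, f)$ and $J_n(\id, f)$ exist, form the factorised cell $\phi_n'$ as in \eqref{pointwise cocartesian factorisation}. Applying $F$ to that defining identity and using that $F$ preserves the two cartesian cells appearing in it, one sees that $F\phi_n'$ coincides with the analogous factorisation associated with $Ff$ in $\L$, where the required restrictions are $F\bigpars{H_{nm_n}(\id,f)}$ and $F\bigpars{J_n(\id,f)}$. By hypothesis the path $(F\phi_1, \dotsc, F\phi_{n-1}, F\phi_n')$ is cocartesian in $\L$, and the cocartesian case above (whose additional assumption is now in force) gives cocartesianness of $(\phi_1, \dotsc, \phi_{n-1}, \phi_n')$ in $\K$; the left pointwise case is horizontally dual. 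The main technical point is the identification of $F\phi_n'$ with the canonical factorisation in $\L$, which rests on uniqueness of factorisations through cartesian cells combined with the preservation hypothesis; once this is secured, the three parts fit together as described.
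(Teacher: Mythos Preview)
The paper states this lemma without proof, treating it as routine. Your argument is the natural one and is correct in outline: the weakly cocartesian case is a direct transfer via local full faithfulness, and your treatment of the pointwise case---identifying $F\phi_n'$ with the canonical factorisation in $\L$ via preservation of cartesian cells and uniqueness, then invoking the cocartesian reflection already established---is exactly right.

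Your caveat in the cocartesian case, that the argument needs $F$ to preserve the cartesian cells appearing in the extended paths of \defref{cocartesian paths}, is a genuine observation: without it $F\ul\Phi$ need not be an extended path of the required form in $\L$, and the assumed cocartesianness of $(F\phi_1,\dotsc,F\phi_n)$ there gives no direct purchase on it. Strictly speaking this leaves your proof of the cocartesian clause of the first assertion conditional on that extra hypothesis. In the paper's uses this is harmless: either the preservation hypothesis is already in force (as in \exref{horizontal composites in (V, V')-Prof} and \exref{composites of small profunctors}), or the paths under consideration are horizontal, so that the vertical boundaries $f_0$ and $f_n$ are identities and the cartesian cells in the extended paths may themselves be taken to be identity cells, which any functor preserves (as in \exref{composites of internal profunctors}).
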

	
	Pointwise cocartesian paths are coherent in the following sense.
	\begin{lemma} \label{coherence of pointwise cocartesian paths}
		If the path $\ul \phi = (\phi_1, \dotsc, \phi_n)$ is right pointwise cocartesian then any path of the form $(\phi_1, \dotsc, \phi_n')$, as in \defref{pointwise cocartesian path}, is again right pointwise cocartesian. An analogous result holds for (left) pointwise cocartesian paths.
	\end{lemma}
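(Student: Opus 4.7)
The plan is to prove the right pointwise case; the left case follows by horizontal duality, and the (two-sided) pointwise case then follows by combining both.

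Assume $\ul\phi$ is right pointwise cocartesian and fix $\map fY{X_{nm_n}}$ such that $H_{nm_n}(\id, f)$ and $J_n(\id, f)$ exist, so that $\phi_n'$ is defined by~\eqref{pointwise cocartesian factorisation}. To show that $(\phi_1, \dotsc, \phi_{n-1}, \phi_n')$ is right pointwise cocartesian, I would fix $\map gZY$ such that $H_{nm_n}(\id, f)(\id, g)$ and $J_n(\id, f)(\id, g)$ exist, let $\phi_n''$ denote the factorisation of $\phi_n'$ through the chosen cartesian cells defining these, and aim to show that $(\phi_1, \dotsc, \phi_{n-1}, \phi_n'')$ is cocartesian.

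The key observation is that stacking the defining identity of $\phi_n''$ onto that of $\phi_n'$ expresses $\phi_n$ as a composite in whose top-right corner appears the vertical composite of the chosen cartesian cells defining $H_{nm_n}(\id, f)$ and $H_{nm_n}(\id, f)(\id, g)$, and analogously for $J_n$ in the bottom-right. By \lemref{pasting lemma for cartesian cells} these stacked composites are themselves cartesian with vertical target $f \of g$, and therefore exhibit $H_{nm_n}(\id, f)(\id, g)$ and $J_n(\id, f)(\id, g)$ as restrictions of $H_{nm_n}$ and $J_n$ along $\id$ and $f \of g$. In particular $H_{nm_n}(\id, f \of g)$ and $J_n(\id, f \of g)$ exist and there are invertible horizontal cells $\iota_H \colon H_{nm_n}(\id, f \of g) \Rar H_{nm_n}(\id, f)(\id, g)$ and $\iota_J \colon J_n(\id, f \of g) \Rar J_n(\id, f)(\id, g)$ comparing them to the chosen cartesian cells defining the restrictions along $f \of g$.

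Next, letting $\phi_n^\sharp$ be the factorisation of $\phi_n$ through the chosen cartesian cells for $H_{nm_n}(\id, f \of g)$ and $J_n(\id, f \of g)$, the right pointwise cocartesianness of $\ul\phi$ applied to $f \of g$ makes $(\phi_1, \dotsc, \phi_{n-1}, \phi_n^\sharp)$ cocartesian. Writing the two factorisations of $\phi_n$ side by side and cancelling by uniqueness of factorisations through the cartesian composites will yield the identity
\begin{equation*}
\phi_n'' = \iota_J \of \phi_n^\sharp \of (\id_{H_{n1}}, \dotsc, \id_{H_{nm_n'}}, \iota_H^{-1}).
\end{equation*}
Since horizontal composition with the invertible cells $\iota_J$ and $\iota_H^{-1}$ induces bijections on factorisations, it preserves cocartesianness (directly from the universal property in \defref{cocartesian paths}), and hence $(\phi_1, \dotsc, \phi_{n-1}, \phi_n'')$ is cocartesian. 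The main obstacle will be precisely this bookkeeping around the pseudofunctoriality of restriction: $K(\id, f)(\id, g)$ is only canonically isomorphic, not equal, to $K(\id, f \of g)$, and one must carefully trace the invertible comparison cells $\iota_H$ and $\iota_J$ through the argument and verify that the discrepancy between $\phi_n''$ and $\phi_n^\sharp$ is exactly horizontal composition with them.
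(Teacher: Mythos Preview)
Your proposal is correct and follows the same approach as the paper: reduce right pointwise cocartesianness of $(\phi_1,\dotsc,\phi_n')$ with respect to $g$ to right pointwise cocartesianness of $(\phi_1,\dotsc,\phi_n)$ with respect to $f\of g$, via the pasting lemma for cartesian cells. The only difference is presentational: the paper avoids introducing the auxiliary cell $\phi_n^\sharp$ and the invertible comparison cells $\iota_H$, $\iota_J$ altogether, by observing that the stacked composites of cartesian cells are themselves cartesian and hence may be used \emph{directly} as the cartesian cells witnessing the restrictions along $f\of g$ in \defref{pointwise cocartesian path}, so that $\phi_n''$ is already the relevant factorisation and no bookkeeping with invertible cells is required.
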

	\begin{proof}
		Let $\map fY{X_{nm_n}}$ be as in \defref{pointwise cocartesian path}; that is $H_{nm_n}(\id, f)$ and $J_n(\id, f)$ exist. Let $\map gZY$ be any morphism such that $H_{nm_n}(\id, f)(\id, g) \iso H_{nm_n}(\id, f \of g)$ and $J_n(\id, f)(\id, g) \iso J_n(\id, f \of g)$ exist, where the isomorphisms follow from the pasting lemma for cartesian cells (\lemref{pasting lemma for cartesian cells}). Consider the unique factorisation $\phi_n''$ in \mbox{$\phi'_n \of (\id, \dotsc, \id, \cart) = \cart \of \phi_n''$}, as in \defref{pointwise cocartesian path} but for $\phi'$, where the cartesian cells define $H_{nm_n}(\id, f)(\id, h)$ and $J_n(\id, f)(\id, h)$ respectively; we have to show that $(\phi_1, \dotsc, \phi_n'')$ is cocartesian. To see this consider the following equation where, in each composite, the bottom cartesian cell (denoted `c') defines a restriction along $f$ and the top cartesian cell (also denoted `c') defines a restriction along $g$, and where the identities follow from the definitions of $\phi_n''$ and $\phi_n'$ respectively.
		\begin{displaymath}
			\begin{tikzpicture}[scheme]
				\draw	(1,3) -- (0,3) -- (0,1) -- (0.5,0) -- (2.5,0) -- (3,1) -- (3,3) -- (2,3) (0,1) -- (3,1) (0,2) -- (3,2);
				\draw[shift={(0.5,0.5)}]	(1,2.5) node[xshift=0.5pt] {$\dotsb$}
							(1,2) node {$\phi_n''$}
							(1,1) node {c}
							(1,0) node {c};
			\end{tikzpicture} \quad = \quad \begin{tikzpicture}[scheme]
				\draw	(0,2) -- (1,2) -- (1,3) -- (0,3) -- (0,1) -- (0.5,0) -- (2.5,0) -- (3,1) -- (3,3) -- (2,3) -- (2,2) -- (3,2) (0,1) -- (3,1);
				\draw[shift={(0.5,0.5)}]	(1,2) node[xshift=0.5pt] {$\dotsb$}
							(2,2) node {c}
							(1,1) node {$\phi_n'$}
							(1,0) node {c};
			\end{tikzpicture} \quad = \quad \begin{tikzpicture}[scheme]
				\draw	(0,1) -- (1,1) -- (1,3) -- (0,3) -- (0,1) -- (0.5,0) -- (2.5,0) -- (3,1) -- (3,3) -- (2,3) -- (2,1) -- (3,1) (0,2) -- (1,2) (2,2) -- (3,2);
				\draw[shift={(0.5,0.5)}]	(1,2) node[xshift=0.5pt] {$\dotsb$}
							(2,2) node {c}
							(1,1) node[xshift=0.5pt] {$\dotsb$}
							(1,0) node {$\phi_n$}
							(2,1) node {c};
			\end{tikzpicture}
		\end{displaymath}
		The composites of cartesian cells in the left-hand and right-hand sides above are again cartesian by the pasting lemma, so that $(\phi_1, \dotsc, \phi_n'')$ is cocartesian because $(\phi_1, \dotsc, \phi_n)$ is right pointwise cocartesian. This concludes the proof.
	\end{proof}
	
	The pasting lemma for cocartesian paths (\lemref{pasting lemma for cocartesian paths}) induces one for pointwise cocartesian paths as follows.
	\begin{lemma}[Pasting lemma] \label{pasting lemma for right pointwise cocartesian paths}
		Consider the configuration of cells of \lemref{pasting lemma for cocartesian paths}. Assume that all its cells $\psi_i$ and $\phi_{jk}$ are unary and that the vertical targets of the last cells $\psi_n$ and $\phi_{nm_n}$ are both the identity morphism on the object $C_n$. The assertions \textup{(a)} and \textup{(b)} of \lemref{pasting lemma for cocartesian paths} also hold after replacing `cocartesian' by `right pointwise cocartesian with respect to $f$', where $\map fY{C_n}$ is any morphism. Similarly these assertions also apply to (left) pointwise cocartesian paths.
	\end{lemma}
	\begin{proof}
		We prove that assertion \lemref{pasting lemma for cocartesian paths}(a) holds for the `right pointwise cocartesian with respect to $\map fY{C_n}$' case; the proofs for the other assertions are analogous. Assume that the paths $\ul\psi$ and $(\phi_{11}, \dotsc, \phi_{nm_n})$ are right pointwise cocartesian with respect to $f$ so that, by \defref{pointwise cocartesian path}, the following restrictions along $f$ exist: those of the horizontal targets of $\psi_n$ and $\phi_{nm_n}$ as well as that of the last morphism in the horizontal source of $\phi_{nm_n}$. Using these restrictions we obtain factorisations $\psi_n'$ and $\phi_{nm_n}'$, as in \defref{pointwise cocartesian path}, such that the following equation holds, where `c' denotes any cartesian cell defining one of the restrictions along $f$.
		\begin{displaymath}
			\begin{tikzpicture}[scheme, x=0.5cm]
				\draw (0,2) -- (1,2) -- (1,3) -- (0,3) -- (0,0) -- (9,0) -- (9,3) -- (8,3) -- (8,2) -- (9,2) (3,2) -- (2,2) -- (2,3) -- (3,3) -- (3,1) -- (0,1) (6,2) -- (7,2) -- (7,3) -- (6,3) -- (6,1) -- (9,1);
				\draw	(1.5,2.5) node[xshift=0.75pt] {$\dotsb$}
							(1.5,1.5) node {$\phi_{n1}$}
							(4.5,2) node[font=] {$\dotsb$}
							(4.5,0.5) node {$\psi_n$}
							(7.5,2.5) node[xshift=0.75pt] {$\dotsb$}
							(7.5,1.5) node {$\phi_{nm_n}$}
							(8.5,2.5) node {c};
			\end{tikzpicture} \mspace{12mu} = \mspace{12mu} \begin{tikzpicture}[scheme, x=0.5cm]
				\draw (1,3) -- (0,3) -- (0,0) -- (9,0) -- (9,3) -- (8,3) (2,3) -- (3,3) -- (3,1) -- (0,1) (7,3) -- (6,3) -- (6,1) -- (9,1) (0,2) -- (3,2) (6,2) -- (9,2);
				\draw	(1.5,3) node[xshift=0.75pt, yshift=-0.25pt] {$\dotsb$}
							(1.5,2.5) node {$\phi_{n1}$}
							(4.5,2.5) node[font=] {$\dotsb$}
							(4.5,1.5) node[font=] {$\dotsb$}
							(4.5,0.5) node {$\psi_n$}
							(7.5,3) node[xshift=0.75pt, yshift=-0.25pt] {$\dotsb$}
							(7.5,2.5) node[yshift=1pt] {$\phi_{nm_n}'$}
							(7.5,1.5) node {c};
			\end{tikzpicture} \mspace{12mu} = \mspace{12mu} \begin{tikzpicture}[scheme, x=0.5cm]
				\draw (1,3) -- (0,3) -- (0,0) -- (9,0) -- (9,3) -- (8,3) (2,3) -- (3,3) -- (3,2) -- (0,2) (7,3) -- (6,3) -- (6,2) -- (9,2) (0,1) -- (9,1);
				\draw	(1.5,3) node[xshift=0.75pt, yshift=-0.25pt] {$\dotsb$}
							(1.5,2.5) node {$\phi_{n1}$}
							(4.5,2.5) node[font=] {$\dotsb$}
							(4.5,1.5) node {$\psi_n'$}
							(7.5,3) node[xshift=0.75pt, yshift=-0.25pt] {$\dotsb$}
							(7.5,2.5) node[yshift=1pt] {$\phi_{nm_n}'$}
							(4.5,0.5) node {c};
			\end{tikzpicture}
		\end{displaymath}
		The above equation implies that the unique factorisation $\lbrack\psi_n \of (\phi_1, \dotsc, \phi_n)\rbrack'$ corresponding to $\psi_n \of (\phi_{n1}, \dotsc, \phi_{nm_n})$, as in \defref{pointwise cocartesian path} and with respect to $f$, coincides with \mbox{$\psi_n' \of (\phi_{n1}, \dotsc, \phi_{nm_n}')$}. By assumption $(\psi_1, \dotsc, \psi_n')$ and $(\phi_{11}, \dotsc, \phi_{nm_n}')$ are cocartesian so that $\bigpars{\psi_1 \of (\phi_{11}, \dotsc, \phi_{1m_1}), \dotsc, \lbrack\psi_n \of (\phi_1, \dotsc, \phi_n)\rbrack'}$ is cocartesian too by \lemref{pasting lemma for cocartesian paths}(a).
	\end{proof}
	
	Pointwise cocartesian cells can be obtained from the following lemmas.
	\begin{lemma} \label{cocartesian cell defining restriction is pointwise}
		Let $(\psi, \phi)$ be a pair of cells that satisfies both identities of \lemref{restrictions and composites}. The cocartesian cell $\phi$ is pointwise cocartesian.
	\end{lemma}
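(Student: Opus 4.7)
The plan is to prove that $\phi$ is right pointwise cocartesian with respect to an arbitrary morphism $\map hYB$ for which $g^*(\id, h)$ and $J(\id, h)$ exist; left pointwise cocartesianness then follows by the horizontal dual argument, since the hypothesis on $(\psi, \phi)$ is self"/dual under interchanging the roles of companions and conjoints. Write $\chi_h \colon g^*(\id, h) \Rightarrow g^*$ and $\xi_h \colon J(\id, h) \Rightarrow J$ for the cartesian cells defining these restrictions, and let $\phi'$ denote the factorisation of \defref{pointwise cocartesian path} of $\phi \of (\id_{f_*}, \id_{\ul K}, \chi_h)$ through $\xi_h$.

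The crucial observation is that, by the pasting lemma for cartesian cells (\lemref{pasting lemma for cartesian cells}), $g^*(\id, h) \iso (g \of h)^*$ and $J(\id, h) \iso \ul K(f, g \of h)$. Consequently $\psi' \dfn \psi \of \xi_h$ is cartesian, defining $J(\id, h)$ as the restriction of $\ul K$ along $f$ and $g \of h$. By the equivalence in \lemref{restrictions and composites} applied to the pair $(f, g \of h)$, it suffices to verify the identity $\phi' \of (\eta_f, \psi', \eta_{g \of h}) = \id_{J(\id, h)}$, where $\eta_f$ and $\eta_{g \of h}$ are cocartesian cells defining $f_*$ and $(g \of h)^*$. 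Here $\eta_{g \of h}$ is uniquely characterised by the relation $\chi_h \of \eta_{g \of h} = \eta_g \of h$: both sides become $\id_{g \of h}$ after composition with $\chi_g$ (using the vertical conjoint identity for $g$ and the whiskered version thereof for $g \of h$), so they coincide by the cartesianness of $\chi_g$.

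To verify the required identity, I would post"/compose it with $\xi_h$. Using the defining equation of $\phi'$ and the relation $\chi_h \of \eta_{g \of h} = \eta_g \of h$, this reduces to the equation
\[ \phi \of (\eta_f, \psi \of \xi_h, \eta_g \of h) = \xi_h. \]
The main computation is a single instance of the associativity axiom, with outer layer $(\eta_f, \psi, \eta_g)$ partitioning $\phi$'s source $(f_*, \ul K, g^*)$ into three pieces, and inner layer consisting of the empty tuple at position~1, the single cell $\xi_h$ at position~2, and the nullary identity cell $\id_h$ at position~3 (so that $\eta_g \of h = \eta_g \of \id_h$ parses as the boundary whiskering of $\eta_g$). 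This yields
\[ \phi \of (\eta_f, \psi \of \xi_h, \eta_g \of \id_h) = \bigpars{\phi \of (\eta_f, \psi, \eta_g)} \of (\xi_h, \id_h) = \id_J \of (\xi_h, \id_h) = \xi_h, \]
where the second equality invokes the hypothesis that $(\psi, \phi)$ satisfies the identity of \lemref{restrictions and composites}, and the last step successively removes the nullary identity cell $\id_h$ and simplifies $\id_J \of (\xi_h)$ via the unit axioms. The main subtlety is ensuring the associativity step is well"/posed: one must check that $\id_h$ may legitimately be partitioned into the empty source of $\eta_g$ as a boundary nullary cell, and that all verticals match along the resulting composition.
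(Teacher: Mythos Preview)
Your argument and the paper's both reduce to applying \lemref{restrictions and composites} to the pair $(f, g\of h)$ (your $h$ is the paper's $p$); the difference is which of the three equivalent conditions you verify. The paper composes the original \emph{first} identity for $(\psi,\phi)$ with $\chi_h$ and rewrites via the defining equation of $\phi'$, obtaining the first identity for $(\psi',\phi')$; since $\psi'=\psi\of\xi_h$ is cartesian by the pasting lemma, cocartesianness of $\phi'$ follows immediately from the equivalence. You instead verify the \emph{second} identity. That computation is correct, but there is a gap: the three"/way equivalence in \lemref{restrictions and composites} is stated \emph{for a factorisation as above}, i.e.\ under the hypothesis that the first identity holds, and you invoke the equivalence without checking this premise. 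The missing step is short---$\psi'\of\phi' = \psi\of\xi_h\of\phi' = \psi\of\phi\of(\id,\id,\chi_h)$, which reduces to the required form via the first identity for $(\psi,\phi)$ and the pasting lemma---but once you fill it, your verification of the second identity becomes redundant, since $\psi'$ cartesian already forces $\phi'$ cocartesian. So the paper's route is strictly shorter.

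One minor slip: the ``empty tuple at position~1'' in your associativity step does not parse, since each inner tuple in the associativity axiom must have length $\geq 1$. Use the vertical identity cell $\id_{\id_A}$ there instead (so that $\eta_f\of\id_{\id_A}=\eta_f$ by the unit axiom), and then the unit axiom for $i=0$ removes it from the right"/hand side, giving the $(\xi_h,\id_h)$ you wrote.
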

	\begin{proof}
		We will show that $\phi$ is right pointwise cocartesian; a horizontally dual argument shows that $\phi$ is left pointwise cocartesian too. Let $\map pYB$ be any morphism such that $g^*(\id, p) \iso (g \of p)^*$ (see \lemref{companion of a composite}) and $J(\id, p)$ exist. Let $\cell{\phi'}{f_* \conc \ul K \conc (g \of p)^*}{J(\id, p)}$ be the unique factorisation in $\phi \of (\id, \id, \cart) = \cart \of \phi'$, as in \defref{pointwise cocartesian path}, where the cartesian cells define the restrictions along $p$. We have to show that $\phi'$ is cocartesian. To see this compose the first identity of \lemref{restrictions and composites} with the cartesian cell defining $g^*(\id, p)$, giving the first identity in the equation below. The second identity follows from the definition of $\phi'$.
		\begin{displaymath}
			\begin{tikzpicture}[scheme, yshift=0.8em]
				\draw	(0,1) -- (0.5,0) -- (2.5,0) -- (3,1) -- (3,2) -- (0,2) -- (0,1) -- (3,1) (0.5,0) -- (1,1) -- (1,2) (2.5,0) -- (2,1) -- (2,2);
				\draw	(0.5,0.6) node {c}
							(2.5,0.6) node {c}
							(2.5,1.5) node {c};
			\end{tikzpicture} \quad = \quad \begin{tikzpicture}[scheme]
				\draw (0,2) -- (3,2) -- (3,3) -- (0,3) -- (0,2) -- (1,1) -- (1,0) -- (2,0) -- (2,1) -- (3,2) (1,1) -- (2,1) (1,2) -- (1,3) (2,2) -- (2,3);
				\draw	(1.5,0.5) node {$\psi$}
							(1.5,1.5) node {$\phi$}
							(2.5,2.5) node {c};
			\end{tikzpicture} \quad = \quad \begin{tikzpicture}[scheme]
				\draw (1,2) -- (1,0) -- (2,0) -- (2,2) -- (3,3) -- (0,3) -- (1,2) -- (2,2) (1,1) -- (2,1);
				\draw (1.5,0.5) node {$\psi$}
							(1.5,1.5) node {c}
							(1.5,2.5) node {$\phi'$};
			\end{tikzpicture}
		\end{displaymath}
		Since the composite of cartesian cells in the left"/hand side defines the companion of $g \of p$, the equation above is of the same form as the first identity of \lemref{restrictions and composites}. Moreover by the pasting lemma (\lemref{pasting lemma for cartesian cells}) the composite of the bottom two cells in the right"/hand side is cartesian, so that $\phi'$ is cocartesian by \lemref{restrictions and composites}.
	\end{proof}
	
	\begin{lemma} \label{right pointwise cocartesian with respect to a morphism that has a conjoint}
		Consider the path $\ul\phi = (\phi_1, \dotsc, \phi_n)$ and the morphism $\map fY{X_{nm_n}}$ of \defref{pointwise cocartesian path}. If the conjoint $f^*$ exists and $\ul\phi$ is cocartesian then $\ul\phi$ is right pointwise cocartesian with respect to $f$. An analogous result holds for (left) pointwise cocartesianness.
		
		Consequently in an augmented virtual equipment (\defref{augmented virtual equipment}) the horizontal composite of a path $\hmap{(H_1, \dotsc, H_n)}{X_0}{X_n}$ is pointwise whenever $X_0$ and $X_n$ are unital.
	\end{lemma}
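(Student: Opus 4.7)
The plan is to reduce the assertion to the cocartesianness of $\ul\phi$ by a three"/step transformation at the right end of the path, combining \lemref{pasting lemma for cocartesian paths} with the identification of cartesian restrictions along $f$ with horizontal composites involving $f^*$ furnished by \lemref{restrictions and composites}. Let $\cart_H$ and $\cart_J$ denote the cartesian cells defining $H_{nm_n}(\id, f)$ and $J_n(\id, f)$ respectively, and let $\cart_f$ denote the cartesian cell defining $f^*$. By \lemref{restrictions and composites} choose cocartesian cells $\cell{\cocart_H}{(H_{nm_n}, f^*)}{H_{nm_n}(\id, f)}$ and $\cell{\cocart_J}{(J_n, f^*)}{J_n(\id, f)}$ satisfying the identities $\cart_H \of \cocart_H = \id_{H_{nm_n}} \hc \cart_f$ and $\cart_J \of \cocart_J = \id_{J_n} \hc \cart_f$. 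Setting $\phi_n'' \dfn \cocart_J \of (\phi_n, \id_{f^*})$, a short computation from the defining equation of $\phi_n'$, the preceding identities and the interchange axioms (\lemref{horizontal composition}) shows that both $\cart_J \of \phi_n''$ and $\cart_J \of \bigpars{\phi_n' \of (\id_{H_{n1}}, \dotsc, \id_{H_{nm_n'}}, \cocart_H)}$ equal $\phi_n \hc \cart_f$; uniqueness of factorisations through the cartesian cell $\cart_J$ then yields the key identification $\phi_n'' = \phi_n' \of (\id_{H_{n1}}, \dotsc, \id_{H_{nm_n'}}, \cocart_H)$.

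I would then show that $(\phi_1, \dotsc, \phi_{n-1}, \phi_n')$ is weakly cocartesian via three successive applications of \lemref{pasting lemma for cocartesian paths}. First, $(\phi_1, \dotsc, \phi_n, \id_{f^*})$ is cocartesian: it is the concatenation of the cocartesian $\ul\phi$ with the cocartesian identity cell $\id_{f^*}$, whose common vertical transition with $\phi_n$ is the identity on $X_{nm_n}$, so the remark preceding \lemref{pasting lemma for cocartesian paths} applies. Second, pasting this cocartesian top with the cocartesian bottom path $(\id_{J_1}, \dotsc, \id_{J_{n-1}}, \cocart_J)$---whose leftmost vertical source and rightmost vertical target are identities, rendering the restriction hypotheses of the second assertion of the pasting lemma trivial---gives cocartesianness of the composite $(\phi_1, \dotsc, \phi_{n-1}, \phi_n'')$. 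Third, the top path $(\id_{\ul J_1}, \dotsc, \id_{\ul J_{n-1}}, \id_{H_{n1}}, \dotsc, \id_{H_{nm_n'}}, \cocart_H)$ is cocartesian, and by the key identification its composite with $(\phi_1, \dotsc, \phi_{n-1}, \phi_n')$ equals the cocartesian path of the previous step, so the first assertion of \lemref{pasting lemma for cocartesian paths} yields that $(\phi_1, \dotsc, \phi_{n-1}, \phi_n')$ is weakly cocartesian.

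To upgrade to full cocartesianness I would rerun these three steps for every admissible padding. Given left padding $\cart_{H'_p(\id, f_0)}$ (where this restriction exists) and right padding $\id_{H''_1}, \dotsc, \id_{H''_q}$, I extend the right padding of $\ul\phi$ by prepending $f^*$ directly after $\phi_n$; the required auxiliary cartesian cell $\cart_{f^*(\id, \id)}$ degenerates to $\id_{f^*}$. This padded version of $\ul\phi$ is weakly cocartesian by cocartesianness of $\ul\phi$, and the same three pasting arguments---now using only the first assertion of \lemref{pasting lemma for cocartesian paths}, which requires no further existence of restrictions---show that the correspondingly padded $(\phi_1, \dotsc, \phi_{n-1}, \phi_n')$ is weakly cocartesian. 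The left"/pointwise assertion follows by horizontal duality, and the final clause is immediate from both: in an augmented virtual equipment all unary restrictions exist, so \cororef{restrictions in terms of units} guarantees that every morphism into a unital object admits a conjoint and every morphism out of a unital object a companion, making any cocartesian horizontal composite with unital source and target both right and left pointwise.

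The main obstacle is the passage from weak cocartesianness of the unpadded composite to weak cocartesianness of all admissible padded composites: the second assertion of \lemref{pasting lemma for cocartesian paths} cannot be invoked directly at the final pasting step, since it would require restrictions along the vertical source $f_0$ on the left of the original path, which are not assumed to exist. The key technical observation that circumvents this is that every admissible right padding of the restricted target path corresponds, by prepending $f^*$, to an admissible right padding of the original target path whose auxiliary cartesian cell collapses to $\id_{f^*}$.
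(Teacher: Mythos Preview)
Your proof is correct and follows essentially the same route as the paper: both establish the key identity $\phi_n' \of (\id, \dotsc, \id, \cocart_H) = \cocart_J \of (\phi_n, \id_{f^*})$ via the relations of \lemref{restrictions and composites}, then deduce cocartesianness of $(\phi_1, \dotsc, \phi_n')$ from that of $\ul\phi$ by repeated use of the pasting lemma. The paper's proof is terser, simply invoking \lemref{pasting lemma for cocartesian paths} twice without separating the weak and full cases.

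The obstacle you flag—that the second assertion of \lemref{pasting lemma for cocartesian paths} requires all restrictions along $f_0$—is a fair reading of that lemma as stated, but it is not a genuine gap in the paper's argument. In the final pasting step the top row $(\id_{H_{11}}, \dotsc, \id_{H_{nm_n'}}, \cocart_H)$ has identity outer verticals, so the composite and the bottom row $(\phi_1, \dotsc, \phi_n')$ share the same outer verticals $f_0$ and $\id_Y$; hence they admit exactly the same admissible paddings, and for each such padding the padded top row remains weakly cocartesian (as $\cocart_H$ is cocartesian and all added verticals are identities). The first assertion of the pasting lemma then gives the equivalence of weak cocartesianness padding by padding, which is precisely full cocartesianness. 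Your explicit padding argument via prepending $f^*$ amounts to the same observation unwound, and is perfectly fine; it just does more bookkeeping than is strictly necessary.
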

	\begin{proof}
		As in \defref{pointwise cocartesian path} we assume that $H_{nm_n}(\id, f)$ and $J(\id, f)$ exist. By \lemref{restrictions and composites} we have $H_{nm_n}(\id, f) \iso H_{nm_n} \hc f^*$ and $J(\id, f) \iso J \hc f^*$ such that each pair of cartesian and cocartesian cells, defining the restriction and the composite, satisfy the identities of that lemma. Let $\phi_n'$ be the factorisation as in \defref{pointwise cocartesian path}; we have to show that $(\phi_1, \dotsc, \phi_n')$ is cocartesian. To see this consider the following equation of composites, where the cartesian cells defining $H_{nm_n}(\id, f)$, $J(\id, f)$ and $f^*$ are denoted `c' and the cocartesian cells defining $H_{nm_n} \hc f^*$ and $J \hc f^*$ are denoted `cc'. The identities follow from the definition of $\phi_n'$ and the first identity of \lemref{restrictions and composites}.
		\begin{displaymath}
			\begin{tikzpicture}[scheme]
				\draw	(0,2) -- (1,2) -- (1,3) -- (0,3) -- (0,1) -- (1,0) -- (2,0) -- (3,1) -- (3,3) -- (2,3) -- (2,2) -- (3,2) (0,1) -- (3,1);
				\draw (1.5,0.5) node {c}
							(1.5,1.5) node {$\phi_n'$}
							(1.5,2.5) node[xshift=.75pt] {$\dotsb$}
							(2.5,2.5) node {cc};							
			\end{tikzpicture} \quad = \quad \begin{tikzpicture}[scheme]
				\draw (0,1) -- (1,1) -- (1,3) -- (0,3) -- (0,1) -- (1,0) -- (2,0) -- (3,1) -- (3,3) -- (2,3) -- (2,1) -- (3,1) (0,1) -- (1,1) (2,1) -- (3,1) (0,2) -- (1,2) (2,2) -- (3,2);
				\draw (1.5,0.5) node {$\phi_n$}
							(1.5,1.5) node[xshift=.75pt] {$\dotsb$}
							(2.5,1.5) node {c}
							(1.5,2.5) node[xshift=.75pt] {$\dotsb$}
							(2.5,2.5) node {cc};
			\end{tikzpicture} \quad = \quad \begin{tikzpicture}[scheme, yshift=1.6em]
				\draw	(1,1) -- (0,1) -- (0.5,0) -- (3.5,0) -- (4,1) -- (2,1) (3.5,0) -- (3,1);
				\draw	(1.75,0.5) node {$\phi_n$}
							(1.5,1) node[xshift=.75pt] {$\dotsb$}
							(3.5,0.6) node {c};
			\end{tikzpicture} \quad = \quad \begin{tikzpicture}[scheme]
				\draw (1,3) -- (0,3) -- (0.5,2) -- (0.5,1) -- (1.5,0) -- (2.5,0) -- (3.5,1) -- (3.5,2) -- (4,3) -- (2,3) (0.5,1) -- (3.5,1) (0.5,2) -- (3.5,2) (2.5,2) -- (3,3);
				\draw (2,0.5) node {c}
							(2,1.5) node {cc}
							(1.5,2.5) node {$\phi_n$}
							(1.5,3) node[xshift=.75pt] {$\dotsb$};
			\end{tikzpicture}
		\end{displaymath}
		We conclude that $\phi_n' \of (\id, \dotsc, \id, \cocart) = \cocart \of (\phi_n, \id)$, by the uniqueness of factorisations through cartesian cells. It then follows from the pasting lemma (\lemref{pasting lemma for cocartesian paths}) that $\ul\phi$ being cocartesian implies that $\bigpars{\phi_1, \dotsc, \cocart \of (\phi_n, \id)} = \bigpars{\phi_1, \dotsc, \phi_n' \of (\id, \dotsc, \id, \cocart)}$ is cocartesian which in turn means that $(\phi_1, \dotsc, \phi_n')$ is cocartesian. This proves the first assertion. The final assertion follows by recalling from \cororef{restrictions in terms of units} that, in an augmented virtual equipment, all morphisms into unital objects $X_0$ and $X_n$ admit companions and conjoints.
	\end{proof}
	
	\section{The equivalence of unital augmented virtual double categories and unital virtual double categories}\label{unital virtual double categories section}
	In this last section we will show that the notions of augmented virtual double category and virtual double category are equivalent when all horizontal units exist. We denote by $\VirtDblCat_\textup u$ the locally full sub"/$2$"/category of $\VirtDblCat$ consisting of virtual double categories that have all horizontal units, \emph{normal} functors---that preserve the cocartesian cells defining horizontal units---, and all transformations between them. Likewise $\AugVirtDblCat_\textup u \subset \AugVirtDblCat$ denotes the full sub"/$2$"/category generated by the augmented virtual double categories that have all horizontal units. Remember that any functor of augmented virtual double categories preserves horizontal units (\cororef{functors preserve companions and conjoints}).
	
	Recall the strict $2$-functor $\map U{\AugVirtDblCat}{\VirtDblCat}$ (\propref{2-category of augmented virtual double categories}) that maps any augmented virtual double category $\K$ to the underlying virtual double category $U(\K)$ consisting of the unary cells of $\K$. Clearly unary cocartesian cells in $\K$ are again cocartesian in $U(\K)$ so that $U$ restricts to a strict $2$"/functor \mbox{$\map U{\AugVirtDblCat_\textup u}{\VirtDblCat_\textup u}$}. The theorem of this section proves that the latter $2$"/functor, together with the assignment $\K \mapsto N(\K)$ of \exref{augmented virtual double category from a unital virtual double category}, extends to a $2$"/equivalence \mbox{$\AugVirtDblCat_\textup u \simeq \VirtDblCat_\textup u$}.
	
	In order to make the distinction between cells of $N(\K)$ and those of $\K$ clear, in this section only we will place a bar over those of $N(\K)$. Thus $N(\K)$ has the same objects and morphisms as $\K$ while each unary cell $\bar\phi$ of $N(\K)$ corresponds to a cell $\phi$ in $\K$ and each nullary cell $\bar\psi$ of $N(\K)$, of the shape on the left below, corresponds to a unary cell $\psi$ in $\K$ as on the right, where $I_C$ is the chosen horizontal unit for $C$.
	\begin{displaymath}
		\begin{tikzpicture}[baseline]
			\matrix(m)[math35, column sep={1.75em,between origins}]
				{A_0 & & A_1 & \dotsb & A_{n'} & & A_n \\ & & & C & & & \\};
			\path[map]	(m-1-1) edge[barred] node[above] {$J_1$} (m-1-3)
													edge node[below left] {$f$} (m-2-4)
									(m-1-5) edge[barred] node[above] {$J_n$} (m-1-7)
									(m-1-7) edge node[below right] {$g$} (m-2-4);
			\path				(m-1-4) edge[cell] node[right] {$\bar \psi$} (m-2-4);
		\end{tikzpicture} \qquad \qquad \qquad \qquad \begin{tikzpicture}[baseline]
			\matrix(m)[math35]{A_0 & A_1 & A_{n'} & A_n \\ C & & & C \\};
			\path[map]	(m-1-1) edge[barred] node[above] {$J_1$} (m-1-2)
													edge node[left] {$f$} (m-2-1)
									(m-1-3) edge[barred] node[above] {$J_n$} (m-1-4)
									(m-1-4) edge node[right] {$g$} (m-2-4)
									(m-2-1) edge[barred] node[below] {$I_C$} (m-2-4);
			\path[transform canvas={xshift=1.75em}]	(m-1-2) edge[cell] node[right] {$\psi$} (m-2-2);
			\draw				($(m-1-2)!0.5!(m-1-3)$) node {$\dotsb$};
		\end{tikzpicture}
	\end{displaymath}
	Recall that for each object $C \in \K$ we denote by $\cell{\eta_C}C{I_C}$ the cocartesian cell in $\K$ that defines the chosen horizontal unit $\hmap{I_C}CC$. Using the bar notation, composition in $N(\K)$ is defined as
	\begin{equation} \label{composition in N(K)}
		\bar\chi \of (\bar\xi_1, \dotsc, \bar\xi_n) \dfn \ol{\chi' \of (\xi_1, \dotsc, \xi_n)}
	\end{equation}
	where $\chi'$ is the unique factorisation of $\chi$ through the cocartesian path of cells $(\eta_{\bar\xi_1}, \dotsc, \eta_{\bar\xi_n})$ in $\K$, where $\eta_{\bar\xi_i} \dfn \eta_{C_{i'}}$ if $\bar\xi_i$ is nullary with horizontal target $C_{i'}$ and $\eta_{\bar\xi_i} \dfn \id_{K_i}$ if $\bar\xi_i$ is unary with horizontal target $\hmap{K_i}{C_{i'}}{C_i}$. The identity cells in $N(\K)$, for morphisms $\hmap JAB$ and $\map fAC$, are given by
	\begin{displaymath}
		\id_J \dfn \ol{\id_J} \qquad \qquad \text{and} \qquad \qquad \id_f \dfn \ol{\eta_C \of f}.
	\end{displaymath}
	
	\begin{theorem} \label{unital virtual double categories}
		The strict $2$-functor $\map U{\AugVirtDblCat_\textup u}{\VirtDblCat_\textup u}$ together with the assignment $\K \mapsto N(\K)$ (\exref{augmented virtual double category from a unital virtual double category}), both as recalled above, extend to a strict $2$"/equivalence $\AugVirtDblCat_\textup u \simeq \VirtDblCat_\textup u$.
	\end{theorem}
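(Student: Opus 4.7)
The plan is to extend the assignment $\K \mapsto N(\K)$ to a strict $2$"/functor $\map N{\VirtDblCat_\textup u}{\AugVirtDblCat_\textup u}$, then exhibit $2$"/natural isomorphisms $UN \iso \id$ and $NU \iso \id$ realising the $2$"/equivalence.

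To extend $N$, for each normal functor $\map F{\mathbb X}{\mathbb Y}$ I define $\map{NF}{N(\mathbb X)}{N(\mathbb Y)}$ to coincide with $F$ on objects, vertical and horizontal morphisms, and unary cells. A nullary cell $\bar\xi$ of $N(\mathbb X)$ with target $C$, corresponding to a unary cell $\xi\colon \ul J \Rar I_C$ in $\mathbb X$, is sent to the nullary cell of $N(\mathbb Y)$ corresponding to $\alpha_C \of F\xi\colon F\ul J \Rar I_{FC}$, where $\alpha_C\colon F(I_C) \iso I_{FC}$ is the unique invertible horizontal cell satisfying $\alpha_C \of F\eta_C = \eta_{FC}$; such $\alpha_C$ exists since $F\eta_C$ is cocartesian by normality of $F$. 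Transformations are sent componentwise. Strict $2$"/functoriality, in particular $N(G \of F) = NG \of NF$, follows from the uniqueness of factorisations through cocartesian cells: writing $\alpha^F_C$, $\alpha^G_{FC}$ and $\alpha^{G \of F}_C$ for the three induced isomorphisms, one checks $\alpha^{G \of F}_C = \alpha^G_{FC} \of G\alpha^F_C$.

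Next, the equality $UN = \id$ on $\VirtDblCat_\textup u$ holds strictly: $UN(\mathbb X)$ and $\mathbb X$ have the same objects, morphisms and unary cells, while the composition formula of \exref{augmented virtual double category from a unital virtual double category} reduces to that of $\mathbb X$ when restricted to paths of unary cells, since $\ul\eta_{\ul\phi}$ then consists only of horizontal identities and thus $\psi' = \psi$. For the other direction I would construct a functor $\map{\eps_\K}{NU(\K)}\K$ that is the identity on objects, vertical and horizontal morphisms, and unary cells, and that sends a nullary cell $\bar\psi$ of $NU(\K)$ with target $C$, corresponding to a unary cell $\psi\colon \ul J \Rar I_C$ in $\K$, to the nullary cell $\chi_C \of \psi\colon \ul J \Rar C$, where $\chi_C\colon I_C \Rar C$ is the cartesian cell defining the nullary restriction $I_C = C(\id_C, \id_C)$. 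Bijectivity on nullary cells is the universal property of $\chi_C$; naturality of $\eps$ with respect to a functor $\map F\K\L$ follows from \cororef{functors preserve companions and conjoints}, which ensures $F$ preserves both $\chi_C$ and the cocartesian cell $\eta_C\colon (C) \Rar I_C$ used to define horizontal units in $U(\K)$.

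The main obstacle is verifying that $\eps_\K$ preserves vertical composition. A composite $\bar\chi \of (\bar\xi_1, \dotsc, \bar\xi_n)$ in $NU(\K)$ is defined by the unique factorisation $\chi = \chi' \of \ul\eta_{\ul\phi}$ through the cocartesian cells $\eta_{C_{i'}}$ (one per nullary $\bar\xi_i$), whereas the composite in $\K$ of the $\eps_\K$"/images is obtained by directly composing $\chi$, with $\chi_{C_{i'}}$ inserted at each nullary position, against the $\xi_i$. The two agree precisely because $\chi' = \chi \of (\zeta_1, \dotsc, \zeta_n)$, where $\zeta_i \dfn \chi_{C_{i'}}$ if $\bar\xi_i$ is nullary and $\zeta_i \dfn \id_{K_i}$ otherwise; this identity in turn follows, by the uniqueness of the factorisation $\chi'$, from the horizontal unit identity (A) of \lemref{unit identities}, namely $\chi_C \of \eta_C = \id_C$, together with associativity of vertical composition. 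A short case analysis on the arities of $\bar\chi$ and each $\bar\xi_i$, handling separately the insertion of $\chi_E$ when $\bar\chi$ is itself nullary, then completes the argument.
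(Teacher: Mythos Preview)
Your proposal is correct and follows essentially the same route as the paper: the extension of $N$ via the comparison isomorphisms $\alpha_C = (F_I)_C$, the strict identity $UN = \id$, and the isomorphism $NU \iso \id$ built from the cartesian cells $\chi_C = \eps_C\colon I_C \Rar C$ are exactly what the paper does. The only superficial difference is direction: the paper first constructs $\tau_\K\colon \K \to NU(\K)$ via $\psi \mapsto \ol{\eta_C \of \psi}$ and then notes its inverse is your $\eps_\K$, whereas you build $\eps_\K$ directly and focus on verifying that it preserves composition, which you handle correctly using the unit identity $\chi_C \of \eta_C = \id_C$ together with the cocartesianness of $\eta_C$ in $\K$ (the paper's \lemref{cocartesian unit identities}) to identify $\chi'$ with $\chi \of (\zeta_1,\dotsc,\zeta_n)$.

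A couple of points you leave implicit but should mention in a full write-up: that $N(\K)$ indeed has all horizontal units (so that $N$ lands in $\AugVirtDblCat_\textup u$), which the paper checks explicitly by showing $\bar{\eta_A}$ and $\ol{\id_{I_A}}$ satisfy the unit identities of \lemref{unit identities}; and the detailed verification that $NF$ preserves composition of cells and that $N\zeta$ is natural with respect to nullary cells, both of which reduce, as in your $\eps_\K$ argument, to manipulations with the defining equation $\alpha_C \of F\eta_C = \eta_{FC}$ and uniqueness of cocartesian factorisations. Your naturality claim for $\eps$ is correct but relies on the identity $\chi_{FC} \of \alpha_C = F\chi_C$, which follows from composing both sides with the cocartesian $F\eta_C$; this is worth stating rather than leaving to \cororef{functors preserve companions and conjoints} alone.
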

		\begin{proof}
		That the composition for $N(\K)$ as defined above satisfies the associativity and unit axioms is a straightforward consequence of those axioms in $\K$, combined with the uniqueness of the factorisations $\chi'$ in \eqref{composition in N(K)}.
		
		To show that $N(\K)$ has all horizontal units let $A$ be any object in $N(\K)$; we claim that $\cell{\bar{\eta_A}}A{I_A}$ defines $I_A$ as the horizontal unit of $A$ in $N(\K)$. To see this, consider the identity of $I_A$ as a nullary cell $\cell{\ol{\id_{I_A}}}{I_A}A$ in $N(\K)$; we will show that $\bar{\eta_A}$ and $\ol{\id_{I_A}}$ satisfy the horizontal unit identities of \lemref{unit identities}. Indeed, we have $\ol{\id_{I_A}} \of \bar{\eta_A} = \ol{(\id_{I_A} \of \eta_A)} = \bar{\eta_A} = \id_A$ (the identity cell of $A$ in $N(\K)$). On the other hand we have
		\begin{displaymath}
			\bar{\eta_A} \of \ol{\id_{I_A}} = \ol{\eta_A' \of \id_{I_A}} = \ol{\id_{I_A} \of \id_{I_A}} = \ol{\id_{I_A}} = \id_{I_A},
		\end{displaymath}
		where the right-hand side is the identity cell of $I_A$ in $N(\K)$ and where $\eta_A' = \id_{I_A}$ is the unique factorisation of $\eta_A$ through $\eta_{\ol{\id_{I_A}}} = \eta_A$.
		
		We conclude that $N(\K)$ forms a well-defined augmented virtual double category that has all horizontal units.	Next we extend the assignment $\K \mapsto N(\K)$ to a strict $2$-functor $\map N{\VirtDblCat_\textup u}{\AugVirtDblCat_\textup u}$. For the action of $N$ on morphisms consider a normal functor $\map F\K\L$ between unital virtual double categories. Since $F$ preserves the cocartesian cells $\eta_A$ of $\K$ we can obtain, for each object $A \in K$, an invertible horizontal cell $\cell{(F_I)_A}{FI_A}{I_{FA}}$ in $\L$ that is the unique factorisation in
		\begin{equation} \label{unitors}
			\begin{tikzpicture}[textbaseline]
				\matrix(m)[math35, column sep={1.75em,between origins}]{& FA & \\ FA & & FA \\};
				\path[map]	(m-2-1) edge[barred] node[below] {$I_{FA}$} (m-2-3);
				\path				(m-1-2) edge[transform canvas={xshift=-2pt}, eq] (m-2-1)
														edge[transform canvas={xshift=2pt},eq] (m-2-3)
														edge[transform canvas={shift={(-0.7em,-0.5em)}}, cell] node[right] {$\eta_{FA}$} (m-2-2);
			\end{tikzpicture} \quad = \quad \begin{tikzpicture}[textbaseline]
				\matrix(m)[math35, column sep={1.75em,between origins}]{& FA & \\ FA & & FA \\ FA & & FA \\};
				\path[map]	(m-2-1) edge[barred] node[below] {$FI_A$} (m-2-3)
										(m-3-1) edge[barred] node[below] {$I_{FA}$} (m-3-3);
				\path				(m-1-2) edge[transform canvas={xshift=-2pt}, eq] (m-2-1)
														edge[transform canvas={xshift=2pt},eq] (m-2-3)
														edge[transform canvas={shift={(-0.7em,-0.5em)}}, cell] node[right] {$F\eta_A$} (m-2-2)
										(m-2-1) edge[eq] (m-3-1)
										(m-2-2) edge[transform canvas={shift={(-1.05em,-0.15em)}}, cell] node[right] {$(F_I)_A$} (m-3-2)
										(m-2-3) edge[eq] (m-3-3);
			\end{tikzpicture}.
		\end{equation}
		We define $\map{NF}{N(\K)}{N(\L)}$ as follows. On objects and morphisms it simply acts as $F$ does. To define its action on cells we first define, for each $\bar\xi$ in $N(\K)$, the cell $\delta_{\bar\xi}$ in $\L$ by $\delta_{\bar\xi} \dfn (F_I)_C$ if $\bar\xi$ is nullary with horizontal target $C$, and $\delta_{\bar\xi} \dfn \id_{FK}$ if $\bar\xi$ is unary with horizontal target $\hmap KCD$; we then set $(NF)(\bar\xi) \dfn \ol{(\delta_{\bar\xi} \of F\xi)}$. That this assignment preserves identity cells is easily checked; that it preserves any composition $\bar\chi \of (\bar\xi_1, \dotsc, \bar\xi_n)$ in $N(\K)$, as in \eqref{composition in N(K)}, is shown by
		\begin{align*}
			(NF)(\bar\chi) \of \bigl(&(NF)(\bar\xi_1), \dotsc, (NF)(\bar\xi_n)\bigr) \\
			& = \ol{\delta_{\bar\chi} \of F\chi} \of \pars{\ol{\delta_{\bar\xi_1} \of F\xi_i}, \dotsc, \ol{\delta_{\bar\xi_n} \of F\xi_n}} \\
			& = \ol{\delta_{\bar\chi} \of (F\chi)' \of (\delta_{\bar\xi_1} \of F\xi_1, \dotsc, \delta_{\bar\xi_n} \of F\xi_n)} \\ 
			& = \ol{\delta_{\bar\chi} \of F(\chi') \of (F\xi_1, \dotsc, F\xi_n)} = \ol{\delta_{\bar\chi} \of F\bigpars{\chi' \of (\xi_1, \dotsc, \xi_n)}} \\
			& = (NF)\bigpars{\ol{\chi' \of (\xi_1, \dotsc, \xi_n)}} = (NF)\bigpars{\bar\chi \of (\bar\xi_1, \dotsc, \bar\xi_n)},
		\end{align*}
		where the third identity is shown as follows. The cells $(F\chi)'$ and $\chi'$, on either side, are the factorisations in $F\chi = (F\chi)' \of (\eta_{(NF)(\bar\xi_1)}, \dotsc, \eta_{(NF)(\bar\xi_n)})$ and $\chi = \chi' \of (\eta_{\bar\xi_1}, \dotsc, \eta_{\bar\xi_n})$ respectively. The identity follows from the fact that
		\begin{align*}
			(F\chi)' \of (\delta_{\bar\xi_1}, \dotsc, \delta_{\bar\xi_n}) \of (F\eta_{\bar\xi_1}, \dotsc, F\eta_{\bar\xi_n}) &= (F\chi)' \of \bigpars{\eta_{(NF)(\bar\xi_1)}, \dotsc, \eta_{(NF)(\bar\xi_n)}} \\
			&= F\chi = F(\chi') \of (F\eta_{\bar\xi_1}, \dotsc, F\eta_{\bar\xi_n})
		\end{align*}
		together with the uniqueness of factorisations through the path $(F\eta_{\bar\xi_1}, \dotsc, F\eta_{\bar\xi_n})$, which is cocartesian because $F$ is normal. This concludes the definition of $N$ on morphisms.
		
		Next consider a transformation $\nat\zeta FG$ between normal functors $F$ and $\map G\K\L$ of unital virtual double categories. We claim that the components of $\zeta$ again form a transformation $NF \Rar NG$, which we take to be the image $N\zeta$. For instance, that the components of $\zeta$ are natural with respect to a nullary cell $\cell{\bar\psi}{\ul J}C$ in $N(\K)$, with non"/empty horizontal source $\ul J$, is shown below, where $(\eta_{GC} \of \zeta_C)'$ is the unique factorisation of $\eta_{GC} \of \zeta_C$ through $\eta_{(NF)(\bar\psi)} = \eta_{FC}$.
		\begin{align*}
			(NG)(\bar\psi) \of (\bar\zeta_{J_1}, \dotsc, \bar\zeta_{J_n}) & = \ol{(G_I)_C \of G\psi} \of (\bar\zeta_{J_1}, \dotsc, \bar\zeta_{J_n}) \\
			& = \ol{(G_I)_C \of G\psi \of (\zeta_{J_1}, \dotsc, \zeta_{J_n})} = \ol{(G_I)_C \of \zeta_{I_C} \of F\psi} \\
			& = \ol{(\eta_{GC} \of  \zeta_C)' \of (F_I)_C \of F\psi} = \zeta_C \of (NF)(\bar\psi)
		\end{align*}
		Here the last identity follows from the definition of $\id_{\zeta_C}$ in $N(\L)$ while the penultimate identity follows from
		\begin{align*}
			(G_I)_C \of \zeta_{I_C} \of F\eta_C &= (G_I)_C \of G\eta_C \of \zeta_C = \eta_{GC} \of \zeta_C \\
			&= (\eta_{GC} \of \zeta_C)' \of \eta_{FC} = (\eta_{GC} \of \zeta_C)' \of (F_I)_C \of F\eta_C,
		\end{align*}
		by using that $F\eta_C$ is cocartesian. Naturality of the components of $\zeta$ with respect to cells in $N(\K)$ of other shapes can be shown similarly.
		
		That the assignments $\K \mapsto N(\K)$, $F \mapsto NF$ and $\zeta \mapsto N\zeta$ combine into a strict $2$-functor $\map N{\VirtDblCat_\textup u}{\AugVirtDblCat_\textup u}$ follows easily from the uniqueness of the factorisations \eqref{unitors}. It is also clear that the obvious isomorphism \mbox{$(U \of N)(\K) \iso \K$} of virtual double categories extends to an isomorphism $U \of N \iso \id$ of strict 2"/endofunctors on $\VirtDblCat$. Thus it remains to construct an invertible $2$"/natural transformation \mbox{$\tau\colon \id \xrar\iso N \of U$}. Given a unital augmented virtual double category $\K$ we define the functor $\map{\tau_\K}\K{(N \of U)(\K)}$ as follows. It is the identity on objects and morphisms, it is given by $\phi \mapsto \bar\phi$ on unary cells and by $\psi \mapsto \ol{\eta_C \of \psi}$ on nullary cells $\cell\psi{\ul J}C$. That these assignments preserve composites and identity cells is easily checked; that the family $\tau = (\tau_\K)_{\K}$ is $2$-natural is clear. Finally, the inverse functor $\map{\inv\tau}{(N \of U)(\K)}\K$ can be given as the identity on objects and morphisms, as $\bar\phi \mapsto \phi$ on unary cells and as $\bar\psi \mapsto \eps_C \of \psi$ on nullary cells $\cell{\bar\psi}{\ul J}C$, where $\cell{\eps_C}{I_C}C$ is the nullary cartesian cell that corresponds to $\cell{\eta_C}C{I_C}$ as in \lemref{unit identities}. This completes the proof.
	\end{proof}


\end{document}